\DeclareMathOperator*{\ee}{\mathbb{E}}
\DeclareMathOperator*{\probs}{\text{\textbf{Pr}}}
\numberwithin{equation}{section}
\DeclareMathOperator*{\dom}{dom\,}
\newcommand{\inclu}[0] {\ar@{^{(}->}}
\newcommand{\cD}{\mathcal{D}}
\newcommand{\Z}{{Z}}
\newcommand{\prox}{{\rm prox}}
\newcommand{\R}{{\bf R}}
\newcommand{\cP}{{\mathcal P}}
\newcommand{\cO}{{\mathcal O}}
\newcommand{\EE}{{\mathbb E}}
\newcommand{\argmin}{\operatornamewithlimits{arg\,min}}
\newcommand{\argmax}{\operatornamewithlimits{arg\,max}}
\newtheorem{theorem}{Theorem}[section]
\newtheorem{proposition}[theorem]{Proposition}
\newtheorem{lemma}[theorem]{Lemma}
\newtheorem{defn}[theorem]{Definition}
\newtheorem{corollary}[theorem]{Corollary}
\newtheorem{assumption}{Assumption}
\newtheorem{example}{Example}[section]
\newcommand\blfootnote[1]{%
  \begingroup
  \renewcommand\thefootnote{}\footnote{#1}%
  \addtocounter{footnote}{-1}%
  \endgroup
}
\title{Stochastic optimization with decision-dependent distributions}
\author{Dmitriy Drusvyatskiy\thanks{Department of Mathematics, University of Washington, Seattle, WA 98195; 
\texttt{www.math.washington.edu/{\raise.17ex\hbox{$\scriptstyle\sim$}}ddrusv}. Research of Drusvyatskiy was supported by the NSF DMS 1651851 and CCF 1740551 awards.}
\and 
Lin Xiao\thanks{Facebook AI Research, Seattle, WA 98109. Email: \texttt{linx@fb.com}.}
\blfootnote{Part of this work was done when both authors were affiliated with Microsoft Research.}
}
\begin{document}
\date{}
\maketitle

\begin{abstract}	
 Stochastic optimization problems often involve data distributions that change in reaction to the decision variables.  This is the case for example when members of the population respond to a deployed classifier by manipulating their features  so as to improve the likelihood of being positively labeled. 
Recent works on performative prediction have identified an intriguing solution concept for such problems: 
find the decision that is optimal with respect to the static distribution that the decision induces. 
Continuing this line of work, we show that typical stochastic algorithms---originally designed for static problems---can be applied directly for finding such equilibria with little loss in efficiency. The reason is simple to explain: the main consequence of the distributional shift is that it corrupts  algorithms with a {\em bias} that decays linearly with the distance to the solution. Using this perspective, we obtain sharp convergence guarantees  for popular algorithms, such as stochastic gradient, clipped gradient, proximal point, and dual averaging methods, along with their accelerated and proximal variants. 
In realistic applications, deployment of a decision rule is often much more expensive than sampling. We show how to modify the aforementioned algorithms so as to  maintain their  sample efficiency while performing only logarithmically many deployments.

\paragraph{Keywords:} stochastic optimization, distributional shift, Wasserstein distance, performative prediction, stochastic gradient, proximal point method.

\end{abstract}


\section{Introduction}
Stochastic optimization plays a central role in statistical sciences and large-scale data-driven computing. 
The goal of stochastic optimization in these settings is to learn a decision rule (e.g. classifier)  from a limited data sample that
generalizes well to the entire population. In simplest circumstances, this task amounts to the optimization problem 
\begin{equation}\label{eqn:PS1}
\mathtt{St}(\cP):\qquad\qquad \min_x~ \ee_{z\sim \cP}\ell(x,z)+r(x).
\end{equation}
Here, $z$ encodes the population data, which is assumed to follow some fixed probability distribution $\cP$ that is   accessible only through sampling. The functions $\ell$ and $r$ play qualitatively different
roles. Typically, $\ell(x,z)$ evaluates the loss of the decision rule parametrized by $x$ on
a data point $z$. In contrast, the function $r\colon\R^d\to\R\cup\{\infty\}$ models constraints or promotes some low-dimensional structure in $x$, such as sparsity or low-rank. 
We refer to the problem~\eqref{eqn:PS1} as $\mathtt{St}(\cP)$ to emphasize its dependence on the distribution $\cP$, while  ``$\mathtt{St}$'' abbreviates ``static''---a term whose significance will become clear shortly.

Stochastic approximation algorithms are often the methods of choice for $\mathtt{St}(\cP)$. 
In every iteration $t=1,2,\ldots$, such methods draw a fresh sample $z_t\in \cP$ and update the iterate $x_t$ using the randomly selected function $\ell(\cdot,z_t)$. Among such algorithms, the stochastic proximal gradient method is the most popular; in each iteration, the method simply takes a step from $x_t$ in the direction opposite to the gradient $\nabla \ell(x_t,z_t)$, followed by a proximal operation of $r$. Other common stochastic algorithms include  proximal point, clipped gradient, and dual averaging methods, along with their inertial variants. 

Convergence guarantees of stochastic optimization algorithms crucially rely on the sampling distribution $\cP$ being fixed throughout the run of an algorithm. This assumption, however, is violated in applications where the distribution evolves along the iterations. 
There are two main sources of such distributional shifts. The first is temporal, where the distribution varies  in time due to reasons that are independent of the iterates~$x_t$. This setting has been extensively studied in the machine learning literature; see e.g. \cite{kuh1991learning,bartlett2000learning,bartlett1992learning,gama2014survey,besbes2015non,besbes2019optimal}. 
The second common source is due to a feedback mechanism, wherein the distribution generating the data in iteration $t$ may depend on, or react to, the current ``state'' $x_t$. For example, deployment of a classifier by a learning system, when made public, often causes the population to adapt their attributes in order to increase the likelihood of being positively labeled---a process called ``gaming''. Even when the population is agnostic to the classifier, the decisions made by the learning system (e.g. loan approval) may inadvertently alter the profile of the population (e.g. credit score).  
The goal of the learning system therefore is to find a classifier
that generalizes well under the response distribution. 
Recent research in strategic classification  \cite{hardt2016strategic,bruckner2012static,bechavod2020causal,dalvi2004adversarial} and performative prediction \cite{perdomo2020performative,mendler2020stochastic}  has highlighted the prevalence of this phenomenon.

\begin{table}[t]
	\renewcommand{\arraystretch}{1.5}
	\begin{center}
		\begin{tabular}{| l | c | }
			\hline
			Algorithms & Iterate update with $z_t\sim \cD(x_t)$ \\ 
			\hline
			Proximal point & $\displaystyle x_{t+1}=\argmin_x ~ \ell(x,z_t)+r(x)+\frac{1}{2\eta_t}\|x-x_t\|^2 $ \\  
			Prox-gradient & $\displaystyle x_{t+1}=\prox_{\eta_t r}\bigl(x_t-\eta_t\nabla \ell(x_t,z_t)\bigr)$   \\
			Accel.\ prox-grad. & 	
			$\left\{
			\begin{aligned}
			x_{t}&=\prox_{\eta_t r}\left(y_{t-1}-\eta_t \nabla \ell(y_{t-1},z_t')\right)\\
			y_t&=x_t+\beta_t(x_t-x_{t-1})
			\end{aligned}\right\}\quad \textrm{with }z_t'\sim \cD(y_{t-1})
			$   \\				Clipped gradient & $\displaystyle x_{t+1}=\argmin_x\,  \bigl(\ell(x_t,z_t)+\langle \nabla \ell(x_t,z_t),x-x_t\rangle\bigr)^+ +r(x)+\frac{1}{2\eta_t}\|x-x_t\|^2$   \\
			Dual averaging & $\displaystyle x_{t+1}=\argmin_x  \Big\langle \frac{1}{t}\sum_{i=1}^t\nabla \ell(x_t,z_t),x\Big\rangle+r(x)+\frac{1}{2\eta_t}\|x-x_0\|^2 $   \\		
			\hline
		\end{tabular}
	\end{center}
	\caption{Stochastic algorithms with state-dependent distributions.}
	\label{table:methods_intro}
\end{table}

\subsection{Problem setting}
The focus of this work is stochastic optimization under decision-dependent distributions. Our approach  to such problems builds on the framework of ``performative prediction'' proposed in \cite{perdomo2020performative,mendler2020stochastic}. Namely, we consider optimization
problems of the form 
\begin{equation}\label{eqn:hard_performativ_predict}
\min_x~ \ee_{z\sim \cD(x)}\ell(x,z)+r(x),
\end{equation}
where $\cD(x)$ is a distribution indexed by the decision variable $x\in\R^d$. In contrast to $\mathtt{St}(\cP)$ in~\eqref{eqn:PS1}, the data distribution is now decision-dependent. Thus, the quality of a decision rule $x$ is judged by its performance according to the induced distribution $\cD(x)$. 
A direct solution of~\eqref{eqn:hard_performativ_predict} is out of reach in general, even for convex loss functions. 
Nonetheless, an appealing and widely used heuristic for such problems is to  apply standard stochastic optimization algorithms to the static problem $\mathtt{St}(\cD(x_t))$ for one or more iterations, then update the distribution $\cD(x_{t+1})$ based on the generated iterates, and repeat.   
Motivated by the recent works \cite{perdomo2020performative,mendler2020stochastic}, we ask the following question:
\begin{quote}
	What can one expect from standard stochastic algorithms (Table~\ref{table:methods_intro}) when the sampling distribution $\cD(x_t)$ used in iteration $t$ depends on the iterate $x_t$?
\end{quote}
The answer we present relies on a certain distinguished point $\bar x$, highlighted in  \cite{perdomo2020performative,mendler2020stochastic}. A point $\bar x$ is at {\em equilibrium} for the family of distributions $\cD(x)$ if $\bar x$ solves the static problem $\mathtt{St}(\cP)$ with $\cP=\cD(\bar x)$. Thus $\bar x$ is at equilibrium  if it solves the static problem that the distribution $\cD(\bar x)$ induces. 
Equilibrium points are sure to exist under mild continuity and convexity assumptions \cite{perdomo2020performative}. Our main contribution can be  summarized as follows:
\begin{quote}
	{Under mild conditions, stochastic optimization algorithms that sample according to a state-dependent distribution $\cD(x_t)$ can be viewed as inexact analogues of the same algorithms applied to the  
		static problem $\mathtt{St}(\cD(\bar x))$.	The inexactness manifests in a bias/error that decays linearly with the distance to  $\bar x$.}
\end{quote}

Thus, stochastic optimization algorithms under decision-dependent distributions automatically search for the equilibrium point $\bar x$ by implicitly solving the problem $\mathtt{St}(\cD(\bar x))$. 
The strength of the distribution's dependence on the state---measured by the Lipschitz constant of the map $ \cD(\cdot)$---directly impacts the decay rate in the bias/error. We show therefore that if the state dependence is sufficiently weak,  standard stochastic algorithms (Table~\ref{table:methods_intro})  exhibit the same efficiency estimates as if they were directly applied to the static problem $\mathtt{St}(\cD(\bar x))$---an a priori impossible task, since the distribution $\cD(\bar x)$ is unknown and inaccessible.

\subsection{Application: strategic classification}
\label{sec:strategic-classification}
An important application arena for the developed techniques is the framework of \emph{strategic classification} \cite{hardt2016strategic,milli2019socialcost}. This problem class can  model a variety of settings with state-dependent distributions, such as fraud detection, traffic prediction, spam filtering, and  service recommendations.
See  \cite{zliobaite2016applications} and \cite[Appendix~B]{perdomo2020performative} for detailed applications.

Strategic classification is a two player game between an ``institution'' that deploys a classifier and a population of ``agents'' who can adapt their features in response in order to increase their likelihood of being positively classified. The game proceeds as an iterative process where the institution and the population take turns to adjust the classifier and features respectively.
Specifically, let $z=(a,b)$ denote the features-label pairs of the population that follow a base distribution $z\sim \cP$.
The institution begins the game by deploying a classifier $h_x$, parametrized by $x\in \R^d$, and learned from data sampled from $\cP$. 
Each agent responds to the classifier $h_x$ by greedily modifying their features $a$ to increase their chance of being favorably labeled:
\begin{equation}\label{eqn:strat_update}
\Delta(h_x, a) := \argmax_{a'} \bigl\{u(h_x, a')-c(a, a')\bigr\}.
\end{equation}
Here $u(\cdot,\cdot)$ is some utility function and $c(\cdot,\cdot)$ is the cost of altering the features.
Thus the samples available to the institution $(\Delta(h_x,a), b)$ in the next stage of the game follow a distribution that depends on~$x$, and which we denote by $(\Delta, b)\sim\cD(x)$. 
The goal of the institution is to find the decision variable~$x$ that minimizes the classification error with respect to the response distribution:
\[
\probs_{(\Delta, b)\sim\cD(x)}[h_x(\Delta)\neq b] ~=~ \ee_{(\Delta, b)\sim\cD(x)}[\mathbf{1}(h_x(\Delta)\neq b].
\]
In practice, one often replaces the $0/1$ loss $\mathbf{1}(\cdot)$ with a convex  surrogate  $\ell(x,z)$, such as the logistic loss, to facilitate large-scale optimization. Thus the problem of strategic classification is an instance of \eqref{eqn:hard_performativ_predict} under a specific family of distributions $\cD(x)$.

In practice, the agents are unlikely to actually play the best-response solutions when modifying their features or that the utility and cost functions are common to all agents. 
Moreover, in some cases, the dependence of the distribution on the state  can be passive;
for example, when a bank uses a classifier to approve loan applications, the credit scores of the population are automatically impacted for downstream tasks, even in absence of feature manipulation.
In line with the recent works \cite{perdomo2020performative,mendler2020stochastic}, we do not restrict to the model of strategic classification, and instead only require the distribution map $\cD(\cdot)$ to have a relatively small Lipschitz constant.

\subsection{Related work}
Our work is closely related to a number of research themes in optimization, statistics, and machine learning. We now highlight these relationships. 
\smallskip

{\em Performative prediction and distributional shift.} The two seminal papers on performative prediction \cite{perdomo2020performative,mendler2020stochastic} motivate and guide much of our  work. In particular, the problem setting and assumptions we use (Section~\ref{sec:stoch_alg_feed_intro}) are identical to that of \cite{perdomo2020performative,mendler2020stochastic}. ``Performative prediction'' is an evocative name for the problem class in machine learning settings since it nicely contrasts the problem class with supervised learning. To stay consistent with the stochastic optimization literature, however, we do not use this terminology here and instead refer to the qualifier ``decision-dependent distributions'' when needed.

The earlier paper \cite{perdomo2020performative} introduces the notion of an equilibrium point (therein called ``performatively stable'')  and identified regimes in which retraining and gradient descent algorithms converge linearly. The follow up paper \cite{mendler2020stochastic}, in turn, analyzes two variants of the projected stochastic gradient method (called greedy and lazy), and establishes convergence of the last iterate to the equilibrium. Our current work complements  \cite{perdomo2020performative,mendler2020stochastic}, aiming to provide a systematic and transparent treatment, based on controlling bias/error. The convergence guarantees we develop 
apply for a wide class of stochastic algorithms, including stochastic gradient, clipped gradient, proximal point, and dual averaging methods, along with their accelerated and proximal variants. Comments and comparisons with  \cite{perdomo2020performative,mendler2020stochastic} appear throughout the text. 
Aside from performative prediction, there is a long history of problems with distributional shift in machine learning, whether due to time drift (e.g. \cite{kuh1991learning,bartlett2000learning,bartlett1992learning,gama2014survey,besbes2015non,besbes2019optimal}) or deployment of classifiers (e.g. \cite{hardt2016strategic,bruckner2012static,bechavod2020causal,dalvi2004adversarial}.
We believe that the techniques developed here may be useful in these contexts as well.

{\em Stochastic programming.} Stochastic optimization problems with decision-dependent uncertainties have  appeared in the classical stochastic programming literature, such as \cite{jonsbraaten1998class,varaiya1988stochastic,dupacova2006optimization,ahmed2000strategic,rubinstein1993discrete}. We  refer the reader to the recent paper \cite{hellemo2018decision}, which discusses taxonomy and various models of decision dependent uncertainties. An important theme of these works is to utilize structural assumptions on how the decision variables impact the distributions. Consequently, these works sharply deviate from the framework explored in  \cite{perdomo2020performative,mendler2020stochastic} and  from  our paper. 

{\em Convex stochastic optimization.} Many of the techniques used here are rooted in convex optimization;  recent monographs on the subject include \cite{beck2017first,bubeck2014convex,nesterov2018lectures}. The arguments we present are most closely related to the literature on (accelerated) stochastic gradient methods \cite{kulunchakov2019estimate,ghadimi2012optimal}, dual averaging \cite{nesterov2009primaldual,xiao2010jmlr}, and model-based minimization \cite{asi2019stochastic,davis2019stochastic}. 

{\em Online convex optimization.}
Online convex optimization is a more general framework than stochastic convex optimization in the sense that the loss function at each iteration need not follow any probability distribution and can even be adversarial. On the other hand, online convex optimization requires stronger assumptions such as bounded domain and gradients;
see, e.g., \cite{zinkevich2003,shalevshwartz2012online,hazan2016intro}.
Under these conditions, any online convex optimization algorithm can be applied to static stochastic optimization problems. Similarly, we show by a simple reduction that any online algorithm can be  applied to stochastic optimization problems with state-dependent distributions.

{\em Error and bias in gradient oracles.}
Biased stochastic gradients play a central role in this work. There is a vast literature on errors/bias in gradient computations. For example, \cite{devolder2014first,dvurechensky2017universal,stonyakin2019inexact} analyze first-order methods under inexact oracles and discuss convergence rates. The bias that appears in the current work is fundamentally different, however, in that it decays linearly with the distance to the solution. The closest error model we are aware of is based on relative errors in the gradient (e.g. \cite[Section 1.2.1]{bertsekas1997nonlinear} and \cite{ajalloeian2020analysis}). The  bias we encounter is more restrictive still and therefore facilitates stronger guarantees.

\subsection{Outline of the paper}
Section~\ref{sec:distrib_shift} presents two fundamental lemmas that characterize the sensitivity of the expected loss function and its gradient to arbitrary distributional shift.
Section~\ref{sec:stoch_alg_feed_intro} formalizes the assumptions that we will use throughout the paper, in particular emphasizing Lipschitz continuity of the distributions  relative to variations in the decision variables. 
Section~\ref{sec:outline_main_results} outlines the main results and previews  technical contributions---formally developed in Sections~\ref{sec:prox_point_prox_grad}-\ref{sec:inexactRM}.


\paragraph{Notation.}
Throughout, we consider a Euclidean space, denoted for simplicity as $\R^d$. The symbol $\langle \cdot,\cdot\rangle$ will denote the inner product in $\R^d$, while $\|x\|=\sqrt{\langle x,x\rangle}$ will denote the induced norm. The {\em proximal map} of any function $f\colon\R^d\to\R\cup\{\infty\}$ is defined as 
$$\prox_{\eta f}(x)=\argmin_{y} \,\Bigl\{f(y)+\tfrac{1}{2\eta}\|y-x\|^2\Bigr\},$$
where $\eta>0$ is an arbitrary constant.

We will be interested in random variables taking values in a  metric space. Therefore, throughout the paper, we fix a metric space $\Z$ with metric  $d(\cdot,\cdot)$ and 
equip $\Z$ with the Borel $\sigma$-algebra. The symbol $\mathbb{P}$  will denote the set of Radon probability measures on $\Z$ with a finite first moment $\ee_{z\sim \cP} [d(z,z_0)]<\infty$ for some $z_0\in Z$. We  measure the deviation between two measures $\mu,\nu\in \mathbb{P}$ using the Wasserstein-1 distance:
\begin{equation*}\label{eqn:KR}
W_1(\mu,\nu)=\sup_{g\in {\rm Lip}_1}\,\bigl\{\EE_{X\sim \mu}[g(X)]-\EE_{Y\sim \nu}[g(Y)]\bigr\},
\end{equation*}	
where ${\rm Lip}_1$ denotes the set of $1$-Lipschitz continuous functions $g\colon\Z\to\R$.
The equivalence of this definition with the description of $W_1(\mu,\nu)$ using couplings is the Kantorovich-Rubinstein duality theorem \cite{kantorovich1958space}.

Throughout the paper, we fix an arbitrary function $r\colon\R^d\to\R\cup\{\infty\}$ and a loss function $\ell\colon \R^d\times Z\to\R$. The symbol $\nabla \ell(x,z)$ will always refer to the gradient of $\ell(x,z)$ in the variable $x$. Throughout the paper, we impose the following assumption.

\begin{assumption}[Smoothness]\label{assum:smoothness}
	The loss $\ell(x,z)$ is $C^1$-smooth in $x$ for all $z\in Z$,  and the map $z\mapsto\nabla \ell(x,z)$ is $\beta$-Lipschitz continuous for any $x\in\R^d$.
\end{assumption}

An important consequence (following from the dominated convergence theorem) is that for any measure $\mu\in \mathbb{P}$, the expected loss $\ee_{z\sim \mu}\ell(x,z)$ is differentiable in $x$ with gradient $\ee_{z\sim \mu}[\nabla \ell(x,z)]$.

\section{Sensitivity to distributional shift}\label{sec:distrib_shift}
Optimization algorithms that rely on state-dependent sampling, in essence, perform updates on a sequence of static problems that slowly vary  along the iterations. An appealing strategy for analyzing such algorithms---and the one we follow here---leverages the stability of the problem $\mathtt{St}(\cP)$ to perturbations in $\cP$. Formalizing this viewpoint,  define the expected loss $$f_{\mu}(x):=\ee_{z\sim \mu}\ell(x,z),$$
for any measure  $\mu\in \mathbb{P}$.  The main question we aim to answer in this section is how variations in $\mu\in\mathbb{P}$ impact the function $f_{\mu}$ and its gradient $\nabla f_{\mu}$. The  two Lemmas~\ref{lem:grads_close} and \ref{lem:func_gap_error} provide the answer that guides much of our development.

We begin with Lemma~\ref{lem:grads_close}, which shows that the  distance $W_1(\mu,\nu)$ between two measures $\mu,\nu\in \mathbb{P}$ uniformly bounds the deviation $\nabla f_{\mu}(x)-\nabla f_{\nu}(x)$. This result is well-known and widely used; we provide a short proof for completeness.  

\begin{lemma}[Gradient deviation]\label{lem:grads_close}
Under Assumption~\ref{assum:smoothness}, 
all measures $\mu,\nu\in \mathbb{P}$ satisfy:
\begin{equation}\label{eqn:grad_const}
\sup_{x\in \R^d}\|\nabla f_{\mu}(x)-\nabla f_{\nu}(x)\|\leq \beta\cdot W_1(\mu,\nu).
\end{equation}
\end{lemma}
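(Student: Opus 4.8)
The plan is to reduce the vector-valued gradient deviation to a scalar problem in which the Kantorovich--Rubinstein dual formula for $W_1$ directly applies. The starting point is the identity $\nabla f_{\mu}(x) = \mathbb{E}_{z\sim\mu}[\nabla \ell(x,z)]$ recorded just after Assumption~\ref{assum:smoothness} (a consequence of dominated convergence), which lets me write $\nabla f_{\mu}(x) - \nabla f_{\nu}(x) = \mathbb{E}_{z\sim\mu}[\nabla \ell(x,z)] - \mathbb{E}_{z\sim\nu}[\nabla \ell(x,z)]$. I would fix an arbitrary $x\in\R^d$ throughout; since the bound I produce will not depend on $x$, the supremum over $x$ on the left-hand side of \eqref{eqn:grad_const} comes for free at the end.

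The obstacle is that the dual description of $W_1$ only tests against \emph{scalar} $1$-Lipschitz functions $g\colon\Z\to\R$, whereas $\nabla\ell(x,\cdot)$ is vector-valued. To bridge this, I would dualize the Euclidean norm: writing $\|v\| = \sup_{\|u\|\le 1}\langle u, v\rangle$, it suffices to bound $\langle u, \nabla f_{\mu}(x) - \nabla f_{\nu}(x)\rangle$ uniformly over unit vectors $u$. For each such $u$, introduce the scalar function $g_u(z) := \langle u, \nabla\ell(x,z)\rangle$. The one point requiring care is verifying its Lipschitz constant: by Cauchy--Schwarz and Assumption~\ref{assum:smoothness}, $|g_u(z) - g_u(z')| \le \|u\|\cdot\|\nabla\ell(x,z) - \nabla\ell(x,z')\| \le \beta\, d(z,z')$, so that $\beta^{-1}g_u \in {\rm Lip}_1$ when $\beta>0$ (the case $\beta=0$ being immediate, as the gradient is then independent of $z$).

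With this calibration in hand, the estimate is short: $\langle u, \nabla f_{\mu}(x) - \nabla f_{\nu}(x)\rangle = \mathbb{E}_{z\sim\mu}[g_u(z)] - \mathbb{E}_{z\sim\nu}[g_u(z)] \le \beta\cdot W_1(\mu,\nu)$, where the inequality applies the dual formula for $W_1$ to the admissible test function $\beta^{-1}g_u$. Finiteness of the expectations is assured because $g_u$ is Lipschitz and both $\mu,\nu$ have finite first moment, placing us in the regime $\mathbb{P}$ where $W_1$ is well defined. Taking the supremum over $\|u\|\le 1$ restores $\|\nabla f_{\mu}(x) - \nabla f_{\nu}(x)\|$ on the left, and since the right-hand side is independent of $x$, taking the supremum over $x\in\R^d$ yields the claim.

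I do not expect a genuine difficulty here; the only subtlety is the vector-to-scalar reduction, and specifically ensuring the test function $g_u$ carries exactly Lipschitz constant $\beta$ so that no spurious constant factor enters the final bound.
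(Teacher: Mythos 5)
Your proof is correct and follows essentially the same route as the paper: fix a unit vector, test the gradient difference against the scalar $\beta$-Lipschitz function $z\mapsto\langle u,\nabla\ell(x,z)\rangle$ via the Kantorovich--Rubinstein dual, and take a supremum over unit vectors. The extra care you take with the $\beta^{-1}$ normalization and the $\beta=0$ case is fine but not needed beyond what the paper's one-line version already conveys.
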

\begin{proof}
	Fix a unit vector $v\in \R^d$ and define the function $g(z):=v^T\nabla \ell(x,z)$. Clearly by Assumption~\ref{assum:smoothness}, $g$ is $\beta$-Lipschitz continuous in $z$ and therefore  we deduce
$$v^T(\nabla f_{\mu}(x)-\nabla f_{\nu}(x))=\EE_{z\sim \mu}g(z)-\EE_{z\sim \nu}g(z)\leq\beta\cdot W_1(\mu,\nu).$$
Taking the supremum over unit vectors $v$ yields the result.
\end{proof}

Similarly, it is tempting to use the distance $W_1(\mu,\nu)$ to linearly bound the functional error $|f_{\mu}(x)-f_{\nu}(x)|$; such an estimate, however, is far too crude for analyzing algorithms. Instead, the key insight is that 
 convergence analysis of algorithms does not rely on function values in absolute terms, but rather on their differences. The following lemma provides a {\em multiplicative bound} on the error in the function gaps $\Delta_{\mu}(x,y):=f_{\mu}(x)-f_{\mu}(y)$.
 
\begin{lemma}[Function gap deviation]\label{lem:func_gap_error}
Under Assumption~\ref{assum:smoothness}, 
all points $x,y\in\R^d$ and measures $\mu,\nu\in\mathbb{P}$ satisfy:
 \begin{equation}\label{eqn:val_const}
 \Delta_{\mu}(x,y)-\Delta_{\nu}(x,y)\leq \beta \cdot\|y-x\|\cdot W_1(\mu,\nu).
 \end{equation}
\end{lemma}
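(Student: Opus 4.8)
The plan is to reduce the claim to Lemma~\ref{lem:grads_close} by recognizing that the left-hand side is a \emph{difference of first-order differences}, which the fundamental theorem of calculus expresses as a line integral of the gradient deviation. To this end, I first rewrite
$$\Delta_{\mu}(x,y)-\Delta_{\nu}(x,y) = \bigl(f_{\mu}(x)-f_{\nu}(x)\bigr)-\bigl(f_{\mu}(y)-f_{\nu}(y)\bigr),$$
and introduce the auxiliary scalar function $\phi(t):=f_{\mu}(x+t(y-x))-f_{\nu}(x+t(y-x))$ for $t\in[0,1]$, so that the expression above equals $\phi(0)-\phi(1)$.

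The differentiability of $f_\mu$ and $f_\nu$---guaranteed by the remark following Assumption~\ref{assum:smoothness}, which identifies $\nabla f_\mu(x)=\ee_{z\sim\mu}[\nabla\ell(x,z)]$---lets me apply the chain rule to obtain
$$\phi'(t)=\bigl\langle \nabla f_{\mu}(x+t(y-x))-\nabla f_{\nu}(x+t(y-x)),\; y-x\bigr\rangle.$$
The fundamental theorem of calculus then yields $\phi(0)-\phi(1)=-\int_0^1\phi'(t)\,dt$. Bounding the integrand by Cauchy--Schwarz produces $|\phi'(t)|\le \|\nabla f_{\mu}(w_t)-\nabla f_{\nu}(w_t)\|\cdot\|y-x\|$, where $w_t:=x+t(y-x)$, and invoking the uniform gradient bound of Lemma~\ref{lem:grads_close} replaces the first factor by $\beta\,W_1(\mu,\nu)$ independently of $t$. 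Integrating this constant bound over $[0,1]$ gives exactly $\beta\cdot\|y-x\|\cdot W_1(\mu,\nu)$, establishing the claim (in fact with an absolute value on the left).

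There is no serious obstacle here: the argument is essentially a one-line reduction once the difference of gaps is read as $\phi(0)-\phi(1)$ and the segment integral is set up. The only point requiring a little care is justifying that $f_\mu$ and $f_\nu$ are genuinely $C^1$ along the segment, so that both the chain rule and the fundamental theorem of calculus apply---but this is precisely what the differentiation-under-the-expectation consequence of Assumption~\ref{assum:smoothness} supplies. It is worth noting that the multiplicative factor $\|y-x\|$, rather than an additive $W_1$-term, arises exactly because the gradient deviation (not the function values themselves) is what Lemma~\ref{lem:grads_close} controls; this is the structural feature that makes the estimate useful for the later convergence analyses.
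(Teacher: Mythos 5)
Your proof is correct and follows essentially the same route as the paper: the paper applies the fundamental theorem of calculus to $f_\mu$ and $f_\nu$ separately along the segment $x_s=x+s(y-x)$ and subtracts, whereas you apply it once to the difference $\phi=f_\mu-f_\nu$ along the same segment---an identical computation. The subsequent Cauchy--Schwarz bound and the invocation of Lemma~\ref{lem:grads_close} match the paper's argument exactly.
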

 \begin{proof}
 Fix two points $x,y\in \R^d$. For any $s\in [0,1]$, set $x_s:=x+s(y-x)$. The fundamental theorem of calculus allows to write 
 \begin{align*}
 f_{\mu}(y)-f_{\mu}(x)=\int_{0}^1 \langle \nabla f_{\mu}(x_s),y-x\rangle~ds\quad \textrm{and}\quad f_{\nu}(y)-f_{\nu}(x)=\int_{0}^1 \langle \nabla f_{\nu}(x_s),y-x\rangle~ds.
 \end{align*}
 Subtracting the two estimates yields
 \begin{align}
 |[f_{\mu}(y)-f_{\mu}(x)]-[f_{\nu}(y)-f_{\nu}(x)]|&=\left|\int_{0}^1 \langle \nabla f_{\mu}(x_s)-\nabla f_{\nu}(x_s),y-x\rangle~ds \right|\notag\\
 &\leq \|y-x\|\cdot \int_0^1 \|\nabla f_{\mu}(x_s)-\nabla f_{\nu}(x_s)\|~ds\label{eqn:app_cs}\\
 &\leq \beta \cdot\|y-x\|\cdot W_1(\mu,\nu),\label{eqn:mvt_w1}
 \end{align}
 where \eqref{eqn:app_cs} follows from  Cauchy–Schwarz  and \eqref{eqn:mvt_w1} follows from Lemma~\ref{lem:grads_close}.
 \end{proof}

 Importantly, the right side of \eqref{eqn:val_const} is proportional to the {\em product} $\|y-x\|\cdot W_1(\mu,\nu)$. In particular, if the distance $W_1(\mu,\nu)$ is on the order of $\|y-x\|$---the setting we will pass to shortly---then the right-side is proportional to the quadratic error $\|y-x\|^2$. Both Lemmas~\ref{lem:grads_close} and \ref{lem:func_gap_error} will play a central role in the later sections. 

\section{Assumptions under state-dependent sampling}\label{sec:stoch_alg_feed_intro}
We next record the assumptions that we will use to analyze stochastic optimization under state-dependent distributions---the main content of the work. The imposed conditions are identical to those used in the seminal work \cite{perdomo2020performative}.

Consider a family of probability measures $\cD(x)\in \mathbb{P}$  indexed by $x\in\R^d$. It is instructive throughout the discussion to keep in mind  the corresponding problem \eqref{eqn:hard_performativ_predict}. 
The first assumption asserts Lipschitz control on the assignment $x\mapsto\cD(x)$.
\begin{assumption}[Lipschitz distributions]\label{assum:perm_pred}
	There exists $\gamma>0$ satisfying 	
 $$W_1(\cD(x),\cD(y))\leq \gamma\cdot\|x-y\|\qquad \textrm{ for all }x,y\in \R^d.$$
\end{assumption}

As an example, consider the framework of strategic classification in Section~\ref{sec:strategic-classification}.
If the agents employ a linear utility function $u(x,a)=\langle x, a\rangle$ and a quadratic cost for modifying the features $c(a,a')=\frac{1}{2\gamma}\|a-a'\|^2$, then $W_1(\cD(x), \cD(y))\leq\gamma\|x-y\|$; see \cite[Appendix~G]{perdomo2020performative}.

The Lipschitz condition on the distribution map $\cD(\cdot)$, recorded in Assumption~\ref{assum:perm_pred}, quantifies how far the map is away from being constant---the strength of the state-dependence. 
The guarantees of Lemmas~\ref{lem:grads_close} and \ref{lem:func_gap_error} become especially potent  under  Assumption~\ref{assum:perm_pred}. We record them in Corollary~\ref{cor:grad_func}, and will use it often.
To simplify the exposition, we introduce the notation
$$f_x(y):=\ee_{z\sim \cD(x)}\ell(y,z)\qquad \textrm{and}\qquad  \nabla f_x(y):=\ee_{z\sim \cD(x)}\nabla \ell(y,z).$$ 
Note that in light of Section~\ref{sec:distrib_shift}, we may equivalently write $f_x(y)=f_{\cD(x)}(y)$. Observe also that  $\nabla f_x(x)$ is  the gradient of the function $y\mapsto f_x(y)$ evaluated at $y=x$.

\begin{corollary}[Gradient and function gap deviations]\label{cor:grad_func}
	 Suppose Assumptions~\ref{assum:smoothness} and \ref{assum:perm_pred} hold. Then for all points $x,y,u,v\in\R^d$ the estimates hold:
\begin{align}
	\sup_{w\in \R^d}~\|\nabla f_{x}(w)-\nabla f_{y}(w)\|&\leq \gamma\beta\cdot\|x-y\|,\label{eqn:grad_const2}\\
	|\left(f_x(u)-f_x(v)\right)-\left(f_y(u)-f_y(v)\right)|&\leq \gamma\beta \cdot\|x-y\|\cdot\|u-v\|.\label{eqn:gap_comp}
\end{align}
\end{corollary}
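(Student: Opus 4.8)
The plan is to read off both estimates as direct specializations of Lemmas~\ref{lem:grads_close} and \ref{lem:func_gap_error} to the measures $\mu=\cD(x)$ and $\nu=\cD(y)$, using only the identification $f_x=f_{\cD(x)}$ noted just above the statement together with the Lipschitz bound $W_1(\cD(x),\cD(y))\leq\gamma\|x-y\|$ supplied by Assumption~\ref{assum:perm_pred}. In effect, the corollary is what one gets by composing the two sensitivity lemmas with the ``speed limit'' on the distribution map.

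For \eqref{eqn:grad_const2}, I would apply Lemma~\ref{lem:grads_close} with $\mu=\cD(x)$ and $\nu=\cD(y)$ to obtain $\sup_{w}\|\nabla f_x(w)-\nabla f_y(w)\|\leq\beta\cdot W_1(\cD(x),\cD(y))$, and then chain this with Assumption~\ref{assum:perm_pred} to replace the Wasserstein term by $\gamma\|x-y\|$, yielding the factor $\gamma\beta\|x-y\|$. For \eqref{eqn:gap_comp}, the first observation is that $f_x(u)-f_x(v)=\Delta_{\cD(x)}(u,v)$ and $f_y(u)-f_y(v)=\Delta_{\cD(y)}(u,v)$, so the quantity inside the absolute value is exactly $\Delta_{\cD(x)}(u,v)-\Delta_{\cD(y)}(u,v)$. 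Lemma~\ref{lem:func_gap_error} bounds this difference by $\beta\cdot\|u-v\|\cdot W_1(\cD(x),\cD(y))$, and a second appeal to Assumption~\ref{assum:perm_pred} converts the Wasserstein factor into $\gamma\|x-y\|$.

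The one point requiring slight care is that Lemma~\ref{lem:func_gap_error} delivers a one-sided inequality, whereas \eqref{eqn:gap_comp} is stated with an absolute value. To close this gap I would simply invoke the lemma a second time with the roles of $\mu$ and $\nu$ interchanged; since $W_1$ is symmetric, this produces the identical upper bound on $\Delta_{\cD(y)}(u,v)-\Delta_{\cD(x)}(u,v)$, and the two inequalities combine to give the absolute-value estimate. No genuine obstacle arises here—the result is purely a bookkeeping consequence of the preceding section, which is precisely why it is phrased as a corollary rather than proved from scratch.
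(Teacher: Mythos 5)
Your proposal is correct and matches the paper's (implicit) argument exactly: the corollary is stated without proof precisely because it is the composition of Lemmas~\ref{lem:grads_close} and \ref{lem:func_gap_error} with the Lipschitz bound of Assumption~\ref{assum:perm_pred}, which is what you do. Your extra care about the absolute value in \eqref{eqn:gap_comp} is sound (and in fact the proof of Lemma~\ref{lem:func_gap_error} already establishes the two-sided bound in \eqref{eqn:mvt_w1}, so the symmetry argument is not even strictly needed).
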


We will see that a variety of algorithms with state-dependent sampling are implicitly solving a certain static problem that is at ``equilibrium''. A formal description of this phenomenon 
relies on the notion of an equilibrium point  from \cite{perdomo2020performative}. 
 
\begin{defn}[Equilibrium point]
 	{\rm
 A point $\bar x\in \R^d$ is at {\em equilibrium with respect to} $\cD(\cdot)$ if $\bar x$ solves the static problem $\mathtt{St}(\cD(\bar x))$, or equivalently
$$\bar x\in \argmin_x\, \bigl\{f_{\bar x}(x)+r(x)\bigr\}.$$
}
\end{defn}

Thus a point $\bar x$ is at equilibrium with respect to $\cD(\cdot)$ if $\bar x$ solves the static problem that it entails. 
In the context of strategic classification, the decision variable $\bar x$ is at equilibrium if the institution has no incentive to deploy another classifier based purely on the population's response to $\bar x$.
Equilibrium points are distinct from minimizers of \eqref{eqn:hard_performativ_predict}  in general, though the distance between the two can be bounded under strong convexity assumptions \cite[Theorem 4.3]{perdomo2020performative}.

Observe that equilibrium points are precisely the fixed points of the {\em repeated minimization} procedure 
\begin{equation}\label{eqn:rep:min}
x_{t+1}=\argmin_x\, \bigl\{f_{x_t}(x) +r(x)\bigr\}.
\end{equation}
This algorithm is largely conceptual  since it requires access to the entire data set (to evaluate the expectation) in every iteration. Nonetheless, such ``retraining heuristics'' are ubiquitous in practice for recovering from distributional shifts, regardless of their origin. Repeated minimization will play an important role in later sections.

Equilibrium points are sure to exist under fairly weak assumptions. The following two sufficient conditions  were proved in Theorem 3.5 and Proposition 4.1 of \cite{perdomo2020performative}, respectively.

\begin{proposition}[Existence of equilibrium points]\label{prop:exist_equilib}
	The family of distributions $\cD(\cdot)$ is sure to admit an equilibrium point under either of the two conditions:
	\begin{enumerate}[label=(\alph*)]
		\item\label{i1:exist_equilib} Assumptions~\ref{assum:smoothness} and \ref{assum:perm_pred} hold and the functions $f_x$ are $\alpha$-strongly convex for all $x\in \R^d$ with $\frac{\gamma\beta}{\alpha}<1$. Moreover, in this case, the equilibrium point is unique.
		\item The loss $\ell(x,z)$ is jointly continuous in $(x,z)$ and convex in $x$, the distribution map $\cD(\cdot)$ is continuous, and the domain of $r$ is compact.
	\end{enumerate}
\end{proposition}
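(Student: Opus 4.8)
The plan is to recognize equilibrium points as the fixed points of the repeated-minimization map
$$T(x) := \argmin_{w}\,\bigl\{f_x(w) + r(w)\bigr\}$$
already identified in \eqref{eqn:rep:min}, and then to invoke a suitable fixed-point theorem in each of the two regimes. Throughout I take $r$ to be proper, closed, and convex, so that the inner minimization is a genuine convex problem and the optimality conditions below are available.

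For Part (a) I would show that $T$ is a contraction with modulus $\gamma\beta/\alpha<1$ and conclude by the Banach fixed-point theorem (the contraction acts on the complete space $\R^d$). Fix $x,y$ and write $x_+=T(x)$, $y_+=T(y)$; since $f_x$ is $\alpha$-strongly convex and $r$ is convex, each inner problem has a unique minimizer characterized by the first-order conditions $-\nabla f_x(x_+)\in\partial r(x_+)$ and $-\nabla f_y(y_+)\in\partial r(y_+)$. Monotonicity of $\partial r$ then gives
$$\langle \nabla f_y(y_+) - \nabla f_x(x_+),\, x_+ - y_+\rangle \ge 0.$$
Splitting $\nabla f_y(y_+)-\nabla f_x(x_+)=[\nabla f_y(y_+)-\nabla f_y(x_+)]+[\nabla f_y(x_+)-\nabla f_x(x_+)]$, I would bound the first bracket below via $\alpha$-strong convexity of $f_y$ (it contributes $-\alpha\|x_+-y_+\|^2$ once paired with $x_+-y_+$) and the second bracket via the gradient-deviation estimate \eqref{eqn:grad_const2}, namely $\|\nabla f_y(x_+)-\nabla f_x(x_+)\|\le\gamma\beta\|x-y\|$, together with Cauchy--Schwarz. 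Combining yields $\alpha\|x_+-y_+\|^2\le\gamma\beta\|x-y\|\,\|x_+-y_+\|$, hence $\|T(x)-T(y)\|\le\tfrac{\gamma\beta}{\alpha}\|x-y\|$. Uniqueness of the equilibrium is then automatic from the contraction property.

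For Part (b) strong convexity is unavailable, so $T$ may be set-valued, and I would instead apply Kakutani's fixed-point theorem. Set $C:=\dom r$, which is nonempty, compact, and convex; since $f_x+r$ is convex and lower semicontinuous with compact effective domain $C$, the set $T(x)$ is nonempty, convex, compact, and contained in $C$. It remains to verify that $T$ has closed graph: if $x_n\to x$, $w_n\in T(x_n)$, and $w_n\to w$, then $w\in T(x)$. This follows by passing to the limit in the defining inequality $f_{x_n}(w_n)+r(w_n)\le f_{x_n}(u)+r(u)$, valid for all $u\in C$, using lower semicontinuity of $r$ together with joint continuity of $(x,w)\mapsto f_x(w)$, the latter coming from joint continuity of $\ell$ and continuity of $x\mapsto\cD(x)$. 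Kakutani's theorem then produces $\bar x\in T(\bar x)$, an equilibrium point.

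I expect the main obstacle to be exactly this joint continuity of the expected loss $(x,w)\mapsto f_x(w)$ in the closed-graph step of Part (b): because $\Z$ may be noncompact and $\ell$ is here assumed only continuous (not bounded or Lipschitz in $z$), interchanging the limit with the expectation requires care. One resolves this by exploiting the mode of convergence encoded by continuity of $\cD(\cdot)$ in $W_1$, which forces convergence of integrals of functions with at most linear growth, or equivalently by a tightness/uniform-integrability argument; restricting $w$ to the compact set $C$ makes $\{\ell(w,\cdot):w\in C\}$ equicontinuous and controls the limit. Part (a), by contrast, is routine once the contraction estimate is assembled from \eqref{eqn:grad_const2} and strong convexity.
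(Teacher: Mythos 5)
Your proposal is correct and takes essentially the same route as the paper: for part (a) the paper (which defers the formal proof to \cite{perdomo2020performative} but sketches it in Section~\ref{sec:stoch_alg_feed_intro} and effectively reproves it via Theorem~\ref{thm:stab_minppprox}, Lemma~\ref{lem:sta_contr}, and Corollary~\ref{cor:lin_conv}) shows exactly as you do that the repeated-minimization map \eqref{eqn:rep:min} is a $\rho$-contraction by combining first-order optimality, monotonicity of $\partial r$, $\alpha$-strong convexity, and the gradient-deviation bound \eqref{eqn:grad_const2}, then invokes Banach's fixed-point theorem. Part (b) in the cited source is likewise a Kakutani argument on the best-response correspondence over the compact domain $\dom r$, and the integrability subtlety you flag in the closed-graph step is genuine but handled exactly as you indicate through the finite-first-moment/$W_1$ framework.
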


In light of Proposition~\ref{prop:exist_equilib}, we make the blanket assumption throughout:
\begin{quote}
\emph{The family of distributions $\cD(\cdot)$ admits an equilibrium point, denoted by $\bar x$.}
\end{quote}
The final two ingredients  are assumptions on strong convexity and smoothness: we will use three variants of the former and two variants of the latter depending on context.

\begin{assumption}[Strong convexity]\label{ass:strong_conv_perm}
The regularizer $r$ is convex and there exists $\alpha>0$ satisfying one of the following three properties:
\begin{enumerate}[label=(\alph*)]
\item\label{it:str_conv1}  $f_{\bar x}$ is $\alpha$-strongly convex,
\item\label{it:str_conv_uniform}  $f_x$ is $\alpha$-strongly convex for all $x\in\R^d$, 
\item\label{it:str_conv_loss} the loss function $\ell(\cdot,z)$ is $\alpha$-strongly convex for all $z\in Z$.
\end{enumerate}
In this case, define the ratio $\rho:=\frac{\gamma\beta}{\alpha}$.
\end{assumption}

Clearly, the implications $\ref{it:str_conv_loss}\Rightarrow \ref{it:str_conv_uniform}\Rightarrow \ref{it:str_conv1}$ hold in Assumption~\ref{ass:strong_conv_perm}. We will see that the constant  $\rho=\frac{\gamma\beta}{\alpha}$ sharply characterizes the regime of convergence of basic stochastic algorithms with state-dependent sampling.

\begin{assumption}[Smoothness]\label{ass:smoothness}
There exists $L>0$ satisfying one of the following:
\begin{enumerate}[label=(\alph*)]
	\item\label{assump:smooth_center} $\nabla f_{\bar x}$ is $L$-Lipschitz continuous,
	\item\label{assump:smooth_center_2} $\nabla f_x$ is $L$-Lipschitz continuous for all $x\in \R^d$. 
\end{enumerate}
If in addition either of Assumptions~\ref{ass:strong_conv_perm}\ref{it:str_conv1}-\ref{it:str_conv_loss} hold, define the condition number $\kappa:=\frac{L}{\alpha}$.
\end{assumption}

\subsection{The interesting parameter regime $\rho<1$}
The proof of Proposition~\ref{prop:exist_equilib}\ref{i1:exist_equilib}, presented in \cite[Theorem 3.5]{perdomo2020performative}, is particularly instructive algorithmically. Namely, observe that repeated minimization \eqref{eqn:rep:min} is a fixed point iteration of the map $S(x):=\argmin_y \{f_x(y)+r(y)\}$. The authors of \cite{perdomo2020performative} show that $S$ is Lipschitz continuous with parameter $\rho$. Consequently,  in the regime $\rho<1$, the map $S$ is a contraction and has a fixed point, which by definition is at equilibrium relative to $\cD(\cdot)$. Conversely, they show in the same paper that repeated minimization can easily diverge if $\rho\geq 1$. In this sense, the parameter regime  $\rho<1$ is the natural setting for analyzing algorithms under state-dependent distributions. In this work,
we aim to show that in the setting $\rho<1$, typical stochastic algorithms (Table~\ref{table:methods_intro}) implicitly solve the static problem $\mathtt{St}(\cD(\bar x))$, where $\bar x$ is an equilibrium point. 
We end the section with an illuminating informal argument supporting this claim for the stochastic gradient method.

Suppose that Assumptions~\ref{assum:smoothness}, \ref{assum:perm_pred}, \ref{ass:strong_conv_perm}\ref{it:str_conv1}, \ref{ass:smoothness}\ref{assump:smooth_center} hold.
In each iteration,  the stochastic gradient method draws a sample $z_t\sim\cD(x_t)$ and uses $\nabla \ell(x_t,z_t)$ to advance. Clearly $\nabla \ell(x_t,z_t)$ is an unbiased estimator of $\nabla f_{x_t}(x_t)$ but is biased with respect to the true gradient $\nabla f_{\bar x}(x_t)$. The 
estimate \eqref{eqn:grad_const2} directly bounds the bias $\|\nabla f_{x}(x)-\nabla f_{\bar x}(x)\|$, which combined with strong convexity,
yields the relative error guarantee:
\begin{equation}\label{eqn:grad_rel_accur}
\|\nabla f_{x}(x)-\nabla f_{\bar x}(x)\|\leq \rho \cdot\|\nabla f_{\bar x}(x)\|.
\end{equation}
A simple consequence is that in the regime $\rho<1$, the two vectors $\nabla f_{x}(x)$ and $\nabla f_{\bar x}(x)$ are well-aligned in the sense that they span an angle with cosine 
 $\sqrt{1-\rho^2}$; see Appendix~\ref{sec:angle_cond} for a quick justification. Therefore, in this parameter regime, numerical methods that use unbiased estimators of $\nabla f_{ x}(x)$ are effectively solving the static problem $\mathtt{St}(\cD(\bar x))$ using biased gradients. 
A simple numerical example will illustrate this viewpoint.

\begin{example}[Illustration]\label{ex:llustr_dumb}
	{\rm
We describe now a synthetic two-dimensional example of mean estimation of a moving Gaussian. Specifically, fix a parameter $\rho\geq 0$ and consider the problem \eqref{eqn:hard_performativ_predict} with losses $\ell(x,z)=\tfrac{1}{2}\|x-z\|^2$, no regularizer $r=0$, and the Gaussian distribution $\cD(x_1,x_2)=N(\rho(x_2,x_1),I)$. A quick computation yields the expression
$$\nabla f_y(x)=x-\ee_{z\sim\cD(y)}(z)=\begin{bmatrix} x_1-\rho y_2\\ x_2-\rho y_1
\end{bmatrix}.$$
It is straightforward to see that  the origin $\bar x=\{0\}$ is the unique equilibrium point of $\mathcal{D}(\cdot)$, provided $\rho^2\neq 1$. A quick computation shows the equalities $\alpha=\beta=1$ and $\gamma=\rho$.
According to the estimate \eqref{eqn:grad_rel_accur}, the vector fields $x\mapsto-\nabla f_{\bar x}(x)$ and  $x\mapsto -\nabla f_x(x)$  span an acute angle pointwise for any $\rho\in(0,1)$. Figure~\ref{fig:allignment} illustrates that this is indeed the case for $\rho=\{0.25,0.5, 0,99\}$.
Moreover with these  parameters, the integral curves of the two vector fields converge to the origin. When $\rho=1.25$, the two vector fields span an obtuse angle in some regions and integral curves of $x\mapsto -\nabla f_x(x)$ may even diverge.

\begin{figure}[h!]
	\begin{center}
		\begin{subfigure}{.35\textwidth}
			\centering
			\includegraphics[width=\linewidth]{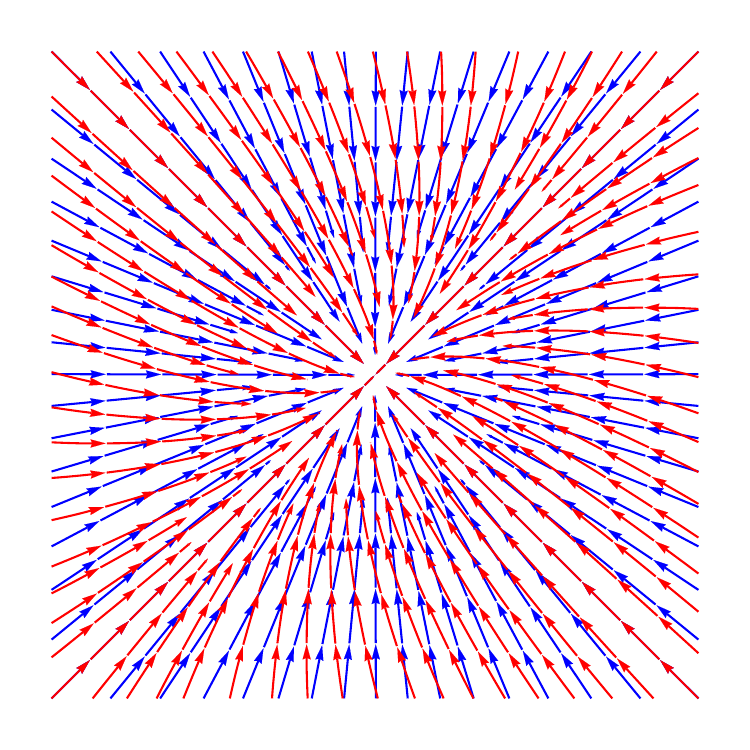}
			\caption{$\rho=0.25$}
		\end{subfigure}\quad
		\begin{subfigure}{.35\textwidth}
			\centering
			\includegraphics[width=\linewidth]{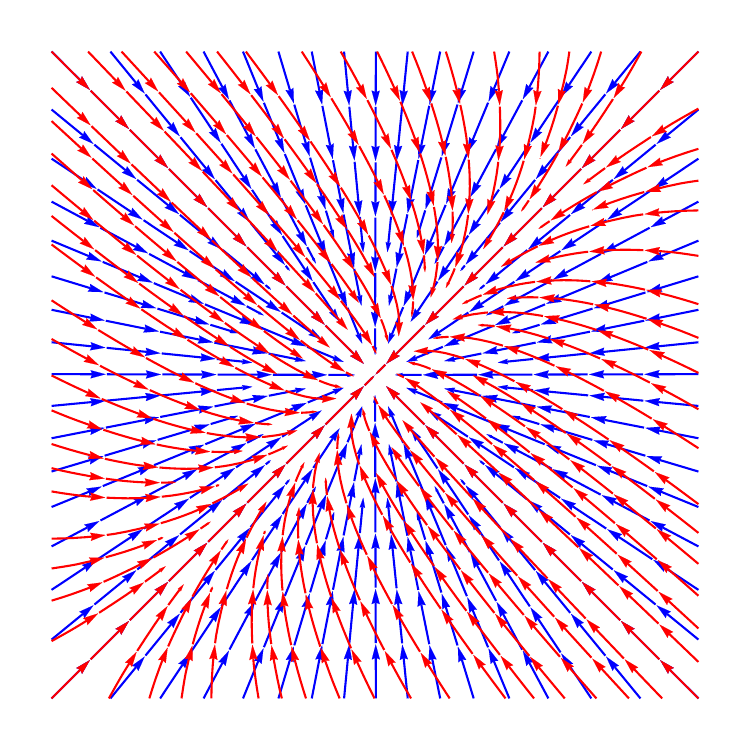}
			\caption{$\rho=0.5$}
		\end{subfigure}
		\begin{subfigure}{.35\textwidth}
			\centering
			\includegraphics[width=\linewidth]{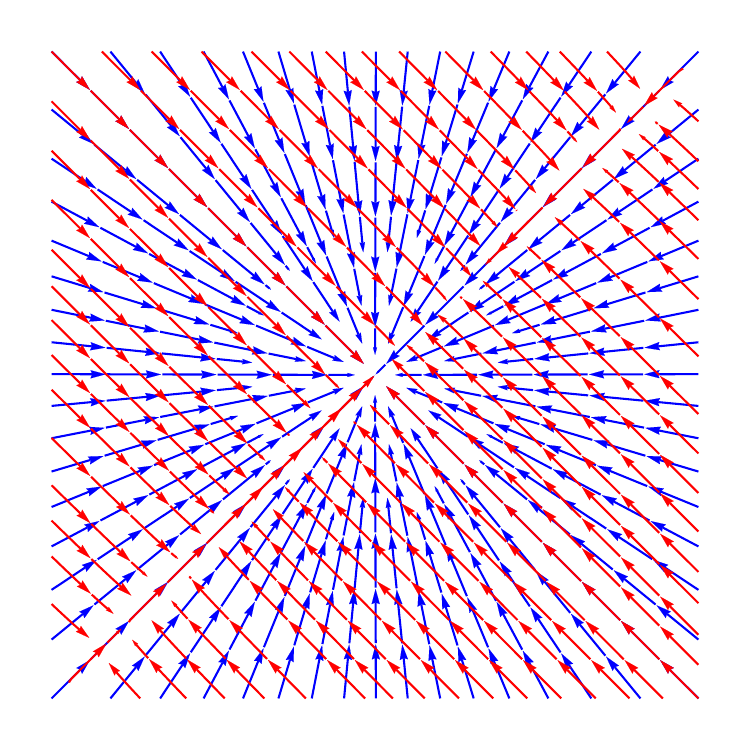}
			\caption{$\rho=0.99$}
		\end{subfigure}\quad
		\begin{subfigure}{.35\textwidth}
			\centering
			\includegraphics[width=\linewidth]{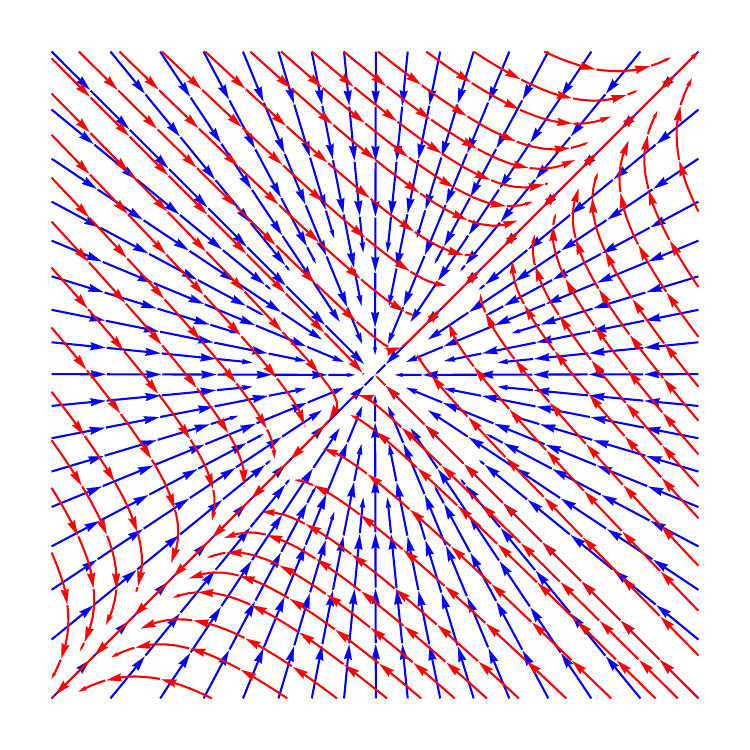}
			\caption{$\rho=1.25$}
		\end{subfigure}
	\end{center}
	\caption{Alignments of vector fields  $x\mapsto -\nabla f_x(x)$ (red) and $x\mapsto -\nabla f_{\bar x}(x)$ (blue). Each picture corresponds to a different choice of $\rho\in \{0.25,0.5,0.99,1.25\}$.
	}
	\label{fig:allignment}
\end{figure}
}
\end{example}

 The  intuition highlighted in Example~\ref{ex:llustr_dumb} can be made precise in the setting $r=0$. 
When regularization is present, the estimate \eqref{eqn:grad_rel_accur} is not useful because the right-hand side might be uniformly bounded away from zero. Instead, our arguments in Section~\ref{sec:biased_oracle}  will use~\eqref{eqn:grad_const2} directly.

\section{Outline of the main results}\label{sec:outline_main_results}
This section outlines the main results of this work. Throughout, the reader should keep in mind the running theme of the paper: a variety of stochastic algorithms under state-dependent distributions are implicitly solving the static problem $\texttt{St}(\cD(\bar x))$, where $\bar x$ is the equilibrium point. 
We impose Assumptions~\ref{assum:smoothness}, \ref{assum:perm_pred}, \ref{ass:strong_conv_perm}\ref{it:str_conv_loss}, \ref{ass:smoothness}\ref{assump:smooth_center_2} throughout the section. 
Define the objective function of the static problems $\texttt{St}(\cD(x))$ as
$$\min_{y}~ \varphi_x(y):=f_{x}(y)+r(y).$$
To shorten the notation, we drop the subscript $\bar x$ from $\varphi_{\bar x}$ and set $\varphi:=\varphi_{\bar x}$.  To better conceptualize the theoretical guarantees, we augment the discussion with numerical illustrations on Example~\ref{ex:llustr_dumb} and on a strategic classification  problem  explored in \cite{perdomo2020performative,mendler2020stochastic}. The implementation details for strategic classification appear in Appendix~\ref{sec:strat_class}.

\subsection{Calm and contractive algorithms (Section~\ref{sec:prox_point_prox_grad})}
We begin the algorithmic development by analyzing a number of conceptual algorithms under state-dependent distributions, the most important being repeated minimization \eqref{eqn:rep:min}. More generally, the classical proximal point and gradient methods extend to the state-dependent setting as follows:
\begin{center}
	\begin{tabular}{ l l} 
		{\bf (Proximal point)} &\qquad $\displaystyle x_{t+1}=\argmin_x\,\Bigl\{ f_{x_{t}}(x)+r(x)+\frac{1}{2\eta}\|x-x_{t}\|^2\Bigr\}$,  \\[2ex]
		{\bf (Proximal gradient)} & \qquad $\displaystyle x_{t+1}=\prox_{\eta r}(x_{t}-\eta\nabla f_{x_{t}}(x_{t}))$,\\[1ex]
	\end{tabular}
\end{center}
where $\eta>0$ is a user-specified step size.
Thus in each iteration $t$, the two algorithms simply take a proximal point step and a proximal gradient step, respectively, on the static problem $\texttt{St}(\cD(x_t))$. The proximal point method in the extreme case $\eta=\infty$ coincides with repeated minimization \eqref{eqn:rep:min}.
Setting notation, let us denote either of these updates as $x_{t+1}=S_{x_t}(x_t)$; more generally, the symbol $S_y(x)$ will denote the update of a point $x$ by an algorithm acting on the static problem $\mathtt{St}(\cD(y))$.

The paper \cite{perdomo2020performative} showed that when $r$ is the indicator function of a closed convex set and $\rho<1$, repeated minimization and the projected gradient method converge linearly to the equilibrium point $\bar x$. Section~\ref{sec:prox_point_prox_grad} provides an alternative and transparent explanation  based on stability to distributional shifts.
Namely, linear convergence 
is a direct consequence of the two independent phenomena:
\begin{enumerate}
	\item {\bf (Calmness to distribution)} The  updates $S_y(x)$ are {\em $\tau$-calm}  relative  to $\cD(\bar x)$, meaning there exists $\tau>0$ satisfying $\displaystyle\sup_x\|S_{y}(x)-S_{\bar x}(x)\|\leq \tau\cdot W_1(\cD(y),\cD(\bar x))$ for all $y\in\R^d$.
	\item {\bf (Contraction at equilibrium)} The updates $S_{\bar x}(\cdot)$ are {\em $q$-contractive} on the static problem $ \mathtt{St}(\cD(\bar x))$, meaning 
 $\|S_{\bar x}(x)-S_{\bar x}(y)\|\leq q\cdot\|x-y\|$ for all $x,y\in\R^d$.  
\end{enumerate}

The first property asserts control on how the update $S_{y}(x)$ varies with respect to the distribution $\cD(y)$, 
while the contraction property asserts that the update is contractive when applied to the static problem induced by $\cD(\bar x)$.
It is elementary to see that these two conditions imply that the update 
$x\mapsto S_x(x)$ contracts towards $\bar x$ with ratio $q+\gamma\tau$. By computing the calmness and contraction parameters for different algorithms (in terms of $\rho>0$), we obtain the following theorem.



\begin{theorem}[Informal]
Repeated minimization and proximal point methods converge linearly to $\bar x$ in the regime $\rho<1$, while the proximal gradient method converges linearly to $\bar x$ in the regime $\rho<\frac{1}{2}$.
(A more careful argument in Section~\ref{sec:biased_oracle} shows that  the proximal gradient method converges linearly in the optimal regime  $\rho<1$.)
\end{theorem}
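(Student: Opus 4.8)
The plan is to verify, for each method, the two structural properties highlighted before the statement---$\tau$-calmness of $S_y(\cdot)$ relative to $\cD(\bar x)$ and $q$-contractivity of $S_{\bar x}(\cdot)$---and then to invoke their elementary combination. Concretely, since the equilibrium point is a fixed point $\bar x = S_{\bar x}(\bar x)$, the iteration $x_{t+1}=S_{x_t}(x_t)$ obeys
\[
\|x_{t+1}-\bar x\| \le \|S_{x_t}(x_t)-S_{\bar x}(x_t)\| + \|S_{\bar x}(x_t)-S_{\bar x}(\bar x)\| \le \tau\cdot W_1(\cD(x_t),\cD(\bar x)) + q\cdot\|x_t-\bar x\|,
\]
and Assumption~\ref{assum:perm_pred} turns the first term into $\gamma\tau\|x_t-\bar x\|$. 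Hence $x\mapsto S_x(x)$ contracts towards $\bar x$ with ratio $q+\gamma\tau$, and linear convergence follows whenever $q+\gamma\tau<1$. The whole proof thus reduces to computing the pair $(q,\tau)$ for each update and reading off the regime of $\rho=\gamma\beta/\alpha$ in which $q+\gamma\tau<1$.

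The calmness constants all come from a single mechanism: comparing the minimizers of the two static subproblems built from $\cD(y)$ and $\cD(\bar x)$. Writing $u=S_y(x)$ and $v=S_{\bar x}(x)$, I would record the first-order optimality conditions for each, subtract them, and use monotonicity of $\partial r$ together with the $\alpha$-strong convexity of $f_{\bar x}$ to obtain $(\alpha+\tfrac1\eta)\|u-v\|\le \|\nabla f_{\bar x}(u)-\nabla f_y(u)\|$ for the proximal point update (and the analogue with the $1/\eta$ term absent for repeated minimization). Lemma~\ref{lem:grads_close} bounds the right-hand side by $\beta\cdot W_1(\cD(y),\cD(\bar x))$, yielding $\tau=\tfrac{\eta\beta}{1+\eta\alpha}$ for proximal point and $\tau=\tfrac\beta\alpha$ for repeated minimization. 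For the contraction factors, repeated minimization is constant in its second argument, so $q=0$ and the ratio is exactly $\gamma\cdot\tfrac\beta\alpha=\rho$; the proximal point map on the $\alpha$-strongly convex $\varphi$ is the standard contraction with $q=\tfrac{1}{1+\eta\alpha}$, so the ratio is $\tfrac{1+\eta\gamma\beta}{1+\eta\alpha}$, which falls below $1$ precisely when $\gamma\beta<\alpha$, i.e.\ $\rho<1$.

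For the proximal gradient method the calmness is even simpler: since $\prox_{\eta r}$ is nonexpansive, $\|S_y(x)-S_{\bar x}(x)\|\le \eta\|\nabla f_y(x)-\nabla f_{\bar x}(x)\|\le \eta\beta\cdot W_1(\cD(y),\cD(\bar x))$, giving $\tau=\eta\beta$; the contraction factor $q$ of the gradient step on the $\alpha$-strongly convex, $L$-smooth $f_{\bar x}$ is obtained by expanding $\|(x-\bar x)-\eta(\nabla f_{\bar x}(x)-\nabla f_{\bar x}(\bar x))\|^2$ and invoking strong convexity and smoothness separately. The main obstacle is precisely here: the additive split $q+\gamma\tau$ treats the optimization contraction and the distributional bias as independent contributions, and with the natural squared-distance contraction estimate at a fixed step such as $\eta=1/L$ the requirement $q+\gamma\tau<1$ only closes for $\rho<\tfrac12$. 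I would emphasize that this factor-of-two loss is an artifact of the coarse calm/contract decomposition rather than of the method itself; recovering the sharp threshold $\rho<1$ requires the tighter per-iteration (biased-oracle) argument of Section~\ref{sec:biased_oracle}, which does not route through $q+\gamma\tau$.
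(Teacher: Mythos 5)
Your proposal is correct and follows essentially the same route as the paper: the triangle-inequality combination of calmness and contraction is exactly Lemma~\ref{lem:sta_contr}, the calmness constants are derived from the same first-order-optimality/strong-monotonicity argument plus Lemma~\ref{lem:grads_close} as in Theorem~\ref{thm:stab_minppprox} (the paper merely packages the prox-point and prox-gradient cases as instances of the full-minimization case with modified losses), and the resulting ratios and regimes match Corollary~\ref{cor:lin_conv}. Your closing remark that the $\rho<\tfrac12$ threshold for prox-gradient is an artifact of the additive $q+\gamma\tau$ decomposition, remedied by the biased-oracle analysis of Section~\ref{sec:biased_oracle}, is precisely the paper's own commentary.
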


In theory, repeated minimization and the proximal-point method work within the same optimal parameter regime $\rho\in (0,1)$. We have seen experimentally, however, that the proximal point method can succeed in a much wider parameter regime whereas repeated minimization can exhibit wild oscillatory behavior. As an example, Figure~\ref{fig:prox_point} depicts the performance of both algorithms with different proximal parameters on a problem of strategic classification. 
 It is an interesting question to identify the theoretic justification for this behavior.

\begin{figure}[h!]
	\begin{center}
			\centering
			\includegraphics[scale=0.5]{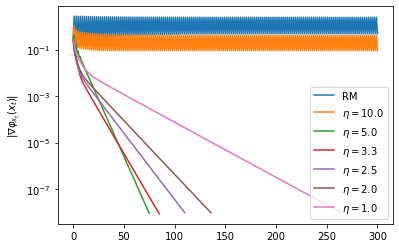}
			\caption{We implement repeated minimization and the proximal point method on a problem of strategic classification detailed in Appendix~\ref{sec:strat_class} (the problem parameters are $n=200$, $\alpha=10/n$, $\gamma=5$.) ``RM'' refers to repeated minimization while the rest of the curves refer to the proximal-point method with parameter $\eta$. Repeated minimization and the prox-point method with $\eta=10$ exhibit wild oscillatory behavior. The prox-point method with parameters $\eta\in\{1,2,2.5,3.3,5.0\}$ succeeds at finding the equilibrium point, with a rate that degrades with decreasing $\eta>0$.}	\label{fig:prox_point}

		\end{center}
\end{figure}

The three algorithms described so far (repeated minimization, prox-point, gradient descent) are largely conceptual since they require access to the entire data set in every iteration.
Implementable algorithms under state-dependent distributions fall into two categories---those that update the sampling distribution in every step and those that run multiple iterations of stochastic methods on the current static problem before updating the sampling distribution. The former algorithms are called ``greedy'' and the latter are called ``lazy'' in \cite{mendler2020stochastic}. As remarked in \cite{mendler2020stochastic}, lazy algorithms can be interpreted as performing inexact repeated minimization. Such algorithms can be advantageous in applications where  the action of updating the distribution is much costlier than sampling from the currently available distribution.
For example, this is the case in applications where it takes significant amount of time for the population to adjust to a newly unveiled learning rule.

We analyze both types of algorithms in this work. Most of the paper, however, focuses on greedy algorithms, while inexact repeated minimization is deferred to Section~\ref{sec:inexactRM}.
Though the two types of methods generate entirely different trajectories,  the convergence arguments we present are slight modifications of each other---thereby underscoring the utility of the developed techniques.

\subsection{Reduction to online convex optimization (Section~\ref{sec:red_online})}
We begin by showing that virtually any algorithm developed for ``online convex optimization'' can be used to find the equilibrium point $\bar x$ in the stochastic setting under state-dependent distributions.
The framework of online convex optimization can be interpreted as a repeated game (e.g., \cite{zinkevich2003,shalevshwartz2012online,hazan2016intro}).
At each round~$t$, the player chooses a point $x_t\in \dom r$, then a convex cost function $\ell_t$ is revealed and the player incurs the cost $\ell_t(x_t)$.
The goal of the player is to minimize the regret
\[
R_t := \sum_{i=1}^t \bigl(\ell_i(x_i)+r(x_i)\bigr) - \min_x \sum_{i=1}^t \bigl(\ell_i(x)+r(x)\bigr),
\]
which is simply the difference between the total regularized cost  incurred up to round~$t$ and the minimum regularized cost in hindsight. Typical algorithms for online convex optimization are the  proximal gradient \cite{DuchiSinger2009}, dual averaging  \cite{xiao2010jmlr}, and variants of FTRL (Follow-The-Regularized-Leader) methods \cite{McMahan2017}.
Under various Lipschitz and strong convexity assumptions, the regret $R_t$  scales as $\mathcal{O}(\log{t})$.
We establish the following reduction by leveraging the gap deviation inequality \eqref{eqn:gap_comp}.

\begin{theorem}[Informal]\label{thm:outline_online}
Suppose that we are in the regime $\rho<\frac{1}{2}$. Then any online algorithm with regret $R_t=\mathcal{O}(\log t)$  
can be used under state-dependent distributions by declaring $\ell_i(x_i)=\ell(x_i,z_i)$ with $z_i\sim \cD(x_i)$ in each iteration. Then the average iterate
$\hat{x}_t:=\frac{1}{t}\sum_{i=1}^t x_i$ satisfies
\[
\EE[\varphi(\hat{x}_t) - \varphi(\bar{x})] ~\leq~ \mathcal{O}\left(\frac{\log t}{(1-2\rho)\,t}\right), \qquad \textrm{for all } t\geq 0.
\]
\end{theorem}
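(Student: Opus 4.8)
The plan is to combine the standard online-to-batch conversion with the gap deviation estimate \eqref{eqn:gap_comp} and the quadratic growth afforded by strong convexity. Write $\EE_{i-1}[\cdot]$ for the conditional expectation given $z_1,\dots,z_{i-1}$; since the player's iterate $x_i$ is determined by this past randomness while $z_i\sim\cD(x_i)$ is drawn afresh, the tower property yields $\EE_{i-1}[\ell(x_i,z_i)]=f_{x_i}(x_i)$ and $\EE_{i-1}[\ell(\bar x,z_i)]=f_{x_i}(\bar x)$. First I would instantiate the defining regret inequality at the comparator $x=\bar x$, obtaining $\sum_{i=1}^t\big(\ell(x_i,z_i)+r(x_i)-\ell(\bar x,z_i)-r(\bar x)\big)\le R_t$, and take total expectation. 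By the observation above, the left-hand side becomes $\sum_{i=1}^t\EE\big[f_{x_i}(x_i)-f_{x_i}(\bar x)+r(x_i)-r(\bar x)\big]$.

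The core step is to replace the ``wrong'' static objective $f_{x_i}$ by the equilibrium objective $f_{\bar x}$. Applying \eqref{eqn:gap_comp} with the four points $x=u=x_i$ and $y=v=\bar x$ gives
$$\big|\,(f_{x_i}(x_i)-f_{x_i}(\bar x))-(f_{\bar x}(x_i)-f_{\bar x}(\bar x))\,\big|\le \gamma\beta\,\|x_i-\bar x\|^2,$$
so that $f_{x_i}(x_i)-f_{x_i}(\bar x)\ge \varphi(x_i)-\varphi(\bar x)-\gamma\beta\|x_i-\bar x\|^2$ after adding $r(x_i)-r(\bar x)$ and recalling $\varphi=f_{\bar x}+r$. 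Substituting back produces
$$\sum_{i=1}^t\EE\big[\varphi(x_i)-\varphi(\bar x)\big]-\gamma\beta\sum_{i=1}^t\EE\,\|x_i-\bar x\|^2\le \EE[R_t].$$

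To absorb the quadratic error term I would invoke strong convexity. Under the blanket assumptions of this section, $f_x$ is $\alpha$-strongly convex for all $x$, in particular $f_{\bar x}$, so $\varphi=f_{\bar x}+r$ is $\alpha$-strongly convex, and $\bar x$ is its minimizer because $\bar x$ is at equilibrium. Quadratic growth from the minimizer then gives $\|x_i-\bar x\|^2\le \tfrac{2}{\alpha}(\varphi(x_i)-\varphi(\bar x))$, whence $\gamma\beta\|x_i-\bar x\|^2\le 2\rho\,(\varphi(x_i)-\varphi(\bar x))$ with $\rho=\gamma\beta/\alpha$. Plugging this in and rearranging collapses the two sums into $(1-2\rho)\sum_{i=1}^t\EE[\varphi(x_i)-\varphi(\bar x)]\le \EE[R_t]$. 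Finally, convexity of $\varphi$ and Jensen's inequality give $\varphi(\hat x_t)\le \tfrac1t\sum_{i=1}^t\varphi(x_i)$, so dividing by $(1-2\rho)t>0$ and using $R_t=\mathcal O(\log t)$ yields the claimed bound.

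The main obstacle—and the conceptual heart of the argument—is the second step: the regret guarantee naturally controls the \emph{drifting} sequence of static problems $\mathtt{St}(\cD(x_i))$, not the single target problem $\mathtt{St}(\cD(\bar x))$, and the gap deviation inequality \eqref{eqn:gap_comp} is exactly what converts between them at the cost of a quadratic error $\gamma\beta\|x_i-\bar x\|^2$. That the error is quadratic, rather than linear in $\|x_i-\bar x\|$, is essential: it is of the same order as the strong-convexity lower bound $\tfrac{\alpha}{2}\|x_i-\bar x\|^2$, so the two can be traded against each other, and this is precisely where the threshold $\rho<\tfrac12$ emerges as the condition keeping the coefficient $1-2\rho$ positive.
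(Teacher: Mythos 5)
Your proposal is correct and follows essentially the same route as the paper's proof of Theorem~\ref{thm:reduction}: instantiate the regret bound at the comparator $\bar x$, convert $\varphi_{x_i}(x_i)-\varphi_{x_i}(\bar x)$ into $\varphi(x_i)-\varphi(\bar x)$ via the gap deviation inequality~\eqref{eqn:gap_comp} at the cost of $\gamma\beta\|x_i-\bar x\|^2$, absorb that term using the quadratic growth $\|x_i-\bar x\|^2\le\frac{2}{\alpha}(\varphi(x_i)-\varphi(\bar x))$ to produce the factor $1-2\rho$, and finish with Jensen. The only difference is that you spell out the conditional-expectation (tower property) step that the paper leaves implicit, which is a welcome clarification rather than a change of argument.
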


In particular, known regret bounds for the proximal gradient, regularized dual averaging, and FTRL algorithms directly yield a converge rate $\mathcal{O}(\log(t)/(1-2\rho)t)$ under state-dependent sampling.  
Though the theorem is attractive in its generality,  it is far from satisfactory. Indeed, the framework of online convex optimization is far more general than stochastic optimization, since the loss function $\ell_t$ at each iteration may not follow any probability distribution and can even be adversarial.
As a consequence, online convex optimization requires stringent assumptions in order to have meaningful regret analysis, most notably that the encountered gradients of the loss functions and the domain $\dom r$ be bounded. Moreover smoothness of the loss function does not play a significant role. 

We will see that much finer convergence guarantees hold for stochastic optimization under state-dependent distributions. In particular, bounds on the gradient of the loss will be replaced by the finite variance assumption.

 \begin{assumption}[Finite variance]\label{assum:fin_var}
 	There is a constant $\sigma>0$ satisfying 
 	$$\EE_{z\sim \cD(x)}\|\nabla \ell(x,z)-\nabla f_x(x)\|^2\leq \sigma^2\qquad \forall x\in\dom r.$$
 \end{assumption}

\subsection{Stochastic gradient methods (Section~\ref{sec:biased_oracle})}
The simplest and most widely used stochastic algorithm is the stochastic gradient method. We begin by investigating its extension under decision-dependent distributions:
\begin{equation}\label{eqn:outline_prox_grad}
\mathrm{SG:}\qquad
\left\{\quad
\begin{aligned}
	&\textrm{Sample~} z_t\sim \cD(x_t)\textrm{ and set } g_t=\nabla \ell(x_t,z_t),\\
	& \textrm{Set~} x_{t+1}=\prox_{\eta_t r}(x_t-\eta_t g_t).
	\end{aligned}\quad\right\}
\end{equation}
Rather than reducing the method SG to online convex optimization, as mentioned previously, we will analyze it directly.
Observe that contrary to the static setting, the vector $g_t$ is a biased estimator for the true gradient $\nabla f_{\bar x}(x_t)$, where the bias is proportional to  $\|x_t-\bar x\|$; recall the gradient deviation inequality \eqref{eqn:grad_const2}. We will prove the following.

\begin{lemma}[Key recursion]\label{intor:key_recurs}
	Suppose that the step-size sequence satisfies $\eta_t<\frac{1}{2L}$.  Then the iterates $\{x_t\}$ generated by the SG Algorithm in~\eqref{eqn:outline_prox_grad} satisfy
	\begin{align*}
	2\eta_t\EE[\varphi(x_{t+1})-\varphi(\bar x)]&\leq \left(1-\alpha(1-2\rho)\eta_t+2 \gamma^2\beta^2\eta_t^2\right)\EE\|x_t-\bar x\|^2  -  \EE\|x_{t+1}-\bar x\|^2 +2\eta_t^2 \sigma^2.
	\end{align*}
\end{lemma}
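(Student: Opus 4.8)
The plan is to run the one-step analysis of the proximal-gradient method on the static objective $\varphi=f_{\bar x}+r$, treating $g_t$ as an estimator of the \emph{true} gradient $\nabla f_{\bar x}(x_t)$ and carefully accounting for the error $e_t:=g_t-\nabla f_{\bar x}(x_t)$. The structural observation driving everything is that $e_t$ splits into a zero-mean noise term and a deterministic bias term:
\[
e_t=\xi_t+b_t,\qquad \xi_t:=g_t-\nabla f_{x_t}(x_t),\qquad b_t:=\nabla f_{x_t}(x_t)-\nabla f_{\bar x}(x_t).
\]
Letting $\mathcal{F}_t$ denote the $\sigma$-algebra generated by $z_1,\dots,z_{t-1}$, the point $x_t$ and hence $b_t$ are $\mathcal{F}_t$-measurable, and since $z_t\sim\cD(x_t)$ and $\nabla f_{x_t}(x_t)=\ee_{z\sim\cD(x_t)}\nabla\ell(x_t,z)$, we have $\EE[\xi_t\mid\mathcal{F}_t]=0$ with $\EE[\|\xi_t\|^2\mid\mathcal{F}_t]\le\sigma^2$ by the finite-variance Assumption~\ref{assum:fin_var}, while the gradient-deviation estimate \eqref{eqn:grad_const2} gives $\|b_t\|\le\gamma\beta\|x_t-\bar x\|$.

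First I would record the three-point inequality from the $\tfrac{1}{\eta_t}$-strong convexity of the prox subproblem $x_{t+1}=\argmin_x\{\langle g_t,x\rangle+r(x)+\tfrac{1}{2\eta_t}\|x-x_t\|^2\}$, evaluated at $\bar x$:
\[
\langle g_t,x_{t+1}-\bar x\rangle+r(x_{t+1})-r(\bar x)\le\tfrac{1}{2\eta_t}\|x_t-\bar x\|^2-\tfrac{1}{2\eta_t}\|x_{t+1}-x_t\|^2-\tfrac{1}{2\eta_t}\|x_{t+1}-\bar x\|^2.
\]
Next I would upper-bound $f_{\bar x}(x_{t+1})-f_{\bar x}(\bar x)$ by combining the descent inequality from $L$-smoothness of $\nabla f_{\bar x}$ with the $\alpha$-strong-convexity lower bound of $f_{\bar x}$, both anchored at $x_t$, producing $\langle\nabla f_{\bar x}(x_t),x_{t+1}-\bar x\rangle+\tfrac{L}{2}\|x_{t+1}-x_t\|^2-\tfrac{\alpha}{2}\|x_t-\bar x\|^2$. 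Writing $\nabla f_{\bar x}(x_t)=g_t-e_t$, adding $r(x_{t+1})-r(\bar x)$, using the three-point inequality to cancel the $\langle g_t,\cdot\rangle$ term, and multiplying by $2\eta_t$, I would arrive at
\[
2\eta_t[\varphi(x_{t+1})-\varphi(\bar x)]\le(1-\alpha\eta_t)\|x_t-\bar x\|^2-\|x_{t+1}-\bar x\|^2-(1-\eta_t L)\|x_{t+1}-x_t\|^2-2\eta_t\langle e_t,x_{t+1}-\bar x\rangle,
\]
where the step restriction $\eta_t<\tfrac{1}{2L}$ ensures $1-\eta_t L>\tfrac12>0$.

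The heart of the argument is the error term, which I would split as $-2\eta_t\langle e_t,x_{t+1}-x_t\rangle-2\eta_t\langle e_t,x_t-\bar x\rangle$. In the second piece, the noise part $-2\eta_t\langle\xi_t,x_t-\bar x\rangle$ has zero conditional mean (as $x_t-\bar x$ is $\mathcal{F}_t$-measurable), while Cauchy--Schwarz with $\|b_t\|\le\gamma\beta\|x_t-\bar x\|$ gives $-2\eta_t\langle b_t,x_t-\bar x\rangle\le 2\gamma\beta\eta_t\|x_t-\bar x\|^2$; since $\gamma\beta=\alpha\rho$, this upgrades the coefficient $1-\alpha\eta_t$ to $1-\alpha(1-2\rho)\eta_t$. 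For the first piece I would apply Young's inequality with parameter $1-\eta_t L$,
\[
-2\eta_t\langle e_t,x_{t+1}-x_t\rangle\le(1-\eta_t L)\|x_{t+1}-x_t\|^2+\tfrac{\eta_t^2}{1-\eta_t L}\|e_t\|^2,
\]
so that the displacement term cancels exactly and leaves $\tfrac{\eta_t^2}{1-\eta_t L}\|e_t\|^2$.

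Finally I would take conditional expectations and invoke the tower property. The main obstacle, and the step I would treat most carefully, is controlling $\EE\|e_t\|^2$ without incurring a spurious factor of two: because $b_t$ is $\mathcal{F}_t$-measurable and $\EE[\xi_t\mid\mathcal{F}_t]=0$, the cross term in $\|e_t\|^2=\|\xi_t\|^2+2\langle\xi_t,b_t\rangle+\|b_t\|^2$ vanishes conditionally, giving $\EE[\|e_t\|^2\mid\mathcal{F}_t]=\EE[\|\xi_t\|^2\mid\mathcal{F}_t]+\|b_t\|^2\le\sigma^2+\gamma^2\beta^2\|x_t-\bar x\|^2$. Together with $\tfrac{\eta_t^2}{1-\eta_t L}<2\eta_t^2$ (from $\eta_t<\tfrac{1}{2L}$), this contributes precisely $2\eta_t^2\sigma^2$ together with $2\gamma^2\beta^2\eta_t^2\|x_t-\bar x\|^2$. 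Collecting the $\|x_t-\bar x\|^2$ coefficients as $1-\alpha(1-2\rho)\eta_t+2\gamma^2\beta^2\eta_t^2$ and taking total expectation then yields the stated recursion.
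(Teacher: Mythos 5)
Your argument is correct and is essentially the paper's own proof (Lemma~\ref{eqn:lem_key_rec_sgd}, specialized via $\tfrac{1}{1-\eta_t L}\le 2$): the same three-point inequality, the same Young's inequality with parameter $1-\eta_t L$ chosen to cancel the displacement term, and the same orthogonal bias--variance decomposition of $\EE\|e_t\|^2$. Your inline Cauchy--Schwarz bound on $\langle b_t, x_t-\bar x\rangle$, which turns the strong-convexity modulus $\alpha$ into $\alpha(1-2\rho)$, is exactly what the paper packages as the approximate subgradient inequality (Lemma~\ref{lem:approx_subgrad_ineq}).
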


Observe that the contraction factor multiplying $\EE\|x_t-\bar x\|^2$ for small $\eta$ scales as $1-\alpha(1-2\rho)\eta$. It follows that in the regime $\rho<\frac{1}{2}$, one can drive the gap 
$\EE[\varphi(x_{t+1})-\varphi(\bar x)]$ to zero at a controlled rate, with an appropriate choice of $\eta_t$. Moreover, a quick argument shows that the parameter regime of convergence becomes larger if we focus on the rate at which the square distance $\|x_{t+1}-\bar x\|^2$ decays. Indeed, lower-bounding the left side in Lemma~\ref{intor:key_recurs} using strong convexity and rearranging yields the one-step progress guarantee
	\begin{align*}
	 \EE\|x_{t+1}-\bar x\|^2&\leq \left(1-\alpha(1-\rho)\eta_t+\tfrac{1}{2}\eta_t^2 \gamma^2\beta^2\right)\EE\|x_t-\bar x\|^2  +\eta_t^2 \sigma^2.
	\end{align*}
For small $\eta_t$, the contraction factor multiplying $\|x_t-\bar x\|^2$ roughly scales as $1-\alpha(1-\rho)\eta$. Therefore in the regime $\rho<1$, one can drive the square distance 
$\EE[\|x_{t+1}-\bar x\|^2]$ to zero at a controlled rate, with an appropriate choice of $\eta_t$.

 With Lemma~\ref{intor:key_recurs} at hand, obtaining formal guarantees with various choices of parameters $\eta_t$ is standard. The following theorem presents one such guarantee. The first part of the theorem when $r$ is an indicator function of a closed convex set was  proved in \cite{mendler2020stochastic} using a different argument.

\begin{theorem}[Informal]\label{thm:intro_stoch_gradetc}
In the regime $\rho< 1$, the proximal stochastic gradient method with appropriate parameters  $\eta_t$ will generate a point $x$ satisfying $\EE\|x- \bar x\|^2\leq \varepsilon$ using 
$$\mathcal{O}\left(\left(\left(\frac{\rho}{1-\rho}\right)^2+\frac{\kappa}{1-\rho}\right)\cdot\log\left(\frac{\|x_0-\bar x\|^2}{\varepsilon}\right)+\frac{\sigma^2}{(1-\rho)^2\alpha^2	\varepsilon}\right)\qquad\textrm{samples}.$$
Moreover, in the regime $\rho< \frac{1}{2}$, the method will generate $x$ satisfying $\EE[\varphi(x)- \varphi(\bar x)]\leq \varepsilon$ using 
$$\mathcal{O}\left(\left(\left(\frac{\rho}{1-2\rho}\right)^2+\frac{\kappa}{1-2\rho}\right)\cdot\log\left(\frac{\varphi(x_0)-\varphi(\bar x)}{\varepsilon}\right)+\frac{\sigma^2}{(1-2\rho)\alpha	\varepsilon}\right)\qquad\textrm{samples}.$$
\end{theorem}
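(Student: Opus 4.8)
The plan is to derive both sample-complexity bounds from the one-step recursions already in hand, reducing each to a canonical scalar recursion of the form $a_{t+1} \le (1 - \mu\eta_t + d\eta_t^2)a_t + \eta_t^2\sigma^2$ and then invoking a standard two-phase step-size analysis. Throughout I would use the identity $\gamma\beta = \rho\alpha$ (hence $\gamma^2\beta^2 = \rho^2\alpha^2$) to express every constant purely in terms of $\rho$, $\alpha$, $L$, so that the final rates come out in the advertised form.

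For the distance bound (regime $\rho<1$) I would start from the one-step progress guarantee $\mathbb{E}\|x_{t+1}-\bar x\|^2 \le (1 - \alpha(1-\rho)\eta_t + \tfrac{1}{2}\rho^2\alpha^2\eta_t^2)\mathbb{E}\|x_t-\bar x\|^2 + \eta_t^2\sigma^2$ displayed after Lemma~\ref{intor:key_recurs}. Writing $a_t = \mathbb{E}\|x_t-\bar x\|^2$, $\mu = \alpha(1-\rho)$, and $d = \tfrac{1}{2}\rho^2\alpha^2$, I would impose the cap $\eta_t \le \bar\eta := \min\{\tfrac{1}{2L}, \tfrac{\mu}{2d}\}$; since $\tfrac{\mu}{2d} = \tfrac{1-\rho}{\rho^2\alpha}$, this forces $d\eta_t^2 \le \tfrac{\mu}{2}\eta_t$ and yields the clean recursion $a_{t+1} \le (1 - \tfrac{\mu}{2}\eta_t)a_t + \eta_t^2\sigma^2$. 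A two-phase schedule then finishes the job: run the constant step-size $\eta_t\equiv\bar\eta$ until the deterministic term is driven below the target, which takes $\mathcal{O}(\tfrac{1}{\mu\bar\eta}\log(a_0/\varepsilon))$ steps, then switch to $\eta_t \sim \tfrac{4}{\mu t}$ to suppress the variance, contributing $\mathcal{O}(\tfrac{\sigma^2}{\mu^2\varepsilon})$ steps. Substituting $\tfrac{1}{\mu\bar\eta} = \max\{\tfrac{2\kappa}{1-\rho}, \tfrac{\rho^2}{(1-\rho)^2}\} \le \tfrac{2\kappa}{1-\rho} + (\tfrac{\rho}{1-\rho})^2$ and $\mu = \alpha(1-\rho)$ reproduces the stated complexity.

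For the function-gap bound (regime $\rho<\tfrac{1}{2}$) I would instead use the key recursion directly, since $\varphi = f_{\bar x} + r$ is nonsmooth (because of $r$) and the gap cannot be controlled by $\|x-\bar x\|^2$ alone. Here the relevant rate is $\mu' := \alpha(1-2\rho)$ and the quadratic coefficient is $2\rho^2\alpha^2$; imposing $\eta_t \le \min\{\tfrac{1}{2L}, \tfrac{1-2\rho}{4\rho^2\alpha}\}$ absorbs the quadratic term and leaves $2\eta_t\mathbb{E}[\varphi(x_{t+1})-\varphi(\bar x)] \le (1-\tfrac{\mu'}{2}\eta_t)a_t - a_{t+1} + 2\eta_t^2\sigma^2$. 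To extract an $\mathcal{O}(1/t)$ rate on the gap I would take decreasing step-sizes $\eta_t = \tfrac{c}{\mu'(t+t_0)}$, multiply by weights $w_t = t + t_0$, telescope, and invoke convexity of $\varphi$ (Jensen) to pass from the weighted average of the gaps to the gap at the averaged iterate $\hat x_t$. This gives $\mathbb{E}[\varphi(\hat x_t)-\varphi(\bar x)] = \mathcal{O}(\tfrac{\sigma^2}{\mu' t})$ after burn-in, so $T \gtrsim \tfrac{\sigma^2}{\alpha(1-2\rho)\varepsilon}$ plus a burn-in of order $((\tfrac{\rho}{1-2\rho})^2 + \tfrac{\kappa}{1-2\rho})\log(1/\varepsilon)$, matching the statement.

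The main obstacle is the gap case: the factor $(1-\tfrac{\mu'}{2}\eta_t)$ multiplying $a_t$ (rather than $1$) makes a naive telescoping sum cancel incorrectly, so the weights $w_t$ must be chosen to telescope the cross terms while keeping the accumulated variance $\sum w_t\eta_t^2\sigma^2$ of the right order. Matching the two step-size caps $\tfrac{1}{2L}$ and $\tfrac{1-2\rho}{4\rho^2\alpha}$ so that the burn-in comes out as the stated sum of $(\tfrac{\rho}{1-2\rho})^2$ and $\tfrac{\kappa}{1-2\rho}$, rather than a product, is the remaining bookkeeping. The distance case is comparatively routine once the scalar recursion is in hand.
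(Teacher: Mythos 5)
Your proposal is correct and starts from the same core ingredient as the paper, namely the key recursion of Lemma~\ref{eqn:lem_key_rec_sgd} (previewed as Lemma~\ref{intor:key_recurs}) together with the identification $\gamma\beta=\rho\alpha$; the step-size caps you impose to absorb the $\eta_t^2B^2$ term match the paper's choices $\eta\leq \frac{1}{B^2/\hat\alpha+L}$ and $\eta\leq\frac{1}{2(B^2/\hat\alpha+L)}$ up to constants, and they produce the same burn-in terms $(\rho/(1-\rho))^2+\kappa/(1-\rho)$ and $(\rho/(1-2\rho))^2+\kappa/(1-2\rho)$. Where you diverge is in the machinery that converts the one-step recursion into a sample complexity. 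The paper keeps $\eta$ constant within each stage and handles both criteria uniformly: the function-gap case is closed via the exponentially weighted running average $\hat x_t=(1-\tfrac{\hat\alpha\eta}{2})\hat x_{t-1}+\tfrac{\hat\alpha\eta}{2}x_t$ and the averaging lemma (Corollary~\ref{cor:const_param}), and the $\varepsilon$-dependence is then obtained by restarting with geometrically halved $\eta$ (Algorithm~\ref{alg:geo_decay}, Lemma~\ref{lem:geo_restart}, yielding Theorem~\ref{thm:rate_grad-desc} and Corollary~\ref{cor:main_eff_result}). You instead use a two-phase constant-then-$O(1/t)$ schedule with polynomially weighted telescoping and Jensen. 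Both routes are standard and give identical rates; the paper's restart scheme buys modularity (the same Appendix~B lemmas are reused verbatim for the model-based and accelerated methods), while your schedule avoids restarts at the cost of exactly the bookkeeping you flag -- choosing $w_t$ so that $w_t(1-\tfrac{\mu'\eta_t}{2})\leq w_{t-1}$ (any $\eta_t=c/(\mu'(t+t_0))$ with $c\geq 2$ works) and keeping $\sum_t w_t\eta_t^2\sigma^2$ of order $\sigma^2 t/\mu'^2$. One point to make explicit in a full write-up: with a pure $1/t$ decay from iteration one, the initial-condition term decays only polynomially, so the constant-step warm-start phase is genuinely needed to obtain the logarithmic dependence on $\varphi(x_0)-\varphi(\bar x)$; you do include it, so there is no gap.
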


In the static setting $\gamma=0$ (and hence $\rho=0$), Theorem~\ref{thm:intro_stoch_gradetc} recovers the classical guarantees for the proximal stochastic gradient method \cite{ghadimi2012optimal,ghadimi2013optimal}. 
Figure~\ref{fig:stoch_grad_synth} illustrates the performance of SG on Example~\ref{ex:llustr_dumb}.


\begin{figure}[h!]
	\begin{center}
		\begin{subfigure}{.5\textwidth}
			\centering
			\includegraphics[width=\linewidth]{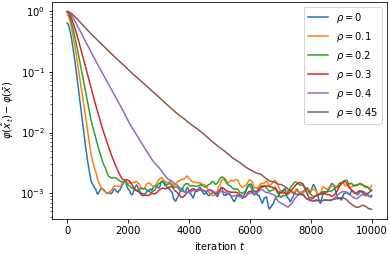}
			\caption{Functional value along the average.}
			\label{fig:stream_average}
		\end{subfigure}%
		\begin{subfigure}{.5\textwidth}
			\centering
			\includegraphics[width=\linewidth]{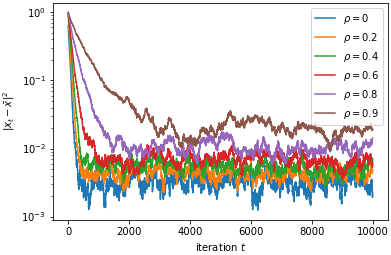}
			\caption{Square distance to equilibrium}
			\label{fig:stream_last_iterate}
		\end{subfigure}
		
	\end{center}
	\caption{Returning to Example~\ref{ex:llustr_dumb}, we implement a stochastic gradient method  with a constant parameter $\eta=0.01$ averaged over $20$ runs for various parameters $\rho$. Figure~\ref{fig:stream_average} shows the function gap $\varphi(\hat x_t)-\varphi(\bar x)$ along the ``average iterate'' on the equilibrium problem. As the theoretical results (Theorem~\ref{thm:rate_grad-desc}) suggest,  the gap tends linearly to a noise level controlled by $\eta$, with a linear rate that degrades as $\rho$ tends to $1/2$. Figure \ref{fig:stream_last_iterate} depicts the square distance of the current iterate $\|x_t-\bar x\|^2$, which also tends linearly to a noise level that now depends both on $\eta$ and on $\rho$. Indeed, Figure \ref{fig:stream_last_iterate} is fully justified by the results of \cite{mendler2020stochastic}, and we include it here only as an illustration.}
		\label{fig:stoch_grad_synth}
\end{figure}

Accelerated gradient methods, famously introduced by Nesterov \cite{nest_orig} and extended to the proximal setting by Beck and Teboulle \cite{beck}, play a central role in convex optimization. Such methods are best in class for smooth convex optimization.
Ghadimi and Lan \cite{lan2012optimal,ghadimi2013optimal} proposed an accelerated method for the stochastic setting, which is best in class for smooth convex stochastic problems. Continuing  the theme of the paper, we ask whether acceleration is possible with state-dependent distributions. We show an affirmative answer in the slightly suboptimal regime $\rho\lesssim \kappa^{-1/4}$. 

Rather than analyzing the original method of Ghadimi and Lan \cite{ghadimi2013optimal}, we focus on the more recent variant of Kulunchakov and Mairal \cite{kulunchakov2019estimate}. With decision-dependent distributions, it reads as:
\begin{equation}\label{eqn:asg}
\mathrm{ASG:}\qquad
\left\{\quad
	\begin{aligned}
	&\textrm{Sample~} z_t\sim \cD(y_{t-1})\textrm{ and set } g_t=\nabla \ell(y_{t-1},z_t),\\
	&\textrm{Set~} x_{t}=\prox_{\eta_t r}(y_{t-1}-\eta g_t), \\
	&\textrm{Set~} y_t=x_t+\tfrac{1-\sqrt{\eta\alpha (1 -2\rho)}}{1+\sqrt{\eta\alpha (1 -2\rho)}}(x_t-x_{t-1}).
	\end{aligned}\quad\right\}.
\end{equation}
In the deterministic and unregularized ($r\equiv 0$) setting, the method reduces to the classical procedure in \cite{intro_lect}, derived through estimate sequences.
The following theorem summarizes the convergence guarantees of the ASG algorithm under state dependent sampling.

\begin{theorem}[Informal]\label{thm:informal_accel_grad}
In the regime $\rho\lesssim \kappa^{-1/4}$, the stochastic ASG method \eqref{eqn:asg} with appropriate parameter choices will generate a point $x$ satisfying $\EE[\varphi(x)-\varphi(\bar x)]\leq \varepsilon$ using 
$$\mathcal{O}\left(\sqrt{\kappa}\cdot\log\left(\frac{\varphi(x_0)- \varphi(\bar x)}{\varepsilon}\right)+\frac{\sigma^2}{\alpha\varepsilon}\right) \qquad \textrm{samples}.$$
\end{theorem}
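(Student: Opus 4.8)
The plan is to recognize the ASG recursion \eqref{eqn:asg} as an \emph{inexact} accelerated proximal gradient method applied to the single static problem $\min_y \varphi(y)=f_{\bar x}(y)+r(y)$, and then to import the estimate-sequence analysis of Kulunchakov and Mairal \cite{kulunchakov2019estimate}, carefully tracking how the decision-dependence enters as a bias in the gradient oracle. Under the standing assumptions of the section, $\varphi$ is $\alpha$-strongly convex and $f_{\bar x}$ is $L$-smooth, so $\mathtt{St}(\cD(\bar x))$ is a well-conditioned composite problem with condition number $\kappa=L/\alpha$.

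First I would decompose the oracle. At iteration $t$ the method queries $g_t=\nabla\ell(y_{t-1},z_t)$ with $z_t\sim\cD(y_{t-1})$. I write $g_t=\nabla f_{\bar x}(y_{t-1})+b_t+\xi_t$, where the bias is $b_t:=\nabla f_{y_{t-1}}(y_{t-1})-\nabla f_{\bar x}(y_{t-1})$ and the noise is $\xi_t:=g_t-\nabla f_{y_{t-1}}(y_{t-1})$. The gradient-deviation estimate \eqref{eqn:grad_const2} yields the bias bound $\|b_t\|\le\gamma\beta\,\|y_{t-1}-\bar x\|$, while $\xi_t$ is conditionally zero-mean with $\EE\|\xi_t\|^2\le\sigma^2$ by the finite-variance Assumption~\ref{assum:fin_var}. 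Thus $g_t$ is an unbiased estimator of the \emph{wrong} gradient $\nabla f_{y_{t-1}}(y_{t-1})$, and its error relative to the correct static gradient $\nabla f_{\bar x}(y_{t-1})$ decays linearly as $y_{t-1}\to\bar x$.

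Next I would run the estimate-sequence machinery for the strongly convex accelerated method, producing a Lyapunov potential $\Phi_t$ that combines the gap $\varphi(x_t)-\varphi(\bar x)$ with a squared-distance term for the tracked momentum sequence. For the exact method this potential contracts by the factor $1-\sqrt{\eta\alpha(1-2\rho)}$; the momentum coefficient in \eqref{eqn:asg} is dictated precisely by this effective modulus. The benign part of the bias cross-term is absorbed exactly as in the non-accelerated Lemma~\ref{intor:key_recurs}, which is what degrades the effective strong convexity from $\alpha$ to $\alpha(1-2\rho)$; the noise $\xi_t$ contributes an additive $O(\eta^2\sigma^2)$ term after taking expectations and using conditional unbiasedness.

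The hard part will be controlling the \emph{residual} bias terms. Because $b_t$ is measured at the extrapolated point $y_{t-1}$ while the descent inequality pairs it against the momentum iterate, the mismatch prevents the clean cancellation available in the SG analysis; estimating the resulting cross-terms by Young's inequality and bounding both distances by $\Phi_t$ shows that these terms are dominated by the per-step contraction $\sqrt{\eta\alpha(1-2\rho)}\,\Phi_t$ only when $\rho$ is sufficiently small relative to $\kappa$. This is where acceleration ``amplifies'' the bias, and a careful accounting of the estimate-sequence weights pins the admissible range at $\rho\lesssim\kappa^{-1/4}$---strictly smaller than the $\rho<\tfrac12$ tolerated by the unaccelerated method, hence the ``slightly suboptimal'' qualifier. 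Once the recursion $\EE[\Phi_{t+1}]\le(1-\sqrt{\eta\alpha(1-2\rho)})\,\EE[\Phi_t]+O(\eta^2\sigma^2)$ is established, the conclusion is routine: with $\eta\asymp 1/L$ the homogeneous part decays geometrically, giving the $\sqrt{\kappa}\,\log((\varphi(x_0)-\varphi(\bar x))/\varepsilon)$ iteration count, while a suitably decreasing step-size schedule drives the accumulated noise to the statistically optimal level, yielding the additive $\sigma^2/(\alpha\varepsilon)$ sample term.
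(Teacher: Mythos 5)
Your setup is faithful to the paper's: the same bias/noise decomposition of the oracle queried at the extrapolated point $y_{t-1}$, the same estimate-sequence machinery imported from \cite{kulunchakov2019estimate}, and the same degradation of the effective strong-convexity modulus to $\hat\alpha=\alpha(1-2\rho)$ via the approximate subgradient inequality (Lemma~\ref{lem:approx_subgrad_ineq}). You also correctly locate the difficulty in the residual bias cross-terms created by the momentum step. The gap is in how you resolve that difficulty. You propose to estimate the cross-terms by Young's inequality, bound the resulting distances by the potential $\Phi_t:=\varphi(x_t)-\varphi(\bar x)+\tfrac{\gamma_t}{2}\|\bar x-v_t\|^2$, and absorb everything into the per-step contraction $\sqrt{\eta\hat\alpha}\,\Phi_t$; you then assert that this ``pins the admissible range at $\rho\lesssim\kappa^{-1/4}$.'' That threshold is asserted, not derived, and the mechanism you describe does not obviously produce it: bounding $\|\Delta_t\|^2\le B^2\|y_{t-1}-\bar x\|^2\lesssim (B^2/\hat\alpha)\,\Phi_{t-1}$ and comparing the dominant bias contribution (whose coefficient is $\tfrac{\delta_t}{2\hat\alpha}\approx\tfrac14\sqrt{\eta/\hat\alpha}$) against $\delta_t\Phi_{t-1}$ cancels the $\sqrt{\kappa}$ factor entirely and leaves a condition of the form $\rho^2/(1-2\rho)^2\lesssim 1$, i.e.\ a constant regime. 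Moreover, making this rigorous requires closing a self-referential recursion (the error sequence $\xi_t$ would then depend on the bound for $\EE[\Phi_{t-1}]$), which your sketch does not address. So either your route proves something stronger than the stated theorem and the quoted threshold is unjustified, or there is an obstruction to the absorption strategy that the sketch does not surface; in either case the key quantitative step is missing.

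The paper resolves the difficulty by a \emph{pointwise} cancellation rather than absorption into the potential, and this is precisely where $\kappa^{-1/4}$ comes from. Two ingredients are essential and absent from your proposal: (i) the term $\tfrac{\hat\alpha\gamma_{t-1}\delta_t}{2\gamma_t}\|y_{t-1}-v_{t-1}\|^2$, normally discarded in estimate-sequence proofs, is retained and paired with the bias--momentum cross term $\langle\Delta_t,x_{t-1}-y_{t-1}\rangle$ (Lemma~\ref{lem:young_second}), which produces a bias penalty carrying the large coefficient $\tfrac{\delta_t}{\hat\alpha\eta_t}\approx 2\sqrt{\kappa}$; and (ii) the prox-gradient mapping obeys the lower bound $\|\tilde g_t\|\gtrsim\hat\alpha\|y_{t-1}-\bar x\|/(1+\eta_t B)$ (Lemma~\ref{lem:one_step_strong} and Lemma~\ref{lem:lower_moment_bound}), so that the leftover negative term $-\tfrac{\eta_t}{8}\|\tilde g_t\|^2$ from the descent inequality dominates that penalty at every step. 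The resulting requirement $\hat\alpha^2/B^2\gtrsim\sqrt{\kappa}$, recorded as condition \eqref{eqn:suff_accel} in Theorem~\ref{thm:accele_stoch_grad}, is exactly $\rho\lesssim\kappa^{-1/4}$. A smaller discrepancy: the additive $\sigma^2/(\alpha\varepsilon)$ sample term is obtained in the paper by the minibatch restart of Algorithm~\ref{alg:restart} at the constant step $\eta=1/(4L)$, not by a decreasing step-size schedule, though that choice is inessential.
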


Notice that the regime  when the accelerated method is guaranteed to work $\rho\lesssim \kappa^{-1/4}$ is ``suboptimal'' by the small factor $\kappa^{-1/4}$. On the other hand, it is  surprising that there is any regime $\rho>0$ where the accelerated method works at all, since it is well known that accelerated methods suffer from error accumulation \cite{devolder2014first}. The reason there is no contradiction here is that the gradient bias that we encounter tends to zero linearly as one approaches the solution \eqref{eqn:grad_const}. It would be interesting to know whether the extra factor $\kappa^{-1/4}$ is really necessary or is an artifact of the proof.

The acceleration phenomenon is most prominent in the nearly noiseless setting $\sigma\approx 0$. As an illustration, Figure~\ref{fig:accele} compares the performance of the vanilla gradient method and the accelerated gradient method in the batch setting ($\sigma=0$) on a problem of strategic classification.  Experimentally, we see that acceleration leads to an impressive speedup even in very ill-conditioned settings, thereby suggesting that the parameter regime $\rho\lesssim \kappa^{-1/4}$ in  Theorem~\ref{thm:informal_accel_grad} may be loose.

\begin{figure}[h!]
	\begin{center}
		\begin{subfigure}{.4\textwidth}
			\centering
			\includegraphics[width=\linewidth]{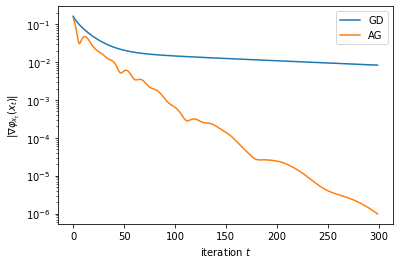}
			\caption{$\gamma=0$.}
		\end{subfigure}\quad
		\begin{subfigure}{.4\textwidth}
			\centering
			\includegraphics[width=\linewidth]{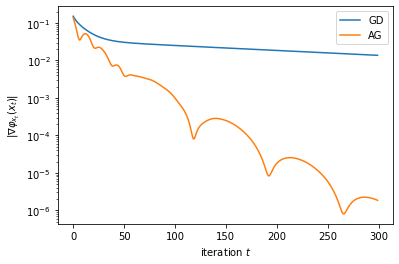}
			\caption{$\gamma=5$.}
		\end{subfigure}%
		
		\begin{subfigure}{.4\textwidth}
			\centering
			\includegraphics[width=\linewidth]{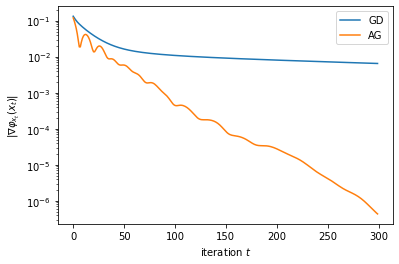}
			\caption{$\gamma=100$.}
		\end{subfigure}\quad
		\begin{subfigure}{.4\textwidth}
			\centering
			\includegraphics[width=\linewidth]{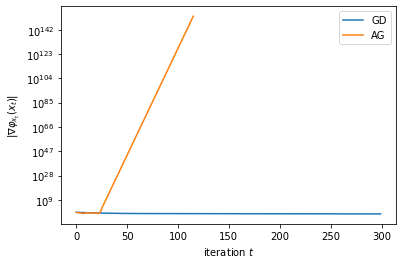}
			\caption{$\gamma=250$.}
		\end{subfigure}%
		
	\end{center}
	\caption{We implement accelerated gradient descent on strategic classification (Appendix~\ref{sec:strat_class}) with parameters
	$\alpha=10/n$ and $n=2000$. In this example $\beta=2$ and therefore the inclusion $\rho\in (0,1)$ holds as long as $\gamma\in (0,\frac{\alpha}{2})$. In each iteration, the method uses the full gradient $\nabla f_{y_{t}}(y_t)$ and we set $y_t=x_t+\tfrac{1-\sqrt{\alpha/L}}{1+\sqrt{\alpha/L}}(x_t-x_{t-1})$, thereby heuristically ignoring scaling by $1-2\rho$. Acceleration leads to an impressive speedup in settings far beyond the threshold $\gamma\in (0,\frac{\alpha}{2})$.}
	\label{fig:accele}
\end{figure}

\paragraph{Proof technique: gradient deviation as a measure of bias.}
The results in this section follow from the following transparent geometric reasoning. Recall that the gradient deviation inequality \eqref{eqn:grad_const2} shows that contrary to the static setting, the vector $\nabla \ell(x,z)$ with $z\sim \cD(x)$ is a biased estimator of the gradient $\nabla f_{\bar x}(x)$, with bias scaling as  $\|x-\bar x\|$. Nonetheless, a quick computation shows that the mean of the estimator $\nabla f_x(x)=\ee_{z\sim \cD(x)}[\nabla \ell(x,z)]$  furnishes a strong convexity inequality between $x$ and $\bar x$ given by
$$f_{\bar x}(\bar x)\geq f_{\bar x}(x)+\langle \nabla f_x(x),\bar x-x\rangle+\frac{\alpha(1-2\rho)}{2}\|x-\bar x\|^2.$$
This inequality suffices to establish convergence guarantees for the proximal stochastic gradient method; the reason is simply that strong convexity is used in the classical argument only to compare the function values along the iterates with the minimal value. Perhaps more surprisingly, the accelerated variant of the method can also be understood from this viewpoint. Section~\ref{sec:biased_oracle} analyzes the proximal stochastic gradient  method and its accelerated variant under a biased stochastic oracle model. This oracle model is broader than the setting of state-dependent sampling and  may be of independent interest.

\subsection{Model-based minimization: stochastic proximal point and clipped gradient methods (Section~\ref{sec:MBA_algos})}
Though the stochastic gradient method is popular in practice, it has well-documented deficiencies.
Notably, the method is highly sensitive to algorithmic parameters, with small misspecifications often drastically degrading performance. Recent works \cite{asi2019stochastic,ryu2014stochastic} have suggested that algorithms based on tighter models than linear may lead to more robust algorithms.  Following \cite{asi2019stochastic,davis2019stochastic}, we consider a class of  stochastic algorithms that proceed as follows. In each iteration $t$, the methods draw a sample $z_t\in \cD(x_t)$ and approximate the loss function $\ell(\cdot,z_t)$ by a simpler model $\ell_{x_t}(\cdot,z_t)$ formed at the basepoint $x_t$. The next iterate $x_{t+1}$ is then  the minimizer of the function $\ell_{x_t}(\cdot,z_t)+r+\frac{1}{2\eta_t}\|\cdot-x_t\|^2$. Thus, the model-based algorithm repeats the steps
	\begin{equation}
    \left\{\quad\begin{aligned}
	&\textrm{Sample~} z_t\sim \cD(x_t)\\
	& \textrm{Set~} x_{t+1}=\argmin_y\,\Bigl\{ \ell_{x_t}(y,z_t)+r(y)+\frac{1}{2\eta_t}\|y-x_t\|^2 \Bigr\}
	\end{aligned}\quad\right\}\tag{MBA}
	\end{equation}
For example, the stochastic proximal gradient method \eqref{eqn:outline_prox_grad} uses the linear model $\ell_x(y,z)=\ell(x,z)+\langle \nabla \ell(x,z),y-x\rangle$, while the stochastic proximal point method uses the loss function itself $\ell_x(y,z)=\ell(y,z)$. Often, the proximal point subproblem can be solved in closed form since it depends only on a single data point. Yet another interesting algorithm is the clipped gradient method, which uses the truncated  models $\ell_x(y,z)=\max\{\ell(x,z)+\langle \nabla \ell(x,z),y-x\rangle,0\}$ under the assumption that the losses  are nonnegative. See Fig.~\ref{fig:illustr_lower_model} for an illustration. Tighter models often lead to better performing algorithms.

\begin{figure}[t]\centering
\begin{subfigure}{0.3\textwidth}
	\begin{center}
		\begin{tikzpicture}[scale=0.65]
		\pgfplotsset{every tick label/.append style={font=\large}}
		\begin{axis}[%
		domain = -2:3,
    ymin=-1,
    ymax=3.5,
		samples = 200,
		axis x line = center,
		axis y line = center,
		xtick={1},
		ytick=\empty
		]		
		\addplot[black, ultra thick] {ln(1+exp(x))} [yshift=3pt] ;
		\addplot[purple,very thick] {ln(1+exp(x))} [yshift=3pt] ;

		\addplot [only marks,mark=*] coordinates { (1,1.31326) };
		\end{axis}		
		\end{tikzpicture}
	\end{center}
\end{subfigure}%
\begin{subfigure}{0.3\textwidth}
	\begin{center}
		\begin{tikzpicture}[scale=0.65]
		\pgfplotsset{every tick label/.append style={font=\large}}
		\begin{axis}[%
		domain = -2:3,
		    ymin=-1,
    ymax=3.5,
		samples = 200,
		axis x line = center,
		axis y line = center,
		xtick={1},
		ytick=\empty
		]		
		\addplot[black, ultra thick] {ln(1+exp(x))} [yshift=3pt] ;
		 {         \addplot[purple, very thick] {ln(1+exp(1))+exp(1)/(1+exp(1))*(x-1)};      }

		\addplot [only marks,mark=*] coordinates { (1,1.31326) };
		\end{axis}		
		\end{tikzpicture}
	\end{center}
\end{subfigure}%
\begin{subfigure}{0.3\textwidth}
	\begin{center}
		\begin{tikzpicture}[scale=0.65]
		\pgfplotsset{every tick label/.append style={font=\large}}
		\begin{axis}[%
		domain = -2:3,
		    ymin=-1,
    ymax=3.5,
		samples = 200,
		axis x line = center,
		axis y line = center,
		xtick={1},
		ytick=\empty
		]		
		\addplot[black, ultra thick] {ln(1+exp(x))} [yshift=3pt];
		 {         \addplot[purple, very thick] {max(ln(1+exp(1))+exp(1)/(1+exp(1))*(x-1),0)} [yshift=6pt] ;      }

		\addplot [only marks,mark=*] coordinates { (1,1.31326) };
		\end{axis}		
		\end{tikzpicture}
	\end{center}
	\end{subfigure}	
  \caption{Illustration of the three models for the function $\ell(y)=\ln(1+e^y)$;  black curve depicts the graph of $\ell$, the red curves depict the models $\ell_1(y)=\ell(y)$ (proximal point), $\ell_1(y)=\ell(1)+\ell'(1)(y-1)$ (gradient), $\ell_1(y)=\max\{\ell(1)+\ell'(1)(y-1),0\}$ (clipped gradient).}
  \label{fig:illustr_lower_model}
\end{figure}
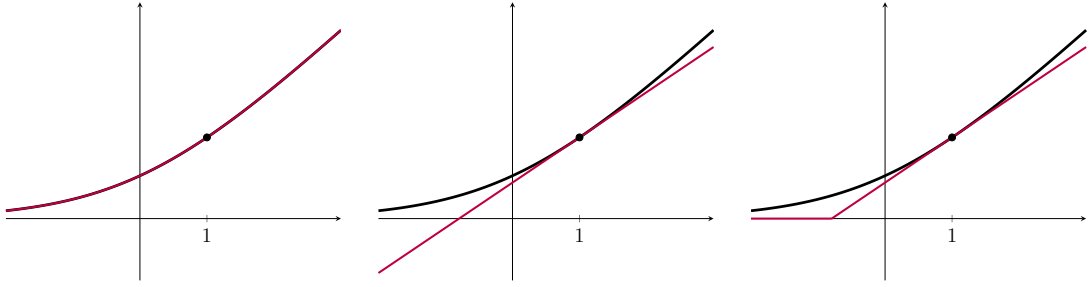

Section~\ref{sec:MBA_algos} presents convergence guarantees for the entire class of model-based algorithms. It will be important for the clipped stochastic gradient method to assume that 
 $r$ is $\mu$-strongly convex for some $\mu\geq 0$.  The efficiency guarantees specialized for the stochastic proximal gradient, proximal point, and clipped gradient methods read as follows. 

\begin{theorem}[Informal]\label{thm:inform_model}
Define $\tilde \alpha:=\alpha+\mu$ for stochastic proximal gradient and proximal point methods and set $\tilde \alpha:=\mu$ for the clipped gradient method; in addition, set $\tilde\rho=\frac{\gamma\beta}{\tilde\alpha}$. In the regime $\tilde\rho< 1$, the three methods with appropriate parameters  $\eta_t$ will find  $x$ satisfying $\EE\|x- \bar x\|^2\leq \varepsilon$ using 
$$\mathcal{O}\left(\frac{L+\tilde \alpha}{\tilde\alpha(1-\tilde\rho)}\cdot\log\left(\frac{\|x_0-\bar x\|^2}{\varepsilon}\right)+\frac{\sigma^2}{\tilde\alpha^2(1-\tilde\rho)^2	\varepsilon}\right)\qquad\textrm{samples}.$$
Moreover, in the regime $\tilde\rho< \frac{1}{2}$, the methods will generate  $x$ satisfying $\EE[\varphi(x)- \varphi(\bar x)]\leq \varepsilon$ using 
$$\mathcal{O}\left(\frac{L+\tilde\alpha}{\tilde\alpha(1-2\tilde\rho)}\cdot\log\left(\frac{\varphi(x_0)-\varphi(\bar x)}{\varepsilon}\right)+\frac{\sigma^2}{\tilde\alpha(1-2\tilde\rho)	\varepsilon}\right)\qquad\textrm{samples}.$$
\end{theorem}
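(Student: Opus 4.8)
The plan is to prove all three bounds at once by establishing a single \emph{one-step recursion} of exactly the shape of the SG recursion in Lemma~\ref{intor:key_recurs}, but with the pair $(\alpha,\rho)$ replaced by the model-adapted pair $(\tilde\alpha,\tilde\rho)$, and then to unwind that recursion with two step-size schedules to obtain the distance guarantee (in the wide regime $\tilde\rho<1$) and the gap guarantee (in the regime $\tilde\rho<\tfrac{1}{2}$). First I would isolate the three structural properties that the gradient, proximal-point, and clipped-gradient models share and that drive the argument: (i) each model $\ell_{x_t}(\cdot,z_t)$ is convex and minorizes the loss, $\ell_{x_t}(y,z_t)\le\ell(y,z_t)$ (equality for proximal point, and from convexity of $\ell(\cdot,z_t)$ together with $\ell\ge 0$ for the gradient and clipped models); (ii) the model agrees with the loss in value and gradient at the basepoint, $\ell_{x_t}(x_t,z_t)=\ell(x_t,z_t)$ and $\nabla_y\ell_{x_t}(x_t,z_t)=\nabla\ell(x_t,z_t)$; and (iii) the comparison of the subproblem value at $x_{t+1}$ against $\bar x$ can invoke strong convexity with modulus $\tilde\alpha$, equal to $\alpha+\mu$ for proximal point and proximal gradient (where value/gradient agreement at the basepoint makes the $\alpha$-strong convexity of $f_{x_t}$, guaranteed by Assumption~\ref{ass:strong_conv_perm}\ref{it:str_conv_loss}, available), and equal to $\mu$ for clipped gradient, where the truncation flattens the loss and only the regularizer's strong convexity survives. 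This is exactly the origin of the case split in the definition of $\tilde\alpha$.

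Next I would write the optimality inequality for the subproblem defining $x_{t+1}$; since the proximal term supplies $\tfrac{1}{\eta_t}$-strong convexity, comparing the subproblem value at $x_{t+1}$ with the value at $\bar x$ yields a three-point inequality controlling $\ell_{x_t}(x_{t+1},z_t)-\ell_{x_t}(\bar x,z_t)$ and $\|x_{t+1}-\bar x\|^2$ in terms of $\|x_t-\bar x\|^2$. Taking the conditional expectation over $z_t\sim\cD(x_t)$ turns the model terms into quantities evaluated under the \emph{shifted} distribution $\cD(x_t)$ (for instance $f_{x_t}(\bar x)$ and the linearization $f_{x_t}(x_t)+\langle\nabla f_{x_t}(x_t),\bar x-x_t\rangle$), whereas the target is $\varphi=f_{\bar x}+r$. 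Converting one into the other is precisely the role of the gap deviation inequality \eqref{eqn:gap_comp} and the gradient deviation inequality \eqref{eqn:grad_const2}: they bound the resulting bias by $\gamma\beta\,\|x_t-\bar x\|\cdot\|x_{t+1}-\bar x\|$, and after one Young's inequality against the $\tilde\alpha$ strong-convexity slack this bias is exactly what turns the nominal contraction factor $1-\tilde\alpha\eta_t$ into $1-\tilde\alpha(1-2\tilde\rho)\eta_t$. The variance enters through the mismatch between the random model gradient and its mean $\nabla f_{x_t}(x_t)$; bounding its second moment by Assumption~\ref{assum:fin_var} and completing the square produces the additive $O(\eta_t^2\sigma^2)$ term (the step-size restriction $\eta_t\lesssim 1/L$, coming from Assumption~\ref{ass:smoothness}\ref{assump:smooth_center_2}, is what makes this absorption legitimate for the gradient-type models). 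Assembling the pieces gives the analogue of Lemma~\ref{intor:key_recurs},
$$2\eta_t\,\EE[\varphi(x_{t+1})-\varphi(\bar x)]\le\bigl(1-\tilde\alpha(1-2\tilde\rho)\eta_t+c\,\eta_t^2\bigr)\,\EE\|x_t-\bar x\|^2-\EE\|x_{t+1}-\bar x\|^2+2\eta_t^2\sigma^2,$$
for a constant $c=O(\gamma^2\beta^2)$.

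Finally I would unwind this recursion in two ways. For the distance guarantee I would lower-bound the left-hand side by $\tilde\alpha\|x_{t+1}-\bar x\|^2$ via strong convexity and drop it; rearranging replaces the factor by $1-\tilde\alpha(1-\tilde\rho)\eta_t+O(\eta_t^2)$, so the admissible regime widens to $\tilde\rho<1$, and a two-phase schedule (a constant step whose geometric contraction rate is governed by the effective condition number $\tfrac{L+\tilde\alpha}{\tilde\alpha(1-\tilde\rho)}$, followed by the harmonic decay $\eta_t\asymp\tfrac{1}{\tilde\alpha(1-\tilde\rho)t}$ to suppress the noise) yields the stated count with the statistical term $\tfrac{\sigma^2}{\tilde\alpha^2(1-\tilde\rho)^2\varepsilon}$. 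For the gap guarantee I would instead keep the left-hand side and telescope with the same style of decaying schedule, now calibrated to $1-2\tilde\rho$, producing the $\tfrac{\sigma^2}{\tilde\alpha(1-2\tilde\rho)\varepsilon}$ term. The main obstacle throughout is the bias-bookkeeping in the crux step: obtaining the \emph{clean} factor $(1-2\tilde\rho)$ rather than a looser constant requires pairing each shifted-distribution error term with the correct strong-convexity slack, and the clipped model is the delicate case, where one must verify the minorization property (i) directly and confirm that only the regularizer's $\mu$-strong convexity survives the truncation in step (iii).
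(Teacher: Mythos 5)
Your proposal follows essentially the same route as the paper: a three-point/optimality inequality for the proximal subproblem on the static problem $\mathtt{St}(\cD(x_t))$ (the paper's Lemma~\ref{lem:one-step_improv_stat}, built on the model properties you list, which correspond to Assumption~\ref{assum:stoch_model_stat} with the $(\alpha_1,\alpha_2,\sigma_0)$ values of Table~\ref{table:methods}), followed by the transfer to $\varphi$ via the gap deviation inequality \eqref{eqn:gap_comp} and Young's inequality to get the shifted modulus $\tilde\alpha(1-2\tilde\rho)$ (Corollary~\ref{cor:recur_prox}), and then the two unwindings—strong-convexity lower bound for the distance bound in the regime $\tilde\rho<1$ and telescoping for the gap bound in the regime $\tilde\rho<\tfrac{1}{2}$—with a step-size schedule (the paper uses geometric restarts rather than your harmonic tail, but the complexities agree). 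The only cosmetic differences are that the paper tracks the strong-convexity budget as a pair $(\alpha_1,\alpha_2)$ rather than the single sum $\tilde\alpha$, and its model-based recursion needs only the gap deviation inequality, not the gradient deviation bound you also invoke.
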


As an illustration, Figure~\ref{fig:models} illustrates the performance of the stochastic gradient, clipped gradient, and proximal point methods on a problem of strategic classification. We use a sublinearly decaying stepsize-sequence $\eta_t=\frac{2}{\alpha(t+1)}$. The three methods perform similarly asymptotically. In the initial stage, however, the subgradient method generates iterates that are highly suboptimal due to a large initial step-size. Consequently, the clipped gradient and proximal-point methods may be preferable. The drastically different performance in the early stages between the subgradient method and  the clipped gradient/proximal point methods  in the static setting was investigated in \cite{asi2019stochastic}. Similar guarantees likely extend to the setting of decision-dependent distributions, though we do not pursue this line of work here.

\begin{figure}[h!]
	\begin{center}
		\begin{subfigure}{.5\textwidth}
			\centering
			\includegraphics[width=\linewidth]{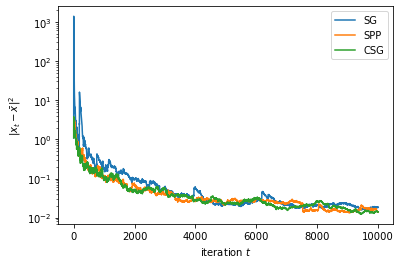}
			\caption{$\gamma=0.1$.}
		\end{subfigure}%
		\begin{subfigure}{.5\textwidth}
			\centering
			\includegraphics[width=\linewidth]{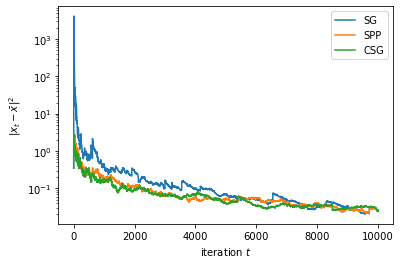}
			\caption{$\gamma=0.25$.}
		\end{subfigure}%

	\end{center}
	\caption{Stochastic gradient (SG), clipped gradient (CSG), proximal point (SPP) methods on strategic classification (Appendix~\ref{sec:strat_class}) with parameters $\alpha=100/n$, $n=2000$, and $\eta_t=\frac{2}{\alpha(t+1)}$.}
	\label{fig:models}
\end{figure}

\paragraph{Proof technique: function gap deviation \eqref{eqn:gap_comp} \&  Lyapunov analysis.}
The proofs of the outlined results rely on the interplay between the function gap inequality~\eqref{eqn:gap_comp} and typical Lyapunov arguments used in stochastic optimization. Namely, classical convergence arguments for stochastic methods on a static problem $\min_{x} \psi(x)$ rely on one-step improvement bounds of the form: 
\begin{equation}\label{eqn:one_step_improvea}
\eta_t\EE_t[\psi(x_{t+1})-\psi(x)]\leq \frac{1-c_1\eta_t}{2}\|x_t-x\|^2-\frac{1+c_2\eta_t}{2}\| x_{t+1}- x\|^2+c_3\eta_t^2\qquad \forall x.
\end{equation}
Here, $c_1,c_2,c_3\in\R$ are some constants and $\eta_t>0$ is a user-specified sequence. 
 As long as the sum $c_1+c_2$ is positive, one may drive the gap  $\EE[\psi(x_{t+1})-\psi(x)]$ below any fixed tolerance by choosing $\eta_t>0$ appropriately. The condition $c_1+c_2>0$ typically holds under strong convexity assumptions. Standard efficiency estimates follow by setting $x$ to be the minimizer of $\psi$; for our purposes, however, it is important that $x$ can be  arbitrary.

Returning to stochastic optimization with state-dependent distributions, recall that the losses themselves are assumed to be smooth and strongly convex. Therefore, stochastic methods on the static problem $\min \varphi_x$ (for any $x$) likely enjoys the estimate~\eqref{eqn:one_step_improvea} with $c_1+c_2>0$.
Imagine now that given a current iterate $x_t$ we take a single step of such an algorithm on the problem $\texttt{St}(\cD(x_t))$. Setting $x=\bar x$ (with $\psi\equiv\varphi_{x_t}$) and applying the function gap inequality implies the one-step improvement:
$$\eta_t\EE_t[\varphi(x_{t+1})-\varphi(\bar x)]\leq \frac{1-(c_1-\gamma\beta)\eta_t}{2}\|x_t-\bar x\|^2-\frac{1+(c_2-\gamma\beta)\eta_t}{2}\| x_{t+1}- \bar x\|^2+c_3\eta_t^2.$$
Therefore in the regime $c_1+c_2-2\gamma\beta>0$, we can drive the gap $\EE_t[\varphi(x_{t+1})-\varphi(\bar x)]$ to zero with an appropriate choice of $\eta_t>0$. Moreover, if $\varphi$ is $\tilde{\alpha}$-strongly convex (for some $\tilde\alpha>0$), then we may lower bound the left side by $(\eta_t\tilde\alpha/2)\EE\|x_{t+1}-\bar x\|^2$. Elementary algebraic manipulations then show that the parameter regime of convergence in $\EE\|x_{t+1}-\bar x\|^2$ improves to $c_1+c_2+\tilde\alpha-2\gamma\beta>0$.
In summary, the function gap inequality~\eqref{eqn:gap_comp} allows to translate one-step improvements on static problems $\min \varphi_{x_t}$ into one-step improvements   on the target problem $\min \varphi$. We will show that model based algorithms on well-conditioned static problems satisfy a one-step improvement bound of the form \eqref{eqn:one_step_improvea} with $c_1+c_2>0$, and then apply the outlined argument  using  \eqref{eqn:gap_comp}.

\subsection{Inexact repeated minimization  (Section~\ref{sec:inexactRM})}

All the aforementioned stochastic algorithms draw a single sample in between every change in distribution. In practice, however, modifying the sampling distribution may be much more expensive than drawing a sample from the current distribution $\cD(x_t)$. Following \cite{perdomo2020performative}, we call the process of modifying the distribution {\em deployment}. It is an interesting question if it is possible to maintain the sample efficiencies of the aforementioned algorithms for (implicitly) solving $\texttt{St}(\cD(\bar x))$ while decreasing the number of deployments. An answer to this question for the projected stochastic gradient method appears in \cite[Theorem 3.3]{perdomo2020performative}. Namely, consider running inexact repeated minimization for iterations $t=0,1,\ldots$ with a projected stochastic gradient method applied for $J_t$ iterations on each subproblem. The authors show that setting $J_t=\mathcal{O}(t^{1.1p})$ ensures that iterates $x_t$ converge to $\bar x$ at the rate $O(t^{-p})$. Here $p>0$ is a tuning parameter that controls the tradeoff between sampling and deployment.
We prove a closely related result for all the algorithms analyzed in the previous sections; concisely, we show that the number of deployments can be reduced to be logarithmic in problem parameters without sacrificing sample efficiency. 

We begin with the following theorem, which provides guarantees on the efficiency of repeated minimization with model-based algorithms used as inexact subsolvers.

\begin{theorem}[Informal]\label{thm:inex_model}
Define $\tilde \alpha=\alpha+\mu$ for stochastic proximal gradient and proximal point methods and set $\tilde \alpha=\mu$ for the clipped gradient method; in addition, set $\tilde\rho=\frac{\gamma\beta}{\tilde\alpha}$. In the regime $\tilde\rho< 1$,  the three methods may be used as inexact solvers within repeated minimization. The resulting methods will generate a point $x$ satisfying  $\EE[\varphi(x)- \varphi(\bar x)]\leq \varepsilon$ using 
\begin{equation*}
\mathcal{O}\left(\frac{1}{1-\tilde\rho}\cdot\left(\log\left(\frac{\varphi(x_0)-\varphi(\bar x)}{\varepsilon}\right)+\log\left(\frac{\sigma^2}{1-\tilde{\rho}}\cdot \frac{1}{L\varepsilon}\right)\right)\right)\qquad \textrm{deployments},
\end{equation*}	
and 
\begin{equation*}
\mathcal{O}\left(\frac{\log((1-\tilde\rho)^{-1})}{1-\tilde\rho}\cdot\left(\frac{L}{\tilde\alpha(1-\tilde\rho)}\log\left(\frac{\varphi(x_0)-\varphi(\bar x)}{\varepsilon}\right)+\frac{\sigma^2}{\tilde\alpha(1-\tilde{\rho})^2\varepsilon}\right)\right)\qquad\textrm{samples}.
\end{equation*}	
\end{theorem}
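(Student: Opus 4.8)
The plan is to analyze inexact repeated minimization as a two-level scheme: an outer fixed-point iteration on the map $S(x)=\argmin_y\{f_x(y)+r(y)\}$, and an inner loop in which a model-based stochastic method (proximal gradient, proximal point, or clipped gradient) approximately solves each static subproblem $\texttt{St}(\cD(x_t))$. The key structural fact established earlier is that $S$ is a contraction with modulus $\tilde\rho=\gamma\beta/\tilde\alpha<1$ on the relevant static problems; consequently exact repeated minimization converges linearly to $\bar x$, contracting the suboptimality gap $\varphi(x_t)-\varphi(\bar x)$ (or equivalently the distance to $\bar x$ via $\tilde\alpha$-strong convexity) by a fixed factor each outer round. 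The goal is to show that if each inner solve achieves a suboptimality that shrinks geometrically in lockstep with the outer contraction, then the overall scheme inherits the linear outer rate while controlling the total sample count.

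First I would set up the outer recursion precisely. Let $\delta_t:=\EE[\varphi_{x_t}(x_{t+1})-\min_y\varphi_{x_t}]$ denote the inner-solve accuracy at round $t$, where $x_{t+1}$ is the output of $J_t$ inner steps. Using $\tilde\alpha$-strong convexity of the static objectives together with the function-gap deviation inequality~\eqref{eqn:gap_comp}, I would convert accuracy on $\varphi_{x_t}$ into accuracy on the target $\varphi=\varphi_{\bar x}$, obtaining a one-step bound of the schematic form $\EE[\varphi(x_{t+1})-\varphi(\bar x)]\leq q\cdot\EE[\varphi(x_t)-\varphi(\bar x)]+(\text{error from }\delta_t)$, with outer contraction factor $q$ governed by $\tilde\rho$ (roughly $q\approx\tilde\rho$ or $q\approx 2\tilde\rho$ depending on whether we track distance or function value). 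This is exactly where the earlier calmness-plus-contraction mechanism enters: the calmness of $S$ relative to $\cD(\bar x)$ combined with the $\tilde\rho$-contraction yields the factor $q<1$ in the regime $\tilde\rho<1$.

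Next I would allocate the inner budget. The natural choice is to demand $\delta_t\lesssim q^t\cdot\delta_0$ so that the injected errors form a geometric series dominated by the outer contraction, guaranteeing $\EE[\varphi(x_T)-\varphi(\bar x)]=\mathcal{O}(q^T)$ plus a noise floor of order $\sigma^2/(\tilde\alpha(1-\tilde\rho)^2)$ times the per-round sample cost. I would invoke Theorem~\ref{thm:inform_model} (the greedy model-based guarantee) as the inner solver: to drive one static subproblem to accuracy $\delta_t$ requires $\mathcal{O}\bigl(\frac{L}{\tilde\alpha}\log(1/\delta_t)+\frac{\sigma^2}{\tilde\alpha\,\delta_t}\bigr)$ samples. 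The number of outer rounds to reach target $\varepsilon$ is $T=\mathcal{O}\bigl(\frac{1}{1-\tilde\rho}\log(\tfrac{\varphi(x_0)-\varphi(\bar x)}{\varepsilon})\bigr)$ since $q=1-\Omega(1-\tilde\rho)$; each round is one deployment, which yields the stated deployment count (the second logarithm absorbs the noise-floor threshold determining when the inner solves must switch from the fast $\log(1/\delta_t)$ phase to the statistically limited $\sigma^2/\delta_t$ phase). Summing the inner sample costs $\sum_{t=0}^{T}\mathcal{O}(\sigma^2/(\tilde\alpha\,\delta_t))$ over the geometric sequence $\delta_t\asymp q^t$ produces a dominant term $\mathcal{O}(\sigma^2/(\tilde\alpha(1-\tilde\rho)^2\varepsilon))$, matched against the logarithmic deployment factor $\log((1-\tilde\rho)^{-1})/(1-\tilde\rho)$, recovering the claimed sample bound.

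The main obstacle I anticipate is bookkeeping the two competing error sources so that the final constants come out cleanly in terms of $\tilde\rho$, $\tilde\alpha$, $L$, and $\sigma^2$. Specifically, one must be careful that the \emph{warm-start} benefit is correctly exploited: because each outer round begins from the previous output $x_t$, which is already within $\mathcal{O}(q^t)$ of $\bar x$, the inner solver's logarithmic term $\log(1/\delta_t)$ does not blow up, but the interaction between the inner initial gap $\varphi_{x_t}(x_t)-\min\varphi_{x_t}$ and the required accuracy $\delta_t$ must be tracked through the deviation inequality~\eqref{eqn:gap_comp} to ensure the two geometric rates $q^t$ are genuinely compatible rather than merely heuristically aligned. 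A secondary subtlety is the clipped-gradient case, where $\tilde\alpha=\mu$ comes only from the regularizer, so the strong-convexity inequalities must be applied to $\varphi$ rather than to the (possibly non-strongly-convex) losses, requiring that the one-step improvement~\eqref{eqn:one_step_improvea} be invoked with the correct constants $c_1,c_2$ that already account for the $-2\gamma\beta$ shift from the distributional bias.
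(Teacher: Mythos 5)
Your outer/inner decomposition is the right skeleton, but the specific reduction you propose---solve each static subproblem $\mathtt{St}(\cD(x_t))$ to suboptimality $\delta_t$ in the metric $\varphi_{x_t}$, then convert to the target $\varphi$ via the gap inequality \eqref{eqn:gap_comp}---cannot reach the claimed regime $\tilde\rho<1$. Carrying out your conversion gives
$\varphi(x_{t+1})-\varphi(\bar x)\leq \delta_t+\gamma\beta\|x_t-\bar x\|\|x_{t+1}-\bar x\|$, and after Young's inequality and strong convexity the term involving $x_{t+1}$ must be moved to the left-hand side, yielding an outer contraction factor of order $\frac{\tilde\rho}{1-\tilde\rho}$ (optimizing the Young weights only improves this to $4\tilde\rho^2$). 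Either way the factor is below one only when $\tilde\rho<\frac{1}{2}$. This is not a bookkeeping issue: the paper uses exactly this black-box reduction for the accelerated inner solver (Theorem~\ref{thm:transf_guarantee}), and that result is correspondingly stated only for $\rho<\frac{1}{2}$. Your appeal to the calmness-plus-contraction mechanism does give modulus $\tilde\rho$ in the regime $\tilde\rho<1$, but only for the \emph{distance} $\|x_t-\bar x\|$ under the \emph{exact} map $S$; it does not transfer to a function-value recursion driven by a $\delta_t$-approximate inner solve without incurring the $\frac{1}{1-\tilde\rho}$ amplification above.

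The paper closes this gap with a white-box argument (Algorithm~\ref{alg:MBA_stage} and the epoch bound \eqref{eqn:stage_wise_func}): it threads \eqref{eqn:gap_comp} through the inner solver's one-step Lyapunov inequality (Lemma~\ref{lem:one-step_improv_stat}) rather than through its final accuracy. In the resulting perturbed recursion \eqref{eqn:key_perturb_est}, the half of the Young split involving the current inner iterate, $\frac{\gamma\beta}{2}\|x_j-\bar x\|^2$, is absorbed into the telescoping quadratic term (shifting $\alpha_1\mapsto\alpha_1-\gamma\beta$, which stays workable whenever $\alpha_1+\alpha_2>\gamma\beta$), while only the half anchored at the deployment point, $\frac{\gamma\beta}{2}\|u_t-\bar x\|^2$, accumulates across the inner loop---and its weights sum to at most one. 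The epoch contraction factor is therefore $2\hat\Gamma_{J_t}+\tilde\rho$ rather than $\frac{\tilde\rho}{1-\tilde\rho}$, and taking $J_t$ large enough that $2\hat\Gamma_{J_t}\leq\frac{1}{2}(1-\tilde\rho)$ gives a factor $\frac{1}{2}(1+\tilde\rho)<1$ for all $\tilde\rho<1$. The remaining scheduling also differs from yours (constant $J_t$ per stage with geometrically decaying stepsizes across restart stages, rather than per-round target accuracies $\delta_t\asymp q^t$), but that choice is interchangeable; the essential missing idea is keeping the bias anchored at the deployment point inside the inner Lyapunov telescope. Your secondary concerns (warm starts, the clipped-gradient case with $\tilde\alpha=\mu$) are legitimate but are handled by the same mechanism once the recursion is set up correctly.
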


Observe that the sample complexity in Theorems~\ref{thm:inform_model} and \ref{thm:inex_model} are essentially the same. The key difference is that the number of deployments in the latter is only logarithmic in the problem parameters. As a simple illustration, 
Figure~\ref{fig:repeated_algos} depicts the performance of inexact repeated minimization on Example~\ref{ex:llustr_dumb} with the stochastic gradient method for approximately solving the inner problems. We cap the number of deployments at $20$ and adjust the number of inner iterations according to the schedule in Corollary~\ref{cor:delayed-complexities}. Experimentally, we see that online algorithms and those based on repeated minimization perform similarly on this example.

\begin{figure}[h!]
	\begin{center}
		\begin{subfigure}{.5\textwidth}
			\centering
			\includegraphics[width=\linewidth]{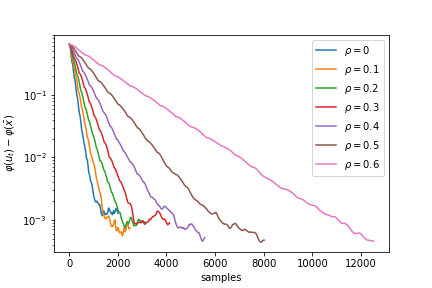}
			\caption{Functional value.}
		\end{subfigure}%
		\begin{subfigure}{.5\textwidth}
			\centering
			\includegraphics[width=\linewidth]{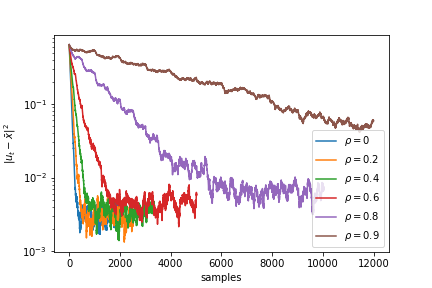}
			\caption{Square distance to equilibrium.}
		\end{subfigure}
		
	\end{center}
	\caption{Inexact repeated minimization on Example~\ref{ex:llustr_dumb}.}
	\label{fig:repeated_algos}
\end{figure}

The following theorem provides guarantees on the efficiency of repeated minimization with an accelerated stochastic gradient method used as an inexact solver.

\begin{theorem}[Informal]\label{thm:inex_accel}
 In the regime $\rho< \frac{1}{2}$,  the accelerated stochastic proximal gradient method may be used as an inexact solver within repeated minimization. The resulting method will generate a point $x$ satisfying  $\EE[\varphi(x)- \varphi(\bar x)]\leq \varepsilon$ using  
\begin{equation*}
 \mathcal{O}\left(\left(1-\tfrac{1}{2(1-\rho)}\right)^{-1}\left(\log\left(\frac{\varphi(x_0)-\varphi(\bar x)}{\varepsilon}\right)+ \log\left(\frac{\sigma^2}{(1-\frac{\rho}{1-\rho})\varepsilon\sqrt{\alpha L}}\right)\right)\right)\qquad \textrm{deployments},
\end{equation*}	
and 
\begin{equation*}
 \mathcal{O}\left(\left(1-\tfrac{1}{2(1-\rho)}\right)^{-1}\cdot\left(\sqrt{\kappa}\cdot\log\left(\frac{\varphi(x_0)-\varphi(\bar x)}{\varepsilon}\right)+\frac{\sigma^2}{(1-\tfrac{\rho}{1-\rho})\alpha\varepsilon}\right)\right)\qquad\textrm{samples}.
\end{equation*}	
	\end{theorem}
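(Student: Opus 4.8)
The plan is to view the scheme as an outer fixed-point iteration of the repeated-minimization map $S(x)=\argmin_y\{f_x(y)+r(y)\}$, where each outer step is \emph{one deployment}: having deployed $x_t$, we run the accelerated stochastic gradient method~\eqref{eqn:asg} as an inexact inner solver for the \emph{static} problem $\min_y\varphi_{x_t}(y)$, producing $x_{t+1}$. The key structural point is that, once $x_t$ is deployed, the inner loop draws all of its samples from the \emph{fixed} distribution $\cD(x_t)$; the inner problem is therefore an ordinary $\alpha$-strongly convex, $L$-smooth static stochastic problem carrying no bias, so the accelerated guarantee of Section~\ref{sec:biased_oracle} (namely Theorem~\ref{thm:informal_accel_grad} specialized to the degenerate case $\gamma=0$) applies directly: reaching inner accuracy $\EE_t[\varphi_{x_t}(x_{t+1})-\min\varphi_{x_t}]\le\delta_t$ costs $J_t=\mathcal{O}\bigl(\sqrt\kappa\,\log(b_t/\delta_t)+\sigma^2/(\alpha\delta_t)\bigr)$ samples, where $b_t=\varphi_{x_t}(x_t)-\min\varphi_{x_t}$ is the warm-started inner gap. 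Note that the restrictive regime $\rho\lesssim\kappa^{-1/4}$ required by the \emph{greedy} accelerated method plays no role here, since bias enters only through the outer loop, and plain repeated minimization contracts already for $\rho<\tfrac12$.

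The first analytical step is a per-deployment contraction of the expected target gap $a_t:=\varphi(x_t)-\varphi(\bar x)$. Since the inner solver minimizes the surrogate $\varphi_{x_t}$ rather than the target $\varphi=\varphi_{\bar x}$, I would compare the two objectives at the pair $(x_{t+1},\bar x)$ using the function-gap deviation inequality~\eqref{eqn:gap_comp}, obtaining
$$\varphi(x_{t+1})-\varphi(\bar x)\;\le\;\bigl(\varphi_{x_t}(x_{t+1})-\varphi_{x_t}(\bar x)\bigr)+\gamma\beta\,\|x_t-\bar x\|\,\|x_{t+1}-\bar x\|\;\le\;\delta_t+\gamma\beta\,\|x_t-\bar x\|\,\|x_{t+1}-\bar x\|,$$
where the second inequality uses $\varphi_{x_t}(\bar x)\ge\min\varphi_{x_t}$. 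Converting both distances into gaps through $\alpha$-strong convexity of $\varphi$ (so that $\tfrac{\alpha}{2}\|x-\bar x\|^2\le\varphi(x)-\varphi(\bar x)$) and absorbing the cross term turns this into a one-step recursion of the form $\EE a_{t+1}\le c\,\EE a_t+\tfrac{1}{1-\rho}\delta_t$. Because $\gamma\beta=\alpha\rho$, the constant $c$ is a function of $\rho$ alone that is strictly below $1$ exactly when $\rho<\tfrac12$; a careful accounting yields $c=\tfrac{1}{2(1-\rho)}$, which is the source of the deployment factor $\bigl(1-\tfrac{1}{2(1-\rho)}\bigr)^{-1}$ in the statement. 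Some care is needed in taking expectations, since the inner guarantee is only conditional on $x_t$: decomposing $x_{t+1}-\bar x=(x_{t+1}-S(x_t))+(S(x_t)-\bar x)$ and using $\EE_t\|x_{t+1}-S(x_t)\|\le\sqrt{2\delta_t/\alpha}$ together with the exact contraction $\|S(x_t)-\bar x\|\le\rho\|x_t-\bar x\|$ removes $a_{t+1}$ from the right-hand side and makes the recursion rigorous.

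With the recursion in hand, I would choose $\{\delta_t\}$ as a deterministic, geometrically decreasing sequence matched to the contraction rate $c$ and terminating at $\delta_T=\Theta((1-c)\varepsilon)$. Unrolling $\EE a_{t+1}\le c\,\EE a_t+\tfrac{1}{1-\rho}\delta_t$ then drives $\EE a_T\le\varepsilon$ after $T=\mathcal{O}\bigl(\tfrac{1}{1-c}\log(a_0/\varepsilon)\bigr)$ deployments, which gives the first displayed bound. For the sample count I would bound the warm-started inner gap by $b_t=\mathcal{O}(a_t)$ — this again follows from \eqref{eqn:gap_comp} and strong convexity, exactly as in the contraction step — so that $\log(b_t/\delta_t)=\mathcal{O}(1)$ per deployment and the ``cheap'' part of $\sum_t J_t$ is $\mathcal{O}(\sqrt\kappa\,T)=\mathcal{O}\bigl(\tfrac{\sqrt\kappa}{1-c}\log(a_0/\varepsilon)\bigr)$. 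The ``expensive'' variance part $\sum_t\sigma^2/(\alpha\delta_t)$ is a geometric sum dominated by its last term $\sigma^2/(\alpha\delta_T)=\mathcal{O}(\sigma^2/(\alpha(1-c)\varepsilon))$; collecting the two parts reproduces the stated sample complexity, with the factors $\tfrac{1}{1-2\rho}$ and $\tfrac{1}{1-\rho}$ arising from $1-c$ and from the geometric summation.

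The main obstacle is the coupled two-level budgeting: the inner accuracies $\delta_t$ (equivalently the per-deployment sample budgets $J_t$) must be chosen so that the outer gap keeps contracting geometrically — keeping the number of deployments logarithmic — while the \emph{total} sample count still matches the one-shot accelerated complexity $\mathcal{O}(\sqrt\kappa\log(1/\varepsilon)+\sigma^2/(\alpha\varepsilon))$. Over-solving each subproblem wastes samples, while under-solving breaks the outer contraction. The resolution rests on the separation exhibited above: the $\sqrt\kappa$-cost is logarithmic and is incurred once per deployment but stays $\mathcal{O}(\sqrt\kappa)$ thanks to warm-starting (via the $b_t=\mathcal{O}(a_t)$ bound), whereas the $\sigma^2/(\alpha\delta_t)$-cost telescopes to its final value because $\delta_t$ decays geometrically. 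A secondary technical point, flagged above, is the rigorous propagation of the conditional inner guarantee through the random outer recursion, which the $S(x_t)$-decomposition handles cleanly.
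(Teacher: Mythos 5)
Your proposal is correct and follows the same architecture as the paper's proof: deploy $u_t$, run the accelerated method on the now-static (hence unbiased) subproblem $\mathtt{St}(\cD(u_t))$, and transfer the inner guarantee to the target objective $\varphi$ via the function-gap deviation inequality \eqref{eqn:gap_comp} combined with strong convexity of $\varphi$ --- which is precisely the content of the paper's Theorem~\ref{thm:transf_guarantee} --- and you correctly identify that the $\rho\lesssim\kappa^{-1/4}$ restriction of the greedy accelerated method disappears because bias enters only through the outer loop. The two points of divergence are bookkeeping rather than substance. First, the paper's transfer lemma consumes an inner guarantee valid for \emph{every} comparison point $y$, carrying the extra term $\tfrac{\alpha}{2}\|x_J-y\|^2$ (imported from the static analysis of Kulunchakov--Mairal), and tunes a \emph{fixed} inner count $J=\sqrt{\kappa}\log(4/(\tfrac12-\rho))$ so that the per-deployment contraction is exactly $\tfrac{1}{2(1-\rho)}$; your version uses the weaker ``gap to the minimum'' guarantee together with $\varphi_{x_t}(\bar x)\ge\min\varphi_{x_t}$ and Young's inequality, which is simpler and reaches the same constants. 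Second, the paper keeps $J$ constant across deployments and drives the variance floor $\sigma^2/\sqrt{\alpha L}$ down by wrapping the entire stagewise procedure in the minibatch restart scheme (Algorithm~\ref{alg:restart} with $m_k=2^k$), whereas you grow the per-deployment sample budget by geometrically tightening the inner tolerance $\delta_t$; the two schedules give different algorithms but identical deployment and sample counts. The only technicality to watch in your route is that if $\delta_t$ decays at exactly the outer contraction rate, unrolling $\EE a_{t+1}\le c\,\EE a_t+\tfrac{1}{1-\rho}\delta_t$ picks up an extra factor of $T$, so the two geometric rates should be slightly separated.
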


Comparing Theorems~\ref{thm:informal_accel_grad} and \ref{thm:inex_accel}, we see that the latter takes hold in the nearly optimal parameter regime $\rho<\frac{1}{2}$ without the extra factor of $\kappa^{-1/4}$. In addition, the number of deployments  is only logarithmic in the problem parameters. The proof strategy for Theorems~\ref{thm:inex_model} and \ref{thm:inex_accel} 
again relies heavily on using the function gap inequality \eqref{eqn:gap_comp} to perturb Lyapunov type arguments.

\bigskip

Sections~\ref{sec:prox_point_prox_grad}-\ref{sec:inexactRM} formally justify the results outlined in this section.

\section{Calm and contractive methods}\label{sec:prox_point_prox_grad}
In this section, we analyze two conceptual algorithms with state-dependent distributions:
\begin{center}
	\begin{tabular}{ l l} 
		{\bf (Proximal point)} &\qquad $\displaystyle x_{t+1}=\argmin_x \,\left\{ f_{x_{t}}(x)+r(x)+\frac{1}{2\eta}\|x-x_{t}\|^2 \right\}$,  \\
		\\
		{\bf (Proximal gradient)} & \qquad $\displaystyle x_{t+1}=\prox_{\eta r}(x_{t}-\eta\nabla f_{x_{t}}(x_{t}))$.  \\ 	
	\end{tabular}
\end{center}
Thus in each iteration $t$, the two methods simply take a proximal point and proximal gradient steps, respectively, on the static problem $\texttt{St}(\cD(x_t))$. The proximal point method in the extreme case $\eta=\infty$ is called {\em repeated minimization} in \cite{perdomo2020performative}.
The paper \cite{perdomo2020performative} showed that when $r$ is the indicator function of a closed convex set and $\rho<1$, repeated minimization and the proximal gradient method converge linearly to $\bar x$. In this section, we provide a different and complimentary viewpoint based on stability to distributional shifts.

\subsection{An interlude: calmness of algorithms for $\mathtt{St}(\nu)$}
We begin with an interlude quantifying the stability of algorithmic updates on the parametric family of problems $\mathtt{St}(\nu)$ with $\nu\in \mathbb{P}$.
To this end,  let  $S_{\nu}(x)$ denote an update of a point $x$ by an algorithm on the static problem $\mathtt{St}(\nu)$. Recall the notation $f_{\nu}(x)=\ee_{z\sim \nu} \ell(x,z)$ from Section~\ref{sec:distrib_shift}. Table~\ref{table:stab_algos} lists  three basic examples that are worth keeping in mind: full minimization, proximal-point, and proximal gradient updates. A desirable property of an algorithm is that for any fixed $x$, the update map $S_{\nu}(x)$ is Lipschitz continuous with respect to variations in $\nu$. 

The following definition summarizes this stability property, relative to perturbations of a fixed distribution $\mu$, which will later correspond to the equilibrium distribution $\cD(\bar x)$.

\begin{defn}[Calmness]
	{\rm
		Fix a distribution $\mu\in \mathbb{P}$. We say that a map $S_{\nu}(x)$ is {\em $\tau$-calm relative to} $\mu$ if the estimate holds:
		$$\|S_{\nu}(x)-S_{ \mu}(x)\|\leq \tau\cdot W_1(\nu,\mu),\qquad \forall \nu\in \mathbb{P}.$$ 	 }
\end{defn}

Without further assumptions, standard algorithms can easily fail to be calm even if the loss function is convex.\footnote{Consider the univariate function $l(x,z)=\tfrac{1}{4}x^4+zx$ and define $\mu$ and $\nu_k$ to be point masses at zero and $\frac{1}{k}$, respectively. Set $x_{\mu}=\argmin f_{\mu}$ and 
	$x_{\nu_k}=\argmin f_{\nu_k}$.
	A quick computation shows $|x_{\mu}-x_{\nu_k}|/W_1(\mu,\nu_k)=k^{2/3}\to\infty$ as $k\to\infty$.}
As a remedy, we impose strong convexity assumptions on the problem data.

\begin{assumption}[Strong convexity]\label{assump:2}
	Suppose that $r$ is closed and convex and that  $f_{\mu}$ is $\alpha$-strongly convex for some distribution $\mu\in\mathbb{P}$ and some constant $\alpha>0$.
\end{assumption}

Theorem~\ref{thm:stab_minppprox} verifies that under Assumptions~\ref{assum:smoothness} and \ref{assump:2}, the three basic updates in Table~\ref{table:stab_algos} are indeed calm relative to $\mu$. The fourth column in Table~\ref{table:stab_algos} also lists the well known contraction factors of the updates  $x\mapsto S_{\mu}(x)$.

\begin{table}[t]
\renewcommand{\arraystretch}{1.3}
	\begin{center}
		\begin{tabular}{| l | c| c |  c |}
			\hline
			Updates & $\displaystyle S_{\nu}(x)=\argmin_y~ \{\ldots\}$ & $\begin{aligned} & {\rm Calmness}\\ 
			&~~\tau>0\end{aligned}$ & $\begin{aligned} &{\rm Contraction}\\ &~~q\in [0,1)\end{aligned}$ \\ 
			\hline
			Minimization  & $f_{\nu}(y)+r(y)$ & $\beta/\alpha$ & $0$ \\  
			Prox-point & $f_{\nu}(y)+r(y)+\frac{1}{2\eta}\|y-x\|^2$  & $\frac{\eta\beta}{1+\eta\alpha}$ &  $\frac{1}{1+\eta\alpha}$  \\
			Prox-gradient  & $\langle \nabla f_{\nu}(x),y\rangle+r(y)+\frac{1}{2\eta}\|y-x\|^2$ & $\eta\beta$ & $\sqrt{1-\alpha\eta}$ ~~($\eta\leq 1/L$)  \\
			\hline
		\end{tabular}
	\end{center}
	\caption{Calm and contractive updates under Assumption~\ref{assum:smoothness}, \ref{assump:2}: first column lists the names of the algorithms, second column specifies the updates, third column lists the calmness constants, the fourth columns lists the contraction factor of the map $x\mapsto S_{\mu}(x)$.}\label{table:stab_algos}
\end{table}

\begin{theorem}[Calmness of the updates]\label{thm:stab_minppprox}
	Suppose that Assumptions~\ref{assum:smoothness} and \ref{assump:2} hold. Then the updates of the repeated minimization, proximal point, and proximal gradient methods are $\tau$-stable for the constants $\tau>0$ appearing in Table~\ref{table:stab_algos}.
\end{theorem}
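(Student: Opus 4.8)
The plan is to verify the calmness constants $\tau$ in Table~\ref{table:stab_algos} for each of the three updates separately, in each case comparing the minimizer of the $\nu$-problem against the minimizer of the $\mu$-problem and exploiting strong convexity of the respective objective together with the gradient deviation bound from Lemma~\ref{lem:grads_close}. The unifying mechanism is this: each update $S_\nu(x)$ is defined as $\argmin_y \Psi_\nu(y)$ for some objective $\Psi_\nu$ that is strongly convex in $y$; perturbing $\nu$ to $\mu$ changes only a term whose gradient (in $y$) shifts by at most $\beta \cdot W_1(\nu,\mu)$ uniformly in $y$, by Lemma~\ref{lem:grads_close}. A standard stability estimate for strongly convex minimization then converts this uniform gradient perturbation into the stated bound on the distance between minimizers.

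\medskip

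\textbf{Minimization update.} Here $S_\nu(x) = \argmin_y\{f_\nu(y)+r(y)\}$. Writing $y_\nu = S_\nu(x)$ and $y_\mu = S_\mu(x)$, I would use the first-order optimality conditions: $-\nabla f_\nu(y_\nu)\in\partial r(y_\nu)$ and $-\nabla f_\mu(y_\mu)\in\partial r(y_\mu)$. Adding the two monotonicity inequalities for the convex subgradient $\partial r$ and invoking $\alpha$-strong convexity of $f_\mu$ gives
\begin{equation*}
\alpha\|y_\nu-y_\mu\|^2 \leq \langle \nabla f_\mu(y_\mu)-\nabla f_\mu(y_\nu),\, y_\nu-y_\mu\rangle \leq \langle \nabla f_\nu(y_\nu)-\nabla f_\mu(y_\nu),\, y_\nu - y_\mu\rangle.
\end{equation*}
Applying Cauchy--Schwarz to the right side and then Lemma~\ref{lem:grads_close} to bound $\|\nabla f_\nu(y_\nu)-\nabla f_\mu(y_\nu)\|\leq \beta\, W_1(\nu,\mu)$ yields $\alpha\|y_\nu-y_\mu\|\leq \beta\, W_1(\nu,\mu)$, i.e.\ $\tau=\beta/\alpha$.

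\medskip

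\textbf{Proximal point and proximal gradient updates.} These are structurally identical to the minimization case, with the strong convexity constant upgraded by the quadratic penalty. For the prox-point update, the objective $f_\nu(y)+r(y)+\frac{1}{2\eta}\|y-x\|^2$ is $(\alpha+\tfrac1\eta)$-strongly convex in $y$, and the perturbing term $f_\nu-f_\mu$ again contributes a gradient shift bounded by $\beta\,W_1(\nu,\mu)$ via Lemma~\ref{lem:grads_close}; the same subgradient argument gives $(\alpha+\tfrac1\eta)\|y_\nu-y_\mu\|\leq \beta\,W_1(\nu,\mu)$, which rearranges to $\tau=\frac{\eta\beta}{1+\eta\alpha}$. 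For the prox-gradient update, the $\nu$-dependence enters only through the constant-in-$y$ vector $\nabla f_\nu(x)$, so the objective $\langle\nabla f_\nu(x),y\rangle + r(y)+\frac{1}{2\eta}\|y-x\|^2$ is $\tfrac1\eta$-strongly convex and the gradient perturbation is exactly $\nabla f_\nu(x)-\nabla f_\mu(x)$, of norm at most $\beta\,W_1(\nu,\mu)$ by Lemma~\ref{lem:grads_close}; the stability estimate gives $\tfrac1\eta\|y_\nu-y_\mu\|\leq \beta\,W_1(\nu,\mu)$, i.e.\ $\tau=\eta\beta$. Note this last case does not even require $f_\mu$ strongly convex for the calmness bound itself---the penalty alone supplies the needed curvature.

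\medskip

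I do not expect a genuine obstacle here, since all three bounds reduce to one clean lemma (uniform gradient perturbation plus strong convexity implies minimizer stability). The one point demanding mild care is making sure Lemma~\ref{lem:grads_close} is applied at the \emph{correct} point---for minimization and prox-point the gradients are evaluated at the iterate-dependent minimizers, so one must check that the bound $\|\nabla f_\nu(w)-\nabla f_\mu(w)\|\leq\beta W_1(\nu,\mu)$ holds uniformly over all $w\in\R^d$, which is exactly what the supremum in \eqref{eqn:grad_const} provides. I would present the minimization case in full and remark that the other two follow by the identical argument with the strong convexity constant and perturbation vector adjusted as above, consolidating the bookkeeping into a single stability sublemma if space permits.
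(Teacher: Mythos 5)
Your proposal is correct and follows essentially the same route as the paper: both arguments combine first-order optimality, strong monotonicity of the ($\mu$-)objective, and the uniform gradient deviation bound of Lemma~\ref{lem:grads_close}, and both dispatch the proximal point and proximal gradient cases by rerunning the minimization argument with the strong convexity constant upgraded by $1/\eta$ and the perturbation vector adjusted. (The only blemish is a consistent sign flip in your displayed chain for the minimization case---the right-hand sides should read $\langle \nabla f_{\mu}(y_{\nu})-\nabla f_{\mu}(y_{\mu}),\,y_{\nu}-y_{\mu}\rangle$ and $\langle \nabla f_{\mu}(y_{\nu})-\nabla f_{\nu}(y_{\nu}),\,y_{\nu}-y_{\mu}\rangle$---which is immaterial once Cauchy--Schwarz is applied.)
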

\begin{proof}
	Our goal is to establish an upper bound $\sup_{x}\|S_{\nu}(x)-S_{\mu}(x)\|\leq \tau W_1(\mu,\nu)$ for the three algorithms. Consequently, let us fix a point $x$ throughout the proof and	
	define the updates corresponding to repeated minimization, proximal point, and proximal gradient updates, respectively:
	\begin{align*}
	S^{1}_{\nu}(x)&=\argmin_y\,\bigl\{f_{\nu}(y)+r(y)\bigr\},\\
	S^{2}_{\nu}(x)&=\argmin_y\,\bigl\{ f_{\nu}(y)+r(y)+\tfrac{1}{2\eta}\|y-x\|^2\bigr\},\\ 
    S^{3}_{\nu}(x)&=\prox_{\eta r}\bigl(x-\eta\nabla f_{\nu}(x)\bigr).
	\end{align*}
	We first verify the calmness constant for repeated minimization, $i=1$. To this end, define the function $\varphi_{\nu}(y):=f_{\nu}(y)+r(y)$ and its minimizer $y_{\nu}:=\argmin \varphi_{\nu}$. First-order optimality conditions guarantee the inclusions $0\in \partial \varphi_{\mu}(y_{\mu})$ and $0\in\partial \varphi_{\nu}(y_{\nu})=\nabla f_{\nu}(y_{\nu})+ \partial r(y_{\nu})$.  
In other words, there exist $\xi\in\partial r(y_\nu)$ such that $\nabla f_{\nu}(y_{\nu})+\xi=0$.

On the other hand, strong convexity of $\varphi_{\mu}$ guarantees $\alpha\|y-y'\|\leq \|w-w'\|$ for all $w\in \partial \varphi_{\mu}(y)$ and $w'\in \partial \varphi_{\mu}(y')$. We set 
$$y=y_{\mu}, \quad w=0\in\partial\varphi_{\mu}(y_{\mu}),\quad y'=y_{\nu}, \quad w'=\nabla f_{\mu}(y_{\nu})+\xi\in\partial\varphi_{\mu}(y_{\nu}).$$ 
Since $\nabla f_{\nu}(y_{\nu})+\xi=0$, we have $w'=\nabla f_{\mu}(y_\nu)-\nabla f_{\nu}(y_\nu)$ and thus can deduce 
	$$\alpha\|y_{\mu}-y_{\nu}\|\leq 
	\|\nabla f_{\mu}(y_\nu)-\nabla f_{\nu}(y_\nu)\|\leq\beta\cdot W_{1}(\mu,\nu),$$
where the last inequality follows from Lemma~\ref{lem:grads_close}. Noticing that $S_{\nu}(x)=y_{\nu}$ and $S_{\mu}(x)=y_{\mu}$, we thus arrive at the claimed estimate $\tau=\beta/\alpha$ for the case $i=1$. 

Calmness of the proximal point ($i=2$) and proximal gradient ($i=3$) updates follow  by applying what we have already proved (case $i=1$) but with the different loss functions 
\begin{align*}
\varphi^{2}_{\nu}(y)&=f_{\nu}(y) + r(y) +\frac{1}{2\eta}\|y-x\|^2,\\
\varphi^{3}_{\nu}(y)&=\langle \nabla f_{\nu}(x),y\rangle+r(y)+\frac{1}{2\eta}\|y-x\|^2,
\end{align*}
and recognizing $S^{i}_{\nu}(x)$ as the minimizers
$S^{i}_{\nu}(x)=\argmin_{y}\,\varphi^{i}_{\nu}(y)$.
\end{proof}

\subsection{Linear convergence of conceptual algorithms}
We next pass to the setting where the distribution governing the data is state-dependent.
To this end, suppose that Assumptions~\ref{assum:smoothness}, \ref{assum:perm_pred}, \ref{ass:strong_conv_perm}\ref{it:str_conv1}, \ref{ass:smoothness}\ref{assump:smooth_center} hold.
We will show that linear convergence of repeated minimization, proximal point, and proximal gradient methods is a direct consequence of the two independent phenomenon:
\begin{enumerate}
	\item {\bf (Calm)} The updates are $\tau$-calm (Theorem~\ref{thm:stab_minppprox}).
	\item {\bf (Contractive)} The algorithms are $q$-contractive on the static problem $ \mathtt{St}(\cD(\bar x))$. That is,
 $\|S_{\bar x}(x)-S_{\bar x}(y)\|\leq q\|x-y\|$ for all $x,y\in\dom r$.  
\end{enumerate}

It is elementary to see that an algorithm satisfying these two properties is automatically  $(q+\tau\gamma)$-contractive under state dependent sampling. This is the content of the following lemma.

\begin{lemma}[Calm and contractive]\label{lem:sta_contr}
	Fix a map $S_{\nu}(x)$ that is $\tau$-calm and such that  the map $x\mapsto S_{\cD(\bar x)}(x)$ is q-contractive with $\bar x$ as its fixed point. Then the estimate holds:
	$$\|S_{\cD(x)}(x)-\bar x\|\leq (q+\tau\gamma)\|x-\bar x\|\qquad \qquad\forall x\in \R^d.$$

\end{lemma}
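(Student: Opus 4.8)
The plan is to reduce the statement to a single triangle inequality that decouples the two phenomena---distributional shift and algorithmic contraction---exactly along the lines the two hypotheses suggest. The crucial first observation is that the fixed-point property lets us rewrite the target quantity as $\|S_{\cD(x)}(x)-\bar x\| = \|S_{\cD(x)}(x) - S_{\cD(\bar x)}(\bar x)\|$, since $\bar x = S_{\cD(\bar x)}(\bar x)$ by assumption. This reformulation is what makes both hypotheses directly applicable, as it expresses the distance as a difference of algorithmic updates rather than a comparison against the raw point $\bar x$.

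Next I would insert the intermediate point $S_{\cD(\bar x)}(x)$ and apply the triangle inequality:
$$\|S_{\cD(x)}(x) - S_{\cD(\bar x)}(\bar x)\| \leq \|S_{\cD(x)}(x) - S_{\cD(\bar x)}(x)\| + \|S_{\cD(\bar x)}(x) - S_{\cD(\bar x)}(\bar x)\|.$$
The first summand holds the basepoint $x$ fixed while varying the governing distribution from $\cD(x)$ to $\cD(\bar x)$; this is precisely the setting of the $\tau$-calmness hypothesis (relative to $\mu = \cD(\bar x)$), so it is bounded by $\tau\cdot W_1(\cD(x),\cD(\bar x))$. Invoking the Lipschitz distribution assumption (Assumption~\ref{assum:perm_pred}) then converts the Wasserstein distance into $\gamma\|x-\bar x\|$, yielding the bound $\tau\gamma\|x-\bar x\|$. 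The second summand holds the distribution fixed at $\cD(\bar x)$ while varying the basepoint from $x$ to $\bar x$; this is exactly the $q$-contraction hypothesis, giving the bound $q\|x-\bar x\|$.

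Adding the two bounds yields $\|S_{\cD(x)}(x)-\bar x\| \leq (q+\tau\gamma)\|x-\bar x\|$, as claimed. The argument is elementary and involves no computation; the only point requiring care is the choice of intermediate point, which must pair the \emph{fixed-basepoint / varying-distribution} term (controlled by calmness) with the \emph{fixed-distribution / varying-basepoint} term (controlled by contraction). I do not expect a genuine obstacle here---the lemma is essentially the formal statement that calmness and contraction compose additively, with the Lipschitz constant $\gamma$ of the distribution map mediating the conversion between Wasserstein distance in measure space and Euclidean distance in decision space.
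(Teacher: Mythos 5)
Your proof is correct and follows exactly the paper's argument: rewrite $\bar x = S_{\cD(\bar x)}(\bar x)$, insert the intermediate point $S_{\cD(\bar x)}(x)$, apply the triangle inequality, and bound the two terms via calmness combined with Assumption~\ref{assum:perm_pred} and via contraction, respectively. There is nothing to add.
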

\begin{proof}
	Abusing  notation slightly and setting $S_{x}(y):=S_{\cD(x)}(y)$, we compute
	\begin{align}
	\|S_{x}(x)-\bar x\|=\|S_{x}(x)-S_{\bar x}(\bar x)\|&\leq \|S_{x}(x)-S_{\bar x}(x)\|+\|S_{\bar x}(x)-S_{\bar x}(\bar x)\|\label{eqn:triangle_ab}\\
	&\leq \tau\cdot W_1(\cD(x),\cD(\bar x))+q\|x-\bar x\|\label{eqn:tauq}\\
	&\leq (q+\tau\gamma) \|x-\bar x\|.\label{eqn:eps_assum}
	\end{align}
	Here \eqref{eqn:triangle_ab} uses the triangle inequality, \eqref{eqn:tauq} follows from calmness and contractiveness, while \eqref{eqn:eps_assum} follows from Assumption~\ref{assum:perm_pred}.
	The proof is complete.
\end{proof}

Combining Lemma~\ref{lem:sta_contr} with the constants $\tau$ and $q$ specified in Table~\ref{table:stab_algos} immediately yields linear convergence guarantees for the three conceptual algorithms.

\begin{corollary}[Repeated minimization, proximal point, and proximal gradient methods]\label{cor:lin_conv}
	 Suppose that Assumptions~\ref{assum:smoothness}, \ref{assum:perm_pred}, \ref{ass:strong_conv_perm}\ref{it:str_conv1}, \ref{ass:smoothness}\ref{assump:smooth_center} hold.
Then for any point $x$, the following estimates hold:
	$$
	\frac{\|x^+-\bar x\|}{\|x-\bar x\|}\leq \begin{cases} \frac{\gamma\beta}{\alpha} &\mbox{if } x^+=\argmin_y\,\bigl\{f_{x}(y)+r(y)\bigr\}, \\[1ex]
	\frac{1+\gamma\eta\beta}{1+\eta\alpha} & \mbox{if } x^+=\argmin_y\, \bigl\{f_x(y)+r(y)+\frac{1}{2\eta}\|y-x\|^2 \bigr\},\\[1ex]
	\sqrt{1-\eta\alpha}+\gamma\eta\beta & \mbox{if } x^+=\prox_{\eta r}(x-\eta\nabla f_{x}(x))~ \textrm{and}~ \eta\leq \frac{1}{L}. \end{cases}
	$$
	Thus iterated minimization and the prox-point methods converge linearly to $\bar x$ in the regime $\rho<1$, while the prox-gradient method converges linearly to $\bar x$ in the regime $\rho<\frac{1}{2}$.
\end{corollary}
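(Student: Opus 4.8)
The plan is to apply Lemma~\ref{lem:sta_contr} three times, once for each conceptual algorithm, reading off the calmness constant $\tau$ and contraction factor $q$ directly from the corresponding row of Table~\ref{table:stab_algos}. The key observation that makes this immediate is that the hypotheses match up: Assumption~\ref{ass:strong_conv_perm}\ref{it:str_conv1} supplies $\alpha$-strong convexity of $f_{\bar x}$, which is exactly the form of Assumption~\ref{assump:2} needed to invoke Theorem~\ref{thm:stab_minppprox} with $\mu = \cD(\bar x)$; and Assumption~\ref{ass:smoothness}\ref{assump:smooth_center} supplies $L$-smoothness of $\nabla f_{\bar x}$, which is what justifies the contraction factor for the proximal gradient update under the step-size restriction $\eta \le 1/L$. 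Thus both ingredients of Lemma~\ref{lem:sta_contr} are available at the equilibrium distribution.

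First I would instantiate Lemma~\ref{lem:sta_contr} with $\mu = \cD(\bar x)$ for each update. For repeated minimization, Table~\ref{table:stab_algos} gives $\tau = \beta/\alpha$ and $q = 0$, so the contraction factor is $q + \tau\gamma = \gamma\beta/\alpha = \rho$. For the proximal point method, the table gives $\tau = \frac{\eta\beta}{1+\eta\alpha}$ and $q = \frac{1}{1+\eta\alpha}$, yielding $q + \tau\gamma = \frac{1+\gamma\eta\beta}{1+\eta\alpha}$. For the proximal gradient method (with $\eta \le 1/L$), the table gives $\tau = \eta\beta$ and $q = \sqrt{1-\alpha\eta}$, yielding $q + \tau\gamma = \sqrt{1-\eta\alpha} + \gamma\eta\beta$. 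Substituting these three expressions into the conclusion of Lemma~\ref{lem:sta_contr} produces exactly the three cases in the displayed bound, since by construction $x^+ = S_{\cD(x)}(x)$ in each case.

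It then remains to verify that each of these contraction factors is strictly less than $1$ in the claimed regime. For repeated minimization, the factor $\rho < 1$ is immediate when $\rho < 1$. For the proximal point method, $\frac{1+\gamma\eta\beta}{1+\eta\alpha} < 1$ is equivalent to $\gamma\beta < \alpha$, i.e.\ $\rho < 1$, independent of $\eta$; so this method also contracts throughout the regime $\rho < 1$. For the proximal gradient method, one must check $\sqrt{1-\eta\alpha} + \gamma\eta\beta < 1$. Using the elementary bound $\sqrt{1-\eta\alpha} \le 1 - \frac{\eta\alpha}{2}$ (valid for $\eta\alpha \le 1$), the left side is at most $1 - \frac{\eta\alpha}{2} + \gamma\eta\beta = 1 - \eta\alpha\bigl(\tfrac{1}{2} - \rho\bigr)$, which is strictly below $1$ precisely when $\rho < \tfrac{1}{2}$. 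This explains the more restrictive regime for the proximal gradient method.

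The main obstacle is essentially bookkeeping rather than genuine difficulty: one must confirm that the strong-convexity and smoothness hypotheses invoked here ($f_{\bar x}$ strongly convex, $\nabla f_{\bar x}$ Lipschitz) are exactly those under which the constants in Table~\ref{table:stab_algos} and Theorem~\ref{thm:stab_minppprox} were derived, all relative to the fixed distribution $\mu = \cD(\bar x)$. Since $\bar x$ is by definition a fixed point of $x \mapsto S_{\cD(\bar x)}(x)$ for all three updates (it solves $\mathtt{St}(\cD(\bar x))$), the contraction hypothesis of Lemma~\ref{lem:sta_contr} is satisfied with $\bar x$ as fixed point, and no additional argument is needed. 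The loose step is the inequality $\sqrt{1-\eta\alpha} \le 1 - \eta\alpha/2$, which is the only place where a nontrivial (though standard) estimate enters, and which is responsible for the threshold $\rho < \tfrac12$ in the proximal gradient case.
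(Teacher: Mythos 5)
Your proposal is correct and follows exactly the paper's route: the paper derives the corollary by plugging the calmness and contraction constants of Table~\ref{table:stab_algos} (established in Theorem~\ref{thm:stab_minppprox}) into Lemma~\ref{lem:sta_contr} at $\mu=\cD(\bar x)$, just as you do. Your added regime checks (in particular $\sqrt{1-\eta\alpha}\leq 1-\eta\alpha/2$ for the proximal gradient case) are elementary and correct, and merely make explicit what the paper leaves to the reader.
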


Corollary~\ref{cor:lin_conv} shows that iterated minimization and  proximal point methods  converge linearly in the parameter regime $\rho <1$. Moreover, as shown in \cite{perdomo2020performative}, repeated minimization can easily diverge outside this parameter regime. 
The regime of convergence  $\rho<\frac{1}{2}$ for the proximal gradient method therefore appears slightly suboptimal.
This regime can be trivially enlarged when no regularization is present, i.e., when $r=0$. Indeed, with the choice $\eta=\frac{2}{\alpha+L}$, the gradient method is $\frac{L-\alpha}{L+\alpha}$ contractive on the static problem $\mathtt{St}(\cD(\bar x))$ (e.g. \cite[Theorem 3.12]{bubeck2014convex}). Lemma~\ref{lem:sta_contr} therefore guarantees that the update $x^+$ satisfies 
$$
\frac{\|x^+-\bar x\|}{\|x-\bar x\|}\leq \frac{L-\alpha}{L+\alpha}+\frac{2\gamma\beta}{L+\alpha}.$$
Clearly, the right side is smaller than one if and only if $\rho<1$. A similar guarantee based on a different argument appears in \cite[Proposition 2.5]{mendler2020stochastic}.

In general regularized settings, the regime of convergence of the proximal gradient method can be enlarged to $\rho<1$ through a  different argument. See the forthcoming guarantees for the proximal stochastic gradient method (Theorem~\ref{thm:rate_grad-desc} with  $\sigma^2=0$).

\section{Reduction to online convex optimization}\label{sec:red_online}
This section shows that virtually any algorithm designed for online convex optimization can be applied under state-dependent sampling. We begin with a short summary of online convex optimization, and refer the reader to the surveys \cite{hazan2016intro,shalevshwartz2012online} for further  details.

\subsection{Review of online convex optimization}
The framework of online convex optimization can be interpreted as a repeated game. At each iteration~$t$, the player chooses a point $x_t$ from a convex set $K\subset\R^d$, then a convex cost function $\ell_t:K\to\R$ is revealed and the player incurs the cost $\ell_t(x_t)$.
The goal of the player is to minimize the {\em regret}, defined as the difference between the total cost $\sum_{i=1}^t\ell_i(x_i)$ incurred up to round~$t$ and the minimum cost of any fixed decision from hindsight, $\min_{x\in K}\sum_{i=1}^t \ell_i(x)$.
Here we consider a regularized version of online convex optimization and define the regret up to round~$t$ as  
\[
    R_t := \sum_{i=1}^t \bigl(\ell_i(x_i)+r(x_i)\bigr) - \min_x \sum_{i=1}^t \bigl(\ell_i(x)+r(x)\bigr) ,
\]
where $r$ is a convex function that represents the indicator function of the set~$K$ or a more general regularization. 
In this setting, we can use the online proximal gradient method,
\begin{equation}\label{eqn:online-prox-grad}
    x_{t+1} = \prox_{\eta_t r} (x_t - \eta_t \nabla\ell_t(x_t)),
\end{equation}
as proposed in \cite{DuchiSinger2009}.
Other suitable algorithms include the regularized dual average method \cite{xiao2010jmlr} and variants of FTRL (Follow-The-Regularized-Leader) method \cite{McMahan2017}.

The framework of online convex optimization is more general than stochastic optimization, since the loss function $\ell_t$ at each iteration may not follow any probability distribution and can even be adversarial.
On the other hand, it requires more strict assumptions in order to have meaningful regret analysis.
In particular, the set~$K$ needs be bounded, say with diameter~$D$, and the gradient of the cost functions are also bounded by a constant~$G$, i.e., 
$\|\nabla\ell_t(x)\|\leq G$ for all $x\in K$ and all~$t$.
Under these assumptions, the methods mentioned above have bounded regret against any reference point $x\in\dom r$:
\begin{equation}\label{eqn:uni_regret_bound}
    R_t(x) := \sum_{i=1}^t \left( \bigl(\ell_i(x_i)+r(x_i)\bigr) -  \bigl(\ell_i(x)+r(x)\bigr)\right)\leq U_t,
\end{equation}
where $U_t=\cO(GD\sqrt{t})$ for convex losses with $\eta_t=D/(G\sqrt{t})$ and $U_t=\cO((G^2/\alpha)\ln(t))$ for $\alpha$-strongly convex losses with $\eta_t=1/(\alpha t)$.
See \cite{DuchiSinger2009}, \cite{xiao2010jmlr} and \cite{McMahan2017} for the details.

\subsection{Reduction}
We claim that one can directly apply online optimization algorithms in the setting of state-dependent sampling, and derive their convergence rate from the  regret bounds \eqref{eqn:uni_regret_bound}. As usual, we let $\bar x$ be an equilibrium point with respect to $\cD(\cdot)$, set  $\varphi_{x}(y)=\EE_{z\sim\cD(x)}[\ell(y,z)]+r(y)$, and use the shorthand  $\varphi=\varphi_{\bar x}$.

\begin{theorem}[Reduction]\label{thm:reduction}
	Suppose that  Assumptions~\ref{assum:smoothness}, \ref{assum:perm_pred}, \ref{ass:strong_conv_perm}\ref{it:str_conv1} hold.
Consider an online algorithm that in each iteration $t\geq 1$ encounters the loss function  $\ell_t(x_t)=\ell(x_t, z_t)$ where $z_t\sim \mathcal{D}(x_t)$. Suppose that the uniform regret bound \eqref{eqn:uni_regret_bound} holds for each $t\geq 1$. Then the average iterate  $\hat{x}_t:=\frac{1}{t}\sum_{i=1}^t x_i$ satisfies 
\[
\EE[\varphi(\hat{x}_t)] - \varphi(\bar{x}) ~\leq~ \frac{U_t}{(1-2\rho)\,t}\qquad \forall t\geq 1.
\]
\end{theorem}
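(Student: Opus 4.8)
The plan is to feed the reference point $x=\bar x$ into the uniform regret bound \eqref{eqn:uni_regret_bound}, take expectations to turn the encountered losses into the parametrized static objectives $f_{x_i}(\cdot)$, and then use the function gap deviation inequality \eqref{eqn:gap_comp} to trade each random static gap for the target gap defining $\varphi$. Strong convexity absorbs the leftover quadratic error, and Jensen's inequality for the average iterate finishes the estimate.

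First I would instantiate \eqref{eqn:uni_regret_bound} at $x=\bar x$, obtaining $\sum_{i=1}^t\bigl[(\ell(x_i,z_i)+r(x_i))-(\ell(\bar x,z_i)+r(\bar x))\bigr]\leq U_t$. The key observation is that $x_i$ is determined by the history $z_1,\ldots,z_{i-1}$, whereas $z_i\sim\cD(x_i)$; hence taking the conditional expectation in $z_i$ replaces $\ell(x_i,z_i)$ by $f_{x_i}(x_i)$ and $\ell(\bar x,z_i)$ by $f_{x_i}(\bar x)$. Summing and taking full expectations yields
\[
\sum_{i=1}^t\EE\bigl[(f_{x_i}(x_i)+r(x_i))-(f_{x_i}(\bar x)+r(\bar x))\bigr]\leq U_t.
\]

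The heart of the argument is to convert each random gap $f_{x_i}(x_i)-f_{x_i}(\bar x)$ into the target gap $f_{\bar x}(x_i)-f_{\bar x}(\bar x)$ appearing in $\varphi=f_{\bar x}+r$. Applying \eqref{eqn:gap_comp} with $(x,y,u,v)=(x_i,\bar x,x_i,\bar x)$ gives $\bigl|(f_{x_i}(x_i)-f_{x_i}(\bar x))-(f_{\bar x}(x_i)-f_{\bar x}(\bar x))\bigr|\leq\gamma\beta\|x_i-\bar x\|^2$, so the $i$-th summand is at least $(\varphi(x_i)-\varphi(\bar x))-\gamma\beta\|x_i-\bar x\|^2$. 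Here I would invoke that $\bar x$ minimizes the $\alpha$-strongly convex function $\varphi$ (Assumption~\ref{ass:strong_conv_perm}\ref{it:str_conv1} together with the equilibrium property), which gives $\tfrac{\alpha}{2}\|x_i-\bar x\|^2\leq\varphi(x_i)-\varphi(\bar x)$ and hence $\gamma\beta\|x_i-\bar x\|^2\leq 2\rho(\varphi(x_i)-\varphi(\bar x))$. Substituting back, each summand dominates $(1-2\rho)(\varphi(x_i)-\varphi(\bar x))$, so $(1-2\rho)\sum_{i=1}^t\EE[\varphi(x_i)-\varphi(\bar x)]\leq U_t$. Finally, convexity of $\varphi$ and Jensen's inequality give $\varphi(\hat x_t)\leq\tfrac{1}{t}\sum_{i=1}^t\varphi(x_i)$, whence dividing by $(1-2\rho)t$ delivers the claimed rate.

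The main obstacle—and the source of the regime $\rho<\tfrac12$—is precisely the quadratic penalty $\gamma\beta\|x_i-\bar x\|^2$ introduced by the distributional shift in the third step: absorbing it into the strongly convex gap costs exactly the factor $2\rho$, so the whole reduction is informative only when $1-2\rho>0$. A secondary technical care-point is the filtration bookkeeping when passing from the (possibly adversarial) regret statement to its expectation, since $z_i$ is drawn from the state-dependent distribution $\cD(x_i)$ rather than from a fixed measure.
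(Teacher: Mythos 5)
Your proposal is correct and follows essentially the same route as the paper's proof: instantiate the regret bound at $\bar x$, take expectations, apply the function gap deviation inequality \eqref{eqn:gap_comp} to pass from $\varphi_{x_i}(x_i)-\varphi_{x_i}(\bar x)$ to $\varphi(x_i)-\varphi(\bar x)$ at the cost of $\gamma\beta\|x_i-\bar x\|^2$, absorb that term via strong convexity into the factor $1-2\rho$, and finish with Jensen. Your additional care with the conditional-expectation bookkeeping (that $x_i$ is measurable with respect to the history while $z_i\sim\cD(x_i)$) is a point the paper glosses over, but it is the same argument.
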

\begin{proof}
Taking the expectation in \eqref{eqn:uni_regret_bound} and setting $x=\bar x$ yields
\[
    \EE\left[R_t(\bar x)\right] = \sum_{i=1}^t \EE\bigl[\varphi_{x_i}(x_i) - \varphi_{x_i}(\bar x)\bigr] \leq U_t.
\]
The function gap inequality \eqref{eqn:gap_comp} in turn implies
\[
    \varphi(x_i)-\varphi_{\bar x}(\bar{x}) \leq \varphi_{x_i}(x_i) - \varphi_{x_i}(\bar{x}) + \gamma\beta\cdot\|x_i-\bar{x}\|^2.
\]
Strong convexity implies $\|x_i-\bar{x}\|^2 \leq \frac{2}{\alpha}(\varphi(x_i)-\varphi(\bar{x}))$ and hence    
\[
    (1-2\rho) \bigl(\varphi(x_i)-\varphi(\bar{x})\bigr) \leq \varphi_{x_i}(x_i) - \varphi_{x_i}(\bar{x}).
\]
Therefore, in the setting $\rho<1/2$, we have
\[
\EE\biggl[ \sum_{i=1}^t \bigl(\varphi(x_i) - \varphi(\bar x)\bigr) \biggr]
~\leq~ \frac{1}{1-2\rho} \EE[R_t(\bar{x})] ~\leq~ \frac{1}{1-2\rho} U_t .
\]
Dividing both sides by~$t$ and using convexity of $\varphi$ completes the proof.
\end{proof}

Known regret bounds for the proximal gradient \cite{DuchiSinger2009}, regularized dual averaging \cite{xiao2010jmlr}, and FTRL  \cite{McMahan2017} algorithms  directly yield a converge rate $O(\log(t)/(1-2\rho)t)$ under state-dependent sampling and under the assumptions used in the aforementioned papers.  

Though Theorem~\ref{thm:reduction} is attractive in its generality, it is far from satisfactory. Indeed, the framework of online convex optimization is far more general than stochastic optimization, since the loss function $\ell_t$ at each iteration may not follow any probability distribution and can even be adversarial.
As a consequence, online convex optimization requires stringent assumptions in order to have meaningful regret analysis, most notably boundedness of the encountered gradients of the loss functions. Moreover smoothness of the loss function does not play a significant role. 
We will see in the next section that much finer convergence guarantees hold for stochastic optimization under state-dependent distributions.

\section{Stochastic gradient methods}
\label{sec:biased_oracle}
In this section, we directly analyze the (accelerated) stochastic proximal gradient method, without relying on regret bounds. The only consequence of state-dependent distribution that will be relevant is that the stochastic estimator of the gradient is biased and the bias is bounded as in~\eqref{eqn:grad_const2}. 
As a result, we work with a broader model with a biased stochastic gradient oracle, formalized next in Assumption~\ref{ass:bias_var}.

\begin{assumption}[Biased stochastic gradient oracle]\label{ass:bias_var}
Consider the optimization problem 
$$\min_{x} ~\varphi(x):=f(x)+r(x),$$
and denote its minimizer by  $\bar x$.
We suppose that there exist constants $L,B,\alpha>0$ satisfying the following.
\begin{enumerate}[label=(\alph*)]
\item The function $f\colon\R^d\to\R$ is $\alpha$-strongly convex  and differentiable with $L$-Lipschitz gradient.
\item The function $r\colon\R^d\to\R\cup\{\infty\}$ is closed and convex. 
\item\label{it:oracle3} For every point $x$, we may draw a realization of a random vector  $g(x)\in \R^d$ satisfying the bias/variance bounds:
$$\|\EE[g(x)]-\nabla f(x)\|\leq B \|x-\bar x\|\qquad \textrm{and}\qquad \EE\|g(x)-\EE[g(x)]\|^2\leq \sigma^2.$$
\end{enumerate}
Throughout, we define the condition number $\kappa:=\frac{L}{\alpha}$ and set $\rho:=\frac{B}{\alpha}$.
\end{assumption}

It is clear that Assumptions~\ref{assum:smoothness}, \ref{assum:perm_pred}, \ref{ass:strong_conv_perm}\ref{it:str_conv1} and \ref{ass:smoothness}\ref{assump:smooth_center} under the setting of state-dependent distributions imply Assumption~\ref{ass:bias_var} with $g(x):=\nabla \ell(x,z)$ where $z\sim\cD(x)$, and $B:=\gamma\beta$. Consequently, all convergence guarantees presented in this section can be interpreted under the setting of state-dependent distributions by simply setting $B=\gamma\beta$.

The main consequence of the particular form of the bias in Assumption~\ref{ass:bias_var} \ref{it:oracle3} is summarized in the following lemma. In essence, the lemma states that a strong convexity type inequality holds between $x$ and $\bar x$, with $\nabla f(x)$ replaced by $\EE[g(x)]$.

\begin{lemma}[Approximate subgradient inequality]\label{lem:approx_subgrad_ineq}
Under Assumption~\ref{ass:bias_var}, it holds that
	$$f(\bar x)\geq f(x)+\langle \EE[g(x)],\bar x-x\rangle+\frac{\alpha(1-2\rho)}{2}\|x-\bar x\|^2\qquad \forall x\in\R^d.$$
\end{lemma}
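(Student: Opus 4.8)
The plan is to combine the $\alpha$-strong convexity of $f$ with the bias bound from Assumption~\ref{ass:bias_var}\ref{it:oracle3}, using Cauchy--Schwarz and Young's inequality to absorb the bias-induced error into the quadratic term. First I would write down the strong convexity inequality applied at the pair $(x,\bar x)$, namely
\begin{equation*}
f(\bar x)\geq f(x)+\langle \nabla f(x),\bar x-x\rangle+\frac{\alpha}{2}\|x-\bar x\|^2.
\end{equation*}
This is the exact inequality we want, except that it features the true gradient $\nabla f(x)$ rather than the oracle mean $\EE[g(x)]$. The entire task is therefore to account for the discrepancy between these two vectors.

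Next I would add and subtract $\EE[g(x)]$ inside the inner product, splitting it as $\langle \nabla f(x),\bar x-x\rangle = \langle \EE[g(x)],\bar x-x\rangle + \langle \nabla f(x)-\EE[g(x)],\bar x-x\rangle$. The first term is precisely the one appearing in the target inequality, so the goal reduces to lower-bounding the error term $\langle \nabla f(x)-\EE[g(x)],\bar x-x\rangle$. Applying Cauchy--Schwarz gives
\begin{equation*}
\langle \nabla f(x)-\EE[g(x)],\bar x-x\rangle \geq -\|\nabla f(x)-\EE[g(x)]\|\cdot\|\bar x-x\|,
\end{equation*}
and the bias bound $\|\EE[g(x)]-\nabla f(x)\|\leq B\|x-\bar x\|$ turns the right-hand side into $-B\|x-\bar x\|^2$. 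Recalling $\rho=B/\alpha$, this is exactly $-\alpha\rho\|x-\bar x\|^2$.

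Finally I would assemble the pieces: substituting back yields
\begin{equation*}
f(\bar x)\geq f(x)+\langle \EE[g(x)],\bar x-x\rangle+\frac{\alpha}{2}\|x-\bar x\|^2-\alpha\rho\|x-\bar x\|^2,
\end{equation*}
and collecting the two quadratic terms gives the coefficient $\frac{\alpha}{2}-\alpha\rho=\frac{\alpha(1-2\rho)}{2}$, which is the claimed inequality. I do not expect any genuine obstacle here: the argument is a direct one-pass estimate. The only point requiring the slightest care is the bookkeeping on the factor of two, making sure the $B\|x-\bar x\|^2$ term (not $\frac{1}{2}B\|x-\bar x\|^2$) is correctly subtracted so that the final coefficient is $\frac{\alpha(1-2\rho)}{2}$ rather than $\frac{\alpha(1-\rho)}{2}$. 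Note also that the variance bound plays no role in this lemma, as the statement concerns only the mean $\EE[g(x)]$; only the bias portion of Assumption~\ref{ass:bias_var}\ref{it:oracle3} is used.
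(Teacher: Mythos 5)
Your proof is correct and follows essentially the same route as the paper's: start from the $\alpha$-strong convexity inequality, split off the bias term $\langle \nabla f(x)-\EE[g(x)],\bar x-x\rangle$, and bound it via Cauchy--Schwarz and the oracle bias assumption to get $-B\|x-\bar x\|^2=-\alpha\rho\|x-\bar x\|^2$, yielding the coefficient $\frac{\alpha(1-2\rho)}{2}$. (The mention of Young's inequality in your plan is superfluous---Cauchy--Schwarz alone suffices, exactly as in your execution and in the paper.)
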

\begin{proof}
	Strong convexity of $f$ guarantees 
	\begin{align*}
	f(\bar x)&\geq f(x)+\langle \nabla f(x),\bar x-x\rangle+\frac{\alpha}{2}\|x-\bar x\|^2\\
	&=f(x)+\langle \EE[g(x)],\bar x-x\rangle+\frac{\alpha}{2}\|x-\bar x\|^2 +\langle \nabla f(x)-\EE[g(x)],\bar x-x\rangle.
	\end{align*}
The Cauchy-Schwarz inequality in turn yields the estimate 
	$$\langle \nabla f(x)- \EE[g(x)],\bar x-x\rangle\geq -\|\EE[g(x)]-\nabla f(x)\|\cdot \|\bar x-x\|\geq -B \|x-\bar x\|^2.$$
Combining the two inequalities above yields the desired result.
\end{proof}

\subsection{Stochastic proximal gradient method}
We are now ready to analyze the stochastic proximal gradient method, summarized in Algorithm~\ref{alg:sgd}, under Assumption~\ref{ass:bias_var}.

\begin{algorithm}[H]
{\bf Input:} initial $x_0$ and sequence $\{\eta_t\}_{t=0}^T\subset (0,\infty)$.

{\bf Step} $t=0,1,\ldots, T$: 
\begin{equation*}
\begin{aligned}
&\textrm{Sample~} g_t=g(x_t)\\
& \textrm{Set~} x_{t+1}=\prox_{\eta_t r}(x_t-\eta_t g_t)
\end{aligned}
\end{equation*}

 \caption{Stochastic gradient method}\label{alg:sgd}
\end{algorithm}

Throughout, we let $\EE_t[\cdot]$ be the conditional expectation given the iterates $x_0,\ldots,x_{t}$. The following lemma provides a key recursion quantifying the one-step progress of the algorithm. The argument closely parallels the proof in the unbiased setting \cite{ghadimi2012optimal}.

\begin{lemma}[Key recursion]\label{eqn:lem_key_rec_sgd}
Suppose that Assumption~\ref{ass:bias_var} holds and the step-size sequence satisfies $\eta_t<\frac{1}{L}$.  Then the iterates $\{x_t\}$ generated by Algorithm~\ref{alg:sgd} satisfy
\begin{align*}
2\eta_t\EE_t[\varphi(x_{t+1})-\varphi(\bar x)]&\leq \left(1-\alpha\eta_t(1-2\rho)+\frac{\eta_t^2 B^2}{1-\eta_t L}\right)\|x_t-\bar x\|^2  -  \EE_t\|x_{t+1}-\bar x\|^2 +\frac{\eta_t^2 \sigma^2}{1-\eta_t L}.
\end{align*}
\end{lemma}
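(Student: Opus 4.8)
The plan is to establish the one-step recursion by combining the standard prox-gradient descent argument with the approximate subgradient inequality of Lemma~\ref{lem:approx_subgrad_ineq}. The starting point is the prox-step defining $x_{t+1}=\prox_{\eta_t r}(x_t-\eta_t g_t)$, which by the optimality conditions of the proximal subproblem yields that $\frac{1}{\eta_t}(x_t-x_{t+1})-g_t\in\partial r(x_{t+1})$. Equivalently, $x_{t+1}$ minimizes the strongly convex model $\langle g_t, y\rangle + r(y) + \frac{1}{2\eta_t}\|y-x_t\|^2$, so for any reference point $y$ (eventually $y=\bar x$) the prox-inequality gives
\begin{equation*}
\langle g_t, x_{t+1}-y\rangle + r(x_{t+1}) - r(y) \leq \frac{1}{2\eta_t}\|x_t-y\|^2 - \frac{1}{2\eta_t}\|x_{t+1}-y\|^2 - \frac{1}{2\eta_t}\|x_{t+1}-x_t\|^2.
\end{equation*}

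Next I would bound the loss part. Using $L$-smoothness of $f$, I write $f(x_{t+1})\leq f(x_t)+\langle\nabla f(x_t),x_{t+1}-x_t\rangle+\frac{L}{2}\|x_{t+1}-x_t\|^2$, and I split the linear term by inserting $g_t$: the term $\langle g_t, x_{t+1}-x_t\rangle$ is controlled by the prox-inequality above (setting $y=\bar x$ and rearranging so that $x_t$ appears), while the residual $\langle\nabla f(x_t)-g_t, x_{t+1}-x_t\rangle$ is the noise/bias term to be handled by Young's inequality. The idea is to absorb the $\frac{L}{2}\|x_{t+1}-x_t\|^2$ term against the $-\frac{1}{2\eta_t}\|x_{t+1}-x_t\|^2$ coming from the prox-inequality, which is exactly where the hypothesis $\eta_t<\frac{1}{L}$ is used to keep a positive leftover coefficient $\frac{1-\eta_t L}{2\eta_t}\|x_{t+1}-x_t\|^2$. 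Against this positive term I would apply Young's inequality to the cross term $\langle\nabla f(x_t)-\EE_t[g_t],x_{t+1}-x_t\rangle$, producing the factor $\frac{\eta_t}{1-\eta_t L}\|\nabla f(x_t)-\EE_t[g_t]\|^2$, and then invoke the bias bound $\|\EE_t[g_t]-\nabla f(x_t)\|\leq B\|x_t-\bar x\|$ to convert this into $\frac{\eta_t B^2}{1-\eta_t L}\|x_t-\bar x\|^2$; the zero-mean noise $g_t-\EE_t[g_t]$ contributes the variance term $\frac{\eta_t\sigma^2}{1-\eta_t L}$ after taking conditional expectation.

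The crucial step — and the place where the decision-dependent structure enters — is how I treat the term $\langle\EE_t[g_t], \bar x - x_t\rangle$ that arises when comparing $f(x_t)$ to $f(\bar x)$. Rather than using plain strong convexity of $f$ (which would give only the coefficient $\alpha$), I apply Lemma~\ref{lem:approx_subgrad_ineq}, which states $f(\bar x)\geq f(x_t)+\langle\EE_t[g_t],\bar x-x_t\rangle+\frac{\alpha(1-2\rho)}{2}\|x_t-\bar x\|^2$. This is precisely what yields the contraction coefficient $1-\alpha\eta_t(1-2\rho)$ on $\|x_t-\bar x\|^2$ and simultaneously lets me form the gap $\varphi(x_{t+1})-\varphi(\bar x)=[f(x_{t+1})+r(x_{t+1})]-[f(\bar x)+r(\bar x)]$ on the left. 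Assembling all pieces, multiplying through by $2\eta_t$, and taking the conditional expectation $\EE_t[\cdot]$ (which linearizes the noise and makes $\|x_t-\bar x\|^2$ deterministic) should collapse the estimate into the claimed recursion.

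I expect the main obstacle to be the careful bookkeeping of the cross term and the correct partition of $g_t$ into its bias component $\EE_t[g_t]-\nabla f(x_t)$ and its zero-mean noise component $g_t-\EE_t[g_t]$, so that Young's inequality is applied only to the bias (which is $\EE_t$-measurable) while the variance term is produced from the noise via $\EE_t\|g_t-\EE_t[g_t]\|^2\leq\sigma^2$. Keeping the coefficient $\frac{1}{1-\eta_t L}$ consistent across both contributions, and ensuring that the leftover $-\frac{1-\eta_t L}{2\eta_t}\|x_{t+1}-x_t\|^2$ is exactly large enough to absorb the Young penalty rather than leaving a stray positive term, is the delicate part; everything else is routine convexity algebra.
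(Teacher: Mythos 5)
Your overall skeleton is exactly the paper's: upper-bound $\varphi(x_{t+1})$ via $L$-smoothness, insert $g_t$ into the linear term, invoke the three-point inequality for the $\eta_t^{-1}$-strongly convex prox subproblem with reference point $\bar x$, choose the Young parameter $\delta_t=\eta_t/(1-\eta_t L)$ so that the leftover $\frac{1-\eta_t L}{2\eta_t}\|x_{t+1}-x_t\|^2$ is exactly consumed, and replace plain strong convexity by Lemma~\ref{lem:approx_subgrad_ineq} to get the $(1-2\rho)$ factor. All of that matches.

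The gap is in your treatment of the deviation term $\langle \nabla f(x_t)-g_t,\,x_{t+1}-x_t\rangle$. You propose to split $g_t$ into bias $\EE_t[g_t]-\nabla f(x_t)$ and noise $g_t-\EE_t[g_t]$, apply Young's inequality only to the bias part, and claim the noise part ``contributes the variance term after taking conditional expectation.'' That last step fails as stated: $x_{t+1}=\prox_{\eta_t r}(x_t-\eta_t g_t)$ depends on $g_t$, so $\EE_t\langle g_t-\EE_t[g_t],\,x_{t+1}-x_t\rangle$ is \emph{not} zero, and there is no mechanism in your outline that converts it into $\tfrac{\eta_t\sigma^2}{1-\eta_t L}$. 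If you instead apply Young's inequality separately to the noise part, you must split the single absorbing term $\frac{1-\eta_t L}{2\eta_t}\|x_{t+1}-x_t\|^2$ between two penalties and lose a factor of $2$ in both the $B^2$ and $\sigma^2$ coefficients; if you compare $x_{t+1}$ against the noiseless update $\prox_{\eta_t r}(x_t-\eta_t\EE_t[g_t])$ via nonexpansiveness, you get a bound of the form $\eta_t\sigma^2$, which again does not match the claimed constant. The paper's fix is simpler: apply Young's inequality once to the \emph{full} deviation $g_t-\nabla f(x_t)$, keep the resulting $\frac{\delta_t}{2}\|g_t-\nabla f(x_t)\|^2$ intact until after taking $\EE_t$, and only then use the exact Pythagorean identity $\EE_t\|g_t-\nabla f(x_t)\|^2=\EE_t\|g_t-\EE_t[g_t]\|^2+\|\EE_t[g_t]-\nabla f(x_t)\|^2\leq \sigma^2+B^2\|x_t-\bar x\|^2$. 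This one-line reordering of your argument yields precisely the stated coefficients; the rest of your plan goes through unchanged.
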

\begin{proof}
Since by Assumption~\ref{ass:bias_var} the gradient $\nabla f$ is $L$-Lipschitz, we have
\begin{align*}
\varphi(x_{t+1}) &=f(x_{t+1}) + r(x_{t+1}) \notag\\
&\leq f(x_{t}) + \langle \nabla f(x_t), x_{t+1}- x_{t}\rangle + r(x_{t+1}) + \frac{L}{2} \|x_{t+1} - x_t\|^2 \\
&=  f(x_{t}) + \langle g_t, x_{t+1} - x_t\rangle + r(x_{t+1}) + \frac{L}{2} \|x_{t+1} - x_t\|^2 - \langle g_t - \nabla f(x_t),  x_{t+1}-x_t\rangle.
\end{align*}
Let $\delta_t>0$ be an arbitrary positive sequence.
We use Young's inequality to bound the last inner-product term in the above inequality, which results in
\begin{align}
\varphi(x_{t+1}) 
&\leq  f(x_{t}) + \langle g_t, x_{t+1} - x_t\rangle + r(x_{t+1}) + \frac{L}{2} \|x_{t+1} - x_t\|^2  \notag\\
&\quad + \frac{\delta_t}{2}\| g_t - \nabla f(x_t)\|^2 + \frac{1}{2\delta_t}\| x_{t+1} - x_t\|^2 \notag\\
&=  f(x_{t}) + \langle g_t, x_{t+1} - x_t\rangle + r(x_{t+1}) + \frac{1}{2\eta_t} \|x_{t+1} - x_t\|^2  \notag\\
&\quad + \frac{\delta_t}{2}\| g_t - \nabla f(x_t)\|^2 + \frac{\delta_t^{-1}-\eta^{-1}_t+L}{2}\| x_{t+1} - x_t\|^2\notag\\
&\leq f(x_t) +\langle g_t,\bar x-x_t\rangle + r(\bar x) + \frac{1}{2\eta_t}\|\bar x-x_t\|^2  - \frac{1}{2\eta_t} \|x_{t+1} - \bar x\|^2\notag  \\
&\quad + \frac{\delta_t}{2}\| g_t - \nabla f(x_t)\|^2 + \frac{\delta_t^{-1}-\eta^{-1}_t+L}{2}\| x_{t+1} - x_t\|^2, \label{eqn:three_point_ineq}
\end{align}
where the last inequality~\eqref{eqn:three_point_ineq} follows from the fact that $x_{t+1}$ is by construction the minimizer of the $\eta^{-1}_t$-strongly convex function $f(x_t) +\langle g_t,\cdot-x_t\rangle + r + \frac{1}{2\eta_t}\|\cdot-x_t\|^2$.

Next set $\delta_t:= \frac{\eta_t}{1-\eta_t L}$ to make the last term of \eqref{eqn:three_point_ineq} zero.
Taking conditional expectations and using Lemma~\ref{lem:approx_subgrad_ineq}, we deduce 
\begin{align}
\EE_t[\varphi(x_{t+1})]
&\leq  \varphi(\bar x)  -\frac{\alpha(1-2\rho)}{2}\|x_t-\bar x\|^2 \notag \\
&\quad + \frac{1}{2\eta_t}\|\bar x-x_t\|^2  - \frac{1}{2\eta_t} \EE_t\|\bar x-x_{t+1}\|^2 +\frac{\delta_t}{2}\EE_t\| g_t - \nabla f(x_t)\|^2.\label{eqn:getit_var}
\end{align}
Next, we upper bound the last term in \eqref{eqn:getit_var} as follows:
$$\EE_t\| g_t - \nabla f(x_t)\|^2=\EE\|g_t-\EE[g_t]\|^2+\|\EE[g_t]-\nabla f(x_t)\|^2\leq \sigma^2+B^2\|x_t-\bar x\|^2.$$
Combining this estimate with \eqref{eqn:getit_var} and grouping like terms completes the proof.
\end{proof}

With Lemma~\ref{eqn:lem_key_rec_sgd} at hand, obtaining convergence guarantees under various choices of the control sequence $\eta_t>0$ is completely standard. We highlight one such result based on using a constant sequence.

\begin{theorem}[Constant step size]\label{thm:rate_grad-desc}
Suppose Assumption~\ref{ass:bias_var} holds.
Let $x_t$ be the iterates generated by Algorithm~\ref{alg:sgd} with a fixed parameter $\eta>0$. Then the following are true.
\begin{enumerate}
\item Suppose we are in the regime $\rho<1$ and set $\hat \alpha:=\alpha-B$. Then with the parameter $\eta\leq \frac{1}{B^2/\hat\alpha+L}$, the estimate holds:
\begin{equation}\label{eqn:distance_decay_prox0}
\EE\|x_{t}-\bar x\|^2\leq \left(1-\frac{2\eta\hat\alpha}{3}\right)^{t}\EE\|x_0-\bar x\|^2 +\frac{2\sigma^2 \eta}{\hat\alpha}.
\end{equation}

\item  Suppose we are in the regime $\rho<\frac{1}{2}$ and set $\hat \alpha:=\alpha-2B$. Then with the parameter $\eta\leq \frac{1}{2(B^2/\hat\alpha+L)}$,  the estimate holds:
 \begin{equation}\label{eqn:sgd_const}
\EE [ \varphi\left(\hat x_t\right)-\varphi(\bar x)]\leq  2\left(1- \frac{\eta\hat\alpha}{2}\right)^t\left( \varphi(x_0)-\varphi(\bar x)\right)+\sigma^2\eta.
\end{equation}
	where the average iterate is defined recursively by $\hat x_t=(1-\frac{\hat\alpha\eta}{2})\hat x_{t-1}+\frac{\hat\alpha\eta}{2} x_t$. 	
\end{enumerate}
\end{theorem}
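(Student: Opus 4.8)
The plan is to derive both statements directly from the key recursion in Lemma~\ref{eqn:lem_key_rec_sgd}, specialized to the constant step size $\eta_t\equiv\eta$ and then unrolled. The overall structure is: (i) simplify the one-step recursion under the stated bound on $\eta$; (ii) recognize it as a linear recursion of the form $a_{t+1}\leq q\, a_t + c$, where $a_t$ measures progress and $q\in(0,1)$ is a contraction factor; and (iii) sum the geometric series to obtain the stated ``bias plus decaying transient'' bounds.

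\textbf{Part 1 (distance bound, $\rho<1$).} First I would discard the nonnegative term $2\eta\,\EE_t[\varphi(x_{t+1})-\varphi(\bar x)]$ on the left of Lemma~\ref{eqn:lem_key_rec_sgd} (it is nonnegative since $\bar x$ minimizes $\varphi$), leaving
\[
\EE_t\|x_{t+1}-\bar x\|^2\leq\Bigl(1-\alpha\eta(1-2\rho)+\tfrac{\eta^2 B^2}{1-\eta L}\Bigr)\|x_t-\bar x\|^2+\tfrac{\eta^2\sigma^2}{1-\eta L}.
\]
The crux is to show that the stated step-size bound $\eta\leq\frac{1}{B^2/\hat\alpha+L}$, with $\hat\alpha=\alpha-B$, forces the contraction factor below $1-\frac{2\eta\hat\alpha}{3}$. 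The hard part will be bookkeeping the algebra: the chosen $\eta$ guarantees $\frac{\eta B^2}{1-\eta L}\leq\hat\alpha$, so $\frac{\eta^2 B^2}{1-\eta L}\leq\eta\hat\alpha$; combined with $\alpha(1-2\rho)=\alpha-2B=\hat\alpha-B$ one bounds the factor by $1-\eta(\hat\alpha-B)+\eta\hat\alpha$, which is too crude, so a sharper accounting of the $(1-\eta L)^{-1}$ term and the relation $\alpha-2B=\hat\alpha-B$ is needed to land the clean constant $\tfrac{2}{3}$. I expect this constant-chasing to be the main obstacle; once the factor $q=1-\frac{2\eta\hat\alpha}{3}$ and additive term $c=\frac{\eta^2\sigma^2}{1-\eta L}$ are in hand, taking full expectations, iterating the recursion, and summing $\sum_{i\geq0}q^i=\frac{1}{1-q}=\frac{3}{2\eta\hat\alpha}$ yields the steady-state term $c\cdot\frac{3}{2\eta\hat\alpha}\leq\frac{2\sigma^2\eta}{\hat\alpha}$ (again using $\eta L$ small), giving \eqref{eqn:distance_decay_prox0}.

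\textbf{Part 2 (function-value bound, $\rho<1/2$).} Here I would keep the function-gap term on the left of Lemma~\ref{eqn:lem_key_rec_sgd} rather than discarding it. With $\hat\alpha=\alpha-2B$ and the tighter step-size bound $\eta\leq\frac{1}{2(B^2/\hat\alpha+L)}$, the coefficient $1-\alpha\eta(1-2\rho)+\frac{\eta^2B^2}{1-\eta L}$ should be shown to be at most $1-\frac{\eta\hat\alpha}{2}$, noting $\alpha(1-2\rho)=\hat\alpha$. Writing the recursion as $2\eta\,\EE_t[\varphi(x_{t+1})-\varphi(\bar x)]\leq(1-\tfrac{\eta\hat\alpha}{2})\EE\|x_t-\bar x\|^2-\EE\|x_{t+1}-\bar x\|^2+\tfrac{\eta^2\sigma^2}{1-\eta L}$, I would then introduce the recursively averaged iterate $\hat x_t=(1-\tfrac{\hat\alpha\eta}{2})\hat x_{t-1}+\tfrac{\hat\alpha\eta}{2}x_t$ and use convexity of $\varphi$ together with a weighted telescoping. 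The key algebraic identity is that this particular weighting makes the $-\EE\|x_{t+1}-\bar x\|^2$ and $(1-\tfrac{\eta\hat\alpha}{2})\EE\|x_t-\bar x\|^2$ terms collapse under the geometric weights $(1-\tfrac{\eta\hat\alpha}{2})^{-t}$, leaving only the boundary term at $t=0$ (which one converts to $\varphi(x_0)-\varphi(\bar x)$ via $\frac{\hat\alpha}{2}\|x_0-\bar x\|^2\leq\varphi(x_0)-\varphi(\bar x)$, using $\hat\alpha$-strong convexity of $\varphi$) plus the accumulated noise $\sum\eta^2\sigma^2$. Dividing through by the normalizing weight and bounding $\frac{1}{1-\eta L}\leq 2$ produces the factor-of-$2$ transient and the $\sigma^2\eta$ steady-state term in \eqref{eqn:sgd_const}. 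The main subtlety will be verifying that the recursive definition of $\hat x_t$ matches the telescoping weights exactly, so that Jensen's inequality applies to $\varphi(\hat x_t)$ against the weighted average of the $\varphi(x_i)$.
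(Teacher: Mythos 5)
Your Part~2 is essentially the paper's argument: keep the function-gap term, use the step-size bound to show $\frac{\eta^2B^2}{1-\eta L}\leq\frac{\hat\alpha\eta}{2}$ so the coefficient of $\EE\|x_t-\bar x\|^2$ drops to $1-\frac{\eta\hat\alpha}{2}$, and then apply the weighted-averaging/telescoping argument (this is exactly what Corollary~\ref{cor:const_param} packages), converting the boundary term $\frac{\hat\alpha}{2}\|x_0-\bar x\|^2$ into a function gap via strong convexity. That part is fine.

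Part~1, however, contains a genuine gap, and it is not the ``constant-chasing'' you anticipate. Your very first step---discarding the nonnegative term $2\eta\,\EE_t[\varphi(x_{t+1})-\varphi(\bar x)]$ from the left side of Lemma~\ref{eqn:lem_key_rec_sgd}---destroys the result in exactly the regime that Part~1 is designed to cover. After discarding, the contraction factor is $1-\alpha\eta(1-2\rho)+\frac{\eta^2B^2}{1-\eta L}$, and since $\alpha(1-2\rho)=\alpha-2B$, this factor is at least $1$ whenever $\rho\geq\frac{1}{2}$. No sharper accounting of the $(1-\eta L)^{-1}$ term can produce a factor below $1$, let alone $1-\frac{2\eta\hat\alpha}{3}$ with $\hat\alpha=\alpha-B>0$; you yourself computed the bound $1+\eta B$ and correctly sensed it was unusable, but the fix is not finer algebra. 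The missing idea (and the paper's move) is the opposite of discarding: lower-bound the left side by $2\eta\cdot\frac{\alpha}{2}\EE_t\|x_{t+1}-\bar x\|^2$ using $\alpha$-strong convexity of $\varphi$, move it to the right, and divide by $1+\eta\alpha$. Since $\frac{1-a}{1+b}=1-\frac{a+b}{1+b}$, this upgrades the effective contraction from $\alpha\eta(1-2\rho)$ to $\frac{2\alpha\eta(1-\rho)-\frac{\eta^2B^2}{1-\eta L}}{1+\eta\alpha}$, and $2\alpha(1-\rho)=2\hat\alpha>0$ precisely when $\rho<1$. The step-size condition $\eta\leq\frac{1}{B^2/\hat\alpha+L}$ then gives $\frac{\eta^2B^2}{1-\eta L}\leq\alpha\eta(1-\rho)$, the factor becomes $1-\frac{\alpha\eta(1-\rho)}{1+\eta\alpha}\leq 1-\frac{2\eta\hat\alpha}{3}$ (using $\eta\alpha\leq\frac{1}{2}$), and unrolling the recursion finishes the proof. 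In short: the $\rho<1$ regime of Part~1 is only reachable because the function-gap term is \emph{used}, not thrown away.
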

\begin{proof}
Observe first that the assumption $\eta\leq\frac{1}{2L}$ implies $\frac{\eta^2\sigma^2}{1-\eta L}\leq 2\eta^2\sigma^2$. Therefore Lemma~\ref{eqn:lem_key_rec_sgd} yields the one step improvement:
\begin{equation}\label{eqn:,ain_ineq_we need}
2\eta\EE[\varphi(x_{t+1})-\varphi(\bar x)]\leq \left(1-\alpha\eta(1-2\rho)+\frac{\eta^2 B^2}{1-\eta L}\right)\EE\|x_t-\bar x\|^2  -  \EE\|x_{t+1}-\bar x\|^2 +2\eta^2 \sigma^2.
\end{equation}
{{\em Proof of claim 1:}} Lower bounding the left side of \eqref{eqn:,ain_ineq_we need} using strong convexity, $\varphi(x_{t+1})-\varphi(\bar x)\geq \frac{\alpha}{2}\|x_{t+1}-\bar x\|^2$, and rearranging yields the guarantee:
	$$\EE\|x_{t+1}-\bar x\|^2\leq \left(1-\frac{2\alpha\eta(1-\rho)-\frac{\eta^2 B^2}{1-\eta L}}{1+\eta_t\alpha}\right)\EE\|x_t-\bar x\|^2  +\frac{2\eta^2 \sigma^2}{1+\eta\alpha}.$$
The assumption $\eta<\frac{1}{B^2/\hat\alpha +L}$ directly implies $\frac{\eta^2 B^2}{1-\eta L}\leq \alpha\eta(1-\rho)$, and  therefore we deduce
$$\EE\|x_{t+1}-\bar x\|^2\leq \left(1-\frac{\alpha\eta(1-\rho)}{1+\eta\alpha}\right)\EE\|x_t-\bar x\|^2  +\frac{2\eta^2 \sigma^2}{1+\eta\alpha}.$$
Unrolling the recursion and using the estimate $\eta\alpha\leq \frac{1}{2}$ completes the proof of \eqref{eqn:distance_decay_prox0}.

\noindent {\em Proof of Claim 2:}
	The assumption $\eta< \frac{1}{2(B^2/\hat \alpha+L)}$ by construction guarantees 
	$\frac{\eta^2 B^2}{1-\eta L}\leq \frac{\hat \alpha\eta}{2}.$ Therefore, the inequality \eqref{eqn:,ain_ineq_we need} implies
\begin{align*}
2\eta\EE_t[\varphi(x_{t+1})-\varphi(\bar x)]&\leq \left(1-\frac{\hat\alpha\eta}{2}\right)\|\bar x-x_t\|^2  -  \EE_t\|\bar x-x_{t+1}\|^2 +2\eta^2 \sigma^2.
\end{align*}
	Applying Corollary~\ref{cor:const_param} for the recursion 
	completes the proof of \eqref{eqn:sgd_const}.
\end{proof}

Let us now translate Theorem~\ref{thm:rate_grad-desc} into an efficiency guarantee for finding an approximate minimizer of $\varphi$. There are two standard restarting techniques that achieve this goal.
The first is based on restarting the constant step algorithm with exponentially increasing mini-batches of gradients $\frac{1}{m}\sum_{i=1}^m g_t^i$ in order to decrease the variance. The second approach restarts the constant step algorithm with geometrically decreasing parameters~$\eta$.
Both techniques are standard and are detailed in Appendix~\ref{sec:restart_improv_eff}. The following corollary focuses on the latter strategy for simplicity, though both strategies enjoy the same sample complexity guarantees.

\begin{corollary}[Efficiency of Algorithm~\ref{alg:sgd} with geometrically decaying schedule of~$\eta$]\label{cor:main_eff_result} {\hfill \\ }
Suppose Assumption~\ref{ass:bias_var} holds. Then the following statements hold.
\begin{enumerate}
\item {\bf (Distance)} Suppose $\rho<1$ and that we have available an estimate $\Delta\geq \|x_0-\bar x\|^2$. Define the modified convexity parameter $\hat \alpha=\alpha-B$. Then Algorithm~\ref{alg:sgd} may be augmented with the geometric decay schedule in Algorithm~\ref{alg:geo_decay} (Appendix~\ref{sec:restart_improv_eff}) under the identification 
$$c=\frac{2\hat\alpha}{3}, \quad C=1,\quad h(x)=\|x-\bar x\|^2,\quad \delta_0=\frac{1}{B^2/\hat\alpha+L},\quad D=\frac{2\sigma^2}{\hat\alpha}.$$ 
The resulting procedure will generate a point $x$ satisfying $\EE[\|x-\bar x\|^2]\leq \varepsilon$ using 
$$\mathcal{O}\left(\left(\frac{B^2}{\hat\alpha^2}+\frac{L}{\hat \alpha}\right)\log\left(\frac{\Delta}{\varepsilon}\right)+\frac{\sigma^2}{\hat\alpha^2\varepsilon}\right)\qquad\textrm{samples}.$$
\item {\bf (Function value)} Suppose $\rho<\frac{1}{2}$ and that we have available an estimate $\Delta\geq \varphi(x_0)-\varphi(\bar x)$. Define the modified convexity parameter $\hat \alpha=\alpha-2B$. Then Algorithm~\ref{alg:sgd} may be augmented with the geometric decay schedule in Algorithm~\ref{alg:geo_decay} (Appendix~\ref{sec:restart_improv_eff}) under the identification $$c=\frac{\hat\alpha}{2}, \quad C=2,\quad h(x)=\varphi(x)-\varphi(\bar x),\quad \delta_0=\frac{1}{2(B^2/\hat\alpha+L)},\quad D=\sigma^2.$$

 The resulting procedure will generate a point $x$ satisfying $\EE[\varphi(x)-\varphi(\bar x)]\leq \varepsilon$ using 
$$\mathcal{O}\left(\left(\frac{B^2}{\hat\alpha^2}+\frac{L}{\hat \alpha}\right)\log\left(\frac{\Delta}{\varepsilon}\right)+\frac{\sigma^2}{\hat\alpha\varepsilon}\right)\qquad \textrm{samples}.$$
\end{enumerate}
\end{corollary}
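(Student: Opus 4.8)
The plan is to treat Corollary~\ref{cor:main_eff_result} as a mechanical consequence of Theorem~\ref{thm:rate_grad-desc} once both of its parts are recast in a common template, and then to feed that template into the generic geometric-decay restart scheme of Appendix~\ref{sec:restart_improv_eff}. Concretely, I would first observe that each claim of Theorem~\ref{thm:rate_grad-desc} has exactly the abstract form
$$\EE[h(x_T)]\le C(1-c\eta)^T h(x_0)+D\eta,$$
valid for every constant step $\eta\le\delta_0$. For the distance guarantee this reads off directly from \eqref{eqn:distance_decay_prox0} with $h(x)=\|x-\bar x\|^2$, $C=1$, $c=\tfrac{2\hat\alpha}{3}$, $D=\tfrac{2\sigma^2}{\hat\alpha}$, $\delta_0=\tfrac{1}{B^2/\hat\alpha+L}$, while for the function-value guarantee it reads off from \eqref{eqn:sgd_const} with $h(x)=\varphi(x)-\varphi(\bar x)$, $C=2$, $c=\tfrac{\hat\alpha}{2}$, $D=\sigma^2$, $\delta_0=\tfrac{1}{2(B^2/\hat\alpha+L)}$ (here the role of the output $x_T$ is played by the running average $\hat x_t$). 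These are precisely the identifications asserted in the corollary.

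Next I would invoke (or, if needed, establish) the generic fact that any algorithm obeying such a one-step bound can be restarted to reach $\EE[h(\cdot)]\le\varepsilon$ in
$$\mathcal{O}\!\left(\frac{1}{c\delta_0}\log\!\Big(\frac{h(x_0)}{\varepsilon}\Big)+\frac{D}{c\varepsilon}\right)$$
total iterations. The mechanism behind this bound is a two-phase stage argument: run the method in stages targeting gaps $\Delta_k=\Delta\cdot 2^{-k}$, choosing in stage $k$ the step $\eta_k=\min\{\delta_0,\Delta_k/(2D)\}$ and the stage length $T_k=\Theta\big(\tfrac{1}{c\eta_k}\log(4C)\big)$ so that the bias term $C(1-c\eta_k)^{T_k}\cdot 2\Delta_k$ and the variance term $D\eta_k$ are each at most $\tfrac12\Delta_k$, halving the gap each stage. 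While $\Delta_k\ge 2D\delta_0$ the step stays fixed at $\delta_0$, so these bias-dominated stages cost $\mathcal{O}(1/(c\delta_0))$ apiece and contribute the $\tfrac{1}{c\delta_0}\log(\Delta/\varepsilon)$ term; once $\Delta_k<2D\delta_0$ the step shrinks like $\Delta_k/(2D)$, so the per-stage cost $\mathcal{O}(D/(c\Delta_k))$ grows geometrically and the tail sum is dominated by its last term $\mathcal{O}(D/(c\varepsilon))$. Substituting the two sets of constants identified above and simplifying $\tfrac{1}{c\delta_0}$ and $\tfrac{D}{c}$ reproduces the two stated sample complexities.

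The routine portions are the arithmetic substitutions (e.g.\ $\tfrac{1}{c\delta_0}=\Theta(B^2/\hat\alpha^2+L/\hat\alpha)$ in both cases) and the convergence of the geometric tail sum. The main obstacle I anticipate is the bookkeeping inside the restart lemma rather than any conceptual difficulty: one must verify that the constant $C$ (equal to $2$ in the function-value case, since the output is the averaged iterate $\hat x_t$) is absorbed harmlessly into $T_k$ through the $\log(4C)$ factor, and that the input to each stage is a valid bound on $h$ at the previous stage's output so the recursion $\EE[h(x^{(k)})]\le\Delta_k$ closes in expectation. Care is also needed because the one-step bound is only guaranteed for $\eta\le\delta_0$, which is exactly why $\eta_k$ is capped at $\delta_0$; checking that this cap is active precisely in the bias-dominated phase is what cleanly separates the $\log$ and $1/\varepsilon$ contributions.
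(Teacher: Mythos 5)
Your proposal is correct and matches the paper's intended argument: the paper proves this corollary exactly by reading off the constants $C$, $c=\psi(\delta)/\delta$, $D$, $\delta_0$, and $h$ from the two claims of Theorem~\ref{thm:rate_grad-desc} and plugging them into the generic restart guarantee of Lemma~\ref{lem:geo_restart} (equation~\eqref{eqn:specializing_restart}), which yields $\mathcal{O}\bigl(\tfrac{1}{c\delta_0}\log(\Delta/\varepsilon)+\tfrac{D}{c\varepsilon}\bigr)$ and simplifies to the stated complexities. The only cosmetic difference is that you re-derive the restart lemma with a step-size rule $\eta_k=\min\{\delta_0,\Delta_k/(2D)\}$ rather than the paper's fixed geometric halving $\delta_k=2^{-k}\delta_0$ with a long initial stage; the paper simply cites \cite[Appendix B.1]{kulunchakov2019generic} for this, and both stagings give the same bound.
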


\subsection{Accelerated Stochastic proximal gradient method}
We next discuss the accelerated stochastic proximal gradient method in the oracle model (Assumption~\ref{ass:bias_var}). To state the algorithm, we require a few auxiliary quantities. Setting the stage, define $\hat \alpha=\alpha-2B$ and choose an arbitrary $\gamma_0\geq\hat \alpha$ and stepsize parameters $\eta_t>0$. Define the two auxiliary sequences 
\begin{equation}\label{eqn:acc_gamma_delta}
\begin{aligned}
\delta_t&=\sqrt{\eta_t \gamma_t}&\textrm{for all }t\geq 0.\\
\gamma_t&=(1-\delta_t)\gamma_{t-1}+\delta_t\hat \alpha& \textrm{for all }t\geq 1.
\end{aligned}
\end{equation}
Algorithm~\ref{alg:sgd_acc} summarizes the accelerated stochastic gradient method, proposed in \cite{kulunchakov2019estimate}.

\begin{algorithm}[H]
{\bf Input:} Initial $x_0\in \R^d$,  $\gamma_0> 0$,  sequence $\{\eta_t\}_{t\geq 0}\in [0,\infty)$,  count $T\in \mathbb{N}$.

{\bf Step} $t=0,1,\ldots, T$: Set
\begin{equation*}
\begin{aligned}
	x_{t}&=y_{t-1}-\eta_t g_t, &\\
	y_t&=x_t+\beta_t(x_t-x_{t-1})& \qquad\textrm{where}\qquad \beta_t=\frac{\delta_t(1-\delta_t)\eta_{t+1}}{\eta_t\delta_{t+1}+\eta_{t+1}\delta_t^2}.
	\end{aligned}
\vspace{-1ex}
\end{equation*}
	\caption{Accelerated stochastic gradient method}\label{alg:sgd_acc}
\end{algorithm}

Our main result of analyzing Algorithm~\ref{alg:sgd_acc} is the following theorem. 
Notice that here the sequence $\beta_t$ is not related to $\beta$ in Assumption~\ref{assum:smoothness}.

\begin{theorem}[Constant step accelerated method]\label{thm:accel}
	Suppose that we are in the regime $\rho\leq \frac{1/2}{1+\sqrt{32+64\sqrt{3\kappa}}}$. Set $\gamma_0=\hat\alpha$ and $\eta_t=\frac{1}{4L}$ for all  $t\geq 0$. Then the iterates generated by Algorithm~\ref{alg:sgd_acc} satisfy 
	$$\EE\left[\varphi(x_{t})-\varphi(\bar x)\right]\leq 2\left(1-\sqrt{\frac{\hat\alpha}{4L}}\right)^t\left(\varphi(x_0)-\varphi(\bar x)\right)+\frac{9\sigma^2}{16\sqrt{L\hat\alpha}}.$$
\end{theorem}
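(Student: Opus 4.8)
The plan is to run the estimate-sequence (Lyapunov) analysis of Kulunchakov and Mairal while feeding it the approximate subgradient inequality of Lemma~\ref{lem:approx_subgrad_ineq} in place of the ordinary strong convexity inequality. I would first observe that the constant choices $\gamma_0=\hat\alpha$ and $\eta_t\equiv\frac{1}{4L}$ render all the auxiliary sequences stationary: the recursion $\gamma_t=(1-\delta_t)\gamma_{t-1}+\delta_t\hat\alpha$ fixes $\gamma_t\equiv\hat\alpha$, whence $\delta_t\equiv\delta:=\sqrt{\hat\alpha/(4L)}$ and $\beta_t\equiv\frac{1-\delta}{1+\delta}$ for all $t$. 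This collapses the method to the textbook constant-momentum scheme and already explains the contraction factor $(1-\delta)^t=(1-\sqrt{\hat\alpha/(4L)})^t$ in the statement. I would then introduce the estimate-sequence point $z_t$ and the Lyapunov function $\Phi_t:=(\varphi(x_t)-\varphi(\bar x))+\frac{\hat\alpha}{2}\|z_t-\bar x\|^2$, whose geometric decay is the real object of study.

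The heart of the argument is a one-step inequality of the form $\EE_t[\Phi_{t+1}]\leq(1-\delta)\Phi_t+(\textrm{noise})+(\textrm{bias})$. To obtain it I would combine three ingredients: the descent inequality from $L$-smoothness of $f$ applied at the extrapolation point $y_{t-1}$; the three-point inequality for the prox step $x_{t}=\prox_{\eta r}(y_{t-1}-\eta g_t)$, exactly as in the proof of Lemma~\ref{eqn:lem_key_rec_sgd}; and Lemma~\ref{lem:approx_subgrad_ineq} applied at $x=y_{t-1}$, which is precisely what lets me replace $\nabla f(y_{t-1})$ by the biased mean $\EE_t[g_t]$ while paying only the factor $1-2\rho$ already folded into $\hat\alpha$. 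Writing $g_t=\EE_t[g_t]+(g_t-\EE_t[g_t])$, the zero-mean fluctuation contributes, after taking $\EE_t$, only a variance term of order $\eta\sigma^2$ (the source, after summation, of the $\frac{9\sigma^2}{16\sqrt{L\hat\alpha}}$ floor), whereas the cross term between the bias $b_t:=\EE_t[g_t]-\nabla f(y_{t-1})$ and the step direction survives and must be controlled separately.

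The main obstacle, and the reason the result is confined to a regime strictly smaller than $\rho<\tfrac12$, is exactly this bias term. Using $\|b_t\|\leq B\|y_{t-1}-\bar x\|$ from Assumption~\ref{ass:bias_var}\ref{it:oracle3}, a Young inequality turns the cross term into a multiple of $B\|y_{t-1}-\bar x\|^2$ plus an absorbable multiple of $\|z_t-\bar x\|^2$. I would then bound $\|y_{t-1}-\bar x\|^2$ through the Lyapunov value, using $\varphi(x)-\varphi(\bar x)\geq\frac{\hat\alpha}{2}\|x-\bar x\|^2$ and the expansion $y_{t-1}-\bar x=(1+\beta)(x_{t-1}-\bar x)-\beta(x_{t-2}-\bar x)$. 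The delicate point is that the momentum extrapolation amplifies this distance, so the bias re-enters the recursion with an effective coefficient larger than the naive $\rho$; forcing it below the per-step contraction $\delta\sim\kappa^{-1/2}$ is what yields the explicit threshold $\rho\leq\frac{1/2}{1+\sqrt{32+64\sqrt{3\kappa}}}$ and the resulting suboptimal $\kappa^{-1/4}$ scaling. Once the clean recursion $\EE_t[\Phi_{t+1}]\leq(1-\delta)\Phi_t+\tfrac{\eta}{2}\sigma^2$ is secured in this regime, I would unroll it, sum the geometric series $\sum_{j}(1-\delta)^j=\delta^{-1}$ for the noise floor, and combine $\varphi(x_t)-\varphi(\bar x)\leq\Phi_t$ with $\Phi_0\leq 2(\varphi(x_0)-\varphi(\bar x))$ (since $z_0=x_0$ and $\varphi$ is $\hat\alpha$-strongly convex) to reach the stated bound, the factor $2$ accounting for the passage from $\Phi_0$ to $\varphi(x_0)-\varphi(\bar x)$.
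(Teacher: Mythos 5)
Your setup is right and matches the paper's: with $\gamma_0=\hat\alpha$ and $\eta_t\equiv\frac{1}{4L}$ the sequences \eqref{eqn:acc_gamma_delta} do become stationary with $\delta_t\equiv\delta=\sqrt{\hat\alpha/(4L)}$, the analysis is an estimate-sequence/Lyapunov induction (your $\Phi_t$ is exactly $d_t(\bar x)-\varphi(\bar x)$ in the paper's notation, cf.\ Lemma~\ref{lem:basic_lem}), Lemma~\ref{lem:approx_subgrad_ineq} is indeed what certifies $\EE[l_t(\bar x)]\leq\varphi(\bar x)$, and the noise floor arises from summing a geometric series of $O(\eta\sigma^2)$ terms. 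The divergence — and the gap — is in how you absorb the bias $\Delta_t=\nabla f(y_{t-1})-\EE[g_t]$. The paper does this in two moves that your sketch does not contain: it retains the term $\frac{\hat\alpha\delta_t(1-\delta_t)\gamma_{t-1}}{2\gamma_t}\|y_{t-1}-v_{t-1}\|^2$ (normally discarded) and pairs it against the cross term $\langle\Delta_t,x_{t-1}-y_{t-1}\rangle$ via the identity \eqref{eqn:key-recurs_rela} and Young's inequality (Lemma~\ref{lem:young_second}), leaving a residue $\frac{\delta}{2\hat\alpha}\|\Delta_t\|^2\leq\frac{\delta B^2}{2\hat\alpha}\|y_{t-1}-\bar x\|^2$; it then absorbs that residue into the leftover smoothness slack $-\frac{\eta}{8}\EE\|\tilde g_t\|^2$ by proving the error-bound-type Lemma~\ref{lem:lower_moment_bound}, $\EE\|\tilde g_t\|^2\geq\frac{\hat\alpha^2}{8(1+\eta B)^2}\EE\|y_{t-1}-\bar x\|^2-\sigma^2$. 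The threshold $\rho\leq\frac{1/2}{1+\sqrt{32+64\sqrt{3\kappa}}}$ is exactly the condition \eqref{eqn:suff_accel} under which the coefficient $\frac{\delta}{2\hat\alpha}B^2$ fits inside the budget $\frac{\eta\hat\alpha^2}{64(1+\eta B)^2}$; the mismatch $\delta/\eta\sim\sqrt{L/\hat\alpha}$ is the sole source of the $\kappa^{-1/4}$ scaling.

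Your alternative — bounding $\|y_{t-1}-\bar x\|^2$ by past Lyapunov values through strong convexity and the momentum expansion $y_{t-1}-\bar x=(1+\beta)(x_{t-1}-\bar x)-\beta(x_{t-2}-\bar x)$ — does not reproduce this. First, it feeds the bias back into the contraction itself, producing a three-term recursion $\Phi_t\leq(1-\delta)\Phi_{t-1}+c\,\delta(\Phi_{t-1}+\Phi_{t-2})+\omega$ whose decay rate is strictly worse than $(1-\delta)^t$ for any $c>0$; the theorem's stated rate $(1-\sqrt{\hat\alpha/(4L)})^t$ and the constant $\frac{9\sigma^2}{16\sqrt{L\hat\alpha}}$ (which comes from $\frac{9}{8}\eta\sigma^2/\delta$, the $\frac{1}{8}$ being the $\sigma^2$ slack in Lemma~\ref{lem:lower_moment_bound}) would not come out of it. Second, and more importantly, if you track the constants in your mechanism the bias enters with an effective coefficient of order $\delta\rho_{\hat\alpha}^2$, so the condition for contraction is $\rho_{\hat\alpha}\lesssim\mathrm{const}$ — independent of $\kappa$. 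Your assertion that ``forcing it below the per-step contraction $\delta\sim\kappa^{-1/2}$ yields the explicit threshold $\rho\leq\frac{1/2}{1+\sqrt{32+64\sqrt{3\kappa}}}$'' is therefore not a consequence of the argument you describe; the threshold is being quoted, not derived. To actually prove the theorem as stated you need the two missing ingredients above (Lemma~\ref{lem:young_second} and Lemma~\ref{lem:lower_moment_bound}, assembled in Theorem~\ref{thm:accele_stoch_grad}), or else a complete, self-contained analysis of your modified recursion with its own (different) rate, threshold, and constants.
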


Theorem~\ref{thm:accel} immediately yields an accelerated rate of convergence in the setting  $\rho=\frac{B}{\alpha}\leq \frac{1/2}{1+\sqrt{32+64\sqrt{3\kappa}}}$.
Indeed,  Lemma~\ref{lem:minbatch_restart} trivially implies that combining Algorithm~\ref{alg:sgd_acc} with a minibatch restart strategy (Algorithm~\ref{alg:restart}) yields a procedure that will find a point $x$ satisfying $\EE[\varphi(x)-\min \varphi]\leq \varepsilon$ using 
$$\mathcal{O}\left(\sqrt{\kappa}\cdot\log\left(\frac{\Delta}{\varepsilon}\right)+\frac{\sigma^2}{\alpha	\varepsilon}\right)$$
stochastic gradient samples, where $\Delta\geq \varphi(x_0)-\varphi(\bar x)$ is user specified. 

The proof of Theorem~\ref{thm:accel} is quite long and technical, relying on the machinery of stochastic estimate sequences \cite{kulunchakov2019estimate}. Therefore, we have placed it in Appendix~\ref{sec:accel_tech}. The high level idea of the argument is as follows. Existing arguments (in the unbiased setting) based on estimate sequences $d_t(x)=d_t^*+\frac{\gamma_t}{2}\|x-v_t\|^2$ rely on lower bounding  the error $\EE[d_t^*-\varphi(x_t)]$; see e.g.  \cite{intro_lect,kulunchakov2019estimate}. The usual path is through a sequence of clever algebraic manipulations. During these manipulations, there is a term $\|y_t-v_t\|^2$ that appears, which is lower-bounded by zero and ignored. In contrast, we show that this term balances the incurred bias of the stochastic gradients.

\section{Model-based algorithms}\label{sec:MBA_algos}
This section presents convergence guarantees for a wide class of algorithms of a ``proximal point type'', introduced in the two recent papers \cite{asi2019stochastic,davis2019stochastic} in the static setting.

\subsection{Model-based algorithms for a static problem}
We begin by reviewing model-based algorithms for static problems and refine the available convergence guarantees. Namely, the convergence guarantees developed in the papers \cite{asi2019stochastic,davis2019stochastic} require the second moment of subgradients to be bounded. We will show that when the loss function is smooth, we may instead assume a bound on the variance. 

Setting the stage, consider the static optimization problem
$$\min_x~ F(x)=f(x)+r(x)\qquad \textrm{with}\qquad f(x)=\ee_{z\sim \cP} \ell(x,z),$$
where $\cP$ is some probability measure, the loss $\ell(x,z)$ is differentiable in $x$, and $r\colon\R^d\to\R\cup\{\infty\}$ is a closed function.  
In each iteration $t$, the algorithms we consider draw an i.i.d set of $m$ samples $S_t \stackrel{i.i.d}{\sim}  \cP$ and approximate the loss function $\ell(\cdot,z_t)$ by a simpler model $\ell_{x_t}(\cdot,S_t)$ formed at the basepoint $x_t$. The next iterate $x_{t+1}$ is then  declared to be the minimizer of the function $\ell_{x_t}(\cdot,S_t)+r+\frac{1}{2\eta_t}\|\cdot-x_t\|^2$. The formal procedure is stated in Algorithm~\ref{alg:MBA_stat}.

\begin{algorithm}[H]
	{\bf Input:} initial $x_0$ and sequence $\{\eta_t\}_{t=0}^T\subset (0,\infty)$.
	
	{\bf Step} $t=0,1,\ldots, T$: 
	\begin{equation*}
	\begin{aligned}
	&\textrm{Sample~} S_t\stackrel{i.i.d}{\sim} \cP\\
	& \textrm{Set~} x_{t+1}=\argmin_y\,\biggl\{ \ell_{x_t}(y,S_t)+r(y)+\frac{1}{2\eta_t}\|y-x_t\|^2 \biggr\}
	\end{aligned}
	\end{equation*}
	
	\caption{Model-based algorithm for static problems}\label{alg:MBA_stat}
\end{algorithm}

Clearly, convergence guarantees of  Algorithm~\ref{alg:MBA} must depend both on the regularity of the 
models $\ell_{x}(y,S)$ individually and on how well the models approximate $f$. The following assumption formalizes these two properties. It will be useful to keep track of two parameters $\alpha_1,\alpha_2\geq 0$, which measure ``strong convexity'' type properties, along with a variance bound $\sigma_0>0$.

\begin{assumption}[Models for static problems]\label{assum:stoch_model_stat}
	There exist constants $\alpha_1,\alpha_2,\sigma_0\geq 0$ such that the following properties hold for all $x,y\in \R^d$ and for almost all samples $S\stackrel{i.i.d}{\sim} \cP$:
	\begin{enumerate}[label=(\alph*)]
		\item\label{it:assum:stoch_model_stat1} {\bf (Convexity)} The model $\ell_x(\cdot,S)$ is convex and the sum $\ell_x(\cdot,S)+r$ is $\alpha_1$-strongly convex.
		\item\label{it:assum:stoch_model_stat2} {\bf (Bias/variance)} The model $\ell_x(\cdot,S)$ is differentiable at $x$ and satisfies 
		$$\ee_{S} [\nabla \ell_x(x,S)]=\nabla f(x)\qquad \textrm{and }\qquad \ee_S\|\nabla \ell_x(x,S)-\nabla f(x)\|^2\leq \sigma^2_0.$$
		\item\label{it:assum:stoch_model_stat3} {\bf (Accuracy)}  The estimate holds: 
		$$\ee_{S}[\ell_x(x,S)-\ell_x(y,S)]\geq f(x)-f(y)+\frac{\alpha_2}{2}\|x-y\|^2.$$
	\end{enumerate} 	
\end{assumption}

The convexity assumption~\ref{it:assum:stoch_model_stat1} is self-explanatory. The Bias/variance property \ref{it:assum:stoch_model_stat2} asserts that $\nabla \ell_x(x,S)$ is an unbiased estimator of $\nabla f(x)$ and has finite variance $\sigma_0^2$.
The accuracy assumption \ref{it:assum:stoch_model_stat3} simply states that the gap $\ell_x(x,S)-\ell_x(y,S)$ is lower bounded in expectation by the true gap $f(x)-f(y)$. Note that  assumption \ref{it:assum:stoch_model_stat3}  is trivially implied by the two intuitive conditions:
$$\ee_{S}[\ell_x(x,S)]=f(x)\qquad \textrm{and}\qquad \ee_{S}[\ell_x(y,S)]+\frac{\alpha_2}{2}\|x-y\|^2\leq f(y),$$
holding for all $x,y\in\R^d$.
The first simply says that the model $\ell_x(\cdot,S)$ evaluated at the basepoint $x$ coincides with $f(x)$ in expectation, while the second asserts that the model $\ell_x(\cdot,S)$ lower bounds $f$ in expectation.

An important consequence of Assumption~\ref{assum:stoch_model_stat} is that $F$ is strongly convex with parameter $\alpha_1+\alpha_2$.

\begin{lemma}\label{lem:quad_growth}
	The function $F$ is $(\alpha_1+\alpha_2)$-strongly convex.
\end{lemma}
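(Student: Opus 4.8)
The plan is to verify the standard two-point (convex-combination) characterization of strong convexity directly, without passing through subgradients. Fix arbitrary points $a,b\in\R^d$ and a weight $\lambda\in[0,1]$, and set $c:=\lambda a+(1-\lambda)b$. The goal is to establish
\[
F(c)\le \lambda F(a)+(1-\lambda)F(b)-\frac{\alpha_1+\alpha_2}{2}\,\lambda(1-\lambda)\|a-b\|^2.
\]
The key structural idea is to anchor the model at the \emph{convex-combination point} $c$ and to split the objective as $F=e+(M_c+r)$, where $M_c(\cdot):=\ee_S[\ell_c(\cdot,S)]$ is the averaged model based at $c$ and $e(\cdot):=f(\cdot)-M_c(\cdot)$ is the corresponding modeling error. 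Intuitively, the $\alpha_1$ contribution to strong convexity comes from the averaged model $M_c+r$, while the $\alpha_2$ contribution comes from the accuracy of the model, captured by $e$.

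First I would record two consequences of Assumption~\ref{assum:stoch_model_stat} at the basepoint $c$. Taking the expectation over $S$ in part~\ref{it:assum:stoch_model_stat1} (using that expectation preserves convexity, equivalently convexity of the functions minus $\tfrac{\alpha_1}{2}\|\cdot\|^2$), the averaged function $M_c+r$ is $\alpha_1$-strongly convex. Next, applying the accuracy estimate~\ref{it:assum:stoch_model_stat3} with $x=c$ and $y$ arbitrary, then rearranging, gives the pointwise quadratic lower bound
\[
e(y)=f(y)-M_c(y)\ \ge\ e(c)+\frac{\alpha_2}{2}\|y-c\|^2\qquad\text{for all }y\in\R^d.
\]

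With these two facts in hand, the conclusion follows by elementary combination. Evaluating the lower bound on $e$ at $y=a$ and $y=b$, weighting by $\lambda$ and $1-\lambda$, and adding produces a $\tfrac{\alpha_2}{2}\bigl[\lambda\|a-c\|^2+(1-\lambda)\|b-c\|^2\bigr]$ term; applying the $\alpha_1$-strong convexity of $M_c+r$ in convex-combination form to the points $a,b$ supplies the matching $\tfrac{\alpha_1}{2}\lambda(1-\lambda)\|a-b\|^2$ term. Summing the two estimates and invoking the identity $\lambda\|a-c\|^2+(1-\lambda)\|b-c\|^2=\lambda(1-\lambda)\|a-b\|^2$ (valid precisely because $c=\lambda a+(1-\lambda)b$) collapses everything to the claimed strong-convexity inequality, since $e(c)+(M_c+r)(c)=F(c)$.

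I do not expect a genuine obstacle here; the lemma is essentially bookkeeping. The one point requiring care---and the real content of the argument---is the choice to base the model at $c$ rather than at $a$ or $b$: because the model $\ell_x(\cdot,S)$ depends on the basepoint $x$, only by anchoring at the convex-combination point do the accuracy bound (which is a lower bound relative to its own basepoint) and the averaged convexity bound align to produce the additive modulus $\alpha_1+\alpha_2$. A minor technical caveat is ensuring that $M_c$ is finite and that the interchange of expectation with the strong-convexity inequality in part~\ref{it:assum:stoch_model_stat1} is justified, which follows from the integrability built into Assumption~\ref{assum:stoch_model_stat}.
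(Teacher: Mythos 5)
Your proof is correct, but it takes a genuinely different route from the paper's. The paper argues through subgradients: for $v\in\partial F(x)$ it observes that $v-\nabla f(x)+\nabla\ell_x(x,S)$ is a subgradient of $\ell_x(\cdot,S)+r$ at the basepoint $x$, chains the accuracy bound, the $\alpha_1$-strong convexity of the model, and the unbiasedness $\ee_S[\nabla\ell_x(x,S)]=\nabla f(x)$ to obtain the strengthened subgradient inequality $F(y)-F(x)\geq\langle v,y-x\rangle+\tfrac{\alpha_1+\alpha_2}{2}\|y-x\|^2$, and then invokes strong monotonicity of $\partial F$ together with an external result (Rockafellar--Wets, Theorem 12.17) to conclude. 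You instead verify the two-point characterization directly, with the key (and correct) device of anchoring the model at the convex-combination point $c=\lambda a+(1-\lambda)b$: the decomposition $F=e+(M_c+r)$ cleanly separates the $\alpha_2$ contribution (from the pointwise lower bound $e(y)\geq e(c)+\tfrac{\alpha_2}{2}\|y-c\|^2$, a rearrangement of the accuracy condition at basepoint $c$) from the $\alpha_1$ contribution (from the averaged model), and the identity $\lambda\|a-c\|^2+(1-\lambda)\|b-c\|^2=\lambda(1-\lambda)\|a-b\|^2$ makes the two moduli add. Your argument is more elementary and self-contained: it needs neither the nonemptiness of $\partial F$ nor the citation, and it does not even use the unbiasedness condition of the bias/variance assumption, only convexity of the models and the accuracy bound. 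What the paper's route buys is the explicit strengthened subgradient inequality, which is the form in which strong convexity is most often deployed downstream; but that inequality follows from strong convexity in any case, so nothing is lost. The only loose ends in your write-up are the ones you already flag (finiteness of $M_c$ and the interchange of expectation with convexity), and these are harmless.
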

\begin{proof}
We will show that $F$ satisfies a strong form of a subgradient inequality. To this end, fix a sample set $S\stackrel{i.i.d}{\sim} \cP$, and a point $x\in\R^d$ along with a subgradient $v\in \partial F(x)$. Notice that the inclusion $v-\nabla f(x)\in\partial r(x)$ holds. Therefore, the vector	$v-\nabla f(x)+\nabla \ell_x(x,S)$ is a subgradient of $\ell_x(\cdot,S)+r$ at $x$. 
We  compute
	\begin{align}
	F(y)-F(x)&\geq \ee_{S}[\ell_{x}(y,S)+r(y)-\ell_{x}( x,S)-r( x)]+\frac{\alpha_2}{2}\|y-\bar x\|^2\label{eqn:compute1}\\
	&\geq \ee_{S}[\langle v-\nabla f(x)+\nabla \ell_x(x,S),y- x\rangle]+\frac{\alpha_1+\alpha_2}{2}\|y- x\|^2\label{eqn:compute2}\\
	&=\langle v,y-x\rangle+\frac{\alpha_1+\alpha_2}{2}\|y- x\|^2,\label{eqn:compute3}
	\end{align}
	where  \eqref{eqn:compute1}, \eqref{eqn:compute2}, and \eqref{eqn:compute3} follow from conditions \ref{it:assum:stoch_model_stat3}, \ref{it:assum:stoch_model_stat1}, and \ref{it:assum:stoch_model_stat2} of Assumption~\ref{assum:stoch_model_stat}, respectively.  It follows immediately that $\partial F$ is $(\alpha_1+\alpha_2)$-strongly monotone. Therefore \cite[Theorem 12.17]{RW98} directly implies that $F$ is $(\alpha_1+\alpha_2)$-strongly convex, as claimed.
\end{proof}

\subsubsection{Examples: proximal point, proximal gradient, and clipped gradient}
Let us look at three examples of models satisfying Assumption~\ref{assum:stoch_model_stat} and the corresponding algorithms; see the accompanying Figure~\ref{fig:illustr_lower_model} and Table~\ref{table:methods}. Specifically, for the three examples, we assume the following two conditions on the loss and the regularizer:
\begin{enumerate}
	\item {\bf (Variance)} There exists $\sigma>0$ satisfying 
	$$\ee_{z\sim\cP}\|\nabla \ell(x,z)-\nabla f(x)\|^2\leq \sigma^2\qquad \textrm{for all } x\in\R^d.$$
	\item {\bf (Strong convexity)} There exist  $\alpha,\mu\geq 0$ such that $r$ is $\mu$-strongly convex and for every $z\in Z$, the loss $\ell(\cdot,z)$ is $\alpha$-strongly convex. 
\end{enumerate}

Algorithm~\ref{alg:MBA_stat} clearly treats $\ell$ and $r$ differently, and therefore $\alpha$ and $\mu$ play distinct roles. Notice that one can always modify the constants $\alpha$ and $\mu$ while maintaining the sum $\alpha+\mu$, simply by adding/subtracting a fixed quadratic $\frac{\lambda}{2}\|\cdot\|^2$ from $\ell$ and $r$.

\begin{table}
	\begin{center}
		\begin{tabular}{| l | c |  c |}
			\hline
			Method & Model $\ell_{x}(y,z)$ & $(\alpha_1,\alpha_2,\sigma_0)$ \\ 
			\hline\hline
			Prox-point & $\ell(y,z)$ & $(\alpha+\mu,0,\sigma)$ \\  
			Gradient & $\ell(x,z)+\langle \nabla \ell(x,z),y-x\rangle$ & $(\mu,\alpha,\sigma)$   \\
			Clipped-grad & $\max\{\ell(x,z)+\langle \nabla \ell(x,z),y-x\rangle,0\}$ & $(\mu,0,\sigma)$  \\
			\hline
		\end{tabular}
	\end{center}
	\caption{Stochastic proximal point, gradient, and clipped gradient methods.}\label{table:methods}
\end{table}

\begin{example}[Stochastic proximal point]
	{\rm
		The simplest stochastic model of a loss is the loss itself:
		$$\ell_x(y,z)=\ell(y,z).$$
		Algorithm~\ref{alg:MBA_stat} equipped with these models is the proximal point method.
		Assumption~\ref{assum:stoch_model_stat} trivially holds with parameters $(\alpha_1,\alpha_2,\sigma_0)=(\alpha+\mu,0,\sigma)$. More generally, given a sample set $S\stackrel{i.i.d}{\sim} \cP$, we may use the average model $\ell_x(y,S)=\frac{1}{|S|}\sum_{z\in S}\ell(y,z)$ whose gradient has the smaller  variance  $\sigma^2_0=\sigma^2/|S|$. 
	}
\end{example}

\begin{example}[Stochastic proximal gradient]
	{\rm
		Another class of models is induced by linearizations
		$$\ell_x(y,z)=\ell(x,z)+\langle \nabla\ell(x,z),y-x\rangle.$$	
		Algorithm~\ref{alg:MBA} equipped with these models reduces to the stochastic proximal gradient method.
		Assumption~\ref{assum:stoch_model_stat}\ref{it:assum:stoch_model_stat1} clearly holds with $\alpha_1=\mu$. Assumption~\ref{assum:stoch_model_stat}\ref{it:assum:stoch_model_stat2} holds trivially. Assumption~\ref{assum:stoch_model_stat}\ref{it:assum:stoch_model_stat3} with $\alpha_2=\alpha$ follows from the expression
		$\ee_{z\sim\cD(x)}\ell_x(x,z)= f_x(x)$ and strong convexity of $f_x$. More generally, given a sample set $S\stackrel{i.i.d}{\sim} \cP$, we may declare $\ell_x(y,S)=\frac{1}{|S|}\sum_{z\in S}\ell_x(y,z)$ thereby decreasing the variance  $\sigma^2_0=\sigma^2/|S|$. 
	}
\end{example}

\begin{example}[Stochastic clipped proximal gradient]
	{\rm
		An interesting middle ground between proximal point and gradient models was proposed in \cite{asi2019stochastic}. Namely, suppose that each loss $\ell(\cdot,z)$ is lower bounded by some known constant, which we may without loss of generality assume is zero. This assumption is  completely innocuous in data scientific contexts.
		Then we may use the clipped linear models
		$$\ell_x(y,z)=\max\{\ell(x,z)+\langle \nabla\ell(x,z),y-x\rangle,0\}.$$
		Algorithm~\ref{alg:MBA_stat} equipped with these models is the proximal clipped stochastic gradient method.	A quick computation shows that Assumption~\ref{assum:stoch_model_stat} holds with $(\alpha_1,\alpha_2,\sigma_0)=(\mu,0,\sigma)$. Given a sample set $S\stackrel{i.i.d}{\sim} \cP$, we can form the two models,
		$$\ell_x(x,S)=\frac{1}{|S|}\sum_{z\in S} \ell(x,z) \quad\textrm{and}\quad \ell_x(y,z)=\max\biggl\{\frac{1}{|S|}\sum_{z}\ell(x,z)+\langle \nabla\ell(x,z),y-x\rangle,0\biggr\}.$$
		It is straightforward to verify that both models $\ell_x(\cdot,S)$ are differentiable at $x$ with gradient $\nabla \ell_x(x,S)=\frac{1}{|S|}\sum_{z\in S}\nabla \ell(x,z)$. Thus the variance of the gradient improves to $\sigma^2_0=\sigma^2/|S|$.	
	}
\end{example}

\subsubsection{Convergence guarantees.}
We are now ready to analyze Algorithm~\ref{alg:MBA_stat}. The main idea of the convergence proof is to establish a one-step improvement guarantee on the function $F$.
Henceforth, we let $\EE_t[\cdot]$ denote the expectation conditioned on $x_t$.

\begin{lemma}[One step improvement]\label{lem:one-step_improv_stat}
	Suppose that Assumption~\ref{assum:stoch_model_stat} holds and that $\nabla f$ is $L$-Lipschitz continuous.
	Fix a sequence $\eta_t<\frac{1}{L}$ for all $t\geq 0$. Then the iterates generated by Algorithm~\ref{alg:MBA_stat} satisfy:
	\begin{equation}\label{eqn:one_step_inside}
	\eta_t\EE_t[F(x_{t+1})-F(y)] \leq \frac{1- \alpha_2\eta_t}{2}\|x_{t}- y\|^2-\frac{1+\alpha_1\eta_t}{2}\EE_t\|x_{t+1}- y\|^2+\tfrac{\sigma^2_0\eta_t^2}{2(1-\eta_t L)},
	\end{equation}
	for all indices $t\geq 0$ and all $y\in \R^d$.
\end{lemma}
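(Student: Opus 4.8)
The plan is to combine the optimality of the proximal update with the $L$-smoothness of $f$ and the three structural properties of the model. First I would exploit that $x_{t+1}$ minimizes the $(\alpha_1+\eta_t^{-1})$-strongly convex function $y\mapsto \ell_{x_t}(y,S_t)+r(y)+\frac{1}{2\eta_t}\|y-x_t\|^2$; strong convexity of this subproblem (Assumption~\ref{assum:stoch_model_stat}\ref{it:assum:stoch_model_stat1}) together with the first-order optimality condition yields, for every $y$,
\begin{align*}
\ell_{x_t}(x_{t+1},S_t)+r(x_{t+1}) &\leq \ell_{x_t}(y,S_t)+r(y)+\tfrac{1}{2\eta_t}\|y-x_t\|^2\\
&\quad -\tfrac{1}{2\eta_t}\|x_{t+1}-x_t\|^2-\tfrac{\alpha_1+\eta_t^{-1}}{2}\|y-x_{t+1}\|^2.
\end{align*}
This isolates the quantity $\ell_{x_t}(x_{t+1},S_t)+r(x_{t+1})$, which is my bridge to $F(x_{t+1})$.

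Next I would pass from the model value at $x_{t+1}$ to the true value $f(x_{t+1})$. Writing $g_t:=\nabla\ell_{x_t}(x_t,S_t)$, I would use the $L$-smoothness descent inequality $f(x_{t+1})\leq f(x_t)+\langle\nabla f(x_t),x_{t+1}-x_t\rangle+\frac{L}{2}\|x_{t+1}-x_t\|^2$, split $\nabla f(x_t)=g_t+(\nabla f(x_t)-g_t)$, and invoke convexity of $\ell_{x_t}(\cdot,S_t)$ in the form $\langle g_t,x_{t+1}-x_t\rangle\leq \ell_{x_t}(x_{t+1},S_t)-\ell_{x_t}(x_t,S_t)$. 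Adding $r(x_{t+1})$ and substituting the displayed bound turns $F(x_{t+1})$ into an expression involving the model gap $\ell_{x_t}(y,S_t)-\ell_{x_t}(x_t,S_t)$, the residual inner product $\langle\nabla f(x_t)-g_t,x_{t+1}-x_t\rangle$, and the quadratic terms. The coefficient of $\|x_{t+1}-x_t\|^2$ at this point is $-\frac{1-\eta_t L}{2\eta_t}$, which is negative precisely because $\eta_t<\frac{1}{L}$.

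I would then apply Young's inequality to the residual term with weight $\frac{\eta_t}{1-\eta_t L}$, so that the generated multiple of $\|x_{t+1}-x_t\|^2$ exactly cancels the negative quadratic, leaving the error term $\frac{\eta_t}{2(1-\eta_t L)}\|g_t-\nabla f(x_t)\|^2$. Finally I would take the conditional expectation $\EE_t[\cdot]$: the accuracy property (Assumption~\ref{assum:stoch_model_stat}\ref{it:assum:stoch_model_stat3}) converts $\EE_t[\ell_{x_t}(x_t,S_t)-\ell_{x_t}(y,S_t)]$ into a lower bound $f(x_t)-f(y)+\frac{\alpha_2}{2}\|x_t-y\|^2$, the variance bound (Assumption~\ref{assum:stoch_model_stat}\ref{it:assum:stoch_model_stat2}) controls $\EE_t\|g_t-\nabla f(x_t)\|^2\leq\sigma_0^2$, and the $f(x_t)$ terms cancel while $f(y)+r(y)=F(y)$. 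Multiplying through by $\eta_t$ and collecting the coefficients of $\|x_t-y\|^2$ (namely $\frac{1-\alpha_2\eta_t}{2}$) and of $\EE_t\|x_{t+1}-y\|^2$ (namely $\frac{1+\alpha_1\eta_t}{2}$) gives the claimed inequality.

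The step I expect to be the main obstacle is the bookkeeping in the third paragraph: one must choose the Young weight so that the $\|x_{t+1}-x_t\|^2$ terms cancel exactly (this is where $\eta_t<1/L$ is essential) and must route the unbiasedness through the convexity inequality $\langle g_t,x_{t+1}-x_t\rangle\leq \ell_{x_t}(x_{t+1},S_t)-\ell_{x_t}(x_t,S_t)$ so that the accuracy condition~\ref{it:assum:stoch_model_stat3}---which is stated only for the basepoint $x_t$---becomes applicable after taking expectations. Matching the strong-convexity constant $\alpha_1+\eta_t^{-1}$ to produce exactly the factor $\frac{1+\alpha_1\eta_t}{2}$ on the final distance term is the other place where an arithmetic slip would break the result.
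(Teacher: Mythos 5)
Your proposal is correct and follows essentially the same route as the paper's proof: the three-point inequality from the $(\eta_t^{-1}+\alpha_1)$-strong convexity of the subproblem, the model's subgradient inequality at the basepoint, the $L$-smoothness descent lemma, Young's inequality with weight $\tfrac{\eta_t}{1-\eta_t L}$ to cancel the $\|x_{t+1}-x_t\|^2$ term, and finally the accuracy and variance assumptions under the conditional expectation. The only difference is cosmetic ordering of these ingredients; the constants and the role of $\eta_t<1/L$ match the paper exactly.
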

\begin{proof} Observe that the inequality we wish to prove is conditioned on $x_t$. Therefore to simplify notation, set $x:=x_t$, $S:=S_t$, and $x^+:=x_{t+1}$.	
	Since $x^+$ is the minimizer of a $(\eta_t^{-1}+\alpha_1)$-strongly convex function $\ell_{x}(\cdot,S)+r+\frac{1}{2\eta_t}\|\cdot-x\|^2$, we deduce 
	\begin{align}
	\tfrac{\eta_t^{-1}+\alpha_1}{2}\|x^+-y\|^2\leq \left(\ell_{x}(y,S)+r(y)+\tfrac{1}{2\eta_t}\|y-x\|^2\right)-\left(\ell_{x}(x^+,S)+r(x^+)+\tfrac{1}{2\eta_t}\|x^+-x\|^2\right).\label{eqn:three_point_last}
	\end{align}	
	We next lower bound $\ell_{x}(x^+,S)$ using convexity: Assumption~\ref{assum:stoch_model_stat} guarantees	
\begin{align*}
	\ell_{x}(x^+,S)&\geq \ell_{x}(x,S)+\langle \nabla \ell_x(x,S),x^+-x\rangle.
\end{align*}
	Combining this estimate with \eqref{eqn:three_point_last}, multiplying through by $2\eta_t$, and taking the expectation yields  
	\begin{align}
	(1+\alpha_1\eta_t)\EE_S\|x^+-y\|^2
	&\leq \|x-y\|^2-2\eta_t\EE_S[\ell_{x}(x,S)- \ell_{x}(y,S)]- \EE_S\|x^+-x\|^2 \notag\\
	& \qquad -2\eta_t\EE_S[\langle \nabla \ell_x(x,S),x^+-x\rangle]-2\eta_t\EE_S [r(x^+)-r(y)]\label{eqn:main_ineq_prox-mod}.
	\end{align}
	Assumption~\ref{assum:stoch_model_stat}\ref{it:assum:stoch_model_stat3} implies 
	\begin{align*}
	\EE_S[\ell_{x}(x,S)- \ell_{x}(y,S)]&\geq f(x)-f(y)+\frac{\alpha_2}{2}\|x-y\|^2.
	\end{align*}	
	Combining this estimate with \eqref{eqn:main_ineq_prox-mod} yields
	\begin{align}
	(1+\alpha_1\eta_t)\EE_S\|x^+-y\|^2 &\leq  (1-\alpha_2\eta_t)\|x-y\|^2 - 2\eta_t\EE_S[f(x)+r(x^+)-f(y)-r(y)]\notag\\
	&~~~- \EE_S\|x^+-x\|^2
	-2\eta_t\EE_S[\langle \nabla \ell_x(x,S),x^+-x\rangle].\label{eqn:continue}
	\end{align}
	Next,  smoothness of $f$ guarantees
	\begin{equation}\label{eqn:cond_exp_smooth}
	f(x)\geq f(x^+)-\langle \nabla f(x),x^+-x\rangle-\frac{L}{2}\|x^+-x\|^2.
	\end{equation}
	Taking expectations in \eqref{eqn:cond_exp_smooth} and combining the estimate with \eqref{eqn:continue} yields 
	\begin{align*}
	(1+\alpha_1\eta_t)\EE_S\|x^+-y\|^2
	&\leq (1-\alpha_2\eta_t)\|x-y\|^2 - 2\eta_t\EE_S[F(x^+)-F(y)]\\
	&  ~~ +2\eta_t\EE_S[\langle \nabla f(x)-\nabla \ell_x (x,S),x^+-x\rangle]- (1-\eta_t L)\EE_S\|x^+-x\|^2 .
	\end{align*}
	Young's inequality in turn guarantees
	$$2\eta_t\langle \nabla f(x)-\nabla\ell_x(x,S),x^+-x\rangle \leq \frac{\eta_t^2\|\nabla f(x)-\nabla\ell_x(x,S)\|^2}{1-\eta_t L}+(1-\eta_t L)\|x^+-x\|^2.$$
	Combining the last two inequalities with the finite variance assumption \ref{assum:stoch_model_stat}\ref{it:assum:stoch_model_stat2} yields
	\begin{align*}
	(1+\alpha_1\eta_t)\EE_S\|x^+-y\|^2
	&\leq  \left(1-\alpha_2\eta_t\right)\|x-y\|^2 - 2\eta_t\EE_S[F(x^+)-F(y)]+\frac{\sigma^2_0\eta_t^2}{1-\eta_t L}.
	\end{align*}
	Rearranging yields~\eqref{eqn:one_step_inside} as claimed. 
\end{proof}

Using a constant parameter $\eta>0$ yields the following guarantee.
\begin{theorem}[Constant step]
	Suppose that Assumption~\ref{assum:stoch_model_stat} holds and that $\nabla f$ is $L$-Lipschitz continuous.
	Fix a sequence $\eta<\min\{\frac{1}{2L},\frac{1}{\alpha_1},\frac{1}{\alpha_2}\}$. Then for all points $y\in \R^d$, the estimate holds
$$\EE[F(\hat x_t)-F(y)]\leq 2\left(1-\frac{(\alpha_1+\alpha_2)\eta}{2}\right)\left( F(x_0)-F(y)\right)+\sigma_0^2\eta,$$
where we recursively define $\hat{x}_t=\left(1-\frac{\eta(\alpha_1+\alpha_2)}{1+c_2\eta}\right)\hat x_{t-1}+\frac{\eta(\alpha_1+\alpha_2)}{1+c_2\eta}x_t$.
\end{theorem}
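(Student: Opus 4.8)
The plan is to reduce the claim to the generic constant-step recursion that was already invoked in the proof of Theorem~\ref{thm:rate_grad-desc}, namely Corollary~\ref{cor:const_param}. The starting point is the one-step improvement bound of Lemma~\ref{lem:one-step_improv_stat}, instantiated with the constant stepsize $\eta_t\equiv\eta$. First I would simplify the noise term: since $\eta<\frac{1}{2L}$ forces $1-\eta L>\frac12$, the trailing term obeys $\frac{\sigma_0^2\eta^2}{2(1-\eta L)}\le \sigma_0^2\eta^2$. This places the recursion into the canonical form \eqref{eqn:one_step_improvea},
\[
\eta\,\EE_t[F(x_{t+1})-F(y)]\le \frac{1-c_1\eta}{2}\|x_t-y\|^2-\frac{1+c_2\eta}{2}\EE_t\|x_{t+1}-y\|^2+c_3\eta^2,
\]
under the identifications $c_1=\alpha_2$, $c_2=\alpha_1$, and $c_3=\sigma_0^2$. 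Crucially $c_1+c_2=\alpha_1+\alpha_2>0$, so the recursion is contractive.

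Next I would feed this recursion into Corollary~\ref{cor:const_param}, exactly as in Theorem~\ref{thm:rate_grad-desc}. The engine behind that corollary is a weighted Lyapunov telescoping: writing $w:=\frac{\eta(\alpha_1+\alpha_2)}{1+c_2\eta}$ and using the identity $1-c_1\eta=(1+c_2\eta)(1-w)$, one recombines the distance terms into the potential $\Phi_t:=\EE[F(\hat x_t)-F(y)]+\frac{\alpha_1+\alpha_2}{2}\EE\|x_t-y\|^2$. Convexity of $F$ applied to the convex combination $\hat x_t=(1-w)\hat x_{t-1}+w x_t$ then yields $\Phi_t\le (1-w)\Phi_{t-1}+w\sigma_0^2\eta$, and unrolling the geometric sum gives $\Phi_t\le (1-w)^t\Phi_0+\sigma_0^2\eta$. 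The weight $w$ is precisely the coefficient in the recursive definition of $\hat x_t$ (with $c_2=\alpha_1$), which is exactly why that particular averaging makes the potential telescope.

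It remains to simplify the two surviving pieces. For the contraction factor, the restriction $\eta<\frac{1}{\alpha_1}$ gives $1+c_2\eta=1+\alpha_1\eta<2$, hence $w=\frac{(\alpha_1+\alpha_2)\eta}{1+\alpha_1\eta}\ge\frac{(\alpha_1+\alpha_2)\eta}{2}$ and therefore $(1-w)^t\le\bigl(1-\tfrac{(\alpha_1+\alpha_2)\eta}{2}\bigr)^t$. For the initial value, Lemma~\ref{lem:quad_growth} shows that $F$ is $(\alpha_1+\alpha_2)$-strongly convex; specializing $y$ to the minimizer and invoking strong convexity bounds $\frac{\alpha_1+\alpha_2}{2}\|x_0-y\|^2\le F(x_0)-F(y)$, so that $\Phi_0\le 2\bigl(F(x_0)-F(y)\bigr)$. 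Combining these with $\EE[F(\hat x_t)-F(y)]\le\Phi_t$ produces the advertised estimate.

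I expect the only delicate point to be the bookkeeping inside Corollary~\ref{cor:const_param}: verifying that the identity $1-c_1\eta=(1+c_2\eta)(1-w)$ is exactly what collapses the two distance terms into a single contracting potential $\Phi_t$, and confirming that the corollary's averaging weight matches the recursive definition of $\hat x_t$ under $c_2=\alpha_1$. The remaining steps—bounding the noise via $\eta<\frac{1}{2L}$, the contraction-factor simplification via $\eta<\frac{1}{\alpha_1}$, and the factor-$2$ initialization through the strong convexity of Lemma~\ref{lem:quad_growth}—are routine.
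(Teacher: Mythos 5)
Your proposal is correct and follows essentially the same route as the paper: bound the noise term via $\eta<\tfrac{1}{2L}$, feed the one-step bound of Lemma~\ref{lem:one-step_improv_stat} into Corollary~\ref{cor:const_param} with $c_1=\alpha_2$, $c_2=\alpha_1$, then use $1+\alpha_1\eta\leq 2$ and the strong convexity from Lemma~\ref{lem:quad_growth} to absorb the initial distance term. You also correctly flag that the last step needs $y$ to be the minimizer (a point the paper's own statement glosses over with ``for all $y$''); otherwise the argument matches the paper's.
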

\begin{proof}
The assumption $\eta<\frac{1}{2L}$ guarantees $\tfrac{\sigma^2_0\eta^2}{2(1-\eta L)}\leq \sigma_0^2\eta$, and therefore Lemma~\ref{lem:one-step_improv_stat} implies 
	$$\eta \EE_t[F(x_{t+1})-F(y)] \leq \frac{1- \alpha_2\eta}{2}\|x_{t}- y\|^2-\frac{1+\eta\alpha_1}{2}\EE_t\|x_{t+1}- y\|^2+\sigma^2_0\eta.$$
Lemma \ref{cor:const_param} directly implies $$\EE[F(\hat x_t)-F(y)]\leq \left(1-\frac{(\alpha_1+\alpha_2)\eta}{1+\alpha_1\eta}\right)\left( F(x_0)-F(y)+ \frac{\alpha_1+\alpha_2}{2}\|x_0-y\|^2\right)+\sigma_0^2\eta.$$
Taking into account $\alpha_1\eta\leq 1$ and Lemma~\ref{lem:quad_growth} completes the proof.
\end{proof}

Combining Algorithm~\ref{alg:MBA_stat}  with the geometric decay schedule (Algorithm~\ref{alg:geo_decay}) yields an algorithm with the following efficiency guarantee, which is immediate from Lemma~\ref{lem:geo_restart}.
\begin{corollary}[Efficiency]
Suppose that Assumption~\ref{assum:stoch_model_stat} holds and that $\nabla f$ is $L$-Lipschitz continuous. Fix an estimate $\Delta\geq F(x_0)-\min F$. Then Algorithm~\ref{alg:MBA_stat} may be augmented with the geometric decay schedule (Algorithm~\ref{alg:geo_decay}) under the identification
$$c=\frac{\alpha_1+\alpha_2}{2}, \quad C=2,\quad h(x)=F(x)-\min F,\quad \delta_0=\min\left\{\frac{1}{2L},\frac{1}{\alpha_1},\frac{1}{\alpha_2}\right\},\quad D=\sigma_0^2.$$ The resulting procedure will generate a point $x$ satisfying $\EE[F(x)-\min F]\leq \varepsilon$ after 
$$\mathcal{O}\left(\frac{L}{\alpha_1+\alpha_2}\log\left(\frac{\Delta}{\varepsilon}\right)+\frac{\sigma_0^2}{(\alpha_1+\alpha_2)\varepsilon}\right)\qquad \textrm{iterations}.$$
\end{corollary}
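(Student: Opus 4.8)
The plan is to read the corollary off directly from the restart machinery of Lemma~\ref{lem:geo_restart}, using the preceding Constant step theorem as the per-run guarantee that this lemma requires as input. First I would set $y=\argmin F$ in the Constant step theorem; such a minimizer exists and is unique because $F$ is $(\alpha_1+\alpha_2)$-strongly convex by Lemma~\ref{lem:quad_growth}. With the choice $h(x):=F(x)-\min F$ the theorem becomes exactly the linear-contraction-with-additive-noise recursion
$$\EE[h(\hat x_t)]\leq C(1-c\eta)^t\, h(x_0)+D\eta,$$
valid for every constant step size $\eta\leq\delta_0:=\min\{\tfrac{1}{2L},\tfrac{1}{\alpha_1},\tfrac{1}{\alpha_2}\}$, where I read off $C=2$, $c=\tfrac{\alpha_1+\alpha_2}{2}$, and $D=\sigma_0^2$. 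These are precisely the identifications listed in the statement, so the role of this step is simply to confirm that the Constant step theorem supplies the template that Algorithm~\ref{alg:geo_decay} is built to exploit.

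Next I would invoke Lemma~\ref{lem:geo_restart} under this identification. The restart lemma converts the above recursion into an output point $x$ satisfying $\EE[h(x)]\leq\varepsilon$ after $\mathcal{O}\!\left(\tfrac{1}{c\delta_0}\log(\Delta/\varepsilon)+\tfrac{D}{c\varepsilon}\right)$ iterations, where $\Delta\geq h(x_0)=F(x_0)-\min F$ is the supplied estimate. Conceptually the schedule runs in two regimes: while the optimality gap dominates the noise floor it keeps $\eta=\delta_0$ fixed and halves the gap every $\mathcal{O}(1/(c\delta_0))$ iterations, producing the $\log(\Delta/\varepsilon)$ factor; once the gap has reached the floor $\sim D\delta_0$ it decreases $\eta$ geometrically, so that the noise floor $D\eta$ shrinks while the epoch length grows, and the total cost of this second phase telescopes to $\mathcal{O}(D/(c\varepsilon))$. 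I would not reprove this lemma here, since the corollary is flagged as immediate from it.

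Finally I would carry out the arithmetic simplification. Substituting $c=\tfrac{\alpha_1+\alpha_2}{2}$ and $\delta_0^{-1}=\max\{2L,\alpha_1,\alpha_2\}$ gives $\tfrac{1}{c\delta_0}=\tfrac{2\max\{2L,\alpha_1,\alpha_2\}}{\alpha_1+\alpha_2}$; in the standard well-conditioned regime where the smoothness constant dominates the strong-convexity constants (so $2L\geq\alpha_1,\alpha_2$ and $\delta_0^{-1}=2L$), this term is $\mathcal{O}(L/(\alpha_1+\alpha_2))$. Likewise $\tfrac{D}{c\varepsilon}=\tfrac{2\sigma_0^2}{(\alpha_1+\alpha_2)\varepsilon}=\mathcal{O}(\sigma_0^2/((\alpha_1+\alpha_2)\varepsilon))$, which together yield the claimed iteration count.

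I expect the only friction to be bookkeeping rather than mathematics: verifying that the Constant step theorem genuinely matches the template Lemma~\ref{lem:geo_restart} expects (in particular that the prefactor multiplies $h(x_0)$ alone, and that the admissible step range is exactly $(0,\delta_0]$), and justifying the harmless reduction $\max\{2L,\alpha_1,\alpha_2\}=\Theta(L)$ under the convention that $L$ dominates the strong-convexity parameters. There is no genuine obstacle, which is consistent with the corollary being stated as an immediate consequence of the restart lemma.
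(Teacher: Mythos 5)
Your proposal is correct and follows exactly the route the paper intends: the Constant step theorem (with $y=\argmin F$, which exists by the strong convexity established in Lemma~\ref{lem:quad_growth}) supplies the template $\EE[h(\hat x_t)]\leq C(1-c\eta)^t h(x_0)+D\eta$ required by Lemma~\ref{lem:geo_restart}, and the stated identification of $c,C,h,\delta_0,D$ together with the bound \eqref{eqn:specializing_restart} yields the claimed iteration count. The paper gives no further argument (the corollary is declared immediate from Lemma~\ref{lem:geo_restart}), and your bookkeeping remarks---including the absorption of $\max\{2L,\alpha_1,\alpha_2\}$ into $\mathcal{O}(L)$---match the conventions used throughout.
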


\subsection{Model-based algorithms under state-dependent distributions}

Model-based algorithms easily adapt to the setting with state-dependent distributions by allowing the sampling distribution to vary along the iterations. 
The formal procedure is stated in Algorithm~\ref{alg:MBA}.

\begin{algorithm}[H]
	{\bf Input:} initial $x_0$ and sequence $\{\eta_t\}_{t=0}^T\subset (0,\infty)$.
	
	{\bf Step} $t=0,1,\ldots, T$: 
	\begin{equation*}
	\begin{aligned}
	&\textrm{Sample~} S_t\stackrel{i.i.d}{\sim} \cD(x_t)\\
	& \textrm{Set~} x_{t+1}=\argmin_y\,\biggl\{ \ell_{x_t}(y,S_t)+r(y)+\frac{1}{2\eta_t}\|y-x_t\|^2 \biggr\}
	\end{aligned}
	\end{equation*}
	
	\caption{Model-based algorithm under state-dependent sampling}\label{alg:MBA}
\end{algorithm}

Naturally, we will impose Assumption~\ref{assum:stoch_model_stat} for all static problems $\eqref{eqn:PS1}$ with $\cP=\cD(x)$.

\begin{assumption}[Stochastic models under state feedback]\label{assum:stoch_model}
Suppose that Assumption~\ref{assum:stoch_model_stat} holds for the static problem $\mathtt{St}(\cD(x))$ for all $x\in \R^d$.
\end{assumption}

	For the rest of the section, we suppose that Assumptions~\ref{assum:perm_pred}, \ref{ass:smoothness}\ref{assump:smooth_center_2}, and \ref{assum:stoch_model} hold.
We are now ready to analyze Algorithm~\ref{alg:MBA}. 
The main idea of the convergence proof is to combine the one-step improvement guarantee   \eqref{eqn:one_step_inside} for the intermediate function $\varphi_{x_t}$ with the function gap  inequality~\eqref{eqn:gap_comp} to obtain a one-step improvement on $\varphi$.

\begin{corollary}[Key recursion]\label{cor:recur_prox}
	Suppose the inequality $\eta_t<\frac{1}{L}$ holds for all $t\geq 0$. Then the iterates generated by Algorithm~\ref{alg:MBA} satisfy the estimate:
	\begin{equation}\label{eqn:one_step_prox_grad_last}
\eta_t \EE_t[\varphi(x_{t+1})-\varphi(\bar x)]\leq \tfrac{1-(\alpha_2-\gamma\beta)\eta_t}{2}\|x_{t}- \bar x\|^2- \tfrac{1+(\alpha_1-\gamma\beta)\eta_t}{2}\|x_{t+1}- \bar x\|^2+\tfrac{\sigma^2_0\eta_t^2}{2(1-\eta_t L)}.
\end{equation}
\end{corollary}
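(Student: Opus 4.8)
The plan is to run the static one-step analysis on the problem $\mathtt{St}(\cD(x_t))$ and then pay a small correction, controlled by the function gap inequality \eqref{eqn:gap_comp}, to translate the progress from the intermediate objective $\varphi_{x_t}$ to the target objective $\varphi=\varphi_{\bar x}$. The first observation is that iteration $t$ of Algorithm~\ref{alg:MBA} is \emph{verbatim} one step of Algorithm~\ref{alg:MBA_stat} applied to the static problem $\mathtt{St}(\cD(x_t))$, since the sample is drawn as $S_t\sim\cD(x_t)$. By Assumption~\ref{assum:stoch_model}, the static hypotheses of Assumption~\ref{assum:stoch_model_stat} hold for $\mathtt{St}(\cD(x_t))$, and $\nabla f_{x_t}$ is $L$-Lipschitz by Assumption~\ref{ass:smoothness}\ref{assump:smooth_center_2}. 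Hence Lemma~\ref{lem:one-step_improv_stat}, applied with $F=\varphi_{x_t}$, $f=f_{x_t}$, and reference point $y=\bar x$, yields
\begin{equation*}
\eta_t\EE_t[\varphi_{x_t}(x_{t+1})-\varphi_{x_t}(\bar x)]\leq \tfrac{1-\alpha_2\eta_t}{2}\|x_t-\bar x\|^2-\tfrac{1+\alpha_1\eta_t}{2}\EE_t\|x_{t+1}-\bar x\|^2+\tfrac{\sigma_0^2\eta_t^2}{2(1-\eta_tL)}.
\end{equation*}

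Next I would convert the left-hand gap in $\varphi_{x_t}$ into the desired gap in $\varphi$. Writing $\varphi_x(y)=f_x(y)+r(y)$ and subtracting, the regularizer contributions $r(x_{t+1})-r(\bar x)$ cancel, leaving
\begin{equation*}
[\varphi(x_{t+1})-\varphi(\bar x)]-[\varphi_{x_t}(x_{t+1})-\varphi_{x_t}(\bar x)]=\bigl(f_{\bar x}(x_{t+1})-f_{\bar x}(\bar x)\bigr)-\bigl(f_{x_t}(x_{t+1})-f_{x_t}(\bar x)\bigr).
\end{equation*}
This is exactly a difference of function gaps across the two distributions $\cD(\bar x)$ and $\cD(x_t)$, so the gap deviation inequality \eqref{eqn:gap_comp}---instantiated with $x=\bar x$, $y=x_t$, $u=x_{t+1}$, $v=\bar x$---bounds its absolute value by $\gamma\beta\,\|x_t-\bar x\|\cdot\|x_{t+1}-\bar x\|$. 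Consequently,
\begin{equation*}
\varphi(x_{t+1})-\varphi(\bar x)\leq \varphi_{x_t}(x_{t+1})-\varphi_{x_t}(\bar x)+\gamma\beta\,\|x_t-\bar x\|\cdot\|x_{t+1}-\bar x\|.
\end{equation*}

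Finally I would symmetrize the cross term with the elementary estimate $\|x_t-\bar x\|\cdot\|x_{t+1}-\bar x\|\leq \tfrac12\|x_t-\bar x\|^2+\tfrac12\|x_{t+1}-\bar x\|^2$, multiply by $\eta_t$, take the conditional expectation $\EE_t$, and substitute the static bound above. Regrouping, the extra $\tfrac{\gamma\beta\eta_t}{2}\|x_t-\bar x\|^2$ shifts the coefficient $\tfrac{1-\alpha_2\eta_t}{2}$ to $\tfrac{1-(\alpha_2-\gamma\beta)\eta_t}{2}$, while the extra $\tfrac{\gamma\beta\eta_t}{2}\EE_t\|x_{t+1}-\bar x\|^2$ shifts $-\tfrac{1+\alpha_1\eta_t}{2}$ to $-\tfrac{1+(\alpha_1-\gamma\beta)\eta_t}{2}$, which is precisely \eqref{eqn:one_step_prox_grad_last}.

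There is no genuine obstacle here; the content is entirely in the theme advertised before the statement---perturbing a Lyapunov-style one-step inequality by the bias incurred from distributional shift. The one point requiring care is the choice of substitution into \eqref{eqn:gap_comp} that produces the \emph{product} $\|x_t-\bar x\|\cdot\|x_{t+1}-\bar x\|$ (rather than a cruder $O(\|x_{t+1}-\bar x\|)$ term); it is precisely this product structure, together with the symmetric AM--GM split, that lets the perturbation be absorbed into \emph{both} strong-convexity coefficients by the same amount $\gamma\beta$, thereby preserving the Lyapunov form with effective parameters $(\alpha_1-\gamma\beta,\alpha_2-\gamma\beta)$. A secondary bookkeeping subtlety is respecting the ordering of $\EE_t$, since $x_{t+1}$ is random given $x_t$.
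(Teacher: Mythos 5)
Your argument is correct and is essentially identical to the paper's proof: both apply the static one-step improvement bound (Lemma~\ref{lem:one-step_improv_stat}) with $y=\bar x$ to the problem $\mathtt{St}(\cD(x_t))$, transfer the gap from $\varphi_{x_t}$ to $\varphi$ via the function gap deviation inequality \eqref{eqn:gap_comp}, and absorb the resulting cross term $\gamma\beta\|x_t-\bar x\|\cdot\|x_{t+1}-\bar x\|$ into the two quadratic coefficients by Young's inequality. Your write-up is somewhat more explicit about the instantiation of \eqref{eqn:gap_comp} and the cancellation of the regularizer, but there is no substantive difference in route.
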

\begin{proof}
The function gap  inequality~\eqref{eqn:gap_comp} directly yields
	\begin{align}
\varphi_{x_t}(x_{t+1})-\varphi_{x_t}(\bar x)&\geq \varphi_{\bar x}(x_{t+1})-\varphi_{\bar x}(\bar x)-\gamma\beta\|x_{t+1}-\bar x\|\cdot\|x_t-\bar x\|\notag\\
&\geq \varphi_{\bar x}(x_{t+1})-\varphi_{\bar x}(\bar x)-\frac{\gamma\beta}{2}\|x_{t+1}-\bar x\|^2-\frac{\gamma\beta}{2}\|x_t-\bar x\|^2, \notag
\end{align}
where the last inequality follows from Young's inequality. Using this estimate to lower bound the left side of \eqref{eqn:one_step_inside} and rearranging completes the proof.
\end{proof}

The following theorem summarizes the convergence guarantees of the method with a constant parameter $\eta>0$.

\begin{theorem}[Convergence guarantees]\label{thm:conv_guarant}
Let $x_t$ be the iterates generated by Algorithm~\ref{alg:MBA} with a fixed parameter $\eta>0$. Then the following are true.
\begin{enumerate}
\item Suppose, we are in the regime $\frac{\gamma\beta}{\alpha_1+\alpha_2}<1$ and set $\hat \alpha:=\alpha_1+\alpha_2-\gamma\beta$. Then with the parameter 
 $\eta\leq \min\{\frac{1}{2L},\frac{1}{\alpha_1},\frac{1}{\alpha_2}\}$,  the estimate holds:
\begin{equation}\label{eqn:distance_decay_prox}
\EE\|x_{t}- \bar x\|^2\leq \left(1-\frac{\hat \alpha\eta}{2}\right)^t\|x_{0}- \bar x\|^2+\frac{\sigma^2_0\eta}{2\hat \alpha}.
\end{equation}
\item Suppose we are in the regime $\frac{\gamma\beta}{\alpha_1+\alpha_2}<\frac{1}{2}$ and set $\hat \alpha:=\alpha_1+\alpha_2-2\gamma\beta$. Then with the parameter
 $\eta\leq \min\{\frac{1}{2L},\frac{1}{(\alpha_1-\gamma\beta)^+},\frac{1}{(\alpha_2-\gamma\beta)^+}\}$,  the estimate holds:
 $$\EE [ \varphi\left(\hat x_t\right)-\varphi(\bar x)]\leq  2\left(1-\frac{\hat \alpha\eta}{2}\right)^t\left( \varphi(x_0)-\varphi(\bar x)\right)+\sigma^2_0\eta,$$
 where we recursively define $\hat x_t=\left(1-\frac{\hat \alpha\eta }{1+(\alpha_1-\gamma\beta)\eta)}\right)\hat x_{t-1}+\frac{\hat \alpha\eta }{1+(\alpha_1-\gamma\beta)\eta)}x_t$.
\end{enumerate}
\end{theorem}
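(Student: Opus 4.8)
The plan is to read both claims directly off the key recursion of Corollary~\ref{cor:recur_prox} (specialized to a constant step $\eta_t\equiv\eta$), obtaining the two different regimes by choosing what to do with the nonnegative left-hand side $\eta\,\EE_t[\varphi(x_{t+1})-\varphi(\bar x)]$. For the function-value claim (regime $\frac{\gamma\beta}{\alpha_1+\alpha_2}<\tfrac12$) I would keep the function gap on the left and simply recognize \eqref{eqn:one_step_prox_grad_last} as a one-step improvement of the generic form \eqref{eqn:one_step_improvea} with $\psi=\varphi$, $y=\bar x$, $c_1=\alpha_2-\gamma\beta$, $c_2=\alpha_1-\gamma\beta$, and---after using $\eta\leq\frac{1}{2L}$ to bound $\frac{\sigma_0^2\eta^2}{2(1-\eta L)}\leq\sigma_0^2\eta^2$---with $c_3=\sigma_0^2$. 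The sum $c_1+c_2=\alpha_1+\alpha_2-2\gamma\beta=\hat\alpha$ is positive exactly when $\frac{\gamma\beta}{\alpha_1+\alpha_2}<\tfrac12$, and the caps $\eta\leq\min\{\frac{1}{(\alpha_1-\gamma\beta)^+},\frac{1}{(\alpha_2-\gamma\beta)^+}\}$ keep the coefficients $1\pm c_i\eta$ suitably signed. Feeding this into the elementary constant-step recursion lemma (Corollary~\ref{cor:const_param}) then produces the averaged iterate $\hat x_t$ with weights $\frac{\hat\alpha\eta}{1+(\alpha_1-\gamma\beta)\eta}=\frac{(c_1+c_2)\eta}{1+c_2\eta}$ and the stated linear-rate-plus-noise bound.

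For the distance claim (the larger regime $\frac{\gamma\beta}{\alpha_1+\alpha_2}<1$), I would instead spend strong convexity on the left-hand side. By Lemma~\ref{lem:quad_growth} applied to the static problem at $\bar x$ (legitimate under Assumption~\ref{assum:stoch_model}), the target $\varphi=\varphi_{\bar x}$ is $(\alpha_1+\alpha_2)$-strongly convex with minimizer $\bar x$, so $\varphi(x_{t+1})-\varphi(\bar x)\geq\frac{\alpha_1+\alpha_2}{2}\|x_{t+1}-\bar x\|^2$. Substituting this lower bound into \eqref{eqn:one_step_prox_grad_last}, collecting the two $\|x_{t+1}-\bar x\|^2$ terms, and dividing through yields a pure distance recursion $\EE_t\|x_{t+1}-\bar x\|^2\leq q\,\|x_t-\bar x\|^2+(\mathrm{noise})$ with $q=\frac{1-(\alpha_2-\gamma\beta)\eta}{1+(2\alpha_1+\alpha_2-\gamma\beta)\eta}$. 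A short rewrite gives $q=1-\frac{2\hat\alpha\eta}{1+(2\alpha_1+\alpha_2-\gamma\beta)\eta}$ with $\hat\alpha=\alpha_1+\alpha_2-\gamma\beta>0$; the caps $\eta\leq\frac{1}{\alpha_1}$ and $\eta\leq\frac{1}{\alpha_2}$ force $(2\alpha_1+\alpha_2-\gamma\beta)\eta\leq3$, hence $q\leq1-\frac{\hat\alpha\eta}{2}$. Controlling the noise via $\frac{1}{1-\eta L}\leq2$ and unrolling the geometric recursion (as in the distance part of Theorem~\ref{thm:rate_grad-desc}) then delivers \eqref{eqn:distance_decay_prox}.

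The conceptual heart---and the only step beyond bookkeeping---is the observation that moving the full strong-convexity factor $\alpha_1+\alpha_2$ onto the left is precisely what enlarges the regime from $\frac{\gamma\beta}{\alpha_1+\alpha_2}<\tfrac12$ to $\frac{\gamma\beta}{\alpha_1+\alpha_2}<1$: it is absorbed into the denominator of $q$ and cancels one of the two $\gamma\beta$ biases, so the distance contracts whenever $\gamma\beta<\alpha_1+\alpha_2$. The main thing to verify is that the step-size restrictions are strong enough to certify $q\leq1-\frac{\hat\alpha\eta}{2}$ (the inequality $(2\alpha_1+\alpha_2-\gamma\beta)\eta\leq3$) while keeping all coefficients nonnegative; the remaining constants in the noise terms are routine, following from $\frac{1}{1-\eta L}\leq2$ and the recursion lemma.
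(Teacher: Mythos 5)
Your proposal is correct and follows essentially the same route as the paper's proof: Claim 2 is read off the key recursion of Corollary~\ref{cor:recur_prox} via Corollary~\ref{cor:const_param} with $c_1=\alpha_2-\gamma\beta$, $c_2=\alpha_1-\gamma\beta$, and Claim 1 is obtained by spending the $(\alpha_1+\alpha_2)$-strong convexity of $\varphi$ (Lemma~\ref{lem:quad_growth}) on the left-hand side, yielding the contraction $q=\frac{1-(\alpha_2-\gamma\beta)\eta}{1+(2\alpha_1+\alpha_2-\gamma\beta)\eta}\leq 1-\frac{\hat\alpha\eta}{2}$ exactly as in the paper. The only detail worth being careful about is that the noise floor $\frac{\sigma_0^2\eta}{2\hat\alpha}$ in \eqref{eqn:distance_decay_prox} comes from dividing the exact per-step noise by the exact $1-q$ before rounding $q$ up to $1-\frac{\hat\alpha\eta}{2}$; bounding both first would lose a constant factor.
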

\begin{proof}
Notice that the assumption $\eta\leq \frac{1}{2L}$ implies $\tfrac{\sigma^2_0\eta^2}{2(1-\eta L)}\leq \sigma_0^2\eta^2$. Combining this estimate with Corollary~\ref{cor:recur_prox} yields:
	\begin{equation}\label{eqn:one_step_prox_grad_last_inproof}
\eta \EE[\varphi(x_{t+1})-\varphi(\bar x)]\leq \tfrac{1-(\alpha_2-\gamma\beta)\eta}{2}\EE\|x_{t}- \bar x\|^2- \tfrac{1+(\alpha_1-\gamma\beta)\eta}{2}\EE\|x_{t+1}- \bar x\|^2+\sigma^2_0\eta^2.
\end{equation}
{\em Proof of Claim 1:} Lower-bounding the left-side using Lemma~\ref{lem:quad_growth} and rearranging yields 
	\begin{equation}
\frac{1+(2\alpha_1+\alpha_2-\gamma\beta)\eta}{2}\EE\|x_{t+1}-\bar x\|^2\leq \frac{1-(\alpha_2-\gamma\beta)\eta}{2}\EE\|x_{t}- \bar x\|^2+\sigma^2_0\eta^2.
\end{equation}
The assumption $\alpha_1+\alpha_2-\gamma\beta>0$ ensures that  the left side is positive, and therefore
$$\EE\|x_{t+1}-\bar x\|^2\leq  \left(1-\frac{2(\alpha_1+\alpha_2-\gamma\beta)\eta}{1+(2\alpha_1+\alpha_2-\gamma\beta)\eta}\right)\EE\|x_{t}- \bar x\|^2+\frac{\sigma^2_0\eta^2}{1+(2\alpha_1+\alpha_2-\gamma\beta)\eta}.$$
The assumption $\eta\leq \frac{1}{\alpha_2}$ ensures that the coefficient multiplying $\EE\|x_{t}- \bar x\|^2$ lies in $(0,1)$. Iterating the recursion and taking into account $(\hat \alpha+\alpha_1)\eta\leq 3$ directly yields \eqref{eqn:distance_decay_prox}.

{\em Proof of Claim 2:}
In light of the expression \eqref{eqn:one_step_prox_grad_last_inproof}, we aim to apply Corollary~\ref{cor:const_param} with $h(x):=\varphi(x)-\varphi(\bar x)$, $D_t:=\frac{1}{2}\EE\|x_t-\bar x\|^2$, $c_1:=\alpha_2-\gamma\beta$, and $c_2:=\alpha_1-\gamma\beta$. The assumption $\eta\leq \min\{\frac{1}{(\alpha_1-\gamma\beta)^+},\frac{1}{(\alpha_2-\gamma\beta)^+}\}$ directly implies $1-c_1\eta>0$ and $1+c_2\eta>0$.
An application of Corollary~\ref{cor:const_param} yields the estimate:
 $$\EE [ \varphi\left(\hat x_t\right)-\varphi(\bar x)]\leq  \left(1-\frac{\hat \alpha\eta}{1+(\alpha_1-\gamma\beta)\eta}\right)^t\left( \varphi(x_0)-\varphi(\bar x)+ \frac{\hat \alpha}{2}\|x_0-\bar x\|^2\right)+\sigma^2_0\eta.$$
Lemma~\ref{lem:quad_growth} in turn implies $\frac{\hat \alpha}{2}\|x_0-\bar x\|^2\leq \varphi(x_0)+\varphi(\bar x)$. Taking into account $(\alpha_1-\gamma\beta)\eta\leq 1$ completes the proof.
\end{proof}

The following corollary obtains efficiency guarantees by combining Algorithm~\ref{alg:MBA} with the geometric decay schedule (Algorithm~\ref{alg:geo_decay}).

\begin{corollary}[Efficiency of Algorithm~\ref{alg:MBA} with geometrically decaying schedule]\label{cor:main_eff_result_dependent} {\hfill \\ }
Suppose that Assumptions~\ref{assum:perm_pred}, \ref{ass:smoothness}\ref{assump:smooth_center_2}, and \ref{assum:stoch_model} hold. Then the following are true.
\begin{enumerate}
\item {\bf (Distance)} Suppose we are in the regime $\frac{\gamma\beta}{\alpha_1+\alpha_2}<1$ and that we have available an estimate $\Delta\geq \|x_0-\bar x\|^2$. Define the augmented strong convexity parameter $\hat \alpha=\alpha_1+\alpha_2-\gamma\beta$. Then Algorithm~\ref{alg:MBA} may be augmented with the geometric decay schedule (Algorithm~\ref{alg:geo_decay})
under the identification
$$c=\frac{\hat\alpha}{2}, \quad C=1,\quad h(x)=\|x-\bar x\|^2,\quad \delta_0=\min\left\{\tfrac{1}{2L},\tfrac{1}{\alpha_1},\tfrac{1}{\alpha_2}\right\},\quad D=\frac{\sigma_0^2}{2\hat\alpha}.$$ 
The resulting procedure will generate a point $x$ satisfying $\EE[\|x-\bar x\|^2]\leq \varepsilon$ using 
$$\mathcal{O}\left(\left(\frac{L+\alpha_1+\alpha_2}{\hat\alpha}\right)\log\left(\frac{\Delta}{\varepsilon}\right)+\frac{\sigma_0^2}{\hat\alpha^2\varepsilon}\right)\qquad\textrm{samples}.$$
\item {\bf (Function value)} Suppose we are in the regime $\frac{\gamma\beta}{\alpha_1+\alpha_2}<\frac{1}{2}$ and that we have available an estimate $\Delta\geq \varphi(x_0)-\varphi(\bar x)$. Define the augmented strong convexity parameter $\hat \alpha=\alpha_1+\alpha_2-2\gamma\beta$. Then Algorithm~\ref{alg:MBA} may be augmented with the geometric decay schedule (Algorithm~\ref{alg:geo_decay}) under the identification
$$c=\frac{\hat\alpha}{2}, \quad C=2,\quad h(x)=\varphi(x)-\varphi(\bar x),\quad \delta_0=\min\left\{\tfrac{1}{2L},\tfrac{1}{(\alpha_1-\gamma\beta)^+},\tfrac{1}{(\alpha_2-\gamma\beta)^+}\right\},\quad D=\sigma_0^2.$$ 
The resulting procedure will generate a point $x$ satisfying $\varphi(x)-\varphi(\bar x)\leq \varepsilon$ using 
$$\mathcal{O}\left(\frac{L+\alpha_1+\alpha_2}{\hat\alpha}\log\left(\frac{\Delta}{\varepsilon}\right)+\frac{\sigma_0^2}{\hat\alpha\varepsilon}\right)\qquad \textrm{samples}.$$
\end{enumerate}
\end{corollary}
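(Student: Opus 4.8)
The plan is to reduce both claims to the generic geometric-decay restart lemma (Lemma~\ref{lem:geo_restart}), feeding it the constant-step guarantees of Theorem~\ref{thm:conv_guarant} as the per-epoch engine. That lemma takes any method whose fixed-step behavior obeys a template bound of the shape $\EE[h(x_T)]\leq C(1-c\eta)^T h(x_0)+D\eta$, valid for every $\eta\leq\delta_0$, and returns a total iteration budget of order $\tfrac{1}{c\delta_0}\log\tfrac{C\Delta}{\varepsilon}+\tfrac{D}{c\varepsilon}$ for driving $\EE[h(\cdot)]$ below $\varepsilon$, where $\Delta\geq h(x_0)$. The work is thus entirely in reading off the five constants $(c,C,h,\delta_0,D)$ for each claim and checking that the template is met; Lemma~\ref{lem:geo_restart} has already absorbed the analysis of the schedule itself.

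For the distance claim I would invoke Theorem~\ref{thm:conv_guarant}(Claim~1). In the regime $\tfrac{\gamma\beta}{\alpha_1+\alpha_2}<1$, with $\hat\alpha=\alpha_1+\alpha_2-\gamma\beta$ and any $\eta\leq\min\{\tfrac1{2L},\tfrac1{\alpha_1},\tfrac1{\alpha_2}\}$, the iterates satisfy $\EE\|x_T-\bar x\|^2\leq(1-\tfrac{\hat\alpha\eta}{2})^T\|x_0-\bar x\|^2+\tfrac{\sigma_0^2\eta}{2\hat\alpha}$. This is precisely the template with $h(x)=\|x-\bar x\|^2$, $C=1$, $c=\tfrac{\hat\alpha}{2}$, $\delta_0=\min\{\tfrac1{2L},\tfrac1{\alpha_1},\tfrac1{\alpha_2}\}$, and $D=\tfrac{\sigma_0^2}{2\hat\alpha}$, matching the stated identifications and the same step-size ceiling. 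Since $\tfrac1{\delta_0}=\max\{2L,\alpha_1,\alpha_2\}=\Theta(L+\alpha_1+\alpha_2)$, substitution gives $\tfrac1{c\delta_0}=\Theta(\tfrac{L+\alpha_1+\alpha_2}{\hat\alpha})$ and $\tfrac{D}{c\varepsilon}=\tfrac{\sigma_0^2}{\hat\alpha^2\varepsilon}$, which is exactly the claimed count. The function-value claim is identical in structure, now using Theorem~\ref{thm:conv_guarant}(Claim~2): in the regime $\tfrac{\gamma\beta}{\alpha_1+\alpha_2}<\tfrac12$, with $\hat\alpha=\alpha_1+\alpha_2-2\gamma\beta$ and $\eta\leq\min\{\tfrac1{2L},\tfrac1{(\alpha_1-\gamma\beta)^+},\tfrac1{(\alpha_2-\gamma\beta)^+}\}$, the averaged iterate obeys $\EE[\varphi(\hat x_T)-\varphi(\bar x)]\leq2(1-\tfrac{\hat\alpha\eta}{2})^T(\varphi(x_0)-\varphi(\bar x))+\sigma_0^2\eta$. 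Matching the template with $h(x)=\varphi(x)-\varphi(\bar x)$, $C=2$, $c=\tfrac{\hat\alpha}{2}$, the stated $\delta_0$, and $D=\sigma_0^2$, the same substitution yields $\tfrac1{c\delta_0}=\Theta(\tfrac{L+\alpha_1+\alpha_2}{\hat\alpha})$ and $\tfrac{D}{c\varepsilon}=\Theta(\tfrac{\sigma_0^2}{\hat\alpha\varepsilon})$.

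I expect the genuinely delicate points to be bookkeeping rather than mathematics. One must check that each epoch's output (the last iterate $x_T$ for the distance claim, the averaged iterate $\hat x_T$ for the function-value claim) indeed satisfies the template bound so that Lemma~\ref{lem:geo_restart} may chain epochs with $\Delta\geq h(x_0)$; that the factor $C=2$ enters the complexity only through an additive $\log 2$ inside the logarithm; and that both $\max\{2L,\alpha_1,\alpha_2\}$ and $\max\{2L,(\alpha_1-\gamma\beta)^+,(\alpha_2-\gamma\beta)^+\}$ are $\Theta(L+\alpha_1+\alpha_2)$, so the two claims share the same leading logarithmic coefficient. With these confirmed, the result is an immediate application of Lemma~\ref{lem:geo_restart}.
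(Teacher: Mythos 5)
Your proposal is correct and follows exactly the paper's route: the paper proves this corollary by declaring it immediate from Theorem~\ref{thm:conv_guarant} (the constant-step guarantees) together with Lemma~\ref{lem:geo_restart}, with precisely the identifications of $(c,C,h,\delta_0,D)$ you list. Your additional bookkeeping checks (matching the template bound, the $\Theta(L+\alpha_1+\alpha_2)$ estimate for $1/\delta_0$, and the role of $C$) are all consistent with what the paper leaves implicit.
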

\begin{proof}
This is immediate from Theorem~\ref{thm:conv_guarant} and Lemma~\ref{lem:geo_restart}.
\end{proof}

Efficiency guarantees of the stochastic proximal point, proximal gradient, and clipped gradient methods under Assumptions~\ref{assum:smoothness}, \ref{assum:perm_pred}, \ref{ass:strong_conv_perm}\ref{it:str_conv_loss}, \ref{ass:smoothness}\ref{assump:smooth_center_2}  follow directly from Corollary~\ref{cor:main_eff_result_dependent} and the values of $\alpha_1$ and $\alpha_2$ in Table~\ref{table:methods}.

\section{Inexact repeated minimization}\label{sec:inexactRM}
All stochastic algorithms we have discussed so far in each iteration $t$ apply a single step of a standard algorithm on the static problem $\mathtt{St}(\cD(x_t))$. An alternative strategy is to apply an algorithm to the static problem $\mathtt{St}(\cD(x_t))$ for a moderate number of iterations and switch the distribution at the end of the run. Algorithms based on this principle can be interpreted as inexact repeated minimization. Such algorithms can be superior in the typical settings where changing the distribution governing the static problem may be much costlier than drawing a sample. We will informally call the process of changing the distribution from $\cD(x)$ to $\cD(y)$ as  {\em deployment}. In this section, we will show that it is possible to maintain the sample complexity derived in previous sections, while using a number of deployments that is only logarithmic in the problem parameters.

\subsection{Model-based algorithms}
We begin by analyzing  Algorithm~\ref{alg:MBA_stage}, which effectively performs repeated minimization with each subproblem solved approximately by a model-based algorithm (Algorithm~\ref{alg:MBA}). We describe two versions of the procedure. The first version uses the last iterate of each run to warmstart the next run, while the second version instead uses the running average of the iterates. Throughout the section, we  suppose that Assumptions~\ref{assum:perm_pred}, \ref{ass:smoothness}\ref{assump:smooth_center_2}, and \ref{assum:stoch_model} hold.

\begin{algorithm}[H]
	{\bf Input:} initial $u_0$, sequences $\{\eta_j\}_{j=0}^J\subset (0,\infty)$ and $\{J_t\}_{t=0}^T\subset\mathbb{N}$.

	\For{$t=0,1,\ldots,T$}{
		Set $x_0=u_t$
		
		\For{$j=1,\ldots, J_t$}
		{
			\begin{equation*}
			\begin{aligned}
			&\textrm{Sample~} S_j\stackrel{i.i.d}{\sim} \cD(x_0)\\
			& \textrm{Set~} x_{j}=\argmin_x\, \biggl\{ \ell_{x_0}(x,S_j)+r(x)+\frac{1}{2\eta_{j}}\|x-x_{j-1}\|^2 \biggr\}
			\end{aligned}
			\end{equation*}
		}
		
		Set 
		\begin{displaymath}
		u_{t+1} = \left\{
		\begin{array}{ll}
		x_{J_t} &, \textrm{ if Version I}\\
		\displaystyle\hat\Gamma_{J_t} x_0+\sum_{j=1}^{J_t} \frac{\hat\Gamma_{J_t}\hat \delta_j}{\hat\Gamma_j}x_j & , \textrm{ if Version II}
		\end{array}
		\right.
		\end{displaymath} 
		where we define $\displaystyle \hat \delta_j=\tfrac{(\alpha_1+\alpha_2-\gamma\beta)\eta_j}{1+(\alpha_1-\gamma\beta)\eta_j}$ and $\displaystyle\hat\Gamma_j=\prod_{i=1}^j (1-\hat \delta_i)$.
	}
	
	\caption{Stagewise based Minimization}\label{alg:MBA_stage}
\end{algorithm}

The following theorem shows a one-step improvement guarantee for Algorithm~\ref{alg:MBA_stage}, which explicitly balances the impact of the number of inner iterations $J_t$ and the step-size parameters $\eta_j$ on the overall performance.

\begin{theorem}[One epoch improvement]
	Suppose we are in the regime $\frac{\gamma\beta}{\alpha_1+\alpha_2}<1$. 
	\begin{enumerate}
		\item {\bf (Version I)} Choose a sequence $\eta_j\leq \frac{1}{2L}$. Then the iterates $u_t$ generated by Version I of Algorithm~\ref{alg:MBA_stage} satisfy
		\begin{equation}\label{eqn:stage_wise_dist}
		\EE\|u_{t+1}-\bar x\|^2\leq \left(\Gamma_{J_t}+\tfrac{\gamma\beta}{2\alpha_1+2\alpha_2-\gamma\beta}\right) \EE\|u_t-\bar x\|^2+\tfrac{2\sigma^2_0}{2\alpha_1+2\alpha_2-\gamma\beta}\sum_{j=1}^{J_t}\frac{\eta_j\delta_j\Gamma_{J_t}}{\Gamma_j},
		\end{equation}
		where we set $\delta_j=\frac{(2\alpha_1+2\alpha_2-\gamma\beta)\eta_j}{1+\eta_j(2\alpha_1+\alpha_2-\gamma\beta)}$ and $\Gamma_j=\prod_{i=1}^{j} (1-\delta_i)$.
		\item {\bf (Version II)} Choose a sequence $\eta_j\leq \min\{\frac{1}{2L}, \frac{1}{(\gamma\beta-\alpha_1)^+},\frac{1}{\alpha_2}\}$. Then the iterates $u_t$ generated by Version II of Algorithm~\ref{alg:MBA_stage} satisfy
		\begin{equation}\label{eqn:stage_wise_func}
		\EE[\varphi(u_{t+1})-\varphi(\bar x)]\leq \left(2\hat\Gamma_{J_t}+\tfrac{\gamma\beta}{\alpha_1+\alpha_2}\right)\EE[\varphi(u_t)-\varphi(\bar x)] +\sigma^2_0\sum_{j=1}^{J_t}\frac{\eta_j\hat \delta_j\hat \Gamma_{J_t}}{\hat \Gamma_j}.
		\end{equation}
	\end{enumerate}
\end{theorem}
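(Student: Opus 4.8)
The plan is to exploit the fact that within each outer epoch $t$ the sampling distribution is frozen at $\cD(u_t)$: the inner loop $j=1,\dots,J_t$ is then nothing but the static model-based method (Algorithm~\ref{alg:MBA_stat}) run on the problem $\min_y \varphi_{u_t}(y)$, so the one-step guarantee of Lemma~\ref{lem:one-step_improv_stat} applies verbatim with $F=\varphi_{u_t}$ and reference point $y=\bar x$. The only place the distributional shift enters is through the discrepancy between $\varphi_{u_t}$ and the target $\varphi=\varphi_{\bar x}$, which by the gap inequality \eqref{eqn:gap_comp} is controlled by $\gamma\beta\|u_t-\bar x\|$. Crucially, because $u_t$ is fixed throughout the epoch, this bias enters as a single additive error proportional to $\|u_t-\bar x\|^2$, rather than re-entering at every step as in the greedy analysis of Corollary~\ref{cor:recur_prox}.

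For Version I, I would start from Lemma~\ref{lem:one-step_improv_stat} with $F=\varphi_{u_t}$, $y=\bar x$, and lower-bound the left-hand side $\varphi_{u_t}(x_j)-\varphi_{u_t}(\bar x)$ by first passing to $\varphi(x_j)-\varphi(\bar x)$ via \eqref{eqn:gap_comp} and then invoking strong convexity of $\varphi$ (Lemma~\ref{lem:quad_growth}), $\varphi(x_j)-\varphi(\bar x)\ge \tfrac{\alpha_1+\alpha_2}{2}\|x_j-\bar x\|^2$. Splitting the cross term $\gamma\beta\|u_t-\bar x\|\,\|x_j-\bar x\|$ by Young's inequality and collecting the $\|x_j-\bar x\|^2$ coefficients (using $\eta_j\le\tfrac1{2L}$ to replace $\tfrac1{1-\eta_j L}$ by $2$) yields the one-step distance recursion
\[
\EE_j\|x_j-\bar x\|^2\le(1-\delta_j)\|x_{j-1}-\bar x\|^2+\tfrac{\gamma\beta\,\delta_j}{2\alpha_1+2\alpha_2-\gamma\beta}\|u_t-\bar x\|^2+\tfrac{2\sigma_0^2\eta_j\delta_j}{2\alpha_1+2\alpha_2-\gamma\beta},
\]
with $\delta_j$ as defined in the statement; the identity $1-\delta_j=\tfrac{1-\alpha_2\eta_j}{1+\eta_j(2\alpha_1+\alpha_2-\gamma\beta)}$ is what makes the coefficients align. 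Unrolling from $x_0=u_t$ to $x_{J_t}=u_{t+1}$ and using the telescoping identity $\sum_{j=1}^{J_t}\tfrac{\Gamma_{J_t}\delta_j}{\Gamma_j}=1-\Gamma_{J_t}\le 1$ (which follows from $\tfrac{\delta_j}{\Gamma_j}=\tfrac1{\Gamma_j}-\tfrac1{\Gamma_{j-1}}$) converts the fixed bias coefficient into $\tfrac{\gamma\beta}{2\alpha_1+2\alpha_2-\gamma\beta}$ and produces exactly \eqref{eqn:stage_wise_dist}.

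For Version II I would again start from Lemma~\ref{lem:one-step_improv_stat} with $F=\varphi_{u_t}$, $y=\bar x$, but now keep function values on the left. Using \eqref{eqn:gap_comp} with a symmetric Young split, $\varphi_{u_t}(x_j)-\varphi_{u_t}(\bar x)\ge \varphi(x_j)-\varphi(\bar x)-\tfrac{\gamma\beta}{2}\|u_t-\bar x\|^2-\tfrac{\gamma\beta}{2}\|x_j-\bar x\|^2$, I obtain a one-step improvement of the canonical form \eqref{eqn:one_step_improvea} with $h=\varphi-\varphi(\bar x)$, $c_1=\alpha_2$, $c_2=\alpha_1-\gamma\beta$, and additive term $\tfrac{\eta_j\gamma\beta}{2}\|u_t-\bar x\|^2+\tfrac{\sigma_0^2\eta_j^2}{2(1-\eta_j L)}$; note that the $\|u_t-\bar x\|^2$ half of the split becomes additive and so leaves $c_1=\alpha_2$ untouched. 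The parameters $\hat\delta_j,\hat\Gamma_j$ in the algorithm are precisely those generated by Corollary~\ref{cor:const_param} from these $c_1,c_2$, so applying that corollary to the weighted average $u_{t+1}$ gives a bound $\hat\Gamma_{J_t}\big(h(u_t)+\tfrac{\alpha_1+\alpha_2-\gamma\beta}{2}\|u_t-\bar x\|^2\big)$ plus the weighted sum of the additive terms divided by $\eta_j$. The initial potential is at most $2h(u_t)$ via Lemma~\ref{lem:quad_growth}; the fixed bias $\tfrac{\gamma\beta}{2}\|u_t-\bar x\|^2$ telescopes against $\sum_j\tfrac{\hat\Gamma_{J_t}\hat\delta_j}{\hat\Gamma_j}=1-\hat\Gamma_{J_t}\le1$ and converts to $\tfrac{\gamma\beta}{\alpha_1+\alpha_2}h(u_t)$ using $\|u_t-\bar x\|^2\le\tfrac{2}{\alpha_1+\alpha_2}h(u_t)$; and $\tfrac{\sigma_0^2\eta_j^2}{2(1-\eta_j L)}/\eta_j\le\sigma_0^2\eta_j$ produces the variance sum. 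Taking total expectation yields \eqref{eqn:stage_wise_func}.

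I expect the main obstacle to be organizational rather than conceptual: one must correctly feed the per-step inequality into Corollary~\ref{cor:const_param} while carefully distinguishing the \emph{fixed} warm-start distance $\|u_t-\bar x\|^2$ from the running iterate distances $\|x_{j-1}-\bar x\|^2$, so that the bias telescopes to the single clean coefficient $\tfrac{\gamma\beta}{\alpha_1+\alpha_2}$ (resp.\ $\tfrac{\gamma\beta}{2\alpha_1+2\alpha_2-\gamma\beta}$) and the factor $2$ multiplying $\hat\Gamma_{J_t}$ emerges from the strong-convexity conversion of the initial potential. A final routine check is that the step-size conditions $\eta_j\le\min\{\tfrac1{2L},\tfrac1{(\gamma\beta-\alpha_1)^+},\tfrac1{\alpha_2}\}$ are exactly what guarantee $1-\alpha_2\eta_j>0$ and $1+(\alpha_1-\gamma\beta)\eta_j>0$, hence $\hat\delta_j\in(0,1)$ and a well-defined averaging scheme.
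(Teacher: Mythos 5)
Your proposal is correct and follows essentially the same route as the paper: apply the static one-step bound (Lemma~\ref{lem:one-step_improv_stat}) with $F=\varphi_{u_t}$ and $y=\bar x$, absorb the frozen bias via \eqref{eqn:gap_comp} and a Young split, then use strong convexity and unrolling for Version~I and the averaging machinery for Version~II, with the same telescoping identities. The only slip is a labeling one: since the $\eta_j$ may vary within an epoch, the averaging step in Version~II is the general Lemma~\ref{lem:average2} rather than the constant-parameter Corollary~\ref{cor:const_param}, though the bound you describe is exactly what that lemma delivers.
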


\begin{proof}
	Fix an index $t\in\{0,\dots, T\}$. We will consider the outcome of the inner loop $j=1,\ldots, J_t$. To simplify notation therefore let us drop the subscript $t$ from $u_t$. 
	Setting $y=\bar x$ in Lemma~\ref{lem:one-step_improv_stat} yields the guarantee:
	\begin{equation}\label{eqn:inproof_onestep}
	\eta_{j}\EE[\varphi_{u}(x_{j})-\varphi_{u}(\bar x)]\leq \frac{1-\alpha_2\eta_j}{2}\EE\|x_{j-1}-\bar x\|^2-\frac{1-\alpha_1\eta_{j}}{2}\EE\|x_{j}-\bar x\|^2+\sigma^2_0\eta^2_{j},
	\end{equation}
	for all $j=1,\ldots,J_t$. The function gap inequality \eqref{eqn:gap_comp} in turn implies
	\begin{align*}
	\varphi_{u}(x_{j})-\varphi_u(\bar x)&\geq \varphi(x_{j})-\varphi(\bar x)-\gamma\beta\|x_{j}-\bar x\|\cdot\|y-\bar x\|\\
	&\geq \varphi(x_{j})-\varphi(\bar x)-\frac{\gamma\beta}{2}\|x_{j}-\bar x\|^2-\frac{\gamma\beta}{2}\|u-\bar x\|^2,
	\end{align*}
	where the last estimate follows from  Young's inequality.
	Combining with \eqref{eqn:inproof_onestep} and rearranging, we conclude
	\begin{equation}\label{eqn:key_perturb_est}
	\eta_j\EE[\varphi(x_{j})-\varphi(\bar x)]\leq \tfrac{1-\alpha_2\eta_j}{2}\EE\|x_{j-1}-\bar x\|^2-\tfrac{1+(\alpha_1-\gamma\beta)\eta_j}{2}\EE\|x_{j}-\bar x\|^2+\tfrac{2\sigma^2_0\eta^2_j+\gamma\beta\eta_j\EE\|u-\bar x\|^2}{2}.
	\end{equation}
	
	Suppose now that we  are running version I of Algorithm~\ref{alg:MBA_stage}. Lower-bounding the left-side of  \eqref{eqn:key_perturb_est} using strong convexity, $\varphi(x_{j})-\varphi(\bar x)\geq \tfrac{\alpha_1+\alpha_2}{2}\|x_j-\bar x\|^2$ (Lemma~\ref{lem:quad_growth}), and rearranging yields
	$$\EE\|x_{j}-\bar x\|^2\leq \left(\frac{1-\alpha_2\eta_j}{1+\eta_j(2\alpha_1+\alpha_2-\gamma\beta)}\right)\EE\|x_{j-1}-\bar x\|^2+\frac{2\sigma^2_0\eta^2_j+\eta_j\gamma\beta\EE\|u-\bar x\|^2}{1+\eta_j(2\alpha_1+\alpha_2-\gamma\beta)}.$$
	The coefficient of $\EE\|x_{j-1}-\bar x\|^2$ is precisely $1-\delta_j$. Unrolling the recursion, we conclude
	\begin{equation}\label{eqn:first_recourse}
	\EE\|x_{J_t}-\bar x\|^2\leq \Gamma_{J_t} \EE\|x_0-\bar x\|^2+\sum_{j=1}^{J_t} \frac{2\sigma^2_0\eta^2_j+\eta_j\gamma\beta\EE\|u-\bar x\|^2}{1+\eta_j(2\alpha_1+\alpha_2-\gamma\beta)}\cdot \frac{\Gamma_{J_t}}{\Gamma_j}.
	\end{equation}
	Next to simplify the sum, algebraic manipulations yield
	\begin{equation}\label{eqn:first_recourse2}
	\sum_{j=1}^{J_t} \frac{\eta_j}{1+\eta_j(2\alpha_1+\alpha_2-\gamma\beta)}\cdot\frac{\Gamma_{J_t}}{\Gamma_j}=\frac{1}{2\alpha_1+2\alpha_2-\gamma\beta}\sum_{j=1}^{J_t}\frac{\delta_j\Gamma_{J_t}}{\Gamma_j}=\frac{1-\Gamma_{J_t}}{2\alpha_1+2\alpha_2-\gamma\beta},
	\end{equation}
	where the last equality uses Lemma~\ref{lem:gamma_sum}. Note also the expression
	\begin{equation}\label{eqn:first_recourse3}
	\sum_{j=1}^{J_t} \frac{\eta_j^2}{1+\eta_j(2\alpha_1+\alpha_2-\gamma\beta)}\cdot\frac{\Gamma_{J_t}}{\Gamma_j}=\frac{1}{2\alpha_1+2\alpha_2-\gamma\beta}\sum_{j=1}^{J_t}\frac{\eta_j\delta_j\Gamma_{J_t}}{\Gamma_j}.
	\end{equation}
	Combining \eqref{eqn:first_recourse2}, \eqref{eqn:first_recourse3}, \eqref{eqn:first_recourse}, and the equality $x_0=u$ completes the proof of \eqref{eqn:stage_wise_dist}.

	Suppose now that we  are running version II of Algorithm~\ref{alg:MBA_stage}. Then applying Lemma~\ref{lem:average2} to the recursion \eqref{eqn:key_perturb_est} directly implies 
	$$\EE[\varphi(u_{t+1})-\varphi(\bar x)]\leq \hat \Gamma_{J_t}\left(\EE[\varphi(x_0)-\varphi(\bar x)]+\tfrac{\alpha_1+\alpha_2-\gamma\beta}{2}\|x_0-\bar x\|^2+\sum_{j=1}^{J_t}\tfrac{(\alpha_1+\alpha_2-\gamma\beta)(2\sigma^2_0\eta_j^2+\gamma\beta\eta_j\|u-\bar x\|^2)}{2\hat\Gamma_j(1+(\alpha_1-\gamma\beta)\eta_j)}]\right).$$
	To simplify the right side, observe first $\tfrac{\alpha_1+\alpha_2-\gamma\beta}{2}\|x_0-\bar x\|^2\leq \varphi(x_{0})-\varphi(\bar x)$ (Lemma~\ref{lem:quad_growth}). Next the same argument that justified \eqref{eqn:first_recourse2} and \eqref{eqn:first_recourse3} implies the expressions:
	$$\sum_{j=1}^{J_t} \frac{(\alpha_1-\alpha_2-\gamma\beta)\eta_j}{1+(\alpha_1-\gamma\beta)\eta_j}\cdot\frac{\hat\Gamma_{J_t}}{\hat\Gamma_j}=1-\hat\Gamma_{J_t}\qquad \textrm{and}\qquad \sum_{j=1}^{J_t} \frac{(\alpha_1+\alpha_2-\gamma\beta)\eta_j^2}{1+(\alpha_1-\gamma\beta)\eta_j}\cdot\frac{\hat \Gamma_{J_t}}{\hat \Gamma_j}=\sum_{j=1}^{J_t}\frac{\eta_j\hat \delta_j\hat \Gamma_{J_t}}{\hat \Gamma_j}.$$
	The claimed estimate \eqref{eqn:stage_wise_func} follows immediately.
\end{proof}

Looking at the estimate \eqref{eqn:stage_wise_func}, it is natural to choose $J_t$ such that  the estimate,
$2\hat\Gamma_{J_t}+\tfrac{\gamma\beta}{\alpha_1+\alpha_2}\leq \frac{1}{2}(1+\tfrac{\gamma\beta}{\alpha_1+\alpha_2})$, holds. The following result summarizes the guarantees of such a  procedure when the stepsizes $\eta_j$ are constant.

\begin{corollary}
	Suppose we are in the regime $\frac{\gamma\beta}{\alpha_1+\alpha_2}<1$.
	\begin{enumerate}
		\item {\bf (Version I)} Choose a constant parameter $\eta\leq \frac{1}{2L}$ and set  
		$$J_t=\left\lceil\left(1+\frac{1}{(2\alpha_1+2\alpha_2-\gamma\beta)\eta}\right)\log\left(\frac{2\alpha_1+2\alpha_2-\gamma\beta}{\alpha_1+\alpha_2-\gamma\beta}\right)\right\rceil\qquad \forall t\geq 0.$$
		Then the iterates $u_t$ generated by Version I of Algorithm~\ref{alg:MBA_stage} satisfy
		\begin{equation}\label{eqn:stage_wise_dist2}
		\EE\|u_{t}-\bar x\|^2\leq \left(\frac{1}{2}(1+\tfrac{\gamma\beta}{2\alpha_1+2\alpha_2-\gamma\beta})\right)^t \EE\|u_0-\bar x\|^2+\tfrac{2\sigma^2_0\eta}{\alpha_1+\alpha_2-\gamma\beta}.
		\end{equation}
		\item {\bf (Version II)} Choose $\eta\leq \min\{\frac{1}{2L}, \frac{1}{(\gamma\beta-\alpha_1)^+},\frac{1}{\alpha_2}\}$  and set  $$J_t=\left\lceil\left(1+\frac{1}{(\alpha_1+\alpha_2-\gamma\beta)\eta}\right)\log\left(\frac{2(\alpha_1+\alpha_2)}{\alpha_1+\alpha_2-\gamma\beta}\right)\right\rceil\qquad \forall t\geq 0.$$
		Then the iterates $u_t$ generated by Version II of Algorithm~\ref{alg:MBA_stage} satisfy
		\begin{equation}\label{eqn:stage_wise_func2}
		\EE[\varphi(u_{t})-\varphi(\bar x)]\leq \left(\frac{1}{2}(1+\tfrac{\gamma\beta}{\alpha_1+\alpha_2})\right)^t\EE[\varphi(u_0)-\varphi(\bar x)] +\frac{2\sigma^2_0\eta}{1-\tfrac{\gamma\beta}{\alpha_1+\alpha_2}}.
		\end{equation}
	\end{enumerate}
\end{corollary}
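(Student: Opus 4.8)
The plan is to reduce both claims to a single scalar geometric recursion by specializing the one-epoch improvement bounds of the previous theorem to the constant step size $\eta_j\equiv\eta$. Once $\eta$ is constant, the inner-loop quantities become constants: writing $A:=2\alpha_1+2\alpha_2-\gamma\beta$ we have $\delta_j\equiv\delta=\frac{A\eta}{1+(2\alpha_1+\alpha_2-\gamma\beta)\eta}$ and $\Gamma_j=(1-\delta)^j$ for Version~I, and $\hat\delta_j\equiv\hat\delta=\frac{(\alpha_1+\alpha_2-\gamma\beta)\eta}{1+(\alpha_1-\gamma\beta)\eta}$ and $\hat\Gamma_j=(1-\hat\delta)^j$ for Version~II. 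The stepsize restrictions assumed here are exactly those required by the one-epoch theorem, so its estimates apply directly and each takes the form $a_{t+1}\le q\,a_t+D$, where $a_t:=\EE\|u_t-\bar x\|^2$ (Version~I) or $a_t:=\EE[\varphi(u_t)-\varphi(\bar x)]$ (Version~II), $q$ is the one-epoch contraction factor, and $D$ is the one-epoch residual. The conclusions then follow by unrolling this recursion into $a_t\le q^t a_0+\frac{D}{1-q}$, so everything comes down to (i) showing the prescribed $J_t$ forces $q\le\frac12\bigl(1+\rho'\bigr)$ for the appropriate ratio $\rho'$, and (ii) evaluating $D$ and $1-q$.

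For step (i) in Version~I, the one-epoch factor $\Gamma_{J_t}+\frac{\gamma\beta}{A}$ is at most $q=\frac12\bigl(1+\frac{\gamma\beta}{A}\bigr)$ precisely when $\Gamma_{J_t}=(1-\delta)^{J_t}\le\frac{A-\gamma\beta}{2A}=\frac{\alpha_1+\alpha_2-\gamma\beta}{A}$. Taking logarithms and using the elementary bound $-\log(1-\delta)\ge\delta$ shows it suffices that $J_t\ge\frac1\delta\log\bigl(\frac{A}{\alpha_1+\alpha_2-\gamma\beta}\bigr)$; finally $\frac1\delta=\frac1{A\eta}+\frac{2\alpha_1+\alpha_2-\gamma\beta}{A}\le 1+\frac1{A\eta}$ (since $\alpha_2\ge0$) confirms that the displayed ceiling choice of $J_t$ is large enough. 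Version~II is identical in structure: the requirement $2\hat\Gamma_{J_t}+\frac{\gamma\beta}{\alpha_1+\alpha_2}\le\frac12\bigl(1+\frac{\gamma\beta}{\alpha_1+\alpha_2}\bigr)$ reduces to $\hat\Gamma_{J_t}\le\frac{\alpha_1+\alpha_2-\gamma\beta}{4(\alpha_1+\alpha_2)}$, and the same logarithm estimate together with $\frac1{\hat\delta}\le 1+\frac1{(\alpha_1+\alpha_2-\gamma\beta)\eta}$ produces the stated $J_t$.

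For step (ii), the residual is controlled by the telescoping identity $\sum_{j=1}^{J}\frac{\delta_j\Gamma_J}{\Gamma_j}=1-\Gamma_J$ from Lemma~\ref{lem:gamma_sum}, specialized to constant $\delta$. Pulling the constant $\eta$ out of the noise sum gives, in Version~I, $D=\frac{2\sigma_0^2\eta}{A}(1-\Gamma_{J_t})\le\frac{2\sigma_0^2\eta}{A}$, and since $1-q=\frac{A-\gamma\beta}{2A}=\frac{\alpha_1+\alpha_2-\gamma\beta}{A}$ one obtains $\frac{D}{1-q}\le\frac{2\sigma_0^2\eta}{\alpha_1+\alpha_2-\gamma\beta}$, matching \eqref{eqn:stage_wise_dist2}. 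Likewise, in Version~II, $D=\sigma_0^2\eta(1-\hat\Gamma_{J_t})\le\sigma_0^2\eta$ and $1-q=\frac12\bigl(1-\frac{\gamma\beta}{\alpha_1+\alpha_2}\bigr)$, giving $\frac{D}{1-q}\le\frac{2\sigma_0^2\eta}{1-\gamma\beta/(\alpha_1+\alpha_2)}$, matching \eqref{eqn:stage_wise_func2}.

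The only real work is the bookkeeping in step (i): I must keep the two nested geometric decays disentangled---the fast inner decay governed by $\delta$ (resp.\ $\hat\delta$) across the $J_t$ epoch iterations, and the slow outer decay governed by $q$ across epochs---and track the numerical constants so that the prescribed $J_t$ genuinely pins $q$ strictly below one. That strict inequality $q<1$ is exactly where the standing hypothesis $\frac{\gamma\beta}{\alpha_1+\alpha_2}<1$ enters, since it guarantees the geometric series $\sum q^t$ in step (ii) converges; beyond this, no ingredient other than the one-epoch theorem and Lemma~\ref{lem:gamma_sum} is needed.
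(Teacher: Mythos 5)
Your proposal is correct and follows essentially the same route as the paper's own proof: specialize the one-epoch bounds to a constant $\eta$, use Lemma~\ref{lem:gamma_sum} to collapse the noise sum to $1-\Gamma_{J_t}$ (resp.\ $1-\hat\Gamma_{J_t}$), choose $J_t$ so that the epoch contraction factor is at most $\tfrac12(1+\tfrac{\gamma\beta}{2\alpha_1+2\alpha_2-\gamma\beta})$ (resp.\ $\tfrac12(1+\tfrac{\gamma\beta}{\alpha_1+\alpha_2})$), and unroll the resulting scalar geometric recursion. The only blemish is one you inherit from the statement itself: in Version~II your (correct) requirement is $\hat\Gamma_{J_t}\le\tfrac{\alpha_1+\alpha_2-\gamma\beta}{4(\alpha_1+\alpha_2)}$, while the displayed $J_t$ with $\log\bigl(\tfrac{2(\alpha_1+\alpha_2)}{\alpha_1+\alpha_2-\gamma\beta}\bigr)$ only guarantees the bound with $2$ in place of $4$ --- a harmless constant inside the logarithm that the paper's proof glosses over in exactly the same way.
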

\begin{proof}
	With the constant parameter $\eta$, the estimate \eqref{eqn:stage_wise_dist} becomes
	$$\EE\|u_{t}-\bar x\|^2\leq \left(\left(1-\tfrac{(2\alpha_1+2\alpha_2-\gamma\beta)\eta}{1+\eta(2\alpha_1+\alpha_2-\gamma\beta)}\right)^{j_{t-1}}+\tfrac{\gamma\beta}{2\alpha_1+2\alpha_2-\gamma\beta}\right) \EE\|u_{t-1}-\bar x\|^2+\tfrac{2\sigma^2_0\eta}{2\alpha_1+2\alpha_2-\gamma\beta},$$
	where we used the equality $\sum_{j=1}^{j_{t-1}}\frac{\delta_j\Gamma_{j_{t-1}}}{\Gamma_j}=1-\Gamma_{j_{t-1}}$ (Lemma~\ref{lem:gamma_sum}). The definition of $j_{t-1}$ ensures that the coefficient of $\EE \|u_{t-1}-\bar x\|^2$ is at most $\frac{1}{2}(1+\tfrac{\gamma\beta}{2\alpha_1+2\alpha_2-\gamma\beta})$ and therefore
	$$\EE\|u_{t}-\bar x\|^2\leq \frac{1}{2}(1+\tfrac{\gamma\beta}{2\alpha_1+2\alpha_2-\gamma\beta}) \EE\|u_{t-1}-\bar x\|^2+\tfrac{2\sigma^2_0\eta}{2\alpha_1+2\alpha_2-\gamma\beta},$$
	Unrolling the recursion in $t$ completes the proof of \eqref{eqn:stage_wise_dist2}. Similarly, with the constant parameter $\eta$, the estimate \eqref{eqn:stage_wise_func} becomes
	$$\EE[\varphi(u_{t})-\varphi(\bar x)]\leq \left(2\left(1-\tfrac{\eta(\alpha_1+\alpha_2-\gamma\beta)}{1+(\alpha_1-\gamma\beta)\eta}\right)^{j_{t-1}}+\tfrac{\gamma\beta}{\alpha_1+\alpha_2}\right)\EE[\varphi(u_{t-1})-\varphi(\bar x)] +\sigma^2_0\eta.$$
	The definition of $j_{t-1}$ ensures that the coefficient of $\EE[\varphi(u_{t-1})-\varphi(\bar x)]$ is at most $\frac{1}{2}(1+\tfrac{\gamma\beta}{\alpha_1+\alpha_2})$.
	Iterating the recursion in $t$ completes the proof of \eqref{eqn:stage_wise_func2}.
\end{proof}

Consequently, we may equip Algorithm~\ref{alg:MBA_stage} with a geometrically decaying stepsize. The efficiency estimates of the resulting procedure are recorded in the following corollary.

\begin{corollary}\label{cor:delayed-complexities}
	Suppose that Assumptions~\ref{assum:perm_pred}, \ref{ass:smoothness}\ref{assump:smooth_center_2}, and \ref{assum:stoch_model} hold, and that we are in the regime $\frac{\gamma\beta}{\alpha_1+\alpha_2}<1$.
	\begin{enumerate}
		\item {\bf (Version I)} 
		Suppose we have available an estimate $\Delta\geq \|x_0-\bar x\|^2$. Define the augmented strong convexity parameter $\hat \alpha=\alpha_1+\alpha_2-\gamma\beta$. Then Algorithm~\ref{alg:MBA_stage} (Version I) may be augmented with the geometric decay schedule (Algorithm~\ref{alg:geo_decay}) with parameters:
		$$\psi(\delta)\equiv\frac{1}{2}\left(1-\tfrac{\gamma\beta}{2\alpha_1+2\alpha_2-\gamma\beta}\right),\quad C=1, \quad h=\|\cdot-\bar x\|^2,\quad D=\frac{2\sigma^2_0}{\alpha_1+\alpha_2-\gamma\beta}, \quad \delta_0=\frac{1}{2L}.$$
		The resulting procedure will generate a point $x$ satisfying $\EE[\|x-\bar x\|^2]\leq \varepsilon$ using 
		
		\begin{equation}\label{eqn:deployment1}
		\mathcal{O}\left((1-\tfrac{\gamma\beta}{2\alpha_1+2\alpha_2-\gamma\beta})^{-1}\cdot\left(\log\left(\frac{\Delta}{\varepsilon}\right)+\log\left(\frac{\sigma^2_0}{\hat\alpha L\varepsilon}\right)\right)\right)
		\end{equation}	
		deployments and 
		\begin{equation}\label{eqn:samples1}	
		\mathcal{O}\left((1-\tfrac{\gamma\beta}{2\alpha_1+2\alpha_2-\gamma\beta})^{-1}\log\left((1-\tfrac{\gamma\beta}{2\alpha_1+2\alpha_2-\gamma\beta})^{-1}\right)\cdot\left(\frac{L}{\hat\alpha}\log\left(\frac{\Delta}{\varepsilon}\right)+\frac{\sigma_0^2}{\hat\alpha^2\varepsilon}\right)\right) 
		\end{equation}
		samples. 
		\item {\bf (Version II)}	 Suppose we have available an estimate $\Delta\geq \varphi(x_0)-\min \varphi$. Define the augmented strong convexity parameter $\hat \alpha=\alpha_1+\alpha_2-\gamma\beta$. Then Algorithm~\ref{alg:MBA_stage} (Version II) may be augmented with the geometric decay schedule (Algorithm~\ref{alg:geo_decay}) with parameters:
		$$\psi(\delta)\equiv\frac{1}{2}\left(1-\tfrac{\gamma\beta}{\alpha_1+\alpha_2}\right),~ C=1, ~ h(\cdot)=\varphi(\cdot)-\varphi(\bar x),~ D=\frac{2\sigma^2_0}{1-\tfrac{\gamma\beta}{\alpha_1+\alpha_2}}, ~ \delta_0=\min\{\tfrac{1}{2L}, \tfrac{1}{(\gamma\beta-\alpha_1)^+},\tfrac{1}{\alpha_2}\}.$$
		The resulting procedure will generate a point $x$ satisfying $\EE[\varphi(x)-\min \varphi]\leq \varepsilon$ using \begin{equation*}
		\mathcal{O}\left((1-\tfrac{\gamma\beta}{\alpha_1+\alpha_2})^{-1}\cdot\left(\log\left(\frac{\Delta}{\varepsilon}\right)+\log\left(\frac{\sigma^2_0}{1-\frac{\gamma\beta}{\alpha_1+\alpha_2}}\cdot \frac{1}{L\varepsilon}\right)\right)\right)
		\end{equation*}	
		deployments and 
		\begin{equation*}
		\mathcal{O}\left(\left((1-\tfrac{\gamma\beta}{\alpha_1+\alpha_2})^{-1}\right)\cdot\log((1-\tfrac{\gamma\beta}{\alpha_1+\alpha_2})^{-1})\cdot\left(\frac{1}{\hat\alpha \delta_0}\log\left(\frac{\Delta}{\varepsilon}\right)+\frac{\sigma_0^2}{(1-\tfrac{\gamma\beta}{\alpha_1+\alpha_2})\hat\alpha\varepsilon}\right)\right)
		\end{equation*}	
		samples.
	\end{enumerate}
\end{corollary}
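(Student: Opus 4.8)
The plan is to treat a single epoch of Algorithm~\ref{alg:MBA_stage} as one iteration of a base method and feed the resulting per-epoch recursion into the geometric-decay machinery of Lemma~\ref{lem:geo_restart}, while charging deployments and samples by two separate accounts. First I would isolate the per-epoch contraction: before unrolling in $t$, the preceding corollary produces (Version~I) $\EE\|u_{t+1}-\bar x\|^2\le \tfrac12\bigl(1+\tfrac{\gamma\beta}{2\alpha_1+2\alpha_2-\gamma\beta}\bigr)\EE\|u_t-\bar x\|^2+\tfrac{2\sigma_0^2\eta}{2\alpha_1+2\alpha_2-\gamma\beta}$ and (Version~II) the analogous bound on $\EE[\varphi(u_{t+1})-\varphi(\bar x)]$ underlying \eqref{eqn:stage_wise_dist2} and \eqref{eqn:stage_wise_func2}. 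Matching these against the hypotheses of Lemma~\ref{lem:geo_restart} reads off the contraction amount $\psi(\delta)\equiv 1-\tfrac12\bigl(1+\tfrac{\gamma\beta}{2\alpha_1+2\alpha_2-\gamma\beta}\bigr)=\tfrac12\bigl(1-\tfrac{\gamma\beta}{2\alpha_1+2\alpha_2-\gamma\beta}\bigr)$, together with the multiplier $C$, the potential $h$, the floor coefficient $D$, and the initial step $\delta_0$ listed in the statement. A structural point worth flagging is that here $\psi$ is \emph{independent} of the step $\delta$ (only the floor $D\delta$ scales with $\delta$), in contrast to the non-stagewise applications where the contraction was $1-c\delta$; so the verification amounts to checking that Lemma~\ref{lem:geo_restart} accommodates a constant contraction function.

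With the reduction in place, the deployment count is immediate: within an epoch the inner loop draws all its samples from the single distribution $\cD(u_t)$ (since $x_0=u_t$ is frozen over the inner iterations), so each epoch incurs exactly one deployment and the number of deployments equals the total number of epochs executed by the geometric-decay schedule. Substituting the identified $\psi$, $D$, and $\delta_0\asymp 1/L$ into the epoch-count guarantee of Lemma~\ref{lem:geo_restart}—whose burn-in contributes the $\log(\Delta/\varepsilon)$ term and whose sequence of step halvings contributes $\log\bigl(\tfrac{\sigma_0^2}{\hat\alpha L\varepsilon}\bigr)$—reproduces \eqref{eqn:deployment1} verbatim, with the $(1-\tfrac{\gamma\beta}{2\alpha_1+2\alpha_2-\gamma\beta})^{-1}=\mathcal{O}(\psi^{-1})$ prefactor.

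The delicate part, and the main obstacle, is the sample count, because the prescribed inner count per epoch is not constant across the schedule: at a stage running step $\eta_k$ the preceding corollary requires $J^{(k)}=\bigl\lceil(1+\tfrac{1}{(2\alpha_1+2\alpha_2-\gamma\beta)\eta_k})\log\tfrac{2\alpha_1+2\alpha_2-\gamma\beta}{\alpha_1+\alpha_2-\gamma\beta}\bigr\rceil$, which grows like $\tfrac{1}{\hat\alpha\eta_k}$ as $\eta_k$ is halved. I would therefore compute $\sum_t J_t$ stage by stage: each stage comprises $\mathcal{O}(\psi^{-1})$ epochs, so its sample cost is $\mathcal{O}\bigl(\psi^{-1}\,\tfrac{1}{\hat\alpha\eta_k}\log\tfrac{2\alpha_1+2\alpha_2-\gamma\beta}{\alpha_1+\alpha_2-\gamma\beta}\bigr)$; here the factor $\log\tfrac{2\alpha_1+2\alpha_2-\gamma\beta}{\alpha_1+\alpha_2-\gamma\beta}=\log\bigl(2+\tfrac{\gamma\beta}{\hat\alpha}\bigr)=\mathcal{O}\bigl(\log((1-\tfrac{\gamma\beta}{2\alpha_1+2\alpha_2-\gamma\beta})^{-1})\bigr)$ is exactly the extra logarithmic factor that separates the sample count from the deployment count. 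Summing the geometric series $\sum_k \eta_k^{-1}$ over stages, the burn-in stage (step $\eta_k\asymp 1/L$, repeated $\mathcal{O}(\log(\Delta/\varepsilon))$ times) yields the $\tfrac{L}{\hat\alpha}\log(\Delta/\varepsilon)$ term, and the terminal stage, whose step is driven down to $\eta_k\asymp \hat\alpha\varepsilon/\sigma_0^2$ so that the floor $D\eta_k\asymp\varepsilon$, dominates and yields the $\tfrac{\sigma_0^2}{\hat\alpha^2\varepsilon}$ term, giving \eqref{eqn:samples1}. The Version~II bookkeeping is identical after replacing $h$ by $\varphi-\varphi(\bar x)$ and using the Version~II constants and the $\tfrac{\sigma_0^2}{\hat\alpha\varepsilon}$ floor, with Lemma~\ref{lem:quad_growth} converting between the distance and function-value potentials as needed.
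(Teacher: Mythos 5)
Your proposal is correct and follows essentially the same route as the paper: feed the per-epoch recursion from the preceding corollary into Lemma~\ref{lem:geo_restart} with a constant contraction function $\psi$, count one deployment per epoch, and obtain the sample count by multiplying each stage's epoch count by its inner-iteration budget $J^{(k)}\sim 1/(\hat\alpha\eta_k)$, with the geometric sum $\sum_k\eta_k^{-1}$ dominated by the terminal stage where $D\eta_K\asymp\varepsilon$. Your identification of the extra $\log\bigl((1-\tfrac{\gamma\beta}{2\alpha_1+2\alpha_2-\gamma\beta})^{-1}\bigr)$ factor as coming from $\log\tfrac{2\alpha_1+2\alpha_2-\gamma\beta}{\alpha_1+\alpha_2-\gamma\beta}$ matches the paper's accounting exactly.
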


\begin{proof}
	We prove the efficiency of Version I of  Algorithm~\ref{alg:MBA_stage}; the proof for version II is completely analogoues. The procedure invokes the algorithm in $k=0,1,\ldots, K$ stages, where $K=\lceil1+\log_2(\frac{D\eta_0}{\varepsilon})\rceil$.
	Applying Lemma~\ref{lem:geo_restart}, the number of deployments coincides with the sum 
	$$\sum_{k=0}^K T_k=\left\lceil\frac{1}{\psi(\delta_0)}\cdot\log\left(\frac{2C\Delta}{\varepsilon}\right)\right\rceil+\sum_{k=1}^K \left\lceil\frac{\log(4C)}{\psi(2^{-k}\delta_0)}\right\rceil.$$
	Simplifying yields \eqref{eqn:deployment1}. The total number of samples used in stage $k$ is simply 
	\begin{equation}\label{eqn:sum_samples1}
	T_k\cdot\left\lceil\left(1+\frac{1}{(2\alpha_1+2\alpha_2-\gamma\beta)\eta_k}\right)\log\left(\frac{2\alpha_1+2\alpha_2-\gamma\beta}{\alpha_1+\alpha_2-\gamma\beta}\right)\right\rceil
	\end{equation}
	where $T_0=\left\lceil\frac{1}{\psi(\delta_0)}\cdot\log\left(\frac{2C\Delta}{\varepsilon}\right)\right\rceil$, $T_k=\left\lceil\frac{1}{\psi(\delta_k)}\cdot\log(4C)\right\rceil$ for $k\geq 1$, and $\eta_k=2^{-k}\eta_0$. Observe
	$\sum_{k=1}^K \frac{1}{\eta_k}=\sum_{k=1}^K 2^{k+1} L\leq 2^{K+2}L\leq 16\frac{D\eta_0}{\varepsilon}\cdot L=\frac{16\sigma^2_0}{\hat\alpha\varepsilon}$. Summing the expressions \eqref{eqn:sum_samples1} across $k=0,1,\ldots, K$ yields \eqref{eqn:samples1}.
\end{proof}

\subsection{Accelerated method}
We next explore applying an accelerated stochastic gradient method within inexact repeated minimization. Throughout the section, we suppose that Assumptions~\ref{assum:smoothness}, \ref{assum:perm_pred}, \ref{ass:strong_conv_perm}\ref{it:str_conv_uniform}, \ref{ass:smoothness}\ref{assump:smooth_center_2} hold. The  accelerated method is summarized as  Algorithm~\ref{alg:accel_stage}.

\begin{algorithm}[H]
	{\bf Input:} initial $u_0$, integers  $J,T\in\mathbb{N}$.
	
	\For{$t=0,1,\ldots,T$}{	
		Set $x_0=u_t$
		
		\For{$j=1,\ldots, J$}
		{
			\begin{equation*}
			\begin{aligned}
			&\textrm{Sample }z_t\sim \cD(u_t)\\
			&x_{j}=\prox_{r/L}\left(y_{j-1}-\eta \nabla \ell(y_{j-1},z_t)\right) &\\
			&y_j=x_j+\frac{1-\sqrt{\alpha/L}}{1+\sqrt{\alpha/L}}(x_t-x_{t-1})
			\end{aligned}
			\end{equation*}   
		}
		Set $u_{t+1}=x_J$
	}		
	\caption{Stagewise Stochastic Accelerated Gradient}\label{alg:accel_stage}
\end{algorithm}

The idea of the argument is as follows. The convergence guarantees for the static stochastic accelerated gradient method \cite[Corollary~13]{kulunchakov2019generic} directly imply
$$\EE_t[\varphi_x(x_{J})-\varphi_x(y)]\leq \left(1-\sqrt{\frac{\alpha}{L}}\right)^J\cdot \left(\varphi_x(x_0)-\varphi_{ x}(y)+\frac{\alpha}{2}\|x_{J}-y\|^2\right)+\frac{\sigma^2}{\sqrt{\alpha L}},$$
for all points $y$, where $\EE_t$ denotes the conditional expectation on $u_t$. 
The following general lemma shows how  to translate efficiency estimates of this type for $\varphi_x$ to a similar estimate for $\varphi$.

\begin{theorem}[Reduction]\label{thm:transf_guarantee}
	Suppose we are in the parameter regime $\rho<\frac{1}{2}$. Fix a point $x$ and counter $j\in \mathbb{N}$ and suppose that an algorithm applied to the static problem $\mathtt{St}(\cD(x))$
	generates a random point $x_j$ satisfying 
	\begin{equation}\label{eqn:assumed_improv}
	\EE[\varphi_x( x_j)-\varphi_x(y)]\leq C(1-q)^j \left(\varphi_x(x)-\varphi_x(y)+\frac{\alpha}{2}\| x_j-y\|^2\right)+B\qquad \forall y\in \R^d,
	\end{equation}
	where $B,C\geq 0$ and $q\in (0,1)$ are some constants.
	Then the estimate holds:
	$$\EE[\varphi( x_j)-\varphi(\bar x)]\leq \frac{\frac{\gamma\beta}{\alpha}+2C(1-q)^j}{{1-\frac{\gamma\beta}{\alpha}-C(1-q))^j}}\cdot(\varphi(x)-\varphi(\bar x))+\frac{B}{1-\frac{\gamma\beta}{\alpha}-C(1-q))^j}$$
\end{theorem}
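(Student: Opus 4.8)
The plan is to treat everything as a perturbation of the static problem $\mathtt{St}(\cD(x))$ and pass to the target objective $\varphi=\varphi_{\bar x}$ using only two ingredients: the function gap deviation inequality~\eqref{eqn:gap_comp} and the $\alpha$-strong convexity of $\varphi$ (which holds under Assumption~\ref{ass:strong_conv_perm}\ref{it:str_conv_uniform} together with convexity of $r$, and which gives the quadratic growth $\|w-\bar x\|^2\le \tfrac{2}{\alpha}(\varphi(w)-\varphi(\bar x))$ since $\bar x$ minimizes $\varphi$). For bookkeeping, write $a:=\varphi(x)-\varphi(\bar x)\ge 0$, $A:=\EE[\varphi(x_j)-\varphi(\bar x)]$, $\rho:=\tfrac{\gamma\beta}{\alpha}$, and $\theta:=C(1-q)^j$; the goal is to show $A(1-\rho-\theta)\le(\rho+2\theta)\,a+B$ and then divide. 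The key observation enabling all conversions is that since $\varphi_x$ and $\varphi$ differ from $f_x$ and $f_{\bar x}$ by the common regularizer $r$, which cancels in any gap, the inequality~\eqref{eqn:gap_comp} applies verbatim to $\varphi$-gaps, yielding $|(\varphi_x(u)-\varphi_x(v))-(\varphi(u)-\varphi(v))|\le \gamma\beta\|x-\bar x\|\cdot\|u-v\|$.

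First I would bound the target gap from above by the intermediate gap: taking $u=x_j$, $v=\bar x$ gives $\varphi(x_j)-\varphi(\bar x)\le \varphi_x(x_j)-\varphi_x(\bar x)+\gamma\beta\|x-\bar x\|\cdot\|x_j-\bar x\|$. Applying Young's inequality to the cross term splits it into $\tfrac{\gamma\beta}{2}\|x-\bar x\|^2+\tfrac{\gamma\beta}{2}\|x_j-\bar x\|^2$, and quadratic growth converts these into $\rho a$ and (in expectation) $\rho A$ respectively. Next I would feed in the assumed improvement~\eqref{eqn:assumed_improv} with $y=\bar x$, bounding $\EE[\varphi_x(x_j)-\varphi_x(\bar x)]$ by $\theta\big(\varphi_x(x)-\varphi_x(\bar x)+\tfrac{\alpha}{2}\EE\|x_j-\bar x\|^2\big)+B$. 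Here the input term is converted by~\eqref{eqn:gap_comp} with $u=x$, $v=\bar x$ into $\varphi_x(x)-\varphi_x(\bar x)\le a+\gamma\beta\|x-\bar x\|^2\le (1+2\rho)a$, while $\tfrac{\alpha}{2}\EE\|x_j-\bar x\|^2\le A$ by quadratic growth.

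Collecting these estimates produces the self-referential inequality $A\le \theta(1+2\rho)a+\theta A+B+\rho a+\rho A$, i.e. $A(1-\rho-\theta)\le\big(\theta(1+2\rho)+\rho\big)a+B$. The main obstacle — and the point that must be handled with care — is precisely this circular appearance of $A$ on both sides (it enters once through the Young bound on the cross term and once through the $\tfrac{\alpha}{2}\|x_j-\bar x\|^2$ term in~\eqref{eqn:assumed_improv}), so one must keep the constants tight enough that $1-\rho-\theta$ remains the denominator and then rearrange; this step also silently requires $1-\rho-\theta>0$, the regime in which the bound is meaningful. The final cosmetic step uses the hypothesis $\rho<\tfrac12$: since $2\rho\le 1$ we have $\theta(1+2\rho)+\rho=\theta+2\rho\theta+\rho\le 2\theta+\rho$, and because $a\ge 0$ this slackening only weakens the $a$-coefficient, giving $A\le \tfrac{2\theta+\rho}{1-\rho-\theta}\,a+\tfrac{B}{1-\rho-\theta}$. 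Substituting $\theta=C(1-q)^j$ and $\rho=\tfrac{\gamma\beta}{\alpha}$ reproduces the stated inequality exactly.
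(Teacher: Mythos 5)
Your proposal is correct and follows essentially the same route as the paper's proof: the gap deviation inequality \eqref{eqn:gap_comp} applied twice (once to $\varphi(x_j)-\varphi(\bar x)$ with Young's inequality on the cross term, once to convert $\varphi_x(x)-\varphi_x(\bar x)$ into $(1+2\rho)(\varphi(x)-\varphi(\bar x))$), quadratic growth from strong convexity to absorb the $\|x_j-\bar x\|^2$ terms into $A$, and finally the bound $1+2\rho\le 2$ from the regime $\rho<\tfrac12$. The self-referential inequality in $A$ and the implicit requirement $1-\rho-C(1-q)^j>0$ that you flag are handled identically in the paper.
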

\begin{proof}
	Appealing to the gap deviation inequality \eqref{eqn:gap_comp}, we  compute
	\begin{align}
	\varphi( x_j)-\varphi(\bar x)&\leq \varphi_x( x_j)-\varphi_{x}(\bar x)+\gamma\beta\| x_j-\bar x\|\cdot\|x-\bar x\|\notag\\
	&\leq \varphi_x( x_j)-\varphi_{x}(\bar x)+\frac{\gamma\beta}{2}\| x_j-\bar x\|^2+\frac{\gamma\beta}{2}\|x-\bar x\|^2 \label{eqn:young_getit} \\
	&\leq \frac{\gamma\beta}{\alpha}(\varphi(x)- \varphi(\bar x))+\varphi_x( x_j)-\varphi_{x}(\bar x)+\frac{\gamma\beta}{2}\| x_j-\bar x\|^2, \label{eqn:str_conv_3}
	\end{align}
	where \eqref{eqn:young_getit} follows from Young's inequality and \eqref{eqn:str_conv_3} follows from strong convexity of $\varphi$.
	Next, taking into account \eqref{eqn:assumed_improv} with $y=\bar x$, we conclude		
	\begin{align*}
	\EE[\varphi( x_j)-\varphi(\bar x)]&\leq  \frac{\gamma\beta}{\alpha}(\varphi(x)-\varphi(\bar x))+C(1-q)^j\left(\varphi_x(x)-\varphi_{x}(\bar x)+\frac{\alpha}{2}\| x_j-\bar x\|^2\right)\\
	&\quad+B+\frac{\gamma\beta}{2}\EE\| x_j-\bar x\|^2.
	\end{align*}
	The gap deviation inequality \eqref{eqn:gap_comp} guarantees
	$$[\varphi_x(x)-\varphi_{x}(\bar x)]-[\varphi(x)-\varphi(\bar x)]\leq \gamma\beta\|x-\bar x\|^2\leq \frac{2\gamma\beta}{\alpha}(\varphi(x)-\varphi(\bar x)),$$
	while strong convexity of $\varphi$ implies
	$\frac{1}{2}\| x_j-\bar x\|^2\leq \frac{1}{\alpha}(\varphi( x_j)-\varphi(\bar x)).$
	We therefore deduce
	$$\EE[\varphi( x_j)-\varphi(\bar x)]\leq \frac{\frac{\gamma\beta}{\alpha}+C(1-q)^j\left(1+\frac{2\gamma\beta}{\alpha}\right)}{{1-\frac{\gamma\beta}{\alpha}-C(1-q))^j}}\cdot(\varphi(x)-\varphi(\bar x))+\frac{B}{1-\frac{\gamma\beta}{\alpha}-C(1-q))^j}.$$
	Invoking the upper bound $\frac{2\gamma\beta}{\alpha}\leq 1$ completes the proof.	
\end{proof}

Combining Algorithm~\ref{alg:accel_stage} with the minibatch restart strategy (Algorithm~\ref{alg:restart}) yields an overall scheme with the following efficiency estimates.

\begin{theorem}[Accelerated Stochastic Gradient]
	Suppose that Assumptions~\ref{assum:smoothness}, \ref{assum:perm_pred}, \ref{ass:strong_conv_perm}\ref{it:str_conv_uniform}, \ref{ass:smoothness}\ref{assump:smooth_center_2} hold and
	that we are in the regime $\rho<\frac{1}{2}$. Set the number of inner iterations $J=\sqrt{\frac{L}{\alpha}}\log\left(\frac{4}{\frac{1}{2}-\rho}\right)$. Then the iterates $u_t$ generated by Algorithm~\ref{alg:accel_stage} satisfy
	\begin{equation}\label{eqn:one_step_improv_fast_grad}
	\EE[\varphi(u_{t})-\varphi(u_0)]\leq \left(\frac{1}{2(1-\rho)}\right)^t\cdot\EE[\varphi(u_0)-\varphi(\bar x)]+\frac{32\sigma^2}{5(1-\frac{\rho}{1-\rho})\sqrt{\alpha L}}.
	\end{equation}
	Consequently, if we have available an estimate $\Delta\geq \|x_0-\bar x\|^2$, then Algorithm~\ref{alg:accel_stage} may be combined with the minibatch restart strategy (Algorithm~\ref{alg:restart}) by setting 
	$$h(\cdot)=\varphi(\cdot)-\varphi(\bar x),\quad \tau=1-\frac{1}{2(1-\rho)},\quad C=1,\quad B=\frac{32\sigma^2}{5(1-\frac{\rho}{1-\rho})\sqrt{\alpha L}}.$$
	The resulting procedure will generate a point $x$ satisfying $\EE[\varphi(x)-\min \varphi]\leq \varepsilon$ using 
	\begin{equation}\label{eqn:deployment3}
	\mathcal{O}\left(\left(1-\tfrac{1}{2(1-\rho)}\right)^{-1}\left(\log\left(\frac{\Delta}{\varepsilon}\right)+ \log\left(\frac{\sigma^2}{(1-\frac{\rho}{1-\rho})\varepsilon\sqrt{\alpha L}}\right)\right)\right)
	\end{equation}	
	deployments and 
	\begin{equation}\label{eqn:samples3}	
	\mathcal{O}\left(\left(1-\tfrac{1}{2(1-\rho)}\right)^{-1}\cdot\left(\sqrt{\frac{L}{\alpha}}\log\left(\frac{\Delta}{\varepsilon}\right)+\frac{\sigma^2}{(1-\tfrac{\rho}{1-\rho})\alpha\varepsilon}\right)\right)
	\end{equation}	
	samples.
\end{theorem}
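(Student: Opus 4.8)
The plan is to decouple the argument into two independent pieces: a \emph{per-epoch} contraction estimate that already accounts for the decision dependence, followed by a standard restart analysis. The crucial observation is that within a single epoch $t$ the inner loop of Algorithm~\ref{alg:accel_stage} samples exclusively from the frozen distribution $\cD(u_t)$, so it is \emph{literally} the static accelerated stochastic gradient method run on the problem $\mathtt{St}(\cD(u_t))$ with an unbiased gradient oracle of variance $\sigma^2$. Consequently, the only place where the state dependence enters is in passing from a guarantee on $\varphi_{u_t}$ to a guarantee on the target $\varphi=\varphi_{\bar x}$, and this passage is handled verbatim by the reduction Theorem~\ref{thm:transf_guarantee}.

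First I would invoke the static convergence guarantee of \cite[Corollary~13]{kulunchakov2019generic} for the problem $\mathtt{St}(\cD(u_t))$, which is $\alpha$-strongly convex with $L$-Lipschitz gradient under Assumptions~\ref{ass:strong_conv_perm}\ref{it:str_conv_uniform} and \ref{ass:smoothness}\ref{assump:smooth_center_2}. This yields exactly the hypothesis \eqref{eqn:assumed_improv} of Theorem~\ref{thm:transf_guarantee} with the identifications $C=1$, $q=\sqrt{\alpha/L}$, $B=\sigma^2/\sqrt{\alpha L}$, and $j=J$. Feeding this into Theorem~\ref{thm:transf_guarantee} produces a bound of the form
\[
\EE[\varphi(u_{t+1})-\varphi(\bar x)]\leq \frac{\rho+2(1-\sqrt{\alpha/L})^{J}}{1-\rho-(1-\sqrt{\alpha/L})^{J}}\,\EE[\varphi(u_t)-\varphi(\bar x)]+\frac{\sigma^2/\sqrt{\alpha L}}{1-\rho-(1-\sqrt{\alpha/L})^{J}},
\]
where $\rho=\gamma\beta/\alpha$. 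The next step is to choose $J$ so that the residual term $(1-\sqrt{\alpha/L})^{J}$ is negligible; using $\log(1-\sqrt{\alpha/L})\leq -\sqrt{\alpha/L}$ together with the prescribed $J=\sqrt{L/\alpha}\log\!\big(4/(\tfrac12-\rho)\big)$ gives $(1-\sqrt{\alpha/L})^{J}\leq (\tfrac12-\rho)/4$.

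With this estimate in hand, the bulk of the remaining work is elementary constant chasing. Substituting $(1-\sqrt{\alpha/L})^J\le(\tfrac12-\rho)/4$ into the contraction factor and clearing denominators reduces the claim that the contraction is at most $\tfrac{1}{2(1-\rho)}$ to the polynomial inequality $8\rho^2-10\rho+3\ge 0$, which factors as $8(\rho-\tfrac12)(\rho-\tfrac34)\ge0$ and therefore holds throughout $\rho<\tfrac12$. Unrolling the resulting one-epoch recursion $\EE[\varphi(u_{t+1})-\varphi(\bar x)]\le q\,\EE[\varphi(u_t)-\varphi(\bar x)]+B'$ with $q=\tfrac{1}{2(1-\rho)}$ and summing the geometric series converts the per-epoch additive $B'$ into the steady-state bias $B'/(1-q)$; since $1-q=\tfrac{1-2\rho}{2(1-\rho)}$, bounding this by the claimed constant $\tfrac{32\sigma^2}{5(1-\rho/(1-\rho))\sqrt{\alpha L}}$ reduces to a single linear inequality in $\rho$ that is comfortably satisfied for $\rho<\tfrac12$, establishing \eqref{eqn:one_step_improv_fast_grad}.

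Finally, the recursion \eqref{eqn:one_step_improv_fast_grad} is precisely of the contraction-plus-bias form required by the minibatch restart strategy of Lemma~\ref{lem:minbatch_restart} (Algorithm~\ref{alg:restart}), under the stated identification $h=\varphi-\varphi(\bar x)$, $\tau=1-\tfrac{1}{2(1-\rho)}$, $C=1$, and $B=\tfrac{32\sigma^2}{5(1-\rho/(1-\rho))\sqrt{\alpha L}}$. Since the bias $B$ scales linearly with the oracle variance, halving it only requires doubling the minibatch size, so a geometric minibatch schedule drives the expected gap below $\varepsilon$; the deployment count is the total number of epochs (one deployment per epoch) and the sample count multiplies this by the inner iteration length $J=\mathcal{O}(\sqrt{L/\alpha})$ and the minibatch sizes, directly yielding \eqref{eqn:deployment3} and \eqref{eqn:samples3}. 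I expect the main obstacle to be purely bookkeeping: the conceptual content is entirely supplied by Theorem~\ref{thm:transf_guarantee}, so the difficulty lies in pinning down the explicit constants $\tfrac{1}{2(1-\rho)}$ and $\tfrac{32}{5}$ through the routine but delicate polynomial inequalities above, and in matching the restart accounting so that the deployment count stays logarithmic.
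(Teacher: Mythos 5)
Your proposal is correct and follows essentially the same route as the paper's own proof: invoke the static guarantee of Kulunchakov--Mairal on the frozen problem $\mathtt{St}(\cD(u_t))$, feed it into the reduction Theorem~\ref{thm:transf_guarantee} with $C=1$, $q=\sqrt{\alpha/L}$, $B=\sigma^2/\sqrt{\alpha L}$, use the prescribed $J$ to force $(1-\sqrt{\alpha/L})^J\leq(\tfrac12-\rho)/4$, and then unroll the resulting contraction-plus-bias recursion before handing it to the minibatch restart lemma. Your explicit verification that the contraction bound reduces to $8\rho^2-10\rho+3=(4\rho-3)(2\rho-1)\geq 0$ on $\rho<\tfrac12$ correctly fills in the constant-chasing the paper leaves implicit, and the steady-state bias $B'/(1-q)$ matches the stated $\tfrac{32\sigma^2}{5(1-\rho/(1-\rho))\sqrt{\alpha L}}$ exactly.
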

\begin{proof}
	Fix an index $t=0,1,\ldots, T$ and set $x=u_t$. The convergence guarantees for the static stochastic accelerated gradient method \cite[Corollary~13]{kulunchakov2019generic} directly imply
	$$\EE_t[\varphi_x(x_{J})-\varphi_x(y)]\leq \left(1-\sqrt{\frac{\alpha}{L}}\right)^J\cdot \left(\varphi_x(x_0)-\varphi_{ x}(y)+\frac{\alpha}{2}\|x_{J}-y\|^2\right)+\frac{\sigma^2}{\sqrt{\alpha L}},$$
	for all points $y$, where $\EE_t$ denotes the conditional expectation on $u_t$. Theorem~\ref{thm:transf_guarantee} along with the tower rule for expectation therefore implies
	$$\EE_t[\varphi(u_{t+1})-\varphi(\bar x)]\leq \frac{\frac{\gamma\beta}{\alpha}+2(1-\sqrt{\alpha/L})^J}{{1-\frac{\gamma\beta}{\alpha}-(1-\sqrt{\alpha/L}))^J}}\cdot(\varphi(u_t)-\varphi(\bar x))+\frac{\sigma^2/\sqrt{\alpha L}}{1-\frac{\gamma\beta}{\alpha}-(1-\sqrt{\alpha/L}))^J}.$$
	The choice $J$ guarantees that the coefficient in front of $\varphi(u_t)-\varphi(\bar x)$ is at most $\frac{1}{2(1-\rho)}$ and the inequality $1-\rho-(1-\tau)^J\geq \frac{3}{4}-\frac{7}{8}\rho\geq \frac{5}{16}$ holds. We therefore conclude
	$$\EE_t[\varphi(u_{t+1})-\varphi(\bar x)]\leq \frac{1}{2(1-\rho)}\cdot(\varphi(u_t)-\varphi(\bar x))+\frac{16\sigma^2}{5\sqrt{\alpha L}}.$$
	Using the tower rule for expectations and iterating the recursion directly yields \eqref{eqn:one_step_improv_fast_grad}. 
	
	Following the notation of Lemma~\ref{lem:minbatch_restart}, the number of deployments is given by 
	$$\sum_{k=0}^K T_k=\left\lceil\sqrt{\frac{1}{\tau}}\log\left(\frac{2\Delta}{\varepsilon}\right)\right\rceil+\left\lceil 1+\log_2\left(\frac{32\sigma^2}{5(1-\frac{\rho}{1-\rho})\varepsilon\sqrt{\alpha L}}\right)\right\rceil\cdot\left\lceil\sqrt{\frac{1}{\tau}}\log(4)\right\rceil.$$
	The estimate \eqref{eqn:deployment3} follows immediately. The total number of samples used is 
	$$J\cdot\sum_{k=0}^K m_k T_k=\mathcal{O}\left(\frac{1}{\tau\sqrt{\kappa}}\log\left(\frac{4}{\frac{1}{2}-\rho}\right)\left(\log\left(\frac{\Delta}{\varepsilon}\right)+\frac{B}{\varepsilon}\right)\right).$$
	The claimed estimate \eqref{eqn:samples3} follows immediately.
\end{proof}

Thus, in the nearly optimal parameter regime $\rho<\frac{1}{2}$, (Algorithm~\ref{alg:accel_stage}) enjoys the same sample efficiency as its online counterpart, while requiring a number of deployments that is only logarithmic in the problem parameters.

\paragraph*{Acknowledgements:} We thank Celestine Mendler-D\"{u}nner, Moritz Hardt, Juan C. Perdomo, and Tijana Zrnic for useful comments and suggestions.

\bibliographystyle{plain}
\bibliography{bibliography}

\appendix

\section{Averaging lemma}
In this section, we investigate recursions of the form 
\begin{equation}\label{eqn:recurs_appe}
\delta_t\cdot \EE[h(x_{t})]\leq (1-c_1\delta_t)D_{t-1}-(1+c_2\delta_t)D_{t}+\omega_t\qquad \forall t\geq 1,
\end{equation}
where $h(\cdot)$ is a convex function, $c_1,c_2\in \R$ are some constants, and the sequences $\{\delta_t\}_{t\geq 1}$, $\{D_t\}_{t\geq 0}$, $\{\omega_t\}_{t\geq 1}$ are nonnegative. The expectation is taken over randomness in the points $x_t$, which are usually the iterates produced by a stochastic algorithm. In typical circumstances $\delta_t>0$ is a user-specified sequence (e.g. stepsize).
Our goal is to determine the rate at which the value $h(\hat x_t)$ tends to zero, where $\hat x_t$ is a running average of the iterates. 
 Most of the material in this section follows the discussion in \cite[Section A.2, A.3]{kulunchakov2019estimate} and \cite{ghadimi2012optimal}.

We begin with the following elementary lemma, which can be proved by induction. 
\begin{lemma}\label{lem:gamma_sum}
	Consider a sequence of  weights $\{\delta_t\}_{t\geq 0}$ in $(0,1)$. Then the partial products $\Gamma_t=\prod_{i=1}^t(1-\delta_i)$ satisfy the equation
	$\sum_{i=1}^t\frac{\delta_i}{\Gamma_i}+1=\frac{1}{\Gamma_t}.$
\end{lemma}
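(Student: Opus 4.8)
The plan is to prove the identity by a short telescoping argument, which is cleaner than a bare induction and makes the underlying mechanism transparent. First I would adopt the convention $\Gamma_0=1$ (the empty product), so that the recursion $\Gamma_i=(1-\delta_i)\Gamma_{i-1}$ holds for every $i\geq 1$. Since each $\delta_i\in(0,1)$, every factor $1-\delta_i$ is strictly positive, so all the partial products $\Gamma_i$ are positive and the reciprocals $1/\Gamma_i$ are well-defined; this is the only hypothesis on the $\delta_i$ that I actually use.

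The key step is to observe that each summand telescopes. Concretely, I would verify the identity
$$\frac{1}{\Gamma_i}-\frac{1}{\Gamma_{i-1}}=\frac{1}{\Gamma_{i-1}}\Bigl(\frac{1}{1-\delta_i}-1\Bigr)=\frac{1}{\Gamma_{i-1}}\cdot\frac{\delta_i}{1-\delta_i}=\frac{\delta_i}{\Gamma_i},$$
where the last equality again uses $\Gamma_i=(1-\delta_i)\Gamma_{i-1}$. Summing this relation over $i=1,\dots,t$ collapses the left-hand side to $\frac{1}{\Gamma_t}-\frac{1}{\Gamma_0}=\frac{1}{\Gamma_t}-1$, whence $\sum_{i=1}^t\frac{\delta_i}{\Gamma_i}=\frac{1}{\Gamma_t}-1$. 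Rearranging yields the claimed equation $\sum_{i=1}^t\frac{\delta_i}{\Gamma_i}+1=\frac{1}{\Gamma_t}$.

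As a safeguard I would also note the equivalent one-line induction: the base case $t=1$ reads $\frac{\delta_1}{1-\delta_1}+1=\frac{1}{1-\delta_1}$, and for the inductive step the hypothesis plus $\frac{1}{\Gamma_{t-1}}=\frac{1-\delta_t}{\Gamma_t}$ gives $\sum_{i=1}^t\frac{\delta_i}{\Gamma_i}+1=\frac{1-\delta_t}{\Gamma_t}+\frac{\delta_t}{\Gamma_t}=\frac{1}{\Gamma_t}$. There is no real obstacle here—the statement is elementary—so the only point requiring a sliver of care is fixing the $\Gamma_0=1$ convention and confirming that $\delta_i\in(0,1)$ keeps every $\Gamma_i$ nonzero, which is precisely what legitimizes dividing by the partial products throughout.
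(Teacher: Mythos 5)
Your proof is correct, and it matches the paper's approach: the paper simply notes the lemma ``can be proved by induction,'' which is exactly the one-line induction you give as a safeguard, and your telescoping argument is just a repackaging of that same induction. Nothing further is needed.
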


The convergence guarantees will be stated in terms of the augmented weights 
$$\hat \delta_t:=\frac{\delta_t(c_1+c_2)}{1+c_2\delta_t}\qquad\textrm{ and }\qquad\hat \Gamma_t:=\prod_{i=1}^t(1-\hat \delta_i),$$
and average iterates that are recursively defined as
$$\hat x_0:=x_0 \qquad \textrm{and}\qquad \hat{x}_t:=(1-\hat\delta_t)\hat x_{t-1}+\hat\delta_t x_t\qquad \forall t\geq 1.$$

The following lemma establishes the sought upper bound on the values $h(\hat x_{t})$. This result follows quickly by reducing to the special case $c_1=1$, $c_2=0$, for which the lemma was proved in \cite[Lemma 12]{kulunchakov2019estimate}.

\begin{lemma}[Averaging]\label{lem:average2}
Consider a convex function $h\colon\R^d\to\R\cup\{\infty\}$ and let $\{x_t\}_{t\geq 0}\subset\R^d$ be a sequence of random vectors in $\dom h$. Suppose that there are constants $c_1,c_2\in \R$ and nonnegative sequences $\{\delta_t\}_{t\geq 1}$, $\{D_t\}_{t\geq 0}$, and $\{\omega_t\}_{t\geq 1}$  satisfying \eqref{eqn:recurs_appe}.
 Suppose moreover the relations $c_1+c_2>0$, $1- c_1\delta_t>0$, and $1+c_2\delta_t> 0$  hold for all $t\geq 1$.
 Then the estimate holds:
\begin{equation}\label{eqn:main_eqn_end}
\frac{\EE [ h\left(\hat x_t\right)]}{c_1+c_2}+ D_t\leq  \hat \Gamma_t\left( \frac{h(x)}{c_1+c_2}+ D_0+\sum_{i=1}^t\frac{ \omega_i}{ \hat \Gamma_i(1+c_2\delta_i)}\right).
\end{equation}
\end{lemma}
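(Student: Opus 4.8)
The plan is to reduce the general recursion \eqref{eqn:recurs_appe} to the special case $c_1=1$, $c_2=0$, for which \eqref{eqn:main_eqn_end} is established as \cite[Lemma~12]{kulunchakov2019estimate}, via a rescaling of both the function $h$ and the weights $\delta_t$. First I would use the hypothesis $c_1+c_2>0$ to pass to the rescaled convex function $\tilde h:=h/(c_1+c_2)$, and divide \eqref{eqn:recurs_appe} through by the positive factor $1+c_2\delta_t$. The purpose of this normalization is that the two algebraic identities
\[
\frac{1-c_1\delta_t}{1+c_2\delta_t}=1-\hat\delta_t
\qquad\text{and}\qquad
\frac{\delta_t}{1+c_2\delta_t}=\frac{\hat\delta_t}{c_1+c_2}
\]
hold verbatim by the definition $\hat\delta_t=\frac{\delta_t(c_1+c_2)}{1+c_2\delta_t}$. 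Substituting them, the recursion collapses to the reduced form
\[
\hat\delta_t\,\EE[\tilde h(x_t)]\leq (1-\hat\delta_t)D_{t-1}-D_t+\tilde\omega_t,
\qquad \tilde\omega_t:=\tfrac{\omega_t}{1+c_2\delta_t}\geq 0,
\]
which is exactly an instance of \eqref{eqn:recurs_appe} with constants $c_1=1$, $c_2=0$ and weights $\hat\delta_t$.

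Next I would verify the hypotheses required to invoke the special case. The assumptions $1-c_1\delta_t>0$ and $1+c_2\delta_t>0$ give $1-\hat\delta_t=(1-c_1\delta_t)/(1+c_2\delta_t)>0$, while $c_1+c_2>0$ and $\delta_t\geq 0$ give $\hat\delta_t\geq 0$; hence $\hat\delta_t\in[0,1)$ as required, and $\hat\Gamma_t=\prod_{i=1}^t(1-\hat\delta_i)$ is precisely the partial product attached to the reduced problem. The one point needing attention is that the running average appearing in the reference, built from the weights $\hat\delta_t$, namely $\hat x_0=x_0$ and $\hat x_t=(1-\hat\delta_t)\hat x_{t-1}+\hat\delta_t x_t$, coincides exactly with the $\hat x_t$ defined before the lemma statement. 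Granting this, applying \cite[Lemma~12]{kulunchakov2019estimate} to the reduced recursion yields $\EE[\tilde h(\hat x_t)]+D_t\leq \hat\Gamma_t\bigl(\tilde h(x_0)+D_0+\sum_{i=1}^t \tilde\omega_i/\hat\Gamma_i\bigr)$, and unwinding $\tilde h=h/(c_1+c_2)$ and $\tilde\omega_i=\omega_i/(1+c_2\delta_i)$ reproduces \eqref{eqn:main_eqn_end} (with $x=x_0$).

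The argument is essentially bookkeeping, so I do not expect a serious obstacle; the only delicate check is confirming that the normalization lands the weights in $[0,1)$ and that the reference's averaging scheme matches $\hat x_t$, both of which follow immediately from the definition of $\hat\delta_t$ and the sign hypotheses. If one prefers not to cite the special case as a black box, the substance can be reproduced directly in a few lines: divide the reduced recursion by $\hat\Gamma_t$, use $(1-\hat\delta_t)/\hat\Gamma_t=1/\hat\Gamma_{t-1}$ to telescope the sum $\sum_{t=1}^T \tfrac{\hat\delta_t}{\hat\Gamma_t}\EE[\tilde h(x_t)]$, and then invoke Lemma~\ref{lem:gamma_sum} to check that $\hat x_T=\hat\Gamma_T x_0+\sum_{t=1}^T \tfrac{\hat\Gamma_T\hat\delta_t}{\hat\Gamma_t}x_t$ is a genuine convex combination, so that convexity of $\tilde h$ (Jensen) upgrades the telescoped bound on the weighted average of $\EE[\tilde h(x_t)]$ into the claimed bound on $\EE[\tilde h(\hat x_T)]$.
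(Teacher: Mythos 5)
Your reduction is exactly the paper's proof: divide \eqref{eqn:recurs_appe} by $1+c_2\delta_t$, set $\hat h = h/(c_1+c_2)$ and $\hat\omega_t = \omega_t/(1+c_2\delta_t)$, observe $\hat\delta_t\in(0,1)$, and invoke \cite[Lemma~12]{kulunchakov2019estimate} for the case $c_1=1$, $c_2=0$. The argument is correct, including your reading of $h(x)$ as $h(x_0)$ in \eqref{eqn:main_eqn_end}.
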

\begin{proof}
The special case of this theorem in the setting $c_1=1$ and $c_2=0$	was proved in \cite[Lemma 12]{kulunchakov2019estimate}. We will reduce the general case to this setting. To this end, dividing  the inequality \eqref{eqn:recurs_appe} through by $1+c_2\delta_t$ yields
$$\hat \delta_t\cdot \EE[\hat h(x_{t})]\leq \left(1-\hat \delta_t\right) D_{t-1}-D_{t}+\hat\omega_t,
$$
where we set  $\hat h(x):=\frac{h(x)}{c_1+c_2}$ and $\hat\omega_t:=\frac{\omega_t}{1+c_2\delta_t}$. Noting the inclusion $\hat \delta_t\in (0,1)$, 
an application of 
 \cite[Lemma 12]{kulunchakov2019estimate} completes the proof.
\end{proof}

Observe that the efficiency estimate~\eqref{eqn:main_eqn_end} is guided by the augmented weights $\hat \delta_i$. Therefore, when applying Lemma~\ref{lem:average2}, it is most convenient to specify $\hat \delta_i$ rather than $ \delta_i$. For any desired value $\hat\delta\in (0,1)$, we may then simply set 
$\delta=\frac{\hat \delta}{c_1+c_2-c_2\hat\delta}.$  
Typical choices of the sequence $\hat \delta_t$ and the corresponding products $\hat\Gamma_t$ are summarized in Table~\ref{table:delta}.

\begin{table}[h!]
	\renewcommand{\arraystretch}{1.5}
	\begin{center}
		\begin{tabular}{ ||c|| c |c | c| c| c|}
			\hline
			$\hat\delta_t$ & $\hat \delta$ & $\frac{1}{t+1}$ & $\frac{2}{t+2}$ & $\min\{\frac{1}{t+1},\hat\delta\}$ & $\min\{\frac{2}{t+2},\hat\delta\}$ \\ 
			\hline
			$\hat\Gamma_t$ & $(1-\hat\delta)^t$ &  $\frac{1}{t+1}$ &  $\frac{2}{(t+1)(t+2)}$ &  $\begin{cases} (1-\hat\delta)^t &\mbox{if } t < t_0 \\
			\frac{\hat\Gamma_{t_0-1}t_0}{t+1} & \mbox{if } t \geq t_0 \end{cases}$ & $\begin{cases} (1-\hat\delta)^t &\mbox{if } t < t_0' \\
			\frac{\hat\Gamma_{t_0'-1}t_0'(t_0'+1)}{(t+1)(t+2)} & \mbox{if } t \geq t_0' \end{cases}$\\  
			\hline
		\end{tabular}
	\end{center}
	\caption{The table describes the values of $\hat\Gamma_t=\prod_{i=1}^t(1-\hat\delta_i)$ under various common choices of $\hat\delta_t$. Here, we assume $\hat\delta\in (0,1)$ and set $t_0=\left\lceil\frac{1}{\hat\delta}-1\right\rceil$, $t_0'=\left\lceil\frac{2}{\hat\delta}-2\right\rceil$.}\label{table:delta}
\end{table}

It is now straightforward to evaluate the right side of \eqref{eqn:main_eqn_end} under the different choices of $\hat\delta_t$, as specified in Table~\ref{table:delta}. For simplicity, we record the resulting estimate only in the case that $\hat \delta_t$ is constant across the iterations.

\begin{corollary}[Constant parameter]\label{cor:const_param}
Assume the setting of Lemma~\ref{lem:average2}, and suppose that $\delta_t=\delta$ and $\omega_i=\omega$ are constant. 
Then the estimate holds:
\begin{equation}\label{eqn:lin_conver_app}
\EE [ h\left(\hat x_t\right)]+ (c_1+c_2)D_t\leq  \left(\frac{1- c_1\delta}{1+c_2\delta}\right)^t\left( h(x)+ (c_1+c_2)D_0\right)+\frac{\omega}{\delta}.
\end{equation}
\end{corollary}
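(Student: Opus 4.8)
The plan is to derive this directly from the averaging estimate~\eqref{eqn:main_eqn_end} of Lemma~\ref{lem:average2} by specializing to constant weights and summing an elementary geometric series via Lemma~\ref{lem:gamma_sum}. First I would record the two algebraic identities that make the constant-weight case collapse. With $\delta_t=\delta$, the augmented weight $\hat\delta_t=\frac{\delta(c_1+c_2)}{1+c_2\delta}$ is a constant $\hat\delta$, and a short computation gives
\[
1-\hat\delta=\frac{1+c_2\delta-\delta(c_1+c_2)}{1+c_2\delta}=\frac{1-c_1\delta}{1+c_2\delta},
\qquad\textrm{so}\qquad
\hat\Gamma_t=(1-\hat\delta)^t=\left(\frac{1-c_1\delta}{1+c_2\delta}\right)^t,
\]
which already matches the contraction factor in the target inequality. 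I would also note the relation $\frac{c_1+c_2}{1+c_2\delta}=\frac{\hat\delta}{\delta}$, which is just a rearrangement of the definition of $\hat\delta$.

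Next I would multiply the conclusion~\eqref{eqn:main_eqn_end} of Lemma~\ref{lem:average2} through by $(c_1+c_2)$, so that the left side becomes $\EE[h(\hat x_t)]+(c_1+c_2)D_t$ and the first two terms on the right become $\hat\Gamma_t\bigl(h(x)+(c_1+c_2)D_0\bigr)$, precisely the leading term we want. The only work remaining is to bound the weighted sum
\[
\hat\Gamma_t\cdot(c_1+c_2)\sum_{i=1}^t\frac{\omega}{\hat\Gamma_i(1+c_2\delta)}
=\frac{\omega(c_1+c_2)}{1+c_2\delta}\,\hat\Gamma_t\sum_{i=1}^t\frac{1}{\hat\Gamma_i}
=\frac{\omega\hat\delta}{\delta}\,\hat\Gamma_t\sum_{i=1}^t\frac{1}{\hat\Gamma_i},
\]
where I used $\omega_i=\omega$, the constancy of $\delta$, and the identity above.

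The crux is evaluating $\hat\Gamma_t\sum_{i=1}^t\hat\Gamma_i^{-1}$, and this is exactly where Lemma~\ref{lem:gamma_sum} enters. Applying that lemma with the constant weights $\hat\delta_i=\hat\delta$ gives $\hat\delta\sum_{i=1}^t\hat\Gamma_i^{-1}+1=\hat\Gamma_t^{-1}$, hence $\sum_{i=1}^t\hat\Gamma_i^{-1}=\hat\delta^{-1}(\hat\Gamma_t^{-1}-1)$ and therefore $\hat\Gamma_t\sum_{i=1}^t\hat\Gamma_i^{-1}=\hat\delta^{-1}(1-\hat\Gamma_t)$. Substituting, the sum term becomes $\frac{\omega}{\delta}(1-\hat\Gamma_t)$, which I would bound above by $\frac{\omega}{\delta}$ using $\hat\Gamma_t\ge0$ (valid since $\hat\delta\in(0,1)$ under the standing hypotheses $1-c_1\delta>0$, $1+c_2\delta>0$, $c_1+c_2>0$ of Lemma~\ref{lem:average2}). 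Collecting the pieces yields exactly~\eqref{eqn:lin_conver_app}. I do not expect any genuine obstacle here: the entire argument is bookkeeping around two one-line identities, with the telescoping sum from Lemma~\ref{lem:gamma_sum} being the only nontrivial ingredient. The one place to be careful is tracking the normalization factor $\frac{c_1+c_2}{1+c_2\delta}=\frac{\hat\delta}{\delta}$ so that the $\hat\delta$ produced by the geometric sum cancels cleanly and leaves the clean constant $\frac{\omega}{\delta}$.
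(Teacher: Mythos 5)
Your proposal is correct and follows essentially the same route as the paper: both multiply the conclusion of Lemma~\ref{lem:average2} by $(c_1+c_2)$, use the identity $\frac{\delta(c_1+c_2)}{1+c_2\delta}=\hat\delta$ to rewrite the error sum, and collapse it to $\frac{\omega}{\delta}(1-\hat\Gamma_t)\leq\frac{\omega}{\delta}$ via the telescoping identity of Lemma~\ref{lem:gamma_sum}. Your additional observation that $1-\hat\delta=\frac{1-c_1\delta}{1+c_2\delta}$ (so that $\hat\Gamma_t$ matches the stated contraction factor) is left implicit in the paper but is exactly the right bookkeeping.
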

\begin{proof}The result follows directly from Lemma~\ref{lem:average2} and algebraic manipulations.
Namely, we compute 
$$\sum_{i=1}^t\frac{\hat \Gamma_t \omega}{ \hat \Gamma_i(1+c_2\delta)}=\frac{\omega}{\delta(c_1+c_2)}\sum_{i=1}^t\frac{\hat \Gamma_t\delta(c_1+c_2)}{ \hat \Gamma_i(1+c_2\delta)}=\frac{\omega}{\delta(c_1+c_2)}\sum_{i=1}^t\frac{\hat \Gamma_t\hat \delta}{ \hat \Gamma_i}=\frac{\omega}{\delta(c_1+c_2)}\cdot(1-\hat \Gamma_t),$$
where the last equality follows from Lemma~\ref{lem:gamma_sum}. An application of Lemma~\ref{lem:average2} completes the proof.
\end{proof}

\section{Stagewise scheme for improved efficiency}\label{sec:restart_improv_eff}
This section describes two restart schemes for improving the efficiency of constant-step stochastic algorithms. We follow the discussion in \cite[Appendix B.1]{kulunchakov2019generic}, though closely related ideas can be found for example in \cite{ghadimi2013optimal,aybat2019universally}. There are two  complementary approaches. The first is based on restarting the constant step algorithm with exponentially increasing minibatches in order to decrease the variance of gradient estimators. The second approach  restarts the algorithm with geometrically decreasing step-sizes. We discuss these two strategies in turn. 

\subsection{Minibatch restart}
  Suppose that we have available a stochastic algorithm $\mathcal{A}(y,m,T)$ that generates a point $y_T$ satisfying 
\begin{equation}\label{eqn:effic_lin_sib}
\EE[h(y_T)]\leq C(1-\tau)^T h(y_0)+\frac{B}{m},
\end{equation}
where $h$ is a nonnegative function and $C,B>0$ and $\tau\in(0,1)$ are some constants. In concrete circumstances, $h(y)$ may denote the function gap $\varphi(y)-\min\varphi$ or the square distance to the solution $\|y-\bar x\|^2$; the point $y_0$ specifies the initialization and $T$ is the number of iterations; $B>0$ is proportional to the variance $\sigma^2$ of stochastic gradients used by the algorithm, which can be reduced by a factor of $m\in \mathbb{N}$ by using a minibatch of size $m$. Thus the overall sample complexity of the execution is proportional to $m\cdot T$.
For example, the stochastic gradient method on a smooth strongly convex function $\varphi$ satisfies \eqref{eqn:effic_lin_sib}
with $h(y)=\varphi(y)-\min \varphi$, $C=1$, $B=\sigma^2/L$, and $\tau=\alpha/L$.   If the stochastic gradients are used in minibatches of size $m$, the variance $\sigma^2$ shrinks by a factor of $m$. 

The minibatch restart procedure, formally described in Algorithm~\ref{alg:restart}, runs in stages by repeatedly calling $\mathcal{A}(y_k,m_k,T_k)$. Specifically, in stage $k$, the method
sets $m_k=2^k$ and finds a point $y$ satisfying $\EE[h(y_T)]\leq 2\cdot \frac{B}{m_k}$, while using the output from the previous stage as a warmstart.

\SetKwProg{Init}{init}{}{}
\begin{algorithm}[H]
	\KwIn{ $y_0\in\R^d$, $B,C>0$, $\tau\in (0,1)$, estimate $\Delta\geq h(y_0)$,  
	accuracy $\varepsilon>0$, algorithm $\mathcal{A}(y,m,T)$ satisfying \eqref{eqn:effic_lin_sib}.}

    \SetKwProg{myproc}{Initialize:}{}{}
      \myproc{Set  $y=\mathcal{A}(y_0,m_0,T_0)$ with $m_0=1$ and $T_0=\tau^{-1}\cdot\log(\frac{2C\Delta}{\varepsilon})$.}
		
	Set $K=\left\lceil 1+\log_2\left(\frac{B}{\varepsilon}\right)\right\rceil$.

	{\bf Step} $k=1,\ldots, K$: 
$$\textrm{Set } y=\mathcal{A}(y,m_k,T_k)\quad\textrm{ with }\quad m_k=2^k,~ T_k=\lceil\tau^{-1}\cdot\log(4C)\rceil.$$

{\bf Return} $y$.
	
	\caption{Minibatch restart}\label{alg:restart}
\end{algorithm}

The overall efficiency of the scheme is summarized in the following lemma. The proof can be found in \cite[Appendix B.1]{kulunchakov2019generic}.

\begin{lemma}[Minibatch restart]\label{lem:minbatch_restart}
The point $y$ returned by Algorithm~\ref{alg:restart} satisfies $\EE[h(y)]\leq\varepsilon$ and the efficiency estimate holds:	
$$\sum_{k=0}^K m_k T_k =\mathcal{O}\left(\frac{1}{\tau}\cdot\log\left(\frac{2C\Delta}{\varepsilon}\right)+\frac{B\log(4C)}{\tau\varepsilon}\right).$$	
\end{lemma}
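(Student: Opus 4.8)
The plan is to reduce the whole statement to the analysis of a single scalar recursion, obtained by tracking the expected potential $h_k := \EE[h(y^{(k)})]$, where $y^{(k)}$ is the iterate returned at the end of stage $k$ (with $y^{(0)}$ the output of the initialization step, where $m_0=1$). Since stage $k$ warmstarts $\mathcal{A}$ at the random point $y^{(k-1)}$, I would apply the guarantee \eqref{eqn:effic_lin_sib} conditionally on $y^{(k-1)}$ and then take total expectations via the tower rule. This yields the master recursion
\[
h_k \le C(1-\tau)^{T_k}\,h_{k-1} + \frac{B}{m_k}, \qquad k\ge 1,
\]
together with $h_0 \le C(1-\tau)^{T_0}\,h(y_0) + B$ for the initialization. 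Everything else is an analysis of this recursion with the stage parameters prescribed in Algorithm~\ref{alg:restart}.

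First I would dispatch the contraction factors using $(1-\tau)^T\le e^{-\tau T}$. For the initialization, the choice $T_0=\tau^{-1}\log(2C\Delta/\varepsilon)$ gives $C(1-\tau)^{T_0}\le \tfrac{\varepsilon}{2\Delta}$, so that $C(1-\tau)^{T_0}h(y_0)\le \varepsilon/2$ using $h(y_0)\le\Delta$; hence $h_0\le \varepsilon/2 + B$. For every later stage, the choice $T_k=\lceil\tau^{-1}\log(4C)\rceil$ forces $C(1-\tau)^{T_k}\le \tfrac14$, turning the recursion into $h_k\le \tfrac14 h_{k-1} + B\,2^{-k}$. With these two facts in hand, I would prove correctness by maintaining the invariant $h_k\le 2B\,2^{-k}$ by induction on $k$. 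The base case holds since $h_0\le \varepsilon/2 + B\le 2B$ in the nontrivial regime $\varepsilon\le 2B$ (otherwise the initialization alone already gives $h_0<\varepsilon$ and no further stages are needed). The inductive step reads $h_k\le \tfrac14\,(4B\,2^{-k}) + B\,2^{-k} = 2B\,2^{-k}$. Since $K=\lceil 1+\log_2(B/\varepsilon)\rceil$ guarantees $2^K\ge 2B/\varepsilon$, we conclude $h_K\le 2B/2^K\le\varepsilon$, which is the asserted accuracy.

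The efficiency estimate is then a direct summation. The initialization costs $m_0 T_0 = \tau^{-1}\log(2C\Delta/\varepsilon)$ samples, while stage $k\ge 1$ costs $m_k T_k = 2^k\lceil\tau^{-1}\log(4C)\rceil$. Summing the geometric series $\sum_{k=1}^K 2^k$ is dominated by its last term $2^{K+1}=\mathcal{O}(B/\varepsilon)$ (the constant absorbed into the ceiling defining $K$), giving $\sum_{k=1}^K m_k T_k = \mathcal{O}\!\left(\tau^{-1}\log(4C)\cdot B/\varepsilon\right)$. Adding the initialization cost produces exactly the stated bound $\mathcal{O}\!\left(\tfrac{1}{\tau}\log(2C\Delta/\varepsilon)+\tfrac{B\log(4C)}{\tau\varepsilon}\right)$.

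The only step demanding genuine care is the bookkeeping of two competing error sources across stages: the bias/contraction term, which is driven down by a fixed fraction per stage, versus the fresh variance $B/m_k$ injected at each stage. The right device is the invariant $h_k\le 2B/m_k$, which precisely balances the factor-$\tfrac14$ contraction of the previously accumulated error against the doubling of the minibatch; the crux is checking that this invariant is self-reproducing, so that stale variance contributions do not accumulate but instead decay geometrically. The elementary estimate $(1-\tau)^T\le e^{-\tau T}$, combined with the logarithmic stage lengths $T_k$, is what converts the multiplicative oracle guarantee \eqref{eqn:effic_lin_sib} into the clean per-stage contraction factor that makes this balancing possible.
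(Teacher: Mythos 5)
Your proof is correct: the master recursion $h_k\le C(1-\tau)^{T_k}h_{k-1}+B/m_k$, the per-stage contraction $C(1-\tau)^{T_k}\le\tfrac14$, and the self-reproducing invariant $h_k\le 2B\cdot 2^{-k}$ together give exactly the stated accuracy and cost, and you correctly handle the edge case $\varepsilon>2B$ where the initialization alone suffices. The paper itself supplies no proof and simply cites \cite[Appendix B.1]{kulunchakov2019generic}; your argument is the standard restart analysis that reference carries out, so it fills in precisely the omitted details.
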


Algorithm~\ref{alg:restart} is effective when working with minibatches of the loss functions is computationally feasible. There are, however, important situations where each iteration using a minibatch becomes prohibitively expensive computationally. For example, a proximal point update on a single loss function $\ell(\cdot,z)$ may be computable in closed form, whereas computing the proximal point of an average $\frac{1}{s}\sum_{i=1}^s \ell(x,z_i)$ may computationally challenging. The following section provides an alternative strategy to minibatching that is based on geometrically decreasing the step-size used by the algorithm.

\subsection{Geometric decay schedule}
Suppose that we have available a stochastic algorithm
$\mathcal{A}(y_0,\delta,T)$, such that as long as $\delta<\delta_0$, the method generates a point $y_T$ satisfying 
\begin{equation}\label{eqn:geo_decay}
\EE[h(y_T)]\leq C\left(1-\psi(\delta)\right)^T h(y_0)+D\delta,
\end{equation}
where $h$ is a nonnegative function, $C,D>0$ and $\delta_0\in (0,1)$ are some constants that are specific to the algorithm, and $\psi$ is a function mapping $[0,\delta_0)$ into $(0,1)$. In typical circumstances, $h(y)$ may denote the function gap $\varphi(y)-\min\varphi$ or the square distance to the solution $\|y-\bar x\|^2$; the coefficient
 $\delta$ is proportional to a step-size parameter that the user is free to choose; the function  $\psi(\delta)=c\delta$ is linear for some constant $c>0$.
The procedure, formally described in Algorithm~\ref{alg:geo_decay}, runs in stages by repeatedly calling $\mathcal{A}(y_k,\delta_k,T_k)$. Specifically, in stage $k$, the method
sets $\delta_k=\delta_0 2^{-k}$ and finds a point $y$ satisfying $\EE[h(y_T)]\leq 2\cdot D\delta_k$, while using the output from the previous stage as a warmstart.

\begin{algorithm}[H]
	\KwIn{$y_0\in\R^d$, $C,D>0$, $\delta_0\in (0,1)$, estimate $\Delta\geq h(y_0)$,  
	accuracy $\varepsilon>0$, algorithm $\mathcal{A}(y,\delta,T)$ satisfying \eqref{eqn:geo_decay}.}

\myproc{Set  $y_0=\mathcal{A}(y_0,\delta_0,T_0)$ with  $T_0=\frac{1}{\psi(\delta_0)}\cdot\log(\frac{2C\Delta}{\varepsilon})$.}
	
	Set $K=\left\lceil 1+\log_2\left(\frac{D\delta_0}{\varepsilon}\right)\right\rceil$.

	{\bf Step} $k=1,\ldots, K$: 
$$\textrm{Set } y_k=\mathcal{A}(y_{k-1},\delta_k,T_k)\quad\textrm{ with }\quad \delta_k=2^{-k}\delta_0,~ T_k=\left\lceil\frac{1}{\psi(\delta_k)}\cdot\log(4C)\right\rceil.$$

{\bf Return} $y_K$.
	
	\caption{Geometric decay schedule}\label{alg:geo_decay}
\end{algorithm}

The overall efficiency of the scheme is summarized in the following lemma. The proof can be found in \cite[Appendix B.1]{kulunchakov2019generic}.

\begin{lemma}[Geometric decay]\label{lem:geo_restart}
The point $y$ returned by Algorithm~\ref{alg:geo_decay} satisfies $\EE[h(y_K)]\leq\varepsilon$ and the efficiency estimate holds:	
$$\sum_{k=0}^K T_k =\left\lceil\frac{1}{\psi(\delta_0)}\cdot\log\left(\frac{2C\Delta}{\varepsilon}\right)\right\rceil+\sum_{k=1}^K \left\lceil\frac{\log(4C)}{\psi(2^{-k}\delta_0)}\right\rceil.$$	
In particular, when $\psi$ has the form $\psi(\delta)=c\delta$ for some constants $c\in (0,\frac{1}{\delta_0})$, the estimate becomes
\begin{equation}\label{eqn:specializing_restart}
\sum_{k=0}^K T_k =\mathcal{O}\left(\frac{1}{c\delta_0}\cdot\log\left(\frac{2C\Delta}{\varepsilon}\right)+\frac{D\log(4C)}{\varepsilon c}\right).
\end{equation}
\end{lemma}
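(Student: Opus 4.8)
The plan is to run a straightforward epoch-by-epoch induction on the stage counter, using the single-stage guarantee \eqref{eqn:geo_decay} as the only input. The invariant I would maintain is that after stage $k$ the warmstart point $y_k$ satisfies $\EE[h(y_k)]\le 2D\delta_k$, where $\delta_k=2^{-k}\delta_0$. Everything then reduces to verifying a base case (the initialization stage) and a contraction step, after which the choice of $K$ forces the final accuracy, and a geometric sum of the $T_k$ gives the efficiency estimate.

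For the base case, I would apply \eqref{eqn:geo_decay} to the initialization call $\mathcal{A}(y_0,\delta_0,T_0)$ and bound $(1-\psi(\delta_0))^{T_0}\le \exp(-\psi(\delta_0)T_0)$. The choice $T_0=\tfrac{1}{\psi(\delta_0)}\log(\tfrac{2C\Delta}{\varepsilon})$ makes the first term at most $\tfrac{\varepsilon}{2}$ after using $h(y_0)\le\Delta$, so $\EE[h(y_0)]\le \tfrac{\varepsilon}{2}+D\delta_0$. In the regime $\varepsilon< 2D\delta_0$ (which is exactly the regime making $K\ge 1$) this is at most $2D\delta_0$; if instead $\varepsilon\ge 2D\delta_0$, the initialization already certifies $\EE[h(y_0)]\le\varepsilon$ and the scheme terminates. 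For the inductive step I would feed $y_{k-1}$ into $\mathcal{A}(\cdot,\delta_k,T_k)$ and again bound $(1-\psi(\delta_k))^{T_k}\le\exp(-\psi(\delta_k)T_k)$; the choice $T_k=\lceil\tfrac{1}{\psi(\delta_k)}\log(4C)\rceil$ forces $C(1-\psi(\delta_k))^{T_k}\le\tfrac14$. Combining with the inductive hypothesis $\EE[h(y_{k-1})]\le 2D\delta_{k-1}=4D\delta_k$ yields $\EE[h(y_k)]\le \tfrac14\cdot 4D\delta_k+D\delta_k=2D\delta_k$, closing the induction.

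To finish the accuracy claim I would evaluate the invariant at $k=K$: since $\delta_K=2^{-K}\delta_0$ and $K\ge 1+\log_2(\tfrac{D\delta_0}{\varepsilon})$, one has $2^{1-K}\le\tfrac{\varepsilon}{D\delta_0}$ and hence $\EE[h(y_K)]\le 2D\delta_K=D\delta_0\,2^{1-K}\le\varepsilon$. The displayed exact iteration count is then just the definition of the $T_k$'s summed over stages. For the specialization $\psi(\delta)=c\delta$, I would isolate the $k=0$ term as $\tfrac{1}{c\delta_0}\log(\tfrac{2C\Delta}{\varepsilon})$ and sum the remaining terms using $\tfrac{1}{\psi(\delta_k)}=\tfrac{2^k}{c\delta_0}$, so that $\sum_{k=1}^K T_k$ is controlled by a geometric series $\tfrac{\log(4C)}{c\delta_0}\sum_{k=1}^K 2^k=\mathcal{O}\!\big(\tfrac{\log(4C)}{c\delta_0}2^{K}\big)$; the ceiling defining $K$ gives $2^K=\mathcal{O}(D\delta_0/\varepsilon)$, producing the second term $\mathcal{O}(D\log(4C)/(c\varepsilon))$ and matching \eqref{eqn:specializing_restart}.

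I expect no genuine obstacle here: the argument is a standard restart/epoch-halving bookkeeping, and the only points requiring care are the harmless boundary case $\varepsilon\ge 2D\delta_0$, the rounding introduced by the ceilings (which only inflate constants), and ensuring $\psi(\delta_k)>0$ throughout — guaranteed because each $\delta_k\le\delta_0$ lies in the domain on which $\psi$ takes values in $(0,1)$. The whole proof is essentially the one for the minibatch restart (Lemma~\ref{lem:minbatch_restart}) with the variance factor $1/m_k$ replaced by the stepsize $\delta_k$ and with $D\delta_k$ in place of $B/m_k$.
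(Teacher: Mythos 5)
Your proof is correct and is exactly the standard epoch-halving restart argument that the paper itself omits (deferring to \cite[Appendix B.1]{kulunchakov2019generic}): the invariant $\EE[h(y_k)]\leq 2D\delta_k$, the per-stage contraction of the leading term to $\tfrac14$ via the choice of $T_k$, and the geometric summation $\sum_k 2^k=\mathcal{O}(2^K)=\mathcal{O}(D\delta_0/\varepsilon)$ are precisely the intended steps. The boundary points you flag (the degenerate case $\varepsilon\geq 2D\delta_0$, the ceilings, and $\delta_k\leq\delta_0$) are handled appropriately, so nothing further is needed.
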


\section{Proof of Theorem~\ref{thm:accel}}\label{sec:accel_tech}
The argument we present is based on the technique of estimate sequences, originally introduced by Nesterov \cite[Section 2.2.1]{intro_lect} for deterministic algorithms, and recently extended by \cite{kulunchakov2019estimate} to stochastic settings. In particular, we closely follow the notation and the proof outline of \cite[Section 4.1]{kulunchakov2019estimate}.

Recall that Algorithm~\ref{alg:sgd_acc}  amounts to the recursion 
$$	\left\{\begin{aligned}
x_{t}&=\argmin_x ~\langle g_t,x\rangle +r(x)+\frac{1}{2\eta_t}\|x-y_{t-1}\|^2, \\
\beta_t&=\tfrac{\delta_t(1-\delta_t)\eta_{t+1}}{\eta_t\delta_{t+1}+\eta_{t+1}\delta_t^2}\\
y_t&=x_t+\beta_t(x_t-x_{t-1})
\end{aligned}\right\},
$$
where $\delta_t$ and $\gamma_t$ are defined in \eqref{eqn:acc_gamma_delta}, with $\gamma_0\geq \hat{\alpha}$. Henceforth, define $\Gamma_t:=\prod_{i=1}^t (1-\delta_i)$.

\paragraph{Estimate Sequences.} We next set up the machinery of estimate sequences. To this end, define the auxiliary vectors
$$\tilde g_t:=\eta^{-1}_t(y_{t-1}-x_t)\qquad \textrm{and}\qquad r'_t:=\tilde g_t-g_t\in \partial r(x_t),$$
and the ``local models''
$$l_t(x):= f(y_{t-1})+\langle g_t, x-y_{t-1}\rangle+\frac{\hat \alpha}{2}\|x-y_{t-1}\|^2+r(x_t)+\langle r'_t,x-x_t\rangle.$$
 Note that Lemma~\ref{lem:approx_subgrad_ineq} guarantees the bound $\EE[l_t(\bar x)]\leq \varphi(\bar x)$. This  is worth emphasizing: in expectation, the models  $l_t$ evaluated at $\bar x$ lower bound the minimal value $\varphi(\bar x)$. 
 
 Estimate sequences are constructed by aggregating the ``local models'' along the iterations. Namely,  fix a vector $v_0\in \R^d$ and define the sequence of functions:
\begin{align}
d_0(x)&:=\varphi(x_0)+\frac{\gamma_0}{2}\|x-v_0\|^2,\\
d_t(x)&:=(1-\delta_t)d_{t-1}(x)+\delta_t l_t(x)\qquad \forall t\geq 1,\label{eqn:induct_est_seq}
\end{align}
Since each function $d_t$ is a spherical quadratic, we may write it in standard form 
\begin{equation}\label{eqn:canonnical_est_seq}
d_t(x)=d^*_t+\frac{\gamma_t}{2}\|x-v_t\|^2,
\end{equation}
for some vectors $v_t\in\R^d$ and real $d_t^*\in\R$. The following relationship between the sequences $v_t$, $x_t$, and $y_t$ follows from algebraic manipulations and is classical \cite[Lemma~1]{kulunchakov2019estimate}:
\begin{equation}\label{eqn:key-recurs_rela}
(x_{t-1}-y_{t-1})+\frac{\delta_t\gamma_{t-1}}{\gamma_t}(v_{t-1}-y_{t-1})=0\qquad \forall t\geq 1.
\end{equation}

Henceforth, we will let $\EE_t[\cdot]$ denote the expectation conditioned on $g_0,\ldots,g_{t-1}$. We begin with the following basic lemma showing that lower bounds on the deviations $\EE[d^*_t]- \EE[\varphi(x_{t})]$ directly control the progress of the algorithm.

\begin{lemma}[Basic estimate sequence bound]\label{lem:basic_lem}
	Suppose that there exists a sequence of  numbers $\xi_t\geq 0$ satisfying $\EE[d^*_t]\geq \EE[\varphi(x_{t})]-\xi_{t}$ for each $t\geq 0$.	Then for each $t\geq 0$ the estimate holds:
	$$\EE\left[\varphi(x_{t})-\varphi(\bar x)+\frac{\gamma_t}{2}\|\bar x-v_t\|^2\right]\leq \Gamma_t\left[\varphi(x_0)-\varphi(\bar x)+\frac{\gamma_0}{2}\|x_0-\bar x\|^2\right]+\xi_{t}.$$
\end{lemma}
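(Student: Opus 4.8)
The plan is to run the classical estimate-sequence telescoping and then feed in the one problem-specific ingredient, namely the bound $\EE[l_i(\bar x)]\le\varphi(\bar x)$ recorded just after Lemma~\ref{lem:approx_subgrad_ineq}, which is where the controlled bias of the stochastic oracle enters.

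First I would unfold the defining recursion \eqref{eqn:induct_est_seq}. Dividing $d_t=(1-\delta_t)d_{t-1}+\delta_t l_t$ by $\Gamma_t$ and using $\Gamma_t=(1-\delta_t)\Gamma_{t-1}$ gives $d_t/\Gamma_t=d_{t-1}/\Gamma_{t-1}+(\delta_t/\Gamma_t)\,l_t$, so telescoping (with $\Gamma_0=1$) yields
\[
d_t(x)=\Gamma_t\, d_0(x)+\Gamma_t\sum_{i=1}^t\frac{\delta_i}{\Gamma_i}\,l_i(x)\qquad\forall x.
\]
Since the weights $\delta_i,\Gamma_i$ are deterministic, evaluating at $x=\bar x$ and taking expectations commutes with the sum, and the bound $\EE[l_i(\bar x)]\le\varphi(\bar x)$ together with $d_0(\bar x)=\varphi(x_0)+\tfrac{\gamma_0}{2}\|\bar x-v_0\|^2$ gives
\[
\EE[d_t(\bar x)]\le \Gamma_t\Big(\varphi(x_0)+\tfrac{\gamma_0}{2}\|\bar x-v_0\|^2\Big)+\Big(\Gamma_t\sum_{i=1}^t\tfrac{\delta_i}{\Gamma_i}\Big)\varphi(\bar x).
\]
Lemma~\ref{lem:gamma_sum} collapses the last coefficient via $\Gamma_t\sum_{i=1}^t\delta_i/\Gamma_i=1-\Gamma_t$.

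Next I would pass to the canonical form \eqref{eqn:canonnical_est_seq}, writing $d_t(\bar x)=d_t^*+\tfrac{\gamma_t}{2}\|\bar x-v_t\|^2$, and invoke the hypothesis $\EE[d_t^*]\ge\EE[\varphi(x_t)]-\xi_t$. This turns the previous display into
\[
\EE[\varphi(x_t)]-\xi_t+\tfrac{\gamma_t}{2}\EE\|\bar x-v_t\|^2\le \Gamma_t\Big(\varphi(x_0)+\tfrac{\gamma_0}{2}\|\bar x-v_0\|^2\Big)+(1-\Gamma_t)\varphi(\bar x).
\]
Subtracting $\varphi(\bar x)$ from both sides and using $(1-\Gamma_t)\varphi(\bar x)-\varphi(\bar x)=-\Gamma_t\varphi(\bar x)$ regroups the right-hand side into $\Gamma_t\big(\varphi(x_0)-\varphi(\bar x)+\tfrac{\gamma_0}{2}\|\bar x-v_0\|^2\big)+\xi_t$. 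Finally, with the standard initialization $v_0=x_0$ one has $\|\bar x-v_0\|^2=\|x_0-\bar x\|^2$, which produces exactly the claimed inequality after moving $\xi_t$ to the right.

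The only genuinely delicate step, and hence the one I would write out most carefully, is the interchange of expectation with the weighted sum of local models: it is legitimate precisely because the scalars $\delta_i,\gamma_i,\Gamma_i$ are generated by the deterministic recursion \eqref{eqn:acc_gamma_delta} (in particular in the constant-step regime of Theorem~\ref{thm:accel}), so they carry no correlation with the random gradients buried inside $l_i$, and the nonnegativity of $\delta_i/\Gamma_i$ and $\Gamma_t$ is what lets the inequality $\EE[l_i(\bar x)]\le\varphi(\bar x)$ propagate through. Everything else is bookkeeping: the telescoping identity, the geometric-weight evaluation via Lemma~\ref{lem:gamma_sum}, and the algebraic regrouping after subtracting $\varphi(\bar x)$.
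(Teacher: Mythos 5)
Your proposal is correct and is essentially the paper's own argument: the paper subtracts $\varphi(\bar x)$ from the one-step inequality $\EE_t[d_t(\bar x)]\leq (1-\delta_t)d_{t-1}(\bar x)+\delta_t\varphi(\bar x)$ and unrolls the recursion multiplicatively, which is exactly your telescoping-plus-Lemma~\ref{lem:gamma_sum} computation with the bookkeeping done in a different order. The remaining steps (canonical form \eqref{eqn:canonnical_est_seq}, the hypothesis on $d_t^*$, and the implicit initialization $v_0=x_0$) match the paper's proof.
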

\begin{proof}
	Lemma~\ref{lem:approx_subgrad_ineq} guarantees the bound $\EE_t[l_t(\bar x)]\leq \varphi(\bar x)$. Therefore from the definition of the estimate sequence, we have
	$$\EE_t[d_{t}(\bar x)]=(1-\delta_t) d_{t-1}(\bar x)+\delta_t \EE_t[l_t(\bar x)]\leq (1-\delta_t)d_{t-1}(\bar x)+\delta_t \varphi(\bar x)$$
	Using the tower rule for expectations, subtracting $\varphi(\bar x)$ from both sides, and unrolling the recursion, yields
	$$\EE[d_{t}(\bar x)]-\varphi(\bar x)\leq (1-\delta_t)(\EE[d_{t-1}(\bar x)]-\varphi(\bar x))\leq  \Gamma_t( d_{0}(\bar x)-\varphi(\bar x)).$$
	Using the expression \eqref{eqn:canonnical_est_seq} for $d_t$ therefore gives
	$$\EE\left[d_t^*-\varphi(\bar x)+\frac{\gamma_t}{2}\|\bar x-v_t\|^2\right]\leq \Gamma_t( d_{0}(\bar x)-\varphi(\bar x)).$$
	Taking into account the assumed  bound, $\EE[d_t^{*}]\geq \EE[\varphi(x_{t})]-\xi_{t}$,  completes the proof.
\end{proof}

In light of Lemma~\ref{lem:basic_lem}, our immediate task is to find conditions guaranteeing  the inequality $\EE[d^*_t]\geq \EE[\varphi(x_{t})]-\xi_t$ for some deterministic sequence $\xi_t$.
This is the content of the following theorem, whose proof we postpone to Section~\ref{sec:proof_hard_acc}.
\begin{theorem}[Accelerated stochastic gradient method]\label{thm:accele_stoch_grad}
	Suppose  $\eta_t\leq \frac{1}{4L}$ and that for all $t=0,\ldots,T$ the inequality holds:
	\begin{equation}\label{eqn:suff_accel}
	\sqrt{1+\frac{\delta_t}{\hat \alpha\eta_t}}
	\leq \frac{{\hat\alpha/B}}{\sqrt{32}(1+\eta_t B)}.
	\end{equation}
	Then for all $t=0,\ldots,T$, the estimate
	$\EE[d^*_t]\geq \EE[\varphi(x_{t})]-\xi_t$ holds, where $\xi_{t}$ are defined recursively as $\xi_{t}=(1-\delta_t)\xi_{t-1}+\frac{9}{8}\eta_t\sigma^2$.
\end{theorem}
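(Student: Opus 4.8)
The plan is to establish $\EE[d^*_t]\geq\EE[\varphi(x_t)]-\xi_t$ by induction on $t$, following the estimate-sequence machinery of \cite{kulunchakov2019estimate} but \emph{retaining} a quadratic term that is ordinarily discarded. The base case $t=0$ is immediate since $d_0^*=\varphi(x_0)$ and $\xi_0\geq0$. For the inductive step I would first record the closed-form recursion for the optimal value of the spherical quadratic $d_t$ in \eqref{eqn:canonnical_est_seq}. Evaluating the identity $d_t(y_{t-1})=d_t^*+\tfrac{\gamma_t}{2}\|y_{t-1}-v_t\|^2$ against the defining recursion \eqref{eqn:induct_est_seq}, and eliminating $v_t$ through the first-order condition $\nabla d_t(v_t)=0$ together with \eqref{eqn:key-recurs_rela}, produces a recursion of the form
\[
d_t^*=(1-\delta_t)d_{t-1}^*+\delta_t\, l_t(y_{t-1})+\tfrac{(1-\delta_t)\gamma_{t-1}\delta_t\hat\alpha}{2\gamma_t}\|y_{t-1}-v_{t-1}\|^2+\tfrac{(1-\delta_t)\gamma_{t-1}\delta_t}{\gamma_t}\langle v_{t-1}-y_{t-1},\tilde g_t\rangle-\tfrac{\delta_t^2}{2\gamma_t}\|\tilde g_t\|^2 .
\]
The pivotal point, flagged in the outline, is that the nonnegative term $\tfrac{(1-\delta_t)\gamma_{t-1}\delta_t\hat\alpha}{2\gamma_t}\|y_{t-1}-v_{t-1}\|^2$ is \emph{kept} rather than thrown away; it is precisely this reservoir that will later absorb the gradient bias.

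Next I would lower bound the local model value $l_t(y_{t-1})=f(y_{t-1})+r(x_t)+\langle r_t',y_{t-1}-x_t\rangle$ in terms of $\varphi(x_t)$. Invoking $L$-smoothness of $f$ to pass from $f(y_{t-1})$ to $f(x_t)$, the identity $\tilde g_t=\eta_t^{-1}(y_{t-1}-x_t)=g_t+r_t'$, and the stepsize bound $\eta_t\leq\tfrac1{4L}$, one obtains an estimate of the shape
\[
l_t(y_{t-1})\;\geq\;\varphi(x_t)+\tfrac{7}{8}\eta_t^{-1}\|x_t-y_{t-1}\|^2+\langle g_t-\nabla f(y_{t-1}),\,x_t-y_{t-1}\rangle .
\]
Substituting this into the recursion, applying the inductive hypothesis $d_{t-1}^*\geq\varphi(x_{t-1})-\xi_{t-1}$, and reorganizing the deterministic linear and quadratic terms exactly as in the unbiased estimate-sequence telescoping reduces the inductive step to controlling the single gradient-error inner product $\delta_t\langle g_t-\nabla f(y_{t-1}),x_t-y_{t-1}\rangle$ against the retained quadratics $\|y_{t-1}-v_{t-1}\|^2$ and $\|x_t-y_{t-1}\|^2$.

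I would then take the conditional expectation $\EE_t$ and split the error into its noise and bias components. The noise part $\EE_t\langle g_t-\EE_t[g_t],x_t-y_{t-1}\rangle$ is handled by Young's inequality against the coefficient $\tfrac78\eta_t^{-1}$ of $\|x_t-y_{t-1}\|^2$; using the variance bound in Assumption~\ref{ass:bias_var}\ref{it:oracle3} this contributes the term $\tfrac{9}{8}\eta_t\sigma^2$, which is exactly the increment in the defined $\xi_t$. The bias part $\EE_t\langle\EE_t[g_t]-\nabla f(y_{t-1}),x_t-y_{t-1}\rangle$ is bounded by Cauchy–Schwarz and the oracle bias bound by $B\,\|y_{t-1}-\bar x\|\cdot\|x_t-y_{t-1}\|$. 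Splitting $\|y_{t-1}-\bar x\|\leq\|y_{t-1}-v_{t-1}\|+\|v_{t-1}-\bar x\|$, the $\|y_{t-1}-v_{t-1}\|$ contribution is absorbed by the retained reservoir and the $\|x_t-y_{t-1}\|^2$ term, while the $\|v_{t-1}-\bar x\|$ contribution must be reconciled with the estimate-sequence quadratic $\tfrac{\gamma_{t-1}}{2}\|v_{t-1}-\bar x\|^2$ carried in $d_{t-1}(\bar x)$ and controlled through Lemma~\ref{lem:basic_lem}. The hypothesis \eqref{eqn:suff_accel} is designed to be precisely the nonnegativity (discriminant) condition $4ac\geq B^2$ for the resulting quadratic form in $(\|y_{t-1}-v_{t-1}\|,\|x_t-y_{t-1}\|)$, whose coefficients are $a\sim\tfrac{\delta_t\hat\alpha\gamma_{t-1}}{2\gamma_t}$ and $c\sim\tfrac78\delta_t\eta_t^{-1}$; under \eqref{eqn:suff_accel} this form is nonnegative and the bias is fully dominated.

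The main obstacle is exactly this bias-absorption step: unlike the evaluation at $\bar x$ (where Lemma~\ref{lem:approx_subgrad_ineq} already spends the modified strong convexity $\hat\alpha=\alpha-2B$ to neutralize the bias inside Lemma~\ref{lem:basic_lem}), the lower bound $d_t^*\geq\varphi(x_t)-\xi_t$ forces one to absorb a bias proportional to $\|y_{t-1}-\bar x\|$, a quantity the retained $\|y_{t-1}-v_{t-1}\|^2$ term controls only after a triangle-inequality split whose residual $\|v_{t-1}-\bar x\|$ piece couples the two lemmas. Managing this coupling and verifying that \eqref{eqn:suff_accel} (equivalently the regime $\rho\lesssim\kappa^{-1/4}$) keeps all error terms nonpositive up to the variance increment is the technically heavy part of the argument; once it is in place, identifying the residual with $\tfrac{9}{8}\eta_t\sigma^2$ and reading off the recursion $\xi_t=(1-\delta_t)\xi_{t-1}+\tfrac{9}{8}\eta_t\sigma^2$ closes the induction.
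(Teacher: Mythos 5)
Your setup is the right one --- the induction, the retained reservoir term $\tfrac{(1-\delta_t)\gamma_{t-1}\delta_t\hat\alpha}{2\gamma_t}\|y_{t-1}-v_{t-1}\|^2$, the smoothness step from $l_t(y_{t-1})$ to $\varphi(x_t)$, and the Young's-inequality treatment of the noise component yielding the $\tfrac{9}{8}\eta_t\sigma^2$ increment all match the paper. The gap is in your bias-absorption step. After Cauchy--Schwarz the bias residual is proportional to $\|y_{t-1}-\bar x\|$, and your plan is to split $\|y_{t-1}-\bar x\|\leq\|y_{t-1}-v_{t-1}\|+\|v_{t-1}-\bar x\|$ and ``reconcile'' the $\|v_{t-1}-\bar x\|$ piece with the quadratic $\tfrac{\gamma_{t-1}}{2}\|v_{t-1}-\bar x\|^2$ carried in $d_{t-1}(\bar x)$ via Lemma~\ref{lem:basic_lem}. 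This cannot work as stated: the quantity being tracked in the induction is $d_t^*$, the \emph{minimum} of $d_t$, not $d_t(\bar x)$, and the conclusion requires $\xi_t$ to be a deterministic sequence depending only on $\eta_t$ and $\sigma^2$. There is no slot in the inductive inequality $\EE[d_t^*]\geq\EE[\varphi(x_t)]-\xi_t$ into which a leftover term proportional to $\EE\|v_{t-1}-\bar x\|^2$ can be deposited; Lemma~\ref{lem:basic_lem} consumes the conclusion of this theorem as a black box and cannot be invoked inside its proof without restructuring both statements into a single joint induction, which you do not carry out. Relatedly, your reading of \eqref{eqn:suff_accel} as a discriminant condition for a quadratic form in $(\|y_{t-1}-v_{t-1}\|,\|x_t-y_{t-1}\|)$ is not how the hypothesis actually enters.

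The missing idea is Lemma~\ref{lem:lower_moment_bound} (resting on Lemma~\ref{lem:one_step_strong}): the prox-gradient displacement satisfies $\EE\|\tilde g_t\|^2\geq\tfrac{\hat\alpha^2}{8(1+\eta_tB)^2}\EE\|y_{t-1}-\bar x\|^2-\sigma^2$, an error-bound property of the proximal gradient step on a strongly convex objective. In the paper, the reservoir term absorbs only the component of the bias aligned with $v_{t-1}-y_{t-1}$ (via the identity $x_{t-1}-y_{t-1}=\tfrac{\delta_t\gamma_{t-1}}{\gamma_t}(y_{t-1}-v_{t-1})$ and Young's inequality, Lemma~\ref{lem:young_second}), leaving residuals that are all multiples of $\EE\|\Delta_t\|^2\leq B^2\EE\|y_{t-1}-\bar x\|^2$. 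These are then dominated by the strictly negative term $\left(\tfrac{L\eta_t^2}{2}-\tfrac{\eta_t}{4}\right)\EE\|\tilde g_t\|^2\leq-\tfrac{\eta_t}{8}\EE\|\tilde g_t\|^2$ (available because $\eta_t\leq\tfrac{1}{4L}$ leaves slack in the smoothness estimate), converted into a negative multiple of $\EE\|y_{t-1}-\bar x\|^2$ by the lower bound above. Condition \eqref{eqn:suff_accel} is exactly the requirement $\tfrac{1}{2}\bigl(1+\tfrac{\delta_t}{\hat\alpha\eta_t}\bigr)B^2\leq\tfrac{\hat\alpha^2}{64(1+\eta_tB)^2}$ that makes this domination go through. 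Without producing a negative multiple of $\EE\|y_{t-1}-\bar x\|^2$ from somewhere, your argument has no way to cancel the bias residual, and the proof does not close.
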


Let us now deduce Theorem~\ref{thm:accel} from Theorem~\ref{thm:accele_stoch_grad}. Namely, setting $\eta_t=\frac{1}{4L}$ and $\gamma_0=\hat \alpha$ results in the expressions $\delta_t=\sqrt{\hat\alpha\eta_t}=(1/2)\sqrt{\hat\alpha/L}$ for all indices $t$. Therefore the sufficient condition \eqref{eqn:suff_accel} amount to 
$$\sqrt{1+2\sqrt{\frac{L}{\hat\alpha}}}\leq \frac{\frac{1-2\rho}{\rho}}{\sqrt{32}(1+\frac{\alpha}{4L}\cdot\rho)}.$$
Using the estimate $1+\frac{\alpha}{4L}\cdot\rho\leq 2$ on the right and $\rho\leq \frac{1}{3}$ on the left, it suffices to ensure 
$$\sqrt{1+2\sqrt{3\kappa}}\leq \frac{2}{2\sqrt{32}}\frac{1-2\rho}{\rho}.$$
Solving for $\rho$ yields exactly the parameter regime assumed in Theorem~\ref{thm:accel}. An application of Lemma~\ref{lem:basic_lem} yields therefore the guarantee
$$\EE\left[\varphi(x_{t})-\varphi(\bar x)\right]\leq 2\Gamma_t\left[\varphi(x_0)-\varphi(\bar x)\right]+\xi_{t}.$$
Unrolling the recursion for $\xi_{t}$ and using Lemma~\ref{lem:gamma_sum} yields
$$\xi_{t}=\frac{9}{8}\Gamma_t\sum_{i=1}^t \frac{\eta_i\sigma^2}{\Gamma_i}=\frac{9\sigma^2}{16\sqrt{L\hat\alpha}}(1-\Gamma_t).$$
This completes the proof of Theorem~\ref{thm:accel}.

\subsection{Proof of Theorem~\ref{thm:accele_stoch_grad}}\label{sec:proof_hard_acc}
	The argument we present closely parallels that of \cite[Theorem 3]{kulunchakov2019estimate}, which in turn builds on Nesterov's original treatment in \cite[p. 78]{intro_lect}. 
	Assume by induction $\EE[d_{t-1}^*]\geq \EE[\varphi(x_{t-1})]-\xi_{t-1}$
	for some constant $\xi_{t-1}\geq 0$. We aim to establish an analogous estimate for the next iterate.
	The same algebraic manipulations as in the very beginning of \cite[Theorem 3]{kulunchakov2019estimate} apply verbatim, yielding  the lower bound
	\begin{equation}\label{eqn:key_lower_bd}
	\begin{aligned}
	d_{t}^*&\geq (1-\delta_t)d^*_{t-1}+\delta_t l_t(y_{t-1})-\frac{\eta_t}{2}\| \tilde g_t\|^2\\
	&\qquad +\tfrac{\delta_t(1-\delta_t)\gamma_{t-1}}{\gamma_t}\left(\langle \tilde g_t, v_{t-1}-y_{t-1} \rangle+\frac{\hat{\alpha}}{2}\|v_{t-1}-y_{t-1}\|^2\right).
	\end{aligned}
	\end{equation}
	In the static setting (both deterministic \cite{intro_lect} and stochastic \cite{kulunchakov2019estimate}), the term $\|v_{t-1}-y_{t-1}\|^2$ is lower-bounded by zero and ignored. We will instead carry this term forward in order to offset the bias.
	To simplify notation, define the bias 
	$\Delta_t:=\nabla f(y_{t-1})- \EE[g_t]$.
	Combining \eqref{eqn:induct_est_seq}, \eqref{eqn:key_lower_bd}, and the inductive hypothesis yields
	\begin{equation}\label{eqn:first_key-est}
	\begin{aligned}
	\EE[d_{t}^*]&\geq (1-\delta_t)[\EE[\varphi(x_{t-1})]-\xi_{t-1}]+\delta_t \EE[l_t(y_{t-1})]-\frac{\eta_t}{2}\EE[\| \tilde g_t\|^2]\\
	&\qquad+\tfrac{\delta_t(1-\delta_t)\gamma_{t-1}}{\gamma_t}\EE\left(\langle \tilde g_t, v_{t-1}-y_{t-1} \rangle+\frac{\hat{\alpha}}{2}\|y_{t-1}-v_{t-1}\|^2\right).
	\end{aligned}
	\end{equation}
	Next, using convexity, we conclude
	\begin{align*}
	\EE[\varphi(x_{t-1})]&\geq \EE\left[f(y_{t-1})+\langle \nabla f(y_{t-1}),x_{t-1}-y_{t-1}\rangle+r(x_t)+\langle r'_t,x_{t-1}-x_t\rangle\right]\\
	&= \EE\left[l_t(y_{t-1})+\langle \tilde{g}_t,x_{t-1}-y_{t-1}\rangle+\langle \Delta_t,x_{t-1}-y_{t-1}\rangle\right].
	\end{align*}
	Combining this estimate with \eqref{eqn:first_key-est} yields 
	\begin{align*}
	\EE[d_{t}^*]&\geq \EE[l_t(y_{t-1})]-(1-\delta_t)\xi_{t-1}-\frac{\eta_t}{2}\EE[\| \tilde g_t\|^2]\\
	&\quad+(1-\delta_t)\EE\left[ \langle \tilde{g}_t, (x_{t-1}-y_{t-1})+\frac{\delta_t\gamma_{t-1}}{\gamma_t}(v_{t-1}-y_{t-1}) \rangle\right]\\
	&\quad+ \frac{\hat{\alpha}\delta_t(1-\delta_t)\gamma_{t-1}}{2\gamma_t}\EE\bigl[\|y_{t-1}-v_{t-1}\|^2\bigr] +(1-\delta_t)\EE\langle \Delta_t,x_{t-1}-y_{t-1}\rangle
	\end{align*}
	The quantity $(x_{t-1}-y_{t-1})+\frac{\delta_t\gamma_{t-1}}{\gamma_t}(v_{t-1}-y_{t-1})$ is zero by construction \eqref{eqn:key-recurs_rela}. We conclude 
	\begin{align}
	\EE[d_{t}^*]&\geq \EE[l_t(y_{t-1})]-(1-\delta_t)\xi_{t-1}-\frac{\eta_t}{2}\EE_t[\| \tilde g_t\|^2]\notag\\
	&\quad+ (1-\delta_t)\left(\frac{\hat{\alpha}\gamma_{t-1}\delta_t}{2\gamma_t}\EE[\|y_{t-1}-v_{t-1}\|^2] +\EE\langle \Delta_t,x_{t-1}-y_{t-1}\rangle\right).\label{eqn:last_line_young}
	\end{align}

	The next two lemmas lower bound $\EE[l_t(y_{t-1})]$ and the term \eqref{eqn:last_line_young}.
	
	\begin{lemma}\label{lem:young_second}
		For every index $t\geq 1$ the estimate holds:
		$$ \tfrac{\hat{\alpha}\gamma_{t-1}\delta_t}{2\gamma_t}\|y_{t-1}-v_{t-1}\|^2 +\langle \Delta_t,x_{t-1}-y_{t-1}\rangle\geq -\tfrac{\gamma_{t-1}\delta_t}{2\gamma_t\hat \alpha}\cdot\| \Delta_t\|^2.$$
	\end{lemma}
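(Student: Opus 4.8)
The plan is to reduce the expression to a single completed square by first eliminating the term $\langle \Delta_t, x_{t-1}-y_{t-1}\rangle$ in favor of the vector $y_{t-1}-v_{t-1}$ that already appears in the quadratic term. The crucial input is the algebraic identity \eqref{eqn:key-recurs_rela}, namely $(x_{t-1}-y_{t-1})+\tfrac{\delta_t\gamma_{t-1}}{\gamma_t}(v_{t-1}-y_{t-1})=0$. Solving for the displacement $x_{t-1}-y_{t-1}$ gives
\[
x_{t-1}-y_{t-1}=\frac{\delta_t\gamma_{t-1}}{\gamma_t}\,(y_{t-1}-v_{t-1}),
\]
so that $\langle \Delta_t, x_{t-1}-y_{t-1}\rangle=\tfrac{\delta_t\gamma_{t-1}}{\gamma_t}\langle \Delta_t, y_{t-1}-v_{t-1}\rangle$. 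Both terms on the left-hand side of the claimed inequality then carry the common positive factor $\tfrac{\gamma_{t-1}\delta_t}{\gamma_t}$.

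The second step is to factor out this common coefficient and denote $u:=y_{t-1}-v_{t-1}$, reducing the left-hand side to
\[
\frac{\gamma_{t-1}\delta_t}{\gamma_t}\left(\frac{\hat\alpha}{2}\|u\|^2+\langle \Delta_t, u\rangle\right).
\]
Completing the square (equivalently, Young's inequality with parameter $\hat\alpha$) yields $\tfrac{\hat\alpha}{2}\|u\|^2+\langle \Delta_t, u\rangle=\tfrac{\hat\alpha}{2}\|u+\hat\alpha^{-1}\Delta_t\|^2-\tfrac{1}{2\hat\alpha}\|\Delta_t\|^2\geq -\tfrac{1}{2\hat\alpha}\|\Delta_t\|^2$. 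Multiplying through by the nonnegative factor $\tfrac{\gamma_{t-1}\delta_t}{\gamma_t}$ produces exactly the stated bound $-\tfrac{\gamma_{t-1}\delta_t}{2\gamma_t\hat\alpha}\|\Delta_t\|^2$.

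I do not anticipate a genuine obstacle here; the only point requiring care is recognizing that the ``orthogonality'' relation \eqref{eqn:key-recurs_rela}—the same identity that, in the unbiased analysis, is used to cancel the inner-product term entirely—can instead be invoked to align the two displacements $x_{t-1}-y_{t-1}$ and $y_{t-1}-v_{t-1}$, thereby turning the sum into a completable quadratic in $u$. This is precisely the mechanism by which the previously discarded term $\|y_{t-1}-v_{t-1}\|^2$ is made to absorb the gradient bias $\Delta_t$, as foreshadowed in the discussion preceding \eqref{eqn:last_line_young}.
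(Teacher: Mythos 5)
Your proposal is correct and is essentially identical to the paper's proof: both substitute $x_{t-1}-y_{t-1}=\tfrac{\delta_t\gamma_{t-1}}{\gamma_t}(y_{t-1}-v_{t-1})$ from \eqref{eqn:key-recurs_rela}, factor out the common coefficient $\tfrac{\gamma_{t-1}\delta_t}{\gamma_t}$, and apply Young's inequality (completing the square) in $u=y_{t-1}-v_{t-1}$. No gaps.
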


	\begin{lemma}\label{lem:key_est_accel}
		For every index $t$, the estimate holds:	
		$$\EE[\varphi(x_t)]\leq \EE[l_t(y_{t-1})]+\left(\frac{L\eta_t^2}{2}-\frac{3\eta_t}{4}\right)\EE[\|\tilde{g}_t\|^2]+\eta_t\sigma^2+\eta_t\EE[\|\Delta_t\|^2].$$
	\end{lemma}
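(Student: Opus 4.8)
The plan is to bound $\varphi(x_t)$ from above by $l_t(y_{t-1})$ plus controllable error terms, working throughout with the conditional expectation $\EE_t[\cdot]$ and passing to the total expectation only at the very end. The starting point is the descent inequality from $L$-smoothness of $f$ (Assumption~\ref{ass:bias_var}), namely $f(x_t)\leq f(y_{t-1})+\langle \nabla f(y_{t-1}),x_t-y_{t-1}\rangle+\tfrac{L}{2}\|x_t-y_{t-1}\|^2$. Adding $r(x_t)$ to both sides and subtracting the explicit value $l_t(y_{t-1})=f(y_{t-1})+r(x_t)+\langle r'_t,y_{t-1}-x_t\rangle$ (the $g_t$ and $\hat\alpha$ terms vanish at $x=y_{t-1}$), I would then substitute the identities $x_t-y_{t-1}=-\eta_t\tilde g_t$ and $r'_t=\tilde g_t-g_t$ to collapse everything into
$$\varphi(x_t)-l_t(y_{t-1})\leq -\eta_t\|\tilde g_t\|^2-\eta_t\langle \nabla f(y_{t-1})-g_t,\tilde g_t\rangle+\tfrac{L\eta_t^2}{2}\|\tilde g_t\|^2.$$
Thus the entire lemma reduces to estimating the single inner product involving the gradient error $\nabla f(y_{t-1})-g_t$.

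Next I would split this error into its bias and its mean-zero noise. Writing $\bar g_t:=\EE_t[g_t]$, $\xi_t:=g_t-\bar g_t$, and recalling $\Delta_t=\nabla f(y_{t-1})-\bar g_t$, one has $\nabla f(y_{t-1})-g_t=\Delta_t-\xi_t$, so that $-\eta_t\langle\nabla f(y_{t-1})-g_t,\tilde g_t\rangle=-\eta_t\langle \Delta_t,\tilde g_t\rangle+\eta_t\langle\xi_t,\tilde g_t\rangle$. The deterministic bias term is handled by Young's inequality $-\langle\Delta_t,\tilde g_t\rangle\leq \tfrac14\|\tilde g_t\|^2+\|\Delta_t\|^2$, which supplies exactly the extra $\tfrac{\eta_t}{4}\|\tilde g_t\|^2$ needed to turn the coefficient $-\eta_t$ into $-\tfrac34\eta_t$, together with the advertised $\eta_t\|\Delta_t\|^2$ term.

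The main obstacle is the noise term $\eta_t\EE_t[\langle\xi_t,\tilde g_t\rangle]$: since $\tilde g_t$ is itself a function of $g_t$, and hence of $\xi_t$, through the proximal step, this cross term is not mean-zero and cannot simply be dropped. To control it I would introduce the noiseless companion iterate $\hat x_t=\prox_{\eta_t r}(y_{t-1}-\eta_t\bar g_t)$ with its associated $\hat{\tilde g}_t:=\eta_t^{-1}(y_{t-1}-\hat x_t)$, which is measurable with respect to the conditioning. Nonexpansiveness of the proximal map gives $\|x_t-\hat x_t\|\leq \eta_t\|g_t-\bar g_t\|=\eta_t\|\xi_t\|$, hence $\|\tilde g_t-\hat{\tilde g}_t\|\leq \|\xi_t\|$. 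Decomposing $\langle\xi_t,\tilde g_t\rangle=\langle\xi_t,\hat{\tilde g}_t\rangle+\langle\xi_t,\tilde g_t-\hat{\tilde g}_t\rangle$, the first summand vanishes in conditional expectation because $\hat{\tilde g}_t$ is deterministic and $\EE_t[\xi_t]=0$, while Cauchy--Schwarz bounds the second by $\EE_t\|\xi_t\|^2\leq\sigma^2$. This yields $\eta_t\EE_t[\langle\xi_t,\tilde g_t\rangle]\leq \eta_t\sigma^2$.

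Finally I would collect the three $\|\tilde g_t\|^2$ contributions into the coefficient $\tfrac{L\eta_t^2}{2}-\eta_t+\tfrac{\eta_t}{4}=\tfrac{L\eta_t^2}{2}-\tfrac{3\eta_t}{4}$, observe that $\Delta_t$ is deterministic under the conditioning so that $\EE_t\|\Delta_t\|^2=\|\Delta_t\|^2$, and take the total expectation via the tower rule to recover the stated inequality. The only genuinely nontrivial ingredient is the prox-nonexpansiveness argument used to tame the noise cross term; the remaining steps are the smoothness bound, the expansion of $l_t$, and two routine applications of Young and Cauchy--Schwarz.
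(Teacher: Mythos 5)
Your argument is correct and follows essentially the same route as the paper's proof: the smoothness descent bound, recognition of $l_t(y_{t-1})$ via $r'_t=\tilde g_t-g_t$ and $x_t-y_{t-1}=-\eta_t\tilde g_t$, Young's inequality for the bias term $\Delta_t$, and the introduction of the noiseless companion point $\hat x_t=\prox_{\eta_t r}(y_{t-1}-\eta_t\EE_t[g_t])$ (the paper's $w_t$) together with nonexpansiveness of the proximal map to show the correlated noise cross term contributes at most $\eta_t\sigma^2$. The only differences are cosmetic (you subtract $l_t(y_{t-1})$ first and then split the error into bias and noise, whereas the paper adds and subtracts $g_t$ and $\EE_t[g_t]$ inside the smoothness bound), so the two proofs are interchangeable.
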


	Combining the estimate \eqref{eqn:last_line_young} and Lemmas~\ref{lem:young_second}, \ref{lem:key_est_accel}, we obtain
	\begin{align*}
	\EE[d_{t}^*&\geq \EE \varphi(x_{t})-(1-\delta_t)\xi_{t-1}-\eta_t\sigma^2-P_t,
	\end{align*}
	where we define the error term $$P_t:=\left(\frac{L\eta_t^2}{2}-\frac{\eta_t}{4}\right)\EE[\|\tilde{g}_t\|^2]+\eta_t\left(1+\tfrac{(1-\delta_t)\gamma_{t-1}\delta_t}{2\gamma_t\hat \alpha\eta_t}\right)\EE\| \Delta_t\|^2.$$
	Our final goal is to show the estimate $P_t\leq c\eta_t\sigma^2$ for some constant $c$, which will complete the induction by setting $\xi_{t}:=(1-\delta_t)\xi_{t-1}+(1+c)\sigma^2\eta_t$. To this end, let us simplify the expression for $P_t$ by using \eqref{eqn:acc_gamma_delta} to write $$\frac{(1-\delta_t)\gamma_{t-1}\delta_t}{2\gamma_{t}\hat \alpha\eta_t}=\frac{1}{2}\cdot\frac{(1-\delta_t)\gamma_{t-1}}{\gamma_t}\cdot\frac{\delta_t}{\hat\alpha\eta_t}=\frac{1}{2}\left(1-\frac{\delta_t\hat\alpha}{\gamma_{t}}\right)\cdot\frac{\delta_t}{\hat\alpha\eta_t}=\frac{1}{2}\left(\frac{\delta_t}{\hat\alpha\eta_t} -\frac{\delta_t^2}{\gamma_t\eta_t}\right)=\frac{1}{2}\left(\frac{\delta_t}{\hat\alpha\eta_t} -1\right).$$
	Thus we arrive at the expression
	$$P_t:=\left(\frac{L\eta_t^2}{2}-\frac{\eta_t}{4}\right)\EE[\|\tilde{g}_t\|^2]+\frac{\eta_t}{2}\left(1+\frac{\delta_t}{\hat\alpha\eta_t}\right)\EE\| \Delta_t\|^2.$$
	The strategy is now to show that $\EE[\|\tilde{g}_t\|^2]$ is much larger than $\|y_{t-1}-\bar x\|^2$ while $\EE\| \Delta_t\|^2$ is much smaller than $\|y_{t-1}-\bar x\|^2$. This is the content of the following lemma.

	\begin{lemma}\label{lem:lower_moment_bound}
		For each index $t\geq 1$, the estimate holds:
\begin{align*}
\EE\bigl[\|\tilde g_t\|^2\bigr] 
& \geq \frac{{\hat\alpha}^2}{8(1+\eta_t B)^2}\EE\bigl[\|y_{t-1}-\bar x\|^2 \bigr] - \sigma^2,
\end{align*}		
		
	\end{lemma}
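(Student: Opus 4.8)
The plan is to derive a deterministic (realization-wise) lower bound on $\|\tilde g_t\|$ and to pass to expectations only at the very end. The core idea is that $\tilde g_t$ differs from a genuine subgradient of $\varphi$ at the point $x_t$ only by the gradient error $\nabla f(x_t)-g_t$, so strong convexity of $\varphi$ forces $\|\tilde g_t\|$ to be large whenever $x_t$---and hence $y_{t-1}$---is far from $\bar x$.

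First I would record that $r'_t\in\partial r(x_t)$, so that $w_t:=\nabla f(x_t)+r'_t\in\partial\varphi(x_t)$, while $\tilde g_t=g_t+r'_t$; thus $w_t-\tilde g_t=\nabla f(x_t)-g_t$. Since $\bar x$ minimizes the $\alpha$-strongly convex function $\varphi$, strong monotonicity of $\partial\varphi$ together with Cauchy--Schwarz gives $\|w_t\|\ge\alpha\|x_t-\bar x\|$. The triangle inequality then yields
$$\|\tilde g_t\|\ \ge\ \|w_t\|-\|\nabla f(x_t)-g_t\|\ \ge\ \alpha\|x_t-\bar x\|-\|\nabla f(x_t)-g_t\|.$$
Next I would replace $x_t$ by $y_{t-1}$ everywhere using $x_t-y_{t-1}=-\eta_t\tilde g_t$: smoothness gives $\|\nabla f(x_t)-\nabla f(y_{t-1})\|\le L\eta_t\|\tilde g_t\|$, and writing $e_t:=g_t-\EE_t[g_t]$ and recalling $\Delta_t=\nabla f(y_{t-1})-\EE_t[g_t]$ with $\|\Delta_t\|\le B\|y_{t-1}-\bar x\|$ gives $\|\nabla f(y_{t-1})-g_t\|\le\|e_t\|+B\|y_{t-1}-\bar x\|$. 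Combining these with $\|x_t-\bar x\|\ge\|y_{t-1}-\bar x\|-\eta_t\|\tilde g_t\|$ and collecting the $\|\tilde g_t\|$ terms produces $(1+(\alpha+L)\eta_t)\|\tilde g_t\|+\|e_t\|\ge(\alpha-B)\|y_{t-1}-\bar x\|$. Using $\alpha\le L$ and the standing restriction $\eta_t\le\frac1{4L}$ to estimate $1+(\alpha+L)\eta_t\le\frac32$, this simplifies to
$$\tfrac32\|\tilde g_t\|+\|e_t\|\ \ge\ (\alpha-B)\|y_{t-1}-\bar x\|.$$

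To conclude I would note that both sides are nonnegative (here $B<\alpha$), square, apply Young's inequality in the form $(\tfrac32 a+b)^2\le\tfrac92 a^2+2b^2$, and rearrange to the realization-wise bound $\|\tilde g_t\|^2\ge\tfrac29(\alpha-B)^2\|y_{t-1}-\bar x\|^2-\tfrac49\|e_t\|^2$. Taking expectations and using $\EE\|e_t\|^2\le\sigma^2$ together with the elementary comparisons $\tfrac49\le1$ and $\tfrac29(\alpha-B)^2\ge\tfrac18(\alpha-2B)^2\ge\frac{\hat\alpha^2}{8(1+\eta_t B)^2}$ (valid since $0\le\alpha-2B\le\alpha-B$ and $(1+\eta_t B)^2\ge1$) yields exactly the claimed inequality.

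The main obstacle, and the place demanding care, is the bookkeeping between the two points $x_t$ and $y_{t-1}$: the subgradient $r'_t$ lives at $x_t$ whereas the target bound is expressed in $\|y_{t-1}-\bar x\|$, so one must repeatedly trade $x_t$ for $y_{t-1}$ at the cost of $\eta_t\|\tilde g_t\|$ corrections, which are absorbed only because of the step-size restriction $\eta_t\le\frac1{4L}$. A secondary subtlety is that the intermediate linear lower bound on $\|\tilde g_t\|$ may be negative for a noisy sample; this is handled by moving $\|e_t\|$ to the left-hand side \emph{before} squaring, so that squaring is applied to a genuinely nonnegative quantity and the noise contributes only the additive $-\sigma^2$.
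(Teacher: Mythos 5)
Your proof is correct, but it takes a genuinely different route from the paper. The paper first proves a stand-alone error-bound lemma (Lemma~\ref{lem:one_step_strong}) for the prox-gradient displacement computed with the \emph{mean} gradient $v=\EE[g_t]$ --- established via the three-point inequality for the prox step, smoothness, and strong convexity --- and then transfers the bound to the noisy displacement $\tilde g_t$ using nonexpansiveness of $\prox_{\eta_t r}$, which is what produces the additive $\|g_t-\EE[g_t]\|$ term. You instead observe that $\tilde g_t=g_t+r'_t$ is a perturbation of the genuine subgradient $\nabla f(x_t)+r'_t\in\partial\varphi(x_t)$ by exactly the gradient error $\nabla f(x_t)-g_t$, so strong monotonicity of $\partial\varphi$ forces $\|\tilde g_t\|\gtrsim \alpha\|x_t-\bar x\|$ up to that error; the $x_t\leftrightarrow y_{t-1}$ bookkeeping is then absorbed via $x_t-y_{t-1}=-\eta_t\tilde g_t$, smoothness, and the step-size bound. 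I checked the constants: $1+(\alpha+L)\eta_t\le\tfrac32$ under $\eta_t\le\tfrac1{4L}$ and $\alpha\le L$; the Young step $(\tfrac32 a+b)^2\le\tfrac92a^2+2b^2$; and $\tfrac29(\alpha-B)^2\ge\tfrac18\hat\alpha^2\ge\tfrac{\hat\alpha^2}{8(1+\eta_tB)^2}$ in the regime $\rho<\tfrac12$ --- all fine, and you were right to move $\|e_t\|$ to the left before squaring. The trade-off is mild: your argument is more elementary (no three-point inequality, no prox nonexpansiveness) and makes the geometric mechanism transparent, but it leans on the standing restriction $\eta_t\le\tfrac1{4L}$ from Theorem~\ref{thm:accele_stoch_grad}, whereas the paper's Lemma~\ref{lem:one_step_strong} only needs $\eta\le 2/L$ and so yields a marginally more portable intermediate statement.
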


Using Lemma~\ref{lem:lower_moment_bound} and the estimates $\eta_t\leq \frac{1}{4L}$ and
	$\EE\|\Delta_t\|^2\leq B^2\EE\|y_{t-1}-\bar x\|^2$ we conclude
	\begin{align*}
	P_t=& \left(\frac{L\eta_t^2}{2}-\frac{\eta_t}{4}\right)\EE[\|\tilde{g}_t\|^2]+ \eta_t\left(1+\tfrac{(1-\delta_t)\gamma_{t-1}\delta_t}{2\gamma_t\hat \alpha\eta_t} \right)\EE\|\Delta_t\|^2\\
	&\leq \eta_t\left(-\tfrac{1}{8}\EE[\|\tilde{g}_t\|^2] +\tfrac{1}{2}\left(1+\tfrac{\delta_t}{\hat \alpha\eta_t} \right)\EE\|\Delta_t\|^2\right)\\
	&\leq \tfrac{\eta_t}{8}\sigma^2 + \eta_t\left(-\tfrac{{\hat\alpha}^2}{64(1+\eta_tB^2} +\tfrac{1}{2}\left(1+\tfrac{\delta_t}{\hat \alpha\eta_t}\right)B^2\right)\EE\|y_{t-1}-\bar x\|^2\\
	&\leq \tfrac{\eta_t}{8}\sigma^2.
	\end{align*}
		This completes the induction by setting 
		$\xi_{t}=(1-\delta_t)\xi_{t-1}+\frac{9}{8}\eta_t\sigma^2.$
		
	\bigskip
	\bigskip
	\bigskip
	\bigskip

\section{Proofs of technical lemmas in Section~\ref{sec:accel_tech}}
\subsection{Proof of Lemma~\ref{lem:young_second}}
		Using the expression, $x_{t-1}-y_{t-1}=\frac{\delta_t\gamma_{t-1}}{\gamma_t}(y_{t-1}-v_{t-1})$ and Young's inequality, we deduce
\begin{align*}
\tfrac{\hat{\alpha}\gamma_{t-1}\delta_t}{2\gamma_t}\|y_{t-1}-v_{t-1}\|^2 +\langle \Delta_t,x_{t-1}-y_{t-1}\rangle&=\tfrac{\gamma_{t-1}\delta_t}{\gamma_t}\left(\frac{\hat{\alpha}}{2}\|y_{t-1}-v_{t-1}\|^2+\langle \Delta_t,y_{t-1}-v_{t-1}\rangle\right)\\
&\geq -\frac{\gamma_{t-1}\delta_t}{2\gamma_t\hat \alpha}\cdot\| \Delta_t\|^2.
\end{align*}
The proof is complete.

\subsection{Proof of Lemma~\ref{lem:key_est_accel}}
		The argument follows similar reasoning as \cite[Lemma 2]{kulunchakov2019estimate}, with a careful accounting for the bias. Using smoothness, we estimate
\begin{equation}\label{eqn:key-est_accelera}
\begin{aligned}
\varphi(x_t)&\leq \EE[f(y_{t-1})+\langle \nabla f(y_{t-1}),x_t-y_{t-1}\rangle+\frac{L}{2}\|x_t-y_{t-1}\|^2+r(x_t)]\\
&=\EE[f(y_{t-1})+\langle g_t,x_t-y_{t-1}\rangle+\frac{L}{2}\|x_t-y_{t-1}\|^2+r(x_t)]+\EE[\langle  \EE[g_t]-g_t,x_t-y_{t-1}\rangle]\\
&\qquad +\EE[\langle \nabla f(y_{t-1})-\EE[g_t],x_t-y_{t-1}\rangle],
\end{aligned}
\end{equation}
where the last equality follows from algebraic manipulations.
Next, we compute 
\begin{align*}
\EE\langle  \EE[g_t]-g_t,x_t-y_{t-1}\rangle= \EE\langle  \EE[g_t]-g_t,x_t\rangle= \EE\langle  \EE[g_t]-g_t,x_t-w_{t}\rangle,
\end{align*}
where we define $w_t:=\prox_{\eta_t r}(y_{t-1}-\eta_t\EE[g_t])$. Taking into account that the proximal map $\prox_{\eta_t r}(\cdot)$ is nonexpansive, we deduce 
$$\|x_t-w_{t}\|=\|\prox_{\eta_t r}(y_{t-1}-\eta_t g_t)-\prox_{\eta_t r}(y_{t-1}-\eta_t\EE[g_t])\|\leq \eta_t\|g_t-\EE[g_t]\|.$$
Therefore continuing \eqref{eqn:key-est_accelera} we obtain
\begin{align*}
\varphi(x_t)
&\leq \EE[f(y_{t-1})+\langle g_t,x_t-y_{t-1}\rangle+\frac{L}{2}\|x_t-y_{t-1}\|^2+r(x_t)]+\eta_t\sigma^2\\
&\qquad +\EE[\langle \nabla f(y_{t-1})-\EE[g_t],x_t-y_{t-1}\rangle]\\
&=\EE[l_{t}(y_{t-1})+\langle \tilde g_t,x_t-y_{t-1}\rangle+\frac{L}{2}\|x_t-y_{t-1}\|^2]+\eta_t\sigma^2\\
&\qquad +\EE[\langle \nabla f(y_{t-1})-\EE[g_t],x_t-y_{t-1}\rangle]\\
&=\EE[l_t(y_{t-1})]+\left(\frac{L\eta_t^2}{2}-\eta_t\right)\EE[\|\tilde{g}_t\|^2]+\eta_t\sigma^2\\
&\qquad +\EE[\langle \nabla f(y_{t-1})-\EE[g_t],x_t-y_{t-1}\rangle].
\end{align*}
Finally, Young's inequality yields
\begin{align*}
\EE[\langle \nabla f(y_{t-1})-\EE[g_t],x_t-y_{t-1}\rangle]&\leq \eta_t\|\nabla f(y_{t-1})-\EE[g_t]\|^2+\frac{1}{4\eta}\|x_t-y_{t-1}\|^2\\
&=\eta\|\nabla f(y_{t-1})-\EE[g_t]\|^2+\frac{\eta_t}{4}\|\tilde g_t\|^2,
\end{align*}
thereby completing the proof.

\subsection{Proof of Lemma~\ref{lem:lower_moment_bound}}
The result will follow quickly from the following stand-alone lemma.

	\begin{lemma}\label{lem:one_step_strong}
	Fix a constant $\eta\leq 2/L$, a point $y\in\R^d$, and a vector $v\in \R^d$ satisfying $\|v-\nabla f(y)\|\leq B\|y-\bar x\|$. Define the proximal gradient update and the displacement vector:
	$$y^{+}=\prox_{\eta r}(y-\eta v)\qquad \textrm{and}\qquad \tilde{g}=\eta^{-1}(y-y^+).$$
	Then the estimate holds:
	\begin{equation}\label{eqn:error_bound}
	\|\tilde g\|\geq \frac{\hat \alpha}{2(1+\eta B)}\|y-\bar x\|.
	\end{equation}
\end{lemma}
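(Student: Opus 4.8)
The plan is to compare the inexact gradient mapping $\tilde g$ against the \emph{exact} gradient mapping and then exploit strong convexity. First I would introduce the exact proximal-gradient update $\hat y := \prox_{\eta r}(y-\eta\nabla f(y))$ together with its gradient mapping $\hat g := \eta^{-1}(y-\hat y)$. Since $\prox_{\eta r}$ is nonexpansive and the two inputs $y-\eta\nabla f(y)$ and $y-\eta v$ differ by $\eta(v-\nabla f(y))$, the hypothesis $\|v-\nabla f(y)\|\le B\|y-\bar x\|$ gives immediately $\|\tilde g-\hat g\|=\eta^{-1}\|\hat y-y^+\|\le B\|y-\bar x\|$. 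Thus by the triangle inequality it suffices to prove the cleaner estimate $\|\hat g\|\ge\tfrac{\alpha}{2}\|y-\bar x\|$ for the exact mapping, since then $\|\tilde g\|\ge\|\hat g\|-\|\tilde g-\hat g\|\ge(\tfrac\alpha2-B)\|y-\bar x\|=\tfrac{\hat\alpha}{2}\|y-\bar x\|$; this dominates the claimed bound whenever $\hat\alpha\ge0$ (because $1+\eta B\ge1$), and the claim is vacuous otherwise since its right-hand side is then nonpositive.

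To bound $\|\hat g\|$ I would use first-order optimality conditions. Writing $q:=\nabla f(y)-\nabla f(\bar x)$, prox optimality gives $\hat g-\nabla f(y)\in\partial r(\hat y)$, while optimality of $\bar x$ for $f+r$ gives $-\nabla f(\bar x)\in\partial r(\bar x)$. Monotonicity of $\partial r$ applied to these two subgradients, together with $\hat y-\bar x=(y-\bar x)-\eta\hat g$, yields $\langle\hat g-q,\,(y-\bar x)-\eta\hat g\rangle\ge0$. Setting $u:=y-\bar x$ and expanding, this reads $\langle\hat g,u\rangle\ge\langle q,u\rangle+\eta\|\hat g\|^2-\eta\langle q,\hat g\rangle$.

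The key step---and the part I expect to be the main obstacle---is to discharge the cross term $\eta\|\hat g\|^2-\eta\langle q,\hat g\rangle$ without paying a factor of $L$. The naive route $\|q\|\le L\|u\|$ pollutes the final constant with $L$ and fails to reproduce the $L$-free bound that is claimed. Instead I would complete the square, $\eta\|\hat g\|^2-\eta\langle q,\hat g\rangle=\eta\|\hat g-\tfrac12 q\|^2-\tfrac\eta4\|q\|^2\ge-\tfrac\eta4\|q\|^2$, and then invoke co-coercivity of $\nabla f$ (Baillon--Haddad), $\|q\|^2\le L\langle q,u\rangle$, to obtain $\langle\hat g,u\rangle\ge(1-\tfrac{\eta L}{4})\langle q,u\rangle$. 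The step-size restriction $\eta\le2/L$ forces $1-\tfrac{\eta L}{4}\ge\tfrac12$, and strong convexity gives $\langle q,u\rangle\ge\alpha\|u\|^2$, so $\langle\hat g,u\rangle\ge\tfrac\alpha2\|u\|^2$. Cauchy--Schwarz then yields $\|\hat g\|\ge\tfrac\alpha2\|u\|$ (the case $u=0$ being trivial), which closes the argument.

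The delicate point throughout is the pairing between the completed square and co-coercivity: it is exactly this combination that converts the step-size budget $\eta\le2/L$ into the clean factor $\tfrac12$ while keeping $L$ out of the final constant, and it is the reason the bound $\tfrac{\hat\alpha}{2(1+\eta B)}$ holds uniformly in $\eta\le2/L$ rather than only for small step sizes. A secondary bookkeeping point is to handle the inexactness entirely through the single perturbation estimate $\|\tilde g-\hat g\|\le B\|y-\bar x\|$, so that all the convexity work is done on the exact mapping $\hat g$ and never entangles the error vector $v-\nabla f(y)$ with the prox comparison.
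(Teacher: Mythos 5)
Your proof is correct, but it follows a genuinely different route from the paper's. The paper works directly with the inexact update: it invokes the three-point inequality for the strongly convex prox subproblem defining $y^+$, combines it with smoothness and strong convexity of $f$ at $x=\bar x$, lower-bounds $\varphi(y^+)\geq\varphi(\bar x)$, discards the nonnegative term $\bigl(\eta-\tfrac{L\eta^2}{2}\bigr)\|\tilde g\|^2$ (this is where $\eta\leq 2/L$ enters), and finally applies Cauchy--Schwarz to the bias term $\eta\langle\Delta,\tilde g\rangle$; carrying the bias through to the end is what produces the factor $(1+\eta B)^{-1}$. You instead split off the exact gradient mapping $\hat g$ via nonexpansiveness of the prox, so that the bias is discharged in one perturbation estimate, and then prove the clean bound $\|\hat g\|\geq\tfrac{\alpha}{2}\|y-\bar x\|$ by pure monotone-operator reasoning: monotonicity of $\partial r$ at $\hat y$ and $\bar x$, completing the square, and co-coercivity of $\nabla f$ (your use of $\eta\leq 2/L$ to get the factor $1-\tfrac{\eta L}{4}\geq\tfrac12$ plays exactly the role of the paper's discarded quadratic term). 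Since $\hat\alpha=\alpha-2B$ in this section, your conclusion $\|\tilde g\|\geq(\tfrac{\alpha}{2}-B)\|y-\bar x\|=\tfrac{\hat\alpha}{2}\|y-\bar x\|$ is in fact marginally stronger than the stated bound when $\hat\alpha\geq 0$, and you correctly note the claim is vacuous otherwise. The paper's argument has the advantage of reusing machinery (inequality \eqref{eqn:fina:lem_eqn}) already present in the estimate-sequence analysis; yours is more modular and avoids function values entirely.
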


Before proving Lemma~\ref{lem:one_step_strong}, let us see how it implies Lemma~\ref{lem:lower_moment_bound}---the result we are after.
Setting $y=y_{t-1}$ and $v=\EE[g_t]$ in Lemma~\ref{lem:one_step_strong} yields
\begin{equation}\label{eqn:key_prox_applic}
\|\eta^{-1}_t(y_{t-1}-\prox_{\eta_t r}(y_{t-1}-\eta_t\EE[g_t]))\|\geq \frac{\hat \alpha}{2(1+\eta B)}\|y_{t-1}-\bar x\|\qquad \forall t\geq 1.
\end{equation}
Therefore, we now compute 
\begin{align*}
\|\tilde{g}_t\|&= \eta_t^{-1}\|y_{t-1}-\prox_{\eta_t r}(y_{t-1}-\eta_t g_t)\|\\
&\geq \eta_t^{-1}\|y_{t-1}-\prox_{\eta_t r}(y_{t-1}-\eta_t \EE[g_t])\|-\eta_t^{-1}\|\prox_{\eta_t r}(y_{t-1}-\eta_t g_t)-\prox_{\eta_t r}(y_{t-1}-\eta_t \EE[g_t])\|\\
&\geq \eta_t^{-1}\|y_{t-1}-\prox_{\eta_t r}(y_{t-1}-\eta_t \EE[g_t])\|-\|g_t-\EE[g_t]\|,\\
&\geq \frac{\hat \alpha}{2(1+\eta_t B)}\|y_{t-1}-\bar x\|-\|g_t-\EE[g_t]\|,
\end{align*}
where the first inequality follows from the reverse triangle inequality, the second inequality uses that the proximal map is nonexpansive, and the third follows from \eqref{eqn:key_prox_applic}.
Rearranging and using the inequality $2a^2+2b^2\geq (a+b)^2$  yields 
\begin{align*}
2\|\tilde g_t\|^2 + 2\|g_t-\EE[g_t]\|^2\geq (\|\tilde g_t\| + \|g_t-\EE[g_t]\|)^2\geq \left(\frac{\hat \alpha}{2(1+\eta B)}\|y_{t-1}-\bar x\|\right)^2.
\end{align*}
Taking expectations of both sides completes the proof of Lemma~\ref{lem:lower_moment_bound}.
It remains to verify Lemma~\ref{lem:one_step_strong}.

\begin{proof}[Proof of Lemma~\ref{lem:one_step_strong}]
	To simplify notation, define the error $\Delta= v-\nabla f(y)$.
	Since $y^+$ is the minimizer of the $\eta^{-1}$-strongly convex function $\langle v, \cdot-y\rangle+r+\frac{1}{2\eta}\|\cdot-y\|^2$, we deduce for every $x\in \R^d$ the estimate holds:
	\begin{equation}\label{eqn:thrre_point}
	\begin{aligned}
	\langle v,y^+-y\rangle+\frac{1}{2\eta}\|y^+-y\|^2+r(y^+)\leq \langle v,x-y\rangle+\frac{1}{2\eta}\|x-y\|^2+r(x)-\frac{1}{2\eta}\|y^+-x\|^2
	\end{aligned}.
	\end{equation}
	Next, algebraic manipulations yield the equality 
	\begin{align*}
	\|y^+-y\|^2+\| y^+-x\|^2-\| x-y\|^2&=2\langle  y-y^+,x-y\rangle+2\|y-y^+\|^2\\
	&=2\eta\langle  \tilde g, x-y\rangle+2\eta^2\|\tilde g\|^2.
	\end{align*}
	Combining this estimate with \eqref{eqn:thrre_point} yields
	\begin{equation}\label{eqn:next_est}
	\langle v,y^+-y\rangle+r(y^+)\leq \langle v,x-y\rangle+r(x)+ \langle  \tilde{g}, y-x\rangle-\eta\|\tilde{g}\|^2.
	\end{equation}
	Smoothness of $f$ in turn guarantees
	\begin{align*}
	\langle v,y^+-y\rangle&=\langle \nabla f(y),y^+-y\rangle+\langle v-\nabla f(y),y^+-y\rangle\\
	&\geq f(y^+)-f(y)-\frac{L}{2}\|y^+-y\|^2-\eta\langle \Delta,\tilde g\rangle.
	\end{align*}
	Combining this estimate with \eqref{eqn:next_est} therefore yields 
	\begin{equation}\label{eqn:fina:lem_eqn}
	\begin{aligned}
	\varphi(y^+)\leq f(y)+&\langle v,x-y\rangle+r(x)+ \langle  \tilde g, y-x\rangle
	-\left(\eta-\frac{L\eta^2}{2}\right)\|\tilde g\|^2+\eta\langle \Delta,\tilde g\rangle.
	\end{aligned}
	\end{equation}
	Set now $x=\bar x$ in \eqref{eqn:fina:lem_eqn} and using strong convexity observe the estimate 
	\begin{align*}
	\langle v,\bar x-y\rangle&=\langle \nabla f(y),\bar x-y\rangle+\langle v-\nabla f(y),\bar x-y\rangle\\
	&\leq f(\bar x)-f(y)-\frac{\hat\alpha}{2}\|y-\bar x\|^2.
	\end{align*}
	We thus deduce 
	\begin{equation}
	\begin{aligned}
	\varphi(y^+)\leq \varphi(\bar x)+& \langle  \tilde g, y-\bar x\rangle-\frac{\hat \alpha}{2}\|y-\bar x\|^2-\left(\eta-\frac{L\eta^2}{2}\right)\|\tilde g\|^2+\eta\langle \Delta,\tilde g\rangle.
	\end{aligned}
	\end{equation}	
	Lower-bounding $\varphi(y^+)$ by $\varphi(\bar x)$ and rearranging yields
	$$\langle  \tilde g, y-\bar x\rangle\geq \tfrac{\hat \alpha}{2}\|y-\bar x\|^2+\left(\eta-\tfrac{L\eta^2}{2}\right)\|\tilde g\|^2-\eta\langle \Delta,\tilde g\rangle.$$
	Using Cauchy-Schwarz, lower bounding the term $\left(\eta-\frac{L\eta^2}{2}\right)\|\tilde g\|^2$ by zero, and dividing through by $\|y-\bar x\|$ yields 
	\begin{align*}
	\|\tilde g\|&\geq \frac{\tilde \alpha}{2}\|y-\bar x\|-\frac{\eta\|\Delta\|\|\tilde g\|}{\|y-\bar x\|}\geq \frac{\hat \alpha}{2}\|y-\bar x\|-\eta B \|\tilde g\|.
	\end{align*}
	Rearranging completes the proof of \eqref{eqn:error_bound}.
\end{proof}

\section{Estimate \eqref{eqn:grad_rel_accur} implies an angle condition}\label{sec:angle_cond}
Squaring both sides of \eqref{eqn:grad_rel_accur}, expanding, and dividing by $\|\nabla f_x(x)\|\cdot\|\nabla f_{\bar x}(x)\|$ yields
$$2\left\langle \frac{\nabla f_x(x)}{\|\nabla f_x(x)\|},\frac{\nabla f_{\bar x}(x)}{\|\nabla f_{\bar x}(x)\|}\right\rangle\geq \frac{\|\nabla f_x(x)\|}{\|\nabla f_{\bar x}(x)\|}+(1-\rho^2)\frac{\|\nabla f_{\bar x}(x)\|}{\|\nabla f_x(x)\|}.$$
Lower bounding the right side with the estimate $a+b\geq 2\sqrt{ab}$, we conclude 
$\left\langle \frac{\nabla f_x(x)}{\|\nabla f_x(x)\|},\frac{\nabla f_{\bar x}(x)}{\|\nabla f_{\bar x}(x)\|}\right\rangle\geq \sqrt{1-\rho^2}$, as  claimed.

\section{Numerical experiments on strategic classification}\label{sec:strat_class}
This section describes the experimental setup of strategic classification used in \cite{perdomo2020performative,mendler2020stochastic} and in the current work. 
 Specifically, we begin with the Kaggle data set \cite{kaggle}, which contains a historical financial record of $15000$ individuals. Each individual is described by ten features $a\in \R^{10}$, with the label $b\in \{0,1\}$  indicating whether $90$ days have passed since the person experienced delinquency. In the experiments, we subsample $n/2$ records labeled with $0$ and with $1$, respectively. We normalize all features to have zero mean and unit variance.
The goal is to learn a classifier parametrized by $x\in \R^{10}$ that accurately predicts the label. As in \cite{perdomo2020performative,mendler2020stochastic} we use the regularized logistic loss:
$$\frac{1}{n}\sum_{i=1}^n -b_i\cdot x^{\top}a_i+\log(1+\exp(x^{\top}a_i))+\frac{\alpha}{2}\|x\|^2,$$
for some parameter $\alpha>0$. We isolate three features (utilization of credit lines, number of open credit lines, number of real estate loans) that the individuals will strategically adapt in reaction to a deployed classifier. Namely, when a classifier parametrized by $x$ is deployed, each individual updates the features $a$ as 
$$a_S=a_S-\gamma\cdot x_S,$$
where $S$ is the index set of the three strategic features. This update is equivalent to \eqref{eqn:strat_update} with the linear utility function $u(x,a)=-\langle a,x\rangle$ and quadratic cost $c(a',a)=\frac{1}{2\gamma}\|a'-a\|^2$. The label of each individual stay the same.
It is shown in \cite[Section B.2]{perdomo2020performative} that the gradient of the population objective  is Lipschitz continuous in $x$ with constant $L=\frac{1}{4n}\sum_{i=1}^n\|a_i\|^2+\alpha$ and is Lipschitz continuous in $(a,b)$ with parameter $\beta=2$. Therefore, we may estimate $\rho=\frac{2\gamma}{\alpha}.$ Therefore the interesting parameter for the strategic effects is $\gamma\in (0,\frac{\alpha}{2})$. This estimate is fairly crude and experimentally we see that numerical methods can work well in a much wider parameter regime.

\end{document}